\def\limp{\Rightarrow}
\def\liff{\Leftrightarrow}
\def\Nat{\mathsf{N}}
\def\Lam#1#2{\lambda#1\,{.}\,#2}
\def\cc{\mathtt{c\!c}}
\def\Fork{{\pitchfork}}
\def\Por{\mathbf{p}\text{-}\mathbf{or}}
\def\<{\langle}
\def\>{\rangle}
\def\FV{\mathit{FV}}
\def\redd{\twoheadrightarrow}
\def\dom{\mathrm{dom}}
\def\SN{\mathrm{SN}}
\def\SAT{\mathbf{SAT}}
\def\A{\mathscr{A}}
\def\B{\mathscr{B}}
\def\cle{\preccurlyeq}
\def\cge{\succcurlyeq}
\def\meet{\curlywedge}
\def\join{\curlyvee}
\def\bigmeet{\bigcurlywedge}
\def\bigjoin{\bigcurlyvee}
\def\slambda{\boldsymbol{\lambda}}
\def\pto{\rightharpoonup}
\def\onto{\twoheadrightarrow}
\def\inonto{\mathbin{\tilde{\to}}}
\def\PER{\mathrm{PER}}
\def\defin{\mathrel{{\downarrow}{\in}}}
\def\Set{\ensuremath{\mathbf{Set}}}
\def\Pos{\ensuremath{\mathbf{Pos}}}
\def\HA{\ensuremath{\mathbf{HA}}}
\def\op{\mathrm{op}}
\def\id{\mathrm{id}}
\def\C{\mathbf{C}}
\def\P{\mathsf{P}}
\def\simto{\mathbin{\mathop{\to}\limits^{\sim}}}
\def\pullbackbox#1{\hbox to#1{\vbox to#1{\hbox to#1{\hfil}\vfil\hrule}\hskip-0.4pt\vrule}}
\def\pullback#1{\hbox to 0pt{\vbox to 0pt{\pullbackbox{#1}\vss}\hss}}
\def\Prop{\textit{Prop}}
\def\Tr{\mathsf{Tr}}
\def\Int#1{\llbracket#1\rrbracket}
\newcommand\ScaleForall[1]{\vcenter{\hbox{\scalefont{#1}$\forall$}}}
\newcommand\ScaleExists[1]{\vcenter{\hbox{\scalefont{#1}$\exists$}}}
\DeclareMathOperator*\bigforall{%
  \vphantom\sum\mathchoice{\ScaleForall{2}}{\ScaleForall{1.4}}{\ScaleForall{1}}{\ScaleForall{0.75}}}
\DeclareMathOperator*\bigexists{%
  \vphantom\sum\mathchoice{\ScaleExists{2}}{\ScaleExists{1.4}}{\ScaleExists{1}}{\ScaleExists{0.75}}}
\def\M{\mathscr{M}}
\def\Pow{\mathfrak{P}}
\def\N{\mathds{N}}
\let\ds=\displaystyle
\outer\long\def\COUIC#1{}
\def\Sep{\mathrm{Sep}}
\def\SJ{\mathit{S}^0_{\!J}}
\def\SK{\mathit{S}^0_{\!K}}
\def\ent{\vdash}
\def\tnent{\mathrel{{\dashv}{\vdash}}}
\newtheorem{theorem}{Theorem}[section]
\newtheorem{proposition}[theorem]{Proposition}
\newtheorem{fact}[theorem]{Fact}
\newtheorem{lemma}[theorem]{Lemma}
\newtheorem{corollary}[theorem]{Corollary}
\newtheorem{definition}[theorem]{Definition}
\newtheorem{remark}[theorem]{Remark}
\newtheorem{remarks}[theorem]{Remarks}
\newtheorem{example}[theorem]{Example}
\title{Implicative algebras: a new foundation
  for realizability and forcing}
\author[Alexandre Miquel]{%
  A\ls l\ls e\ls x\ls a\ls n\ls d\ls r\ls e\ns
  M\ls I\ls Q\ls U\ls E\ls L$^1$\\
  $^1$ Instituto de Matem{\'a}tica y Estad{\'i}stica
  Prof. Ing. Rafael Laguardia\addressbreak
  Facultad de Ingenier{\'ia} --
  Universidad de la Rep{\'u}blica\addressbreak
  Julio Herrera y Reissig 565 -- Montevideo C.P. 11300 -- URUGUAY%
  \thanks{This work was partly supported by the Uruguayan National
    Research \& Innovation Agency (ANII) under the project
    ``Realizability, Forcing and Quantum Computing'',
    FCE\_1\_2014\_1\_104800.}
}
\begin{document}
\maketitle

\begin{abstract}
  We introduce the notion of implicative algebra, a simple algebraic
  structure intended to factorize the model-theoretic constructions
  underlying forcing and realizability (both in intuitionistic and
  classical logic).
  The salient feature of this structure is that its elements can be
  seen both as truth values and as (generalized) realizers, thus
  blurring the frontier between proofs and types.
  We show that each implicative algebra induces a ($\Set$-based)
  tripos, using a construction that is reminiscent from the
  construction of a realizability tripos from a partial combinatory
  algebra.
  Relating this construction with the corresponding constructions in
  forcing and realizability, we conclude that the class of implicative
  triposes encompass all forcing triposes (both intuitionistic and
  classical), all classical realizability triposes (in the sense of
  Krivine) and all intuitionistic realizability triposes built from
  partial combinatory algebras.
\end{abstract}
\nocite{Ruy06}

\setcounter{tocdepth}{3}

\section{Introduction}
\label{ss:Intro}

In this paper, we introduce the notion of implicative algebra, a
simple algebraic structure that is intended to factorize the
model-theoretic constructions underlying forcing and realizability,
both in intuitionistic and classical logic.

Historically, the method of forcing was introduced by
Cohen~\cite{Coh63,Coh64} to prove the relative consistency of the
negation of the continuum hypothesis w.r.t.\ the axioms of set theory.
Since then, forcing has been widely investigated---both from a
proof-theoretic point of view and from a model-theoretic point of
view---, and it now constitutes a standard item in the toolbox of set
theorists~\cite{Jec02}.
From a model-theoretic point of view, the method of forcing can be
understood as a particular way to construct Boolean-valued models of
the considered theory (typically: set theory or higher-order
arithmetic), in which each formula~$\phi$ is interpreted as an element
$$\Int{\phi}~\in~B$$
of a given complete Boolean algebra~$B$.
If one is only interested in interpreting intuitionistic theories,
one can replace complete Boolean algebras by complete Heyting
algebras, in which case similar construction methods give us
Heyting-valued models, that are essentially equivalent to Kripke
(i.e.\ intuitionistic) forcing or Beth forcing.

As observed by Scott~\cite{Oos02}, there is a strong similarity
between (intuitionistic or classical) forcing and the method of
realizability, that was introduced by Kleene~\cite{Kle45} to give a
constructive semantics to Heyting (i.e.\ intuitionistic) arithmetic.
From a model-theoretic point of view, the method of realizability
interprets each closed formula~$\phi$ as a set of realizers
$$\Int{\phi}~\in~\Pow(P)$$
where~$P$ is a suitable algebra of ``programs'' (typically: a partial
combinatory algebra), following the Brouwer-Heyting-Kolmogorov
semantics for intuitionistic logic.
(Here, the symbol $\Pow$ stands for the set-theoretic powerset
operator.)
Although the method of realizability was initially introduced for
intuitionistic first-order arithmetic, it extends to intuitionistic
higher-order arithmetic and even to intuitionistic Zermelo-Fraenkel
set theory~\cite{Myh73,Fri73,McCPhd}.

For a long time, the method of realizability was limited to
intuitionistic logic.
However, from the mid-90's, Krivine reformulated~\cite{Kri09} the
principles of realizability to make them compatible with classical
logic, using the correspondence between classical reasoning and
control operators discovered by Griffin~\cite{Gri90}.
Technically, classical realizability departs from intuitionistic
realizability by interpreting each formula~$\phi$ not as a set of
realizers, but as a set of counter-realizers (a.k.a.\ a falsity value)
$$\Int{\phi}~\in~\Pow(\Pi)$$
where~$\Pi$ is the set of stacks associated to an algebra of classical
programs~$\Lambda$~\cite{Kri11,Str13}.
The corresponding set of realizers (or truth value) is then defined
indirectly, as the orthogonal $\Int{\phi}^{\Bot}\subseteq\Lambda$ of
the falsity value $\Int{\phi}\subseteq\Pi$ with respect to a
particular set of processes $\Bot\subseteq\Lambda\times\Pi$ ---the
pole of the model--- that parameterizes the construction.
As for intuitionistic realizability, classical realizability extends
to higher-order arithmetic and even to (classical) Zermelo-Fraenkel
set theory~\cite{Kri01,Kri12}, possibly enriched with some weak forms
of the axiom of choice.

In spite of their similarity, there is a fundamental difference
between forcing and realizability, regarding the treatment of
connectives and quantifiers.
In forcing, conjunction and disjunction are interpreted as binary
meets and joins
$$\Int{\phi\land\psi}~=~\Int{\phi}\meet\Int{\psi}
\qquad\text{and}\qquad
\Int{\phi\lor\psi}~=~\Int{\phi}\join\Int{\psi}$$
(respectively writing $b\meet c$ and $b\join c$ the meet and the join
of two elements $b,c\in B$), whereas universal and existential
quantifications are interpreted by
$$\Int{\forall x\,\phi(x)}~=~\bigmeet_{v\in\M}\Int{\phi(v)}
\qquad\text{and}\qquad
\Int{\exists x\,\phi(x)}~=~\bigjoin_{v\in\M}\Int{\phi(v)}\,.$$
So that from the point of view of (intuitionistic or classical)
forcing, conjunction and disjunction are just finite forms of
universal and existential quantifications.
This is definitely not the case in intuitionistic realizability, where
conjunctions and disjunctions are interpreted as Cartesian products
and direct sums
$$\Int{\phi\land\psi}~=~\Int{\phi}\times\Int{\psi}
\qquad\text{and}\qquad
\Int{\phi\lor\psi}~=~\Int{\phi}+\Int{\psi}$$
whereas universal and existential quantifications are still
interpreted uniformly%
\footnote{Here, we put aside the case of numeric (or arithmetic)
  quantifications, that can always be decomposed as a uniform
  quantification followed by a relativization:
  $(\forall x\,{\in}\,\N)\phi(x)\equiv
  \forall x\,(x\in\N\limp\phi(x))$ and
  $(\exists x\,{\in}\,\N)\phi(x)\equiv
  \exists x\,(x\in\N\land\phi(x))$.}:
$$\Int{\forall x\,\phi(x)}~=~\bigcap_{v\in\M}\Int{\phi(v)}
\qquad\text{and}\qquad
\Int{\exists x\,\phi(x)}~=~\bigcup_{v\in\M}\Int{\phi(v)}\,.$$
(The situation is slightly more complex in classical realizability, in
which existential quantification and disjunction have to be
interpreted negatively.
But the above picture still holds for conjunctions and universal
quantifications.)
In some sense, realizability is more faithful to proof-theory, in
which proving a universal quantification
$$\infer{\vdash\forall x\,\phi(x)}{\vdash\phi(x)}$$
(that is: providing a generic proof that holds of all instances
of the variable~$x$) is much stronger than proving a (finitary or
infinitary) conjunction:
$$\infer{\vdash\phi(t_0)\land\phi(t_1)\land\phi(t_2)\land
  \cdots\land\phi(t_n)\land\cdots}{
  \vdash\phi(t_0)&\vdash\phi(t_1)&\vdash\phi(t_2)&\cdots&
  \vdash\phi(t_n)&\cdots
}$$
(that is: providing a distinct proof for each instance of the
variable~$x$)%
\footnote{The distinction between uniform constructions
  (e.g.\ intersection and union types) and non-uniform constructions
  (Cartesian product and direct sum) has always been overlooked in
  model theory, although it is at the core of the phenomenon of
  incompleteness in logic.
  Indeed, G{\"o}del's undecidable sentence is a $\Pi^0_1$-formula
  $G\equiv\forall x\,\phi(x)$ that is built from a particular
  $\Delta^0_0$-predicate $\phi(x)$ that has no generic proof, although
  each closed instance $\phi(n)$ ($n\in\N$) has.}.

But what do have in common an element of a complete Heyting (or
Boolean) algebra, a set of realizers (taken in a combinatory
algebra~$P$) or a set of counter-realizers (taken in a set of
stacks~$\Pi$)?
The aim of this paper is to show that all these notions of `truth
value' pertain to implicative algebras, a surprisingly simple
algebraic structure whose most remarkable feature is to use the same
set to represent truth values and realizers, thus blurring the
frontier between proofs and types.
As a matter of fact (Section~\ref{ss:Manifesto}), implicative algebras
offer a fresh semantic reading of \emph{typing} and \emph{definitional
  ordering} in terms of \emph{subtyping}, that is now the primitive
notion.

However, implicative algebras do not only encompass the various
notions of `truth value' underlying forcing and realizability, but
they also allow us to factorize the corresponding model-theoretic
constructions.
For that, we shall place ourselves in the categorical framework of
\emph{triposes}~\cite{HJP80}, that was introduced precisely to compare
forcing and realizability in the perspective of constructing
categorical models of higher-order logic.
Intuitively, a tripos is a $\Set$-indexed Heyting algebra of
`predicates' $\P:\Set^{\op}\to\HA$ (see Def.~\ref{d:Tripos}
p.~\pageref{d:Tripos}) that constitutes a (categorical) model of
higher-order logic.
Triposes can be built from a variety of algebraic structures, such as
complete Heyting (or Boolean) algebras, Partial Combinatory Algebras,
Ordered Combinatory Algebras~\cite{Oos08} and even Abstract Krivine
Structures~\cite{Str13}.
And each tripos can be turned into a topos (i.e.\ a `Set-like
category') via the standard tripos-to-topos
construction~\cite{HJP80}.

As we shall see in Section~\ref{s:Tripos}, all the above tripos
constructions (as well as the corresponding topos constructions) can
be factored through a unique construction, namely: the construction of
an \emph{implicative tripos} from a given implicative algebra.
Thanks to this factorization, we will be able to characterize forcing
in terms of non-determinism (from the point of view of generalized
realizers), and we shall prove that classical implicative triposes are
equivalent to Krivine's classical realizability triposes.

\subsection*{Sources of inspiration \& related works}
The notion of implicative algebra emerged from so many sources of
inspirations that it is almost impossible to list them all here.
Basically, implicative algebras were designed from a close analysis of
the algebraic structure underlying falsity values in Krivine
realizability~\cite{Kri09}, noticing that this structure is very
similar to the one of reducibility
candidates~\cite{Tai67,Gir89,WerPhD,Par97}.
Other sources of inspiration are the notion of semantic type in
coherence spaces~\cite{Miq00} as well as the notion of fact (or
behavior) in phase semantics~\cite{Gir87}.

The idea of reconstructing $\lambda$-terms from implication and
infinitary meets came from filter models~\cite{BCD83}, that are
strongly related to implicative algebras from a technical point of
view, although they are not implicative algebras.
The same idea appeared implicitly in Streicher's reconstruction of
Krivine's tripos~\cite{Str13} and more explicitly in~\cite{FFGMM17},
that introduced many of the ideas that are presented here, but in a
slightly different framework, closer to Streicher's.
Similar ideas were developed independently by Ruyer, whose
applicative lattices~\cite[p.~29]{Ruy06} are equivalent to a
particular case of implicative structures, namely: to the implicative
structures that are compatible with joins (Section~\ref{ss:ExJoin}).

\subsection*{Outline of the paper}
In Section~\ref{s:ImpStruct}, we introduce the notion of
\emph{implicative structure} (as a natural generalization of the
notion of complete Heyting algebra), and show how the elements of such
a structure can be used to represent both truth values (or types) and
realizers.
In Section~\ref{s:Separation}, we introduce the fundamental notion of
\emph{separator} (that generalizes the usual notion of filter) as well
as the accompanying notion of \emph{implicative algebra}.
We show how each separator induces a particular Heyting algebra
(intuitively: the corresponding algebra of propositions), and give a
first account on the relationship between forcing and non-determinism
(Prop.~\ref{p:CharacPrincFilter} p.~\pageref{p:CharacPrincFilter}).
Section~\ref{s:Tripos} is devoted to the construction of the
\emph{implicative tripos} induced by a particular implicative algebra.
We show that implicative triposes encompass many well-known triposes,
namely:
(intuitionistic and classical) forcing triposes,
classical realizability triposes~\cite{Str13},
intuitionistic realizability triposes induced by (total) combinatory
algebras, and even
intuitionistic realizability triposes induced by partial combinatory
algebras (Section~\ref{ss:TriposRealizJ}).
We also characterize forcing triposes as the non-deterministic
implicative triposes (Theorem~\ref{th:CharacForcingTriposes}
p.~\pageref{th:CharacForcingTriposes}), and show that classical
implicative triposes are equivalent to classical realizability
triposes (Theorem~\ref{th:UnivAKS} p.~\pageref{th:UnivAKS}).

\section{Implicative structures}
\label{s:ImpStruct}

\subsection{Definition}

\begin{definition}[Implicative structure]\label{d:ImpStruct}
  An \emph{implicative structure} is a complete meet-semilattice
  $(\A,{\cle})$ equipped with a binary operation
  $(a,b)\mapsto(a\to b)$, called the \emph{implication of~$\A$},
  that fulfills the following two axioms:
  \begin{enumerate}[(99)]
  \item[(1)] Implication is anti-monotonic w.r.t.\ its first operand
    and monotonic w.r.t.\ its second operand:
    $$\text{if}~~a'\cle a~~\text{and}~~b\cle b',
    \quad\text{then}\quad(a\to b)\cle(a'\to b')
    \eqno(a,a',b,b'\in\A)$$
  \item[(2)] Implication commutes with arbitrary meets on its second
    operand:
    $$a\to\bigmeet_{b\in B}\!\!b~=~\bigmeet_{b\in B}\!(a\to b)
    \eqno(a\in\A,~B\subseteq\A)$$
  \end{enumerate}
\end{definition}

\begin{remarks}\label{r:ImpStruct}
  (1)~~By saying that $(\A,\cle)$ is a complete meet-semilattice, we
  mean that every subset of~$\A$ has a greatest lower bound (i.e.\ a
  \emph{meet}).
  Such a poset has always a smallest element $\bot=\bigmeet\A$ and a
  largest element $\top=\bigmeet\varnothing$.
  More generally, every subset of~$\A$ has also a least upper bound
  (i.e.\ a \emph{join}), so that a complete meet-semilattice is
  actually the same as a complete lattice.
  However, in what follows, we shall mainly be interested in the
  meet-semilattice structure of implicative structures, so that it is
  convenient to think that implicative structures are (complete)
  lattices only by accident.\par
  (2)~~In the particular case where $B=\varnothing$, axiom~(2) states
  that $(a\to\top)=\top$ for all $a\in\A$. 
  (Recall that $\top=\bigmeet\varnothing$.)
  In some circumstances, it is desirable to relax this equality, by
  requiring that axiom~(2) hold only for the nonempty subsets~$B$
  of~$\A$.
  Formally, we call a \emph{quasi-implicative structure} any complete
  meet-semilattice~$\A$ equipped with a binary operation
  $(a,b)\mapsto(a\to b)$ that fulfills both axioms~(1) and~(2) of
  Def.~\ref{d:ImpStruct}, the latter being restricted to the case where
  $B\neq\varnothing$.
  From this definition, we easily check that a quasi-implicative
  structure is an implicative structure if and only if
  $(\top\to\top)=\top$.
\end{remarks}

\subsection{Examples of implicative and quasi-implicative structures}
\label{ss:ExamplesImpStruct}

\subsubsection{Complete Heyting algebras}\label{sss:ComplHA}
The most obvious examples of implicative structures are given by
complete Heyting algebras.
Recall that a \emph{Heyting algebra} is a bounded lattice $(H,{\cle})$
equipped with a binary operation $(a,b)\mapsto(a\to b)$ (Heyting's
implication) characterized by the adjunction
$$(c\meet a)\cle b\quad\text{iff}\quad c\cle(a\to b)
\eqno(a,b,c\in H)$$

Historically, Heyting algebras have been introduced as the
intuitionistic counterpart of Boolean algebras, and they can be used
to interpret intuitionistic provability the same way as Boolean
algebras can be used to interpret classical provability.
In this framework, conjunction and disjunction are interpreted by
binary meets and joins, whereas implication is interpreted by the
operation $a\to b$.
This interpretation validates all reasoning principles of
intuitionistic propositional logic, in the sense that every
propositional formula that is intuitionistically valid is denoted
by the truth value~$\top$.

\emph{Boolean algebras} are the Heyting algebras $(H,{\cle})$ in which
negation is involutive, that is: $\lnot\lnot a=a$ for all
$a\in H$, where negation is defined by $\lnot a:=(a\to\bot)$.
Boolean algebras more generally validate all classical reasoning
principles, such as the law of excluded middle ($a\join\lnot a=\top$)
or Peirce's law ($(((a\to b)\to a)\to a)=\top$).

A Heyting (or Boolean) algebra is \emph{complete} when the underlying
lattice is complete.
In a complete Heyting algebra, the interpretation depicted above
naturally extends to all formulas of predicate logic, by interpreting
universal and existential quantifications as meets and joins of
families of truth values indexed over a fixed nonempty set.
Again, this (extended) interpretation validates all reasoning
principles of intuitionistic predicate logic.
It is easy to check that in a complete Heyting algebra, Heyting's
implication fulfills both axioms~(1) and~(2) of Def.~\ref{d:ImpStruct},
so that:
\begin{fact}
  Every complete Heyting algebra is an implicative structure.
\end{fact}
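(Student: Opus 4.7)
The plan is to verify, directly from the defining adjunction of a Heyting algebra, that each of the two axioms of Definition~\ref{d:ImpStruct} holds, noting first that a complete Heyting algebra is by definition a complete lattice and hence, a fortiori, a complete meet-semilattice. So there is nothing to check on the underlying order structure, only on the implication.

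For axiom~(1), I would fix $a'\cle a$ and $b\cle b'$ in $H$ and show $(a\to b)\cle(a'\to b')$ by unfolding the adjunction: it suffices to prove $(a\to b)\meet a'\cle b'$. Using monotonicity of the meet together with $a'\cle a$, we get $(a\to b)\meet a'\cle(a\to b)\meet a$; the counit of the adjunction (i.e., instantiating $c=a\to b$ in the adjunction) gives $(a\to b)\meet a\cle b$; and finally $b\cle b'$ yields the conclusion.

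For axiom~(2), I would prove the two inequalities separately. The inequality $a\to\bigmeet_{b\in B}b\cle\bigmeet_{b\in B}(a\to b)$ follows from the already-established axiom~(1): for each $b_0\in B$ we have $\bigmeet_{b\in B}b\cle b_0$, hence $a\to\bigmeet_{b\in B}b\cle a\to b_0$, and taking the meet over $b_0\in B$ gives the desired bound. For the converse inequality, I would again use the adjunction: it is equivalent to show $\bigl(\bigmeet_{b\in B}(a\to b)\bigr)\meet a\cle\bigmeet_{b\in B}b$, which, by the universal property of the meet, reduces to establishing $\bigl(\bigmeet_{b\in B}(a\to b)\bigr)\meet a\cle b_0$ for each $b_0\in B$; but $\bigmeet_{b\in B}(a\to b)\cle(a\to b_0)$, so by monotonicity of meet and the counit $(a\to b_0)\meet a\cle b_0$ we are done. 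The edge case $B=\varnothing$ reduces to showing $a\to\top=\top$, which is immediate from the adjunction since $c\meet a\cle\top$ holds for every $c$.

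There is no genuine obstacle: everything reduces to mechanical applications of the Heyting adjunction and monotonicity of the meet. The only point that requires a moment's attention is that axiom~(2) is required to hold for \emph{all} subsets $B\subseteq\A$ including the empty one, which is why one must separately verify $a\to\top=\top$; this is exactly where a complete Heyting algebra gives an implicative structure rather than merely a quasi-implicative one in the sense of Remark~\ref{r:ImpStruct}(2).
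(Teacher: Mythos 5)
Your proof is correct and is exactly the routine verification the paper leaves implicit (the paper only remarks that Heyting's implication "is easy to check" to satisfy axioms (1) and (2) of Def.~\ref{d:ImpStruct}): both axioms follow mechanically from the Heyting adjunction and monotonicity of meets, and your separate treatment of $B=\varnothing$ (giving $a\to\top=\top$) correctly pinpoints why one gets a full implicative structure rather than merely a quasi-implicative one.
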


In what follows, we shall say that an implicative structure
$(\A,{\cle},{\to})$ is a \emph{complete Heyting algebra} when the
underlying lattice $(\A,{\cle})$ is a (complete) Heyting algebra, and
when the accompanying implication $(a,b)\mapsto(a\to b)$ is Heyting's
implication.

\subsubsection{Dummy implicative structures}
Unlike Heyting's implication, the implication of an implicative
structure~$\A$ is in general not determined by the ordering of~$\A$, and
several implicative structures can be defined upon the very same
complete lattice structure:
\begin{example}[Dummy implicative structures]\label{ex:DummyImpStruct}
  Let~$(L,{\cle})$ be a complete lattice.
  There are at least two distinct ways to define a dummy implication
  $a\to b$ on~$L$ that fulfills the axioms (1) and (2) of
  Def.~\ref{d:ImpStruct}:
  \begin{enumerate}[(99)]
  \item[(1)] Put $(a\to b):=b$\quad for all $a,b\in L$.
  \item[(2)] Put $(a\to b):=\top$\quad for all $a,b\in L$.
  \end{enumerate}
\end{example}
Each of these two definitions induces an implicative structure on the
top of the complete lattice $(L,{\cle})$.
From the point of view of logic, these two examples are definitely
meaningless, but they will be useful as a source of counter-examples.

\subsubsection{Quasi-implicative structures induced by partial
  applicative structures}\label{sss:ImpStructRealizJ}
Another important source of examples is given by the structures
underlying intuitionistic realizability~\cite{Oos08}.
Recall that a \emph{partial applicative structure} (PAS) is a
nonempty set~$P$ equipped with a partial binary operation
$({\cdot}):P\times P\pto P$, called \emph{application}.
Such an operation naturally induces a (total) binary operation
$(a,b)\mapsto(a\to b)$ on the subsets of~$P$, called \emph{Kleene's
  implication}, that is defined for all $a,b\subseteq P$ by:
$$a\to b~:=~\{z\in P:\forall x\,{\in}\,a,~z\cdot x\defin b\}$$
(where $z\cdot x\defin b$ means that $z\cdot x$ is defined and
belongs to~$b$).
We easily check that:
\begin{fact}\label{f:PAS}
  Given a partial applicative structure $(P,{\,\cdot\,})$:
  \begin{enumerate}[(99)]
  \item[(1)] The complete lattice $(\Pow(P),{\subseteq})$ equipped
    with Kleene's implication $a\to b$ is a quasi-implicative
    structure (in the sense of Remark~\ref{r:ImpStruct}~(2)).
  \item[(2)] The quasi-implicative structure
    $(\Pow(P),{\subseteq},{\to})$ is an implicative structure if and
    only if the underlying operation of application
    $(x,y)\mapsto x\cdot y$ is total.
  \end{enumerate}
\end{fact}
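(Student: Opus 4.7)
The plan is to verify each axiom of Def.~\ref{d:ImpStruct} by direct unfolding of Kleene's implication, keeping close track of where the empty-meet case (equivalently, where $\top\to\top=\top$) can fail.

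For (1), the complete meet-semilattice structure on $\Pow(P)$ is immediate since meets are just intersections; in particular $\top=P$ and $\bot=\varnothing$. Anti-monotonicity in the first operand and monotonicity in the second follow from a one-line argument: if $a'\subseteq a$ and $b\subseteq b'$, then any $z\in P$ with $(\forall x\,{\in}\,a)\;z{\cdot}x\defin b$ automatically satisfies $(\forall x\,{\in}\,a')\;z{\cdot}x\defin b'$. For the commutation axiom with a \emph{nonempty} $B\subseteq\Pow(P)$, unfolding both sides yields the same condition on $z$, namely
$$\ForIn{x}{a}\ForIn{b}{B}\bigl(z\cdot x\text{ is defined and }z\cdot x\in b\bigr),$$
because picking any $b_0\in B$ forces ``$z\cdot x$ defined'' out of ``$z\cdot x\in b_0$'', so the two quantifier orderings become logically equivalent. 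This proves that $(\Pow(P),\subseteq,\to)$ is a quasi-implicative structure.

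For (2), I would appeal to the criterion stated in Remark~\ref{r:ImpStruct}(2): the quasi-implicative structure is an implicative structure iff $(\top\to\top)=\top$. Unfolding gives
$$\top\to\top~=~P\to P~=~\{z\in P:\ForIn{x}{P}\,z\cdot x\text{ is defined}\},$$
and this equals $P$ exactly when application is total, giving the ``if'' direction for free and the ``only if'' direction as well. As a sanity check on why this is really the empty-$B$ case of axiom~(2) biting back, observe that $a\to\bigmeet\varnothing=a\to P=\{z:(\forall x\,{\in}\,a)\,z\cdot x\text{ defined}\}$, whereas $\bigmeet_{b\in\varnothing}(a\to b)=P$; equality for every $a$ is visibly equivalent to totality.

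The proof is essentially routine unfolding; the only conceptual step is isolating where partiality matters, which turns out to be exactly the degenerate $B=\varnothing$ instance of the distribution axiom. Consequently I do not expect any real obstacle beyond bookkeeping of the $\defin$ notation.
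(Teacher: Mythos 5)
Your proof is correct: the paper states this fact without proof (``We easily check that\dots''), and your direct unfolding is precisely the intended routine verification. In particular, using a chosen $b_0\in B$ to recover definedness of $z\cdot x$ (which is exactly where nonemptiness of $B$ is needed) and then reducing part (2) to the criterion $(\top\to\top)=\top$ of Remark~\ref{r:ImpStruct}~(2) matches how the paper expects the check to go.
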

We shall come back to this example in Section~\ref{sss:CaseRealizJ}.

\medbreak
A variant of the above construction consists to replace the subsets
of~$P$ by the \emph{partial equivalence relations} (PER) over~$P$,
that is, by the binary relations on~$P$ that are both symmetric and
transitive---but not reflexive in general.
The set of partial equivalence relations over~$P$, written $\PER(P)$,
is clearly closed under arbitrary intersection (in the sense of
relations), so that the poset $(\PER(P),{\subseteq})$ is a complete
meet-semilattice.
Kleene's implication naturally extends to partial equivalence
relations, by associating to all $a,b\in\PER(P)$ the relation
$(a\to_2b)\in\PER(P)$ defined by:
$$a\to_2b~:=~\{(z_1,z_2)\in P^2:
\forall(x_1,x_2)\,{\in}\,a,~
(z_1\cdot x_1,z_2\cdot x_2)\defin b\}\,.$$
Again:
\begin{fact}
  Given a partial applicative structure $(P,{\,\cdot\,})$:
  \begin{enumerate}[(99)]
  \item[(1)] The complete lattice $(\PER(P),{\subseteq})$ equipped
    with Kleene's implication $a\to_2b$ is a quasi-implicative structure
    (in the sense of Remark~\ref{r:ImpStruct}~(2)).
  \item[(2)] The quasi-implicative structure
    $(\PER(P),{\subseteq},{\to_2})$ is an implicative structure if and
    only if the underlying operation of application
    $(x,y)\mapsto x\cdot y$ is total.
  \end{enumerate}
\end{fact}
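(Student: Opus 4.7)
The plan is to unfold the definition of $a \to_2 b$ and verify the axioms of Definition~\ref{d:ImpStruct} directly, in close parallel to Fact~\ref{f:PAS}. Before anything else, I would run the sanity check that $a \to_2 b$ is in fact a PER whenever $a$ and $b$ are: symmetry follows from that of $a$ (used to swap any $(x_1,x_2)\in a$ into $(x_2,x_1)\in a$) combined with symmetry of $b$, while transitivity uses the standard observation that $(x_1,x_3)\in a$ implies $(x_1,x_1)\in a$, so any chain can be split through a common witness and closed up by transitivity of $b$. The poset $(\PER(P),{\subseteq})$ is already known to be a complete meet-semilattice, with top $\top=P\times P$ and bottom $\bot=\varnothing$.

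For part~(1), axiom~(1) of Definition~\ref{d:ImpStruct} unfolds mechanically: if $a'\subseteq a$, $b\subseteq b'$ and $(z_1,z_2)\in a\to_2 b$, then any $(x_1,x_2)\in a'$ lies in $a$, so $(z_1\cdot x_1,z_2\cdot x_2)\defin b\subseteq b'$. For axiom~(2) with $B\subseteq\PER(P)$ nonempty, the inclusion $a\to_2\bigcap_{b\in B}b\subseteq\bigcap_{b\in B}(a\to_2 b)$ is immediate from antimonotonicity. For the converse, assuming $(z_1,z_2)\in a\to_2 b$ for every $b\in B$, one must check that for each $(x_1,x_2)\in a$ the pair $(z_1\cdot x_1,z_2\cdot x_2)$ is both defined and lies in $\bigcap_{b\in B}b$; membership in the intersection is automatic, whereas the definedness is obtained by instantiating the hypothesis at any fixed element of $B$, which exists precisely because $B\neq\varnothing$. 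This is the only place where nonemptiness of $B$ genuinely enters.

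For part~(2), I would lean on Remark~\ref{r:ImpStruct}~(2), which reduces the question to whether $\top\to_2\top=\top$. Unfolding, $(z_1,z_2)\in\top\to_2\top$ means that for every $(x_1,x_2)\in P\times P$ the pair $(z_1\cdot x_1,z_2\cdot x_2)$ is defined (its membership in $P\times P$ being automatic). Hence $\top\to_2\top=P\times P$ iff $z\cdot x$ is defined for all $z,x\in P$, i.e.\ iff application is total, which is the required characterization. The main obstacle, if one can call it that, concentrates in the interplay between partiality and the PER structure: both the definedness clause in axiom~(2) and the transitivity check for $a\to_2 b$ rely on choosing a defined witness, and both collapse to triviality as soon as $\cdot$ is total---which is exactly the mechanism that turns the quasi-implicative structure into an implicative one.
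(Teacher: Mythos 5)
Your proof is correct and follows exactly the routine verification that the paper leaves to the reader: unfold Kleene's implication on PERs, check the variance axiom and the commutation with nonempty meets (the definedness clause being where $B\neq\varnothing$ is used), and reduce part~(2) to the criterion $(\top\to_2\top)=\top$ of Remark~\ref{r:ImpStruct}~(2). One harmless slip in wording: the easy inclusion $a\to_2\bigcap_{b\in B}b\subseteq\bigcap_{b\in B}(a\to_2 b)$ comes from monotonicity of $\to_2$ in its \emph{second} operand (or just directly), not from antimonotonicity.
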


\begin{remark}
  The reader is invited to check that the last two examples of
  (quasi-) implicative structures fulfill the following additional
  axiom
  $$\biggl(\bigjoin_{a\in A}\!\!a\biggr)\to b~=~
  \bigmeet_{\!\!a\in A\!\!}(a\to b)
  \eqno(\text{for all}~A\subseteq\A~\text{and}~b\in\A)$$
  In what follows, we shall see that this axiom---that already holds
  in complete Heyting algebras---is characteristic from the
  implicative structures coming from intuitionistic realizability or
  from (intuitionistic or classical) forcing.
  (On the other hand, this axiom does not hold in the implicative
  structures coming from classical realizability, except in the
  degenerate case of forcing.)
  We shall come back to this point in
  Section~\ref{ss:ExJoin}.
\end{remark}

\subsubsection{Quasi-implicative structures of reducibility candidates}
\label{sss:QImpRedCandidates}

Other examples of quasi-implicative structures are given by the various
notions of \emph{reducibility
  candidates}~\cite{Tai67,Gir89,WerPhD,Par97} that are used to prove
strong normalization.
Let us consider for instance the case of Tait's saturated
sets~\cite{Tai67}.

Recall that a set~$S$ of (possibly open) $\lambda$-terms is
\emph{saturated} (in the sense of Tait) when it fulfills the following
three criteria:
\begin{enumerate}[$(iiii)$]
\item[$(i)$] $S\subseteq\SN$, where $\SN$ is the set of all
  strongly normalizing terms.
\item[$(ii)$] If $x$ is a variable and if $u_1,\ldots,u_n\in\SN$,
  then $xu_1\cdots u_n\in S$.
\item[$(iii)$] If $t\{x:=u_0\}u_1\cdots u_n\in S$ and $u_0\in\SN$,
  then $(\Lam{x}{t})u_0u_1\cdots u_n\in S$.
\end{enumerate}
The set of all saturated sets, written $\SAT$, is closed under
Kleene's implication, in the sense that for all $S,T\in\SAT$ one has
$S\to T=\{t:\forall u\,{\in}\,S,~tu\in T\}\in\SAT$.
Again:
\begin{fact}
  The triple $(\SAT,{\subseteq},{\to})$ is a quasi-implicative
  structure.
\end{fact}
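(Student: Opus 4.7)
First I would verify that $(\SAT,{\subseteq})$ is a complete meet-semilattice. The top element is $\SN$ itself: condition~$(i)$ is trivial, $(ii)$ holds because any $xu_1\cdots u_n$ with $u_i\in\SN$ is itself strongly normalizing (the $u_i$ are the only possible reduction sites), and $(iii)$ is the standard head-expansion lemma for $\SN$. For any nonempty family $(S_i)_{i\in I}$ in $\SAT$, the set-theoretic intersection $\bigcap_i S_i$ is saturated and is the meet in $\SAT$: condition~$(i)$ follows from any single $S_i\subseteq\SN$, while $(ii)$ and $(iii)$ are universally quantified on terms and are therefore preserved by arbitrary intersections. For the empty family, the meet in $\SAT$ is simply the top, $\SN$.

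Next I would check the axioms of Def.~\ref{d:ImpStruct}. The excerpt already states that $\SAT$ is closed under Kleene's implication, so $(S,T)\mapsto(S\to T)$ is a well-defined binary operation on $\SAT$. Axiom~(1) is immediate from the definition: if $S'\subseteq S$, $T\subseteq T'$ and $z\in S\to T$, then for every $x\in S'\subseteq S$ we have $zx\in T\subseteq T'$, whence $z\in S'\to T'$. For axiom~(2) restricted to a nonempty family $B\subseteq\SAT$, the argument is a straightforward commutation of universal quantifiers:
$$
z\in S\to\bigmeet_{T\in B}T
\quad\iff\quad
\forall x\,{\in}\,S,\ \forall T\,{\in}\,B,\ zx\in T
\quad\iff\quad
z\in\bigmeet_{T\in B}(S\to T),
$$
yielding the required equality of saturated sets.

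Finally I would justify the ``quasi'' qualifier by noting that the $B=\varnothing$ case of axiom~(2) genuinely fails in $\SAT$: it would require $S\to\SN=\SN$ for every $S\in\SAT$, but this breaks down as soon as $S$ contains the self-applicator $\Lam{x}{xx}$, since $\Lam{x}{xx}\in\SN$ whereas $(\Lam{x}{xx})(\Lam{x}{xx})\notin\SN$. I do not expect any serious obstacle in the argument: the only step requiring a small amount of care is the closure of $\SAT$ under arbitrary intersection, which ultimately rests on the ``$\forall$-shaped'' form of conditions $(ii)$ and $(iii)$; everything else is a routine unfolding of definitions.
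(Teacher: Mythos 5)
Your proof is correct, and since the paper states this fact without giving any argument (leaving the verification to the reader), your routine check is exactly the intended one: set-theoretic intersection of saturated sets is saturated (so $(\SAT,{\subseteq})$ is a complete meet-semilattice with top $\SN$), variance of Kleene's implication is immediate, and axiom~(2) for nonempty families is a commutation of universal quantifiers, with closure of $\SAT$ under $\to$ quoted from the sentence preceding the Fact. Your closing observation that $(\top\to\top)\neq\top$ (via $\Lam{x}{xx}$) is not needed for the statement itself but correctly matches the paper's subsequent remark explaining why one only gets a \emph{quasi-}implicative structure.
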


The reader is invited to check that the same holds if we replace
Tait's saturated sets by other notions of reducibility candidates,
such as Girard's reducibility candidates~\cite{Gir89} or Parigot's
reducibility candidates~\cite{Par97}.
Let us mention that in each case, we only get a
\emph{quasi-}implicative structure, in which we have
$(\top\to\top)\neq\top$.
The reason is that full implicative structures (which come with the
equation $(\top\to\top)=\top$) are actually expressive enough to
interpret the full $\lambda$-calculus (see Section~\ref{ss:AppLam}),
so that they are incompatible with the notion of (weak or strong)
normalization.

\subsubsection{Implicative structures of classical
  realizability}\label{sss:ImpStructRealizK}

The final example---which is the main motivation of this work---is
given by classical realizability, as introduced by
Krivine~\cite{Kri01,Kri03,Kri09,Kri11,Kri12}.
Basically, classical realizability takes place in a structure of the
form $(\Lambda,\Pi,{\,\cdot\,},\Bot)$ where:
\begin{itemize}
\item[$\bullet$] $\Lambda$ is a set whose elements are called
  \emph{terms}, or \emph{realizers};
\item[$\bullet$] $\Pi$ is a set whose elements are called
  \emph{stacks}, or \emph{counter-realizers};
\item[$\bullet$] $({\,\cdot\,}):\Lambda\times\Pi\to\Pi$ is a binary
  operation for \emph{pushing} a term onto a stack;
\item[$\bullet$] $\Bot\subseteq\Lambda\times\Pi$ is a binary relation
  between $\Lambda\times\Pi$, called the \emph{pole}.
\end{itemize}
(Krivine's classical realizability structures actually contain many
other ingredients---cf Section~\ref{sss:CaseRealizK}---that we do not
need for now.)
From such a quadruple $(\Lambda,\Pi,{\,\cdot\,},\Bot)$, we let:
\begin{itemize}
\item[$\bullet$] $\A~:=~\Pow(\Pi)$;
\item[$\bullet$] $a\cle b~:\liff~a\supseteq b$\hfill
  (for all $a,b\in\A$)
\item[$\bullet$] $a\to b~:=~a^{\Bot}\cdot b~=~
  \{t\cdot\pi:t\in a^{\Bot},~\pi\in b\}$\hfill
  (for all $a,b\in\A$)
\end{itemize}
writing
$a^{\Bot}:=\{t\in\Lambda:\forall\pi\in a,~(t,\pi)\in\Bot\}
\in\Pow(\Lambda)$ the \emph{orthogonal} of the set $a\in\Pow(\Pi)$
w.r.t.\ the pole $\Bot\subseteq\Lambda\times\Pi$.
Again, it is easy to check that:
\begin{fact}\label{f:KrivineImpAlg}
  The triple $(\A,{\cle},{\to})$ is an implicative structure.
\end{fact}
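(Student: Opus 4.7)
The plan is to verify the three ingredients of Definition~\ref{d:ImpStruct} in turn, exploiting the fact that the ordering $\cle$ on $\A=\Pow(\Pi)$ is \emph{reverse} inclusion, so that meets in $(\A,\cle)$ are just unions in $\Pow(\Pi)$, with $\top=\bigmeet\varnothing=\varnothing$ and $\bot=\Pi$.

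First I would record the completeness of $(\A,\cle)$: since $(\Pow(\Pi),\supseteq)$ is the opposite of a complete lattice, every family $\{b_i\}_{i\in I}\subseteq\A$ admits a meet, namely $\bigmeet_i b_i=\bigcup_i b_i$. Next I would dispose of axiom~(1). The key observation is that the orthogonal operation $a\mapsto a^{\Bot}$ is anti-monotonic with respect to inclusion: if $a\subseteq a'$ then $a'^{\Bot}\subseteq a^{\Bot}$, by a one-line unfolding of the definition. Hence if $a'\cle a$ and $b\cle b'$, i.e.\ $a\subseteq a'$ and $b'\subseteq b$, then $a'^{\Bot}\subseteq a^{\Bot}$ and consequently
\[
a'^{\Bot}\cdot b'~\subseteq~a^{\Bot}\cdot b',
\]
and then $a^{\Bot}\cdot b'\subseteq a^{\Bot}\cdot b$ since $b'\subseteq b$. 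Combining the two inclusions yields $(a'\to b')\subseteq(a\to b)$, that is $(a\to b)\cle(a'\to b')$.

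For axiom~(2), fix $a\in\A$ and $B\subseteq\A$. The claim unfolds to the set-theoretic identity
\[
a^{\Bot}\cdot\Bigl(\bigcup_{b\in B} b\Bigr)~=~\bigcup_{b\in B}\bigl(a^{\Bot}\cdot b\bigr),
\]
which is immediate from the definition $a^{\Bot}\cdot c=\{t\cdot\pi:t\in a^{\Bot},\,\pi\in c\}$, since a typical element $t\cdot\pi$ of either side is characterized by the existence of some $b\in B$ with $\pi\in b$. The only point that deserves explicit attention is the edge case $B=\varnothing$: then the left-hand side becomes $a^{\Bot}\cdot\varnothing=\varnothing=\top$ and the right-hand side is $\bigmeet\varnothing=\top$ as well, so the equation still holds. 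Consequently $(\A,\cle,\to)$ is a genuine implicative structure, not merely a quasi-implicative one.

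I do not expect any real obstacle: everything reduces to two facts, the anti-monotonicity of orthogonality and the distributivity of the binary ``push'' operation $({\cdot})$ over unions in its right argument. The only thing one must be careful about is the direction of the order---it is easy to momentarily confuse meets with intersections and thereby miscompute $\top$, which is why I would make the translation $\bigmeet=\bigcup$, $\top=\varnothing$ explicit at the very start of the proof.
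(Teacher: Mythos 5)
Your verification is correct and is exactly the routine check the paper leaves to the reader: with $\cle$ being reverse inclusion, meets are unions ($\top=\varnothing$, $\bot=\Pi$), axiom~(1) follows from anti-monotonicity of $({\cdot})^{\Bot}$ together with monotonicity of the push in its right argument, and axiom~(2) is the distributivity of $a^{\Bot}\cdot({-})$ over unions, including the empty case. Nothing is missing.
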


\begin{remark}
  The reader is invited to check that Krivine's implication $a\to
  b=a^{\Bot}\cdot b$ fulfills the two additional axioms
  $$\biggl(\bigmeet_{a\in A}\!\!a\biggr)\to b~=~
  \bigjoin_{\!\!a\in A\!\!}(a\to b)\qquad\text{and}\qquad
  a\to\biggl(\bigjoin_{b\in B}\!\!b\biggr)~=~
  \bigjoin_{b\in B}(a\to b)$$
  for all $a,b\in\A$, $A,B\subseteq\A$, $A,B\neq\varnothing$.
  It is worth to notice that these extra properties are almost
  never used in classical realizability, thus confirming that only
  the properties of meets really matter in such a structure.
\end{remark}

We shall come back to this example in Section~\ref{sss:CaseRealizK}.

\subsection{Viewing truth values as generalized realizers: a
  manifesto}\label{ss:Manifesto}

Intuitively, an implicative structure $(\A,{\cle},{\to})$ represents a
semantic type system in which the ordering $a\cle b$ expresses the
relation of subtyping, whereas the operation $a\to b$ represents the
arrow type construction.
From the point of view of logic, it is convenient to think of the
elements of~$\A$ as truth values according to some notion of
realizability, that is: as sets of realizers enjoying particular
closure properties.

Following this intuition, we can always view an actual realizer~$t$ as
a truth value, namely: as the smallest truth value that contains~$t$.
This truth value, written $[t]$ and called the \emph{principal type}
of the realizer~$t$, is naturally defined as the meet of all truth
value containing~$t$ as an element.
Through the correspondence $t\mapsto[t]$%
\footnote{Note that this correspondence automatically identifies
  realizers that have the same principal type.
  But since such realizers are clearly interchangeable in the `logic'
  of~$\A$, this identification is harmless.},
the membership relation $t\in a$ rephrases in term of subtyping as
$[t]\cle a$, so that we can actually manipulate realizers as if they
were truth values.

But the distinctive feature of implicative structures is that they
allow us to proceed the other way around.
That is: to manipulate \emph{all} truth values as if they were
realizers.
Technically, this is due to the fact that the two fundamental
operations of the $\lambda$-calculus---application and
$\lambda$-abstraction---can be lifted to the level of truth values
(Section~\ref{ss:AppLam}).
Of course, such a possibility definitely blurs the distinction between
the particular truth values that represent actual realizers (the
principal types) and the other ones.
So that the framework of implicative structures actually leads us to
perform a surprising identification, between the notion of truth value
and the notion of realizer, now using the latter notion in a
generalized sense.

Conceptually, this identification relies on the idea that every
element $a\in\A$ may also be viewed as a generalized realizer, namely:
as the realizer whose principal type is $a$ itself (by convention).
In this way, the element~$a$, when viewed as a generalized realizer,
is not only a realizer of~$a$, but it is more generally a realizer of
any truth value~$b$ such that $a\cle b$.
Of course, there is something puzzling in the idea that truth values
are their own (generalized) realizers, since this implies that any
truth value is realized, at least by itself.
In particular, the bottom truth value $\bot\in\A$, when viewed as a
generalized realizer, is so strong that it actually realizes any truth
value.
But this paradox only illustrates another aspect of implicative
structures, which is that they do not come with an absolute criterion
of consistency.
To introduce such a `criterion of consistency', we shall need to
introduce the notion of \emph{separator} (Section~\ref{s:Separation}),
which plays the very same role as the notion of filter in Heyting
(or Boolean) algebras.

Due to the identification between truth values and (generalized)
realizers, the partial ordering $a\cle b$ can be given different
meanings depending on whether we consider the elements~$a$ and~$b$ as
truth values or as generalized realizers.
For instance, if we think of~$a$ and~$b$ both as truth values, then
the ordering $a\cle b$ is simply the relation of subtyping.
And if we think of~$a$ as a generalized realizer and of~$b$ as a truth
value, then the relation $a\cle b$ is nothing but the realizability
relation (`$a$ realizes~$b$').
But if we now think of both elements~$a$ and~$b$ as generalized
realizers, then the relation $a\cle b$ means that the (generalized)
realizer~$a$ is at least as powerful as~$b$, in the sense that~$a$
realizes any truth value~$c$ that is realized by~$b$.
In forcing, we would express it by saying that~$a$ is a \emph{stronger
condition} than~$b$.
And in domain theory, we would naturally say that~$a$ is \emph{more
  defined} than~$b$, which we would write $a\sqsupseteq b$.

The latter example is important, since it shows that when thinking of
the elements of~$\A$ as generalized realizers rather than as truth
values, then the reverse ordering $a\cge b$ is conceptually similar to
the definitional ordering in the sense of Scott.
Note that this point of view is consistent with the fact that the
theory of implicative structures (see Def.~\ref{d:ImpStruct} and
Remark~\ref{r:ImpStruct}~(1)) is built around meets, that precisely
correspond to joins from the point of view of definitional
(i.e.\ Scott) ordering.
In what follows, we shall refer to the relation $a\cle b$ as the
\emph{logical ordering}, whereas the symmetric relation $b\cge a$
(which we shall sometimes write $b\sqsubseteq a$) will be called the
\emph{definitional ordering}.

Using these intuitions as guidelines, it is now easy to lift all the
constructions of the $\lambda$-calculus to the level of truth values
in an arbitrary implicative structure.

\subsection{Interpreting $\lambda$-terms}
\label{ss:AppLam}

From now on, $\A=(\A,{\cle},{\to})$ denotes an arbitrary implicative
structure.

\begin{definition}[Application]\label{d:App}
  Given two points $a,b\in\A$, we call the
  \emph{application of~$a$ to~$b$} and write $ab$ the element of~$\A$
  that is defined by
  $$ab~:=~\bigmeet\bigl\{c\in\A:a\cle(b\to c)\bigr\}\,.$$
  As usual, we write $ab_1b_2\cdots b_n:=((ab_1)b_2)\cdots b_n$
  for all $a,b_1,b_2,\ldots,b_n\in\A$.
\end{definition}

Thinking in terms of definitional ordering rather than in terms of
logical ordering, this definition expresses that $ab$ is the join of
all $c\in\A$ such that the implication $b\to c$ (which is analogous to
a \emph{step function}) is a lower approximation of~$a$:
$$ab~:=~\bigsqcup\bigl\{c\in\A:(b\to c)\sqsubseteq a\bigr\}\,.$$

\begin{proposition}[Properties of application]\label{p:PropApp}
  For all $a,a',b,b'\in\A$:
  \begin{enumerate}[(99)]
  \item[(1)] If $a\cle a'$ and $b\cle b'$, then $ab\cle a'b'$\hfill
    (Monotonicity)
  \item[(2)] $(a\to b)a\cle b$\hfill($\beta$-reduction)
  \item[(3)] $a\cle(b\to ab)$\hfill($\eta$-expansion)
  \item[(4)] $ab=\min\bigl\{c\in\A:a\cle(b\to c)\bigr\}$\hfill
    (Minimum)
  \item[(5)] $ab\cle c$\quad iff\quad $a\cle(b\to c)$\hfill(Adjunction)
  \end{enumerate}
\end{proposition}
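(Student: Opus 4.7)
The plan is to establish items~(3), (4), (5) in that order, since all of them follow rapidly from axiom~(2) of Def.~\ref{d:ImpStruct}, and then to derive the remaining items~(1) and~(2) from the adjunction~(5).

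First I would prove the $\eta$-expansion~(3), which is the only step where something genuinely has to be computed. By the definition of~$ab$ and axiom~(2) of Def.~\ref{d:ImpStruct}, we have
\[
b\to ab~=~b\to\bigmeet\bigl\{c\in\A:a\cle(b\to c)\bigr\}
~=~\bigmeet\bigl\{b\to c:a\cle(b\to c)\bigr\}\,.
\]
Since~$a$ is by construction a lower bound of the set $\{b\to c:a\cle(b\to c)\}$, it is bounded above by the meet, giving $a\cle(b\to ab)$. This is the main (and really only) obstacle in the proof; everything else is formal.

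Next, item~(4) is immediate from~(3): the witness $ab$ itself belongs to the set $\{c\in\A:a\cle(b\to c)\}$ of which it is the meet, hence it is the minimum. Item~(5) (adjunction) follows by a two-way argument: if $a\cle(b\to c)$, then $c$ belongs to the defining set of~$ab$, so $ab\cle c$; conversely, if $ab\cle c$, then axiom~(1) gives $(b\to ab)\cle(b\to c)$, and combining with~(3) yields $a\cle(b\to ab)\cle(b\to c)$.

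Items~(1) and~(2) are then just instances of~(5). For $\beta$-reduction~(2), take $c:=b$ in~(5): since $(a\to b)\cle(a\to b)$ trivially holds, we get $(a\to b)a\cle b$. For monotonicity~(1), assume $a\cle a'$ and $b\cle b'$; by~(3) applied to~$a'$, we have $a'\cle(b'\to a'b')$, and by axiom~(1) the implication is anti-monotonic in the first operand, so $(b'\to a'b')\cle(b\to a'b')$; chaining these yields $a\cle a'\cle(b\to a'b')$, and invoking~(5) gives $ab\cle a'b'$, as required.
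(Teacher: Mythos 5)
Your proof is correct, and its mathematical core is the same as the paper's: the one nontrivial step, the $\eta$-expansion~(3), is established exactly as in the paper by using axiom~(2) of Def.~\ref{d:ImpStruct} to push the implication $b\to{-}$ under the meet defining $ab$, and items~(4) and~(5) then follow as in the paper. The only difference is organizational: the paper proves (1) and (2) directly from the defining set $U_{a,b}=\{c\in\A:a\cle(b\to c)\}$ (monotonicity via the inclusion $U_{a',b'}\subseteq U_{a,b}$, $\beta$-reduction via $b\in U_{a\to b,a}$), whereas you first secure the adjunction~(5) and then read off (2) by instantiating it and (1) by combining (3) with the variance of implication and (5). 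Both routes are sound; the paper's direct arguments are marginally shorter, while yours makes explicit that the whole proposition is driven by the adjunction, which is a reasonable trade-off.
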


\begin{proof}
  For all $a,b\in\A$, we write $U_{a,b}=\{c\in\A:a\cle(b\to c)\}$,
  so that $ab:=\bigmeet U_{a,b}$.
  (The set $U_{a,b}$ is upwards closed, from the variance of
  implication.)
  \par\noindent
  (1)~~If $a\cle a'$ and $b\cle b'$, then $U_{a',b'}\subseteq U_{a,b}$
  (from the variance of implication), hence we get
  $ab=\bigmeet U_{a,b}\cle\bigmeet U_{a',b'}=a'b'$. 
  \par\noindent
  (2)~~It is clear that $b\in U_{a\to b,a}$, hence
  $(a\to b)a=\bigmeet U_{a\to b,a}\cle b$.
  \par\noindent
  (3)~~We have $(b\to ab)=(b\to\bigmeet{U_{a,b}})
  =\bigmeet_{c\in U_{a,b}}(b\to c)\cge a$, from the def. of
  $U_{a,b}$.
  \par\noindent
  (4)~~From (3), it is clear that $ab\in U_{a,b}$, hence
  $ab=\min(U_{a,b})$.
  \par\noindent
  (5)~~Assuming that $ab\cle c$, we get
  $a\cle(b\to ab)\cle(b\to c)$ from (3).
  Conversely, assuming that $a\cle(b\to c)$, we have
  $c\in U_{a,b}$ and thus $ab=\bigmeet U_{a,b}\cle c$.
\end{proof}

\begin{corollary}[Application in a complete Heyting algebra]%
  \label{c:HeytingApp}
  In a complete Heyting algebra $(H,{\cle},{\to})$, application is
  characterized by $ab=a\meet b$ for all $a,b\in H$.
\end{corollary}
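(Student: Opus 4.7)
The plan is to unfold the definition of application and use Heyting's adjunction to rewrite the defining set $U_{a,b} = \{c \in H : a \cle (b \to c)\}$ as a simple upward-closed set, whose meet is immediate.

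First I would recall that in any Heyting algebra one has the defining adjunction
$$c \cle (b \to c') \quad\text{iff}\quad c \meet b \cle c'$$
for all $b, c, c' \in H$. Applied with $c := a$, this yields
$$a \cle (b \to c) \quad\text{iff}\quad a \meet b \cle c.$$
Consequently, the set $U_{a,b}$ that appears in Def.~\ref{d:App} is exactly the principal up-set of $a \meet b$, that is,
$$U_{a,b} ~=~ \bigl\{c \in H : a \meet b \cle c\bigr\}.$$

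Next I would observe that the meet of the principal up-set of any element $x$ in a poset is $x$ itself, since $x$ lies in this set and is a lower bound for it. Applying this to $x := a \meet b$ gives
$$ab ~=~ \bigmeet U_{a,b} ~=~ a \meet b,$$
as required.

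There is essentially no obstacle in this proof: it is a one-line calculation once one has the Heyting adjunction and the definition of application in hand. The only thing to be slightly careful about is that completeness of $H$ is needed to guarantee that the meet in Def.~\ref{d:App} makes sense (and in fact, one could alternatively invoke Prop.~\ref{p:PropApp}~(4) to observe that the minimum of $U_{a,b}$ exists, namely $a \meet b$).
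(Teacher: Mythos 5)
Your proof is correct and takes essentially the same route as the paper's: in both, the whole content is Heyting's adjunction rewriting $a\cle(b\to c)$ as $a\meet b\cle c$. The only cosmetic difference is that the paper phrases this via the adjunction $ab\cle c$ iff $a\cle(b\to c)$ from Prop.~\ref{p:PropApp}~(5), concluding that $ab$ and $a\meet b$ have the same upper bounds, whereas you unfold $U_{a,b}$ from Def.~\ref{d:App} and observe directly that it is the principal up-set of $a\meet b$, whose meet is $a\meet b$.
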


\begin{proof}
  For all $c\in\A$, we have $ab\cle c$ iff $a\cle(b\to c)$
  by Prop.~\ref{p:PropApp}~(5).
  But from Heyting's adjunction, we also have $a\cle(b\to c)$
  iff $a\meet b\cle c$.
  Hence $ab\cle c$ iff $a\meet b\cle c$ for all $c\in\A$,
  and thus $ab=a\meet b$.
\end{proof}

\begin{corollary}[Application in a total applicative structure]%
  \label{c:TotalApp}
  In the implicative structure $(\Pow(P),{\subseteq},{\to})$
  induced by a total applicative structure $(P,{\,\cdot\,})$
  (cf Fact~\ref{f:PAS} p.~\pageref{f:PAS}), application is
  characterized by $ab=\{x\cdot y:x\in a,~y\in b\}$ for all
  $a,b\in\Pow(P)$.
\end{corollary}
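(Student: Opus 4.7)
My plan is to exploit the adjunction characterization of application from Proposition~\ref{p:PropApp}, combined with the explicit form of Kleene's implication $a\to b=\{z\in P:\forall x\,{\in}\,a,~z\cdot x\in b\}$ (where $z\cdot x$ is always defined, since the underlying applicative structure is total). Let $E:=\{x\cdot y:x\in a,~y\in b\}$. I will show separately that $ab\subseteq E$ and $E\subseteq ab$, working inside $(\Pow(P),{\subseteq})$ where $\cle$ is $\subseteq$ and $\bigmeet$ is $\bigcap$.

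For the inclusion $ab\subseteq E$, I would observe that $a\subseteq(b\to E)$: indeed, for every $x\in a$ and every $y\in b$, the element $x\cdot y$ belongs to $E$ by the very definition of $E$, so $x\in(b\to E)$. By Proposition~\ref{p:PropApp}~(4), $ab=\min\{c\in\Pow(P):a\subseteq(b\to c)\}$, hence $ab\subseteq E$.

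For the reverse inclusion, I would pick an arbitrary $c\in\Pow(P)$ with $a\subseteq(b\to c)$ and show that $E\subseteq c$. Any element of $E$ has the form $x\cdot y$ with $x\in a$ and $y\in b$; since $x\in a\subseteq(b\to c)$, we get $x\cdot y\in c$ by the definition of Kleene's implication. Therefore $E$ is contained in every $c$ of the set $U_{a,b}=\{c:a\subseteq(b\to c)\}$, hence $E\subseteq\bigcap U_{a,b}=ab$. Combining the two inclusions gives $ab=E$.

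There is really no difficult step here: the proof is a direct unwinding of the definitions, and totality of application is used only to ensure that $x\cdot y$ always lies in $P$ (so that $E$ is a genuine subset of $P$ and that membership in $b\to c$ reduces to a plain universal statement). The only mildly delicate point is to remember that application is defined via a meet over an \emph{upwards-closed} set $U_{a,b}$, so that exhibiting one witness $E\in U_{a,b}$ that is below all other witnesses is enough to identify it with $ab$—which is exactly what Proposition~\ref{p:PropApp}~(4) packages for us.
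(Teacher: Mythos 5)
Your proof is correct and follows essentially the same route as the paper: the paper verifies the equivalence $\{x\cdot y:x\in a,\,y\in b\}\subseteq c$ iff $a\subseteq(b\to c)$ and concludes by the adjunction of Prop.~\ref{p:PropApp}~(5), which is exactly the two inclusions you establish, repackaged via the minimum characterization of Prop.~\ref{p:PropApp}~(4). Your write-up merely unfolds the same argument in more detail.
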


\begin{proof}
  Let $a\cdot b=\{x\cdot y:x\in a,~y\in b\}$.
  It is clear that for all $c\in\Pow(P)$, we have
  $a\cdot b\subseteq c$ iff $a\subseteq(b\to c)$.
  Therefore: $a\cdot b=ab$, by adjunction.
\end{proof}

\begin{definition}[Abstraction]\label{d:Abs}
  Given an arbitrary function $f:\A\to\A$, we write
  $\slambda{f}$ the element of~$\A$ defined by:
  $$\slambda{f}~:=~\bigmeet_{a\in\A}(a\to f(a))\,.$$
\end{definition}
(Note that we do not assume that the function~$f$ is monotonic.)

Again, if we think in terms of definitional ordering rather than in
terms of logical ordering, then it is clear that this definition
expresses that $\slambda{f}$ is the join of all the step functions of
the form $a\to f(a)$, where~$a\in\A$:
$$\slambda{f}~:=~\bigsqcup_{a\in\A}(a\to f(a))\,.$$

\begin{proposition}[Properties of abstraction]\label{p:PropLam}
  For all $f,g:\A\to\A$ and $a\in\A$:
  \begin{enumerate}[(99)]
  \item[(1)] If $f(a)\cle g(a)$ for all $a\in\A$, then
    $\slambda{f}\cle\slambda{g}$\hfill(Monotonicity)
  \item[(2)] $(\slambda{f})a\cle f(a)$\hfill ($\beta$-reduction)
  \item[(3)] $a\cle\slambda(b\mapsto ab)$\hfill ($\eta$-expansion)
  \end{enumerate}
\end{proposition}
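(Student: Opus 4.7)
The plan is to reduce each of the three claims to the corresponding property of application already proved in Prop.~\ref{p:PropApp}, together with the defining meet formula $\slambda{f}=\bigmeet_{a\in\A}(a\to f(a))$ and the axioms of implication from Def.~\ref{d:ImpStruct}.

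For (1), I would fix the hypothesis $f(a)\cle g(a)$ for every $a\in\A$ and apply axiom~(1) of Def.~\ref{d:ImpStruct} (monotonicity of implication in its second operand) pointwise to obtain $(a\to f(a))\cle(a\to g(a))$. Taking the meet over all $a\in\A$ on both sides, and using the fact that the meet is monotonic in each argument, yields $\slambda{f}\cle\slambda{g}$ directly from Def.~\ref{d:Abs}.

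For (2), I would instantiate the meet defining $\slambda{f}$ at the particular index~$a$ to get $\slambda{f}\cle(a\to f(a))$. Applying Prop.~\ref{p:PropApp}~(1) (monotonicity of application) with the right operand kept equal to~$a$ gives $(\slambda{f})a\cle(a\to f(a))a$, and then Prop.~\ref{p:PropApp}~(2) ($\beta$-reduction for application) gives $(a\to f(a))a\cle f(a)$. Chaining the two inequalities delivers $(\slambda{f})a\cle f(a)$.

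For (3), writing $g(b):=ab$, I would observe that Prop.~\ref{p:PropApp}~(3) ($\eta$-expansion for application) gives $a\cle(b\to ab)=(b\to g(b))$ for \emph{every} $b\in\A$. Hence $a$ is a lower bound of the family $\{b\to g(b):b\in\A\}$, and by the universal property of the meet we conclude $a\cle\bigmeet_{b\in\A}(b\to ab)=\slambda(b\mapsto ab)$.

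None of the three steps looks genuinely hard: the whole proposition is a ``categorification'' of the analogous facts for application in Prop.~\ref{p:PropApp}, combined with the interchange of meets with implication in its second operand (axiom~(2) of Def.~\ref{d:ImpStruct}, used implicitly through the monotonic behaviour of meets). The only mild subtlety is that $f$ is not assumed monotonic; but since each inequality is established pointwise at a fixed~$a$ (for (1) and (2)) or at a fixed~$b$ (for (3)), monotonicity of $f$ is never needed, and the argument goes through verbatim for an arbitrary set-theoretic function $f:\A\to\A$.
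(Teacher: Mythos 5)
Your proof is correct and follows essentially the same route as the paper's: (1) by the variance of implication together with monotonicity of meets, (2) by instantiating the defining meet of $\slambda{f}$ at the index~$a$ and chaining Prop.~\ref{p:PropApp}~(1) and~(2), and (3) by taking the meet over all $b\in\A$ of the $\eta$-expansion inequality from Prop.~\ref{p:PropApp}~(3). One minor aside: axiom~(2) of Def.~\ref{d:ImpStruct} is not actually needed here, even implicitly---only the order-theoretic fact that meets are monotonic, which holds in any complete meet-semilattice.
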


\begin{proof}
  (1)~~Obvious from the variance of implication.\par\noindent
  (2)~~From the definition of~$\slambda{f}$, we have
  $\slambda{f}\cle(a\to f(a))$.
  Applying Prop.~\ref{p:PropApp}~(5), we get
  $(\slambda{f})a\cle f(a)$.\par\noindent
  (3)~~Follows from Prop.~\ref{p:PropApp}~(3), taking the
  meet for all $b\in\A$.
\end{proof}

We call a \emph{$\lambda$-term with parameters in~$\A$} any
$\lambda$-term (possibly) enriched with constants taken in the
set~$\A$---the `parameters'.
Such enriched $\lambda$-terms are equipped with the usual notions of
$\beta$- and $\eta$-reduction, considering parameters as inert
constants.

To every closed $\lambda$-term~$t$ with parameters in~$\A$, we
associate an element of~$\A$, written $t^{\A}$ and defined by
induction on the size of~$t$ by:
$$\begin{array}{r@{~~}c@{~~}l}
  a^{\A} &:=& a \\
  (tu)^{\A} &:=& t^{\A}u^{\A} \\
  (\Lam{x}{t})^{\A} &:=&
  \slambda(a\mapsto(t\{x:=a\})^{\A})\\
\end{array}\eqno\begin{tabular}{r@{}}
(if $a\in\A$)\\
(application in~$\A$)\\
(abstraction in~$\A$)\\
\end{tabular}$$

\begin{proposition}[Monotonicity of substitution]\label{p:SubstMono}
  For each $\lambda$-term~$t$ with free variables~$x_1,\ldots,x_k$
  and for all parameters $a_1\cle a'_1$, \ldots, $a_k\cle a'_k$, we
  have:
  $$(t\{x_1:=a_1,\ldots,x_k:=a_k\})^{\A}~\cle~
  (t\{x_1:=a'_1,\ldots,x_k:=a'_k\})^{\A}$$
  (where $t\{x_1:=a_1,\ldots,x_k:=a_k\}$ denotes a simultaneous
  substitution).
\end{proposition}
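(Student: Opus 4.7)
The natural approach is a structural induction on the $\lambda$-term~$t$, with all free variables $x_1,\ldots,x_k$ and two tuples of parameters $(a_1,\ldots,a_k)$ and $(a'_1,\ldots,a'_k)$ satisfying $a_i\cle a'_i$ treated simultaneously. Write $\sigma=\{x_i:=a_i\}_i$ and $\sigma'=\{x_i:=a'_i\}_i$ for concision.

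\textbf{Base cases.} If $t$ is a parameter $a\in\A$, then $(t\sigma)^\A=a=(t\sigma')^\A$ and reflexivity of $\cle$ concludes. If $t=x_i$ for some~$i$, then $(t\sigma)^\A=a_i\cle a'_i=(t\sigma')^\A$ directly by hypothesis.

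\textbf{Application.} If $t=uv$, the induction hypothesis applied to~$u$ and to~$v$ (with the same tuples) gives $(u\sigma)^\A\cle(u\sigma')^\A$ and $(v\sigma)^\A\cle(v\sigma')^\A$. The interpretation clause for application yields
$(t\sigma)^\A=(u\sigma)^\A(v\sigma)^\A$ and $(t\sigma')^\A=(u\sigma')^\A(v\sigma')^\A$, and monotonicity of application (Prop.~\ref{p:PropApp}~(1)) concludes the case.

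\textbf{Abstraction.} This is the delicate case. Suppose $t=\Lam{y}{u}$. Up to $\alpha$-conversion we may assume that $y$ is fresh, i.e.\ distinct from all $x_i$ and not occurring in any parameter, so that substitution commutes with the binder: $(t\sigma)^\A=\slambda(b\mapsto(u(\sigma\cup\{y:=b\}))^\A)$ and likewise with $\sigma'$. Extending both tuples with the common parameter $b\cle b$, the induction hypothesis applied to~$u$ gives, for \emph{every} $b\in\A$,
$$(u(\sigma\cup\{y:=b\}))^\A~\cle~(u(\sigma'\cup\{y:=b\}))^\A.$$
Monotonicity of abstraction (Prop.~\ref{p:PropLam}~(1)) then transports this pointwise inequality to the $\slambda$-level, finishing the case.

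The main obstacle is therefore the abstraction case, which requires treating substitution and interpretation uniformly in the binder's argument~$b$: the induction hypothesis must be formulated for all tuples of parameters at once (rather than for one fixed tuple), precisely so that the fresh variable~$y$ can be added to the list with the trivial comparison $b\cle b$ and the clause for $\slambda$ can be invoked. Once this formulation is in place, the proof is a routine structural induction relying only on Prop.~\ref{p:PropApp}~(1) and Prop.~\ref{p:PropLam}~(1).
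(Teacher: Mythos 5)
Your proof is correct and follows exactly the route the paper takes: the paper's proof is the one-line "By induction on~$t$, using Prop.~\ref{p:PropApp}~(1) and Prop.~\ref{p:PropLam}~(1)", and your structural induction — including the careful handling of the abstraction case by strengthening the induction hypothesis to all parameter tuples and extending with $b\cle b$ — is precisely the elaboration of that argument.
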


\begin{proof}
  By induction on~$t$, using Prop.~\ref{p:PropApp}~(1) and
  Prop.~\ref{p:PropLam}~(1).
\end{proof}

\begin{proposition}[$\beta$ and $\eta$]\label{p:BetaEta}
  For all closed $\lambda$-terms~$t$ and~$u$ with parameters
  in~$\A$:
  \begin{enumerate}[(99)]
  \item[(1)] If $t\redd_{\beta}u$, then $t^{\A}\cle u^{\A}$
  \item[(2)] If $t\redd_{\eta}u$, then $t^{\A}\cge u^{\A}$
  \end{enumerate}
\end{proposition}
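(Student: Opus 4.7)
The plan is to reduce both claims to one-step reductions (using reflexivity and transitivity of $\cle$), and then proceed by induction on the one-step derivation. Because the congruence rule under a $\lambda$-abstraction forces us to reason about reductions happening inside the scope of a binder, I strengthen the statement from the outset to open terms: for every pair $t,u$ of $\lambda$-terms with parameters in~$\A$, whose free variables lie among $x_1,\ldots,x_k$, and for every tuple $\vec a=(a_1,\ldots,a_k)\in\A^k$, if $t\redd_\beta u$ (resp.\ $t\redd_\eta u$) then $(t\{\vec x:=\vec a\})^{\A}\cle(u\{\vec x:=\vec a\})^{\A}$ (resp.\ $\cge$). The original proposition is the special case $k=0$.

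The only auxiliary fact I need is a compositional substitution lemma: for any $\lambda$-term~$s$ with free variable~$x$ and any closed $\lambda$-term~$v$ with parameters in~$\A$,
$$(s\{x:=v\})^{\A}~=~(s\{x:=v^{\A}\})^{\A},$$
where on the right the value $v^{\A}\in\A$ is inserted as a parameter. This is immediate by structural induction on~$s$, using that $a^{\A}=a$ for any parameter~$a\in\A$.

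For part~(1), the base case of the induction is a head $\beta$-step $(\Lam{x}{s})v\red_\beta s\{x:=v\}$ (after applying the outer parameter substitution, which commutes with the redex). Unfolding the interpretation,
$$((\Lam{x}{s})v)^{\A}~=~\slambda(a\mapsto(s\{x:=a\})^{\A})\cdot v^{\A}~\cle~(s\{x:=v^{\A}\})^{\A}~=~(s\{x:=v\})^{\A},$$
using Prop.~\ref{p:PropLam}~(2) followed by the substitution lemma. The congruence cases under application and abstraction follow from the strengthened induction hypothesis applied to the reducing subterm, combined with the monotonicity of application (Prop.~\ref{p:PropApp}~(1)) and of $\slambda$ (Prop.~\ref{p:PropLam}~(1)); in the $\lambda$-case the parameter tuple is simply extended by the abstracted variable ranging over~$\A$.

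Part~(2) is entirely symmetric. The only genuinely new base case is the $\eta$-step $\Lam{x}{(sx)}\red_\eta s$ with $x\notin\FV(s)$: since $(\Lam{x}{(sx)})^{\A}=\slambda(a\mapsto s^{\A}\cdot a)$, Prop.~\ref{p:PropLam}~(3) gives $s^{\A}\cle\slambda(a\mapsto s^{\A}\cdot a)=(\Lam{x}{(sx)})^{\A}$, which is the desired $(\Lam{x}{(sx)})^{\A}\cge s^{\A}$ after reversing orientation. The congruence cases are handled exactly as for $\beta$, with inequalities reversed. The main subtlety of the whole argument is the $\lambda$-congruence case of part~(1): without the strengthening to open terms, the induction hypothesis would not give us control over $(s\{x:=a\})^{\A}$ for each $a\in\A$, which is precisely what the monotonicity of $\slambda$ demands at that step.
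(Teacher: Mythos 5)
Your proof is correct and takes essentially the same route as the paper, whose one-line proof declares the result obvious from Prop.~\ref{p:PropLam}~(2),(3) (for the root $\beta$- and $\eta$-steps) together with the monotonicity properties (Prop.~\ref{p:SubstMono}, itself resting on Prop.~\ref{p:PropApp}~(1) and Prop.~\ref{p:PropLam}~(1)) for the congruence cases. Your explicit substitution lemma $(s\{x:=v\})^{\A}=(s\{x:=v^{\A}\})^{\A}$ and the strengthening of the induction to open terms instantiated by parameters are precisely the details the paper leaves implicit, and you handle them correctly.
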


\begin{proof}
  Obvious from Prop.~\ref{p:PropLam}~(2), (3) and
  Prop.~\ref{p:SubstMono}.
\end{proof}

\begin{remark}
  It is important to observe that an implicative structure is in
  general \emph{not} a denotational model of the $\lambda$-calculus,
  since the inequalities of Prop.~\ref{p:BetaEta} are in general not
  equalities, as shown in Example~\ref{ex:DummyImpStruct2} below.
  Let us recall that in a denotational model~$D$ of the
  $\lambda$-calculus (where $t=_{\beta\eta}u$ implies $t^D=u^D$), the
  interpretation function $t\mapsto t^D$ is either trivial, or
  injective on $\beta\eta$-normal forms.
  This is no longer the case in implicative structures, where some
  $\beta\eta$-normal terms may collapse, while others do not.
  We shall come back to this problem in Section~\ref{ss:Consistency}.
\end{remark}

\begin{example}[Dummy implicative structure]\label{ex:DummyImpStruct2}
  Let us consider the dummy implicative structure (cf
  Example~\ref{ex:DummyImpStruct}~(2)) constructed on the top of a
  complete lattice $(L,{\cle})$ by putting $a\to b:=\top$ for all
  $a,b\in\A$.
  In this structure, we observe that:
  \begin{itemize}
  \item[$\bullet$]
    $ab=\bigmeet\{c\in\A:a\cle(b\to c)\}=\bigmeet\A=\bot$\hfill
    for all $a,b\in\A$;
  \item[$\bullet$]
    $\slambda{f}=\bigmeet_{a\in\A}(a\to f(a))=\top$\hfill
    for all functions $f:\A\to\A$.
  \end{itemize}
  So that for any closed $\lambda$-term~$t$, we immediately get:
  $$t^{\A}~=~\begin{cases}
    \top&\text{if}~t~\text{is an abstraction}\\
    \bot&\text{if}~t~\text{is an application}\\
  \end{cases}$$
  (The reader is invited to check that the above characterization is
  consistent with the inequalities of Prop.~\ref{p:BetaEta}.)
  In particular, letting $\mathbf{I}:=\Lam{x}{x}$, we observe that:
  \begin{itemize}
  \item[$\bullet$] $\mathbf{I}\,\mathbf{I}\to_{\beta}\mathbf{I}$,\quad
    but\quad $(\mathbf{I}\,\mathbf{I})^{\A}\,({=}\,\bot)~\neq~
    \mathbf{I}^{\A}\,({=}\,\top)$;
  \item[$\bullet$] $\Lam{x}{\mathbf{I}\,\mathbf{I}\,x}\to_{\eta}
    \mathbf{I}\,\mathbf{I}$,\quad but\quad
    $(\Lam{x}{\mathbf{I}\,\mathbf{I}\,x})^{\A}\,({=}\,\top)~\neq~
    (\mathbf{I}\,\mathbf{I})^{\A}\,({=}\,\bot)$.
  \end{itemize}
\end{example}

\begin{proposition}[$\lambda$-terms in a complete Heyting
    algebra]\label{p:HeytingLam}
  If $(\A,{\cle},{\to})$ is a complete Heyting algebra, then for
  all (pure) $\lambda$-terms with free variables $x_1,\ldots,x_k$ and
  for all parameters $a_1,\ldots,a_k\in\A$, we have:
  $$(t\{x_1:=a_1,\ldots,x_k:=a_k\})^{\A}~\cge~
  a_1\meet\cdots\meet a_k\,.$$
  In particular, for all closed $\lambda$-terms~$t$, we have:\quad
  $t^{\A}=\top$.
\end{proposition}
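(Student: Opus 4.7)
The plan is to proceed by induction on the structure of the $\lambda$-term~$t$, using as the main technical ingredient the characterization of application in a complete Heyting algebra given by Corollary~\ref{c:HeytingApp}, namely $ab=a\meet b$, together with Heyting's adjunction.

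First, the variable case: if $t=x_i$, then the substituted term evaluates to~$a_i$, and clearly $a_i\cge a_1\meet\cdots\meet a_k$.
Next, the application case: if $t=uv$, then by the inductive hypothesis each of $(u\{\vec{x}:=\vec{a}\})^{\A}$ and $(v\{\vec{x}:=\vec{a}\})^{\A}$ dominates $a_1\meet\cdots\meet a_k$, and by Corollary~\ref{c:HeytingApp} the interpretation of $tu$ is simply their meet, which therefore still dominates $a_1\meet\cdots\meet a_k$.

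The abstraction case is the heart of the argument. If $t=\Lam{y}{u}$ (assuming without loss of generality $y\notin\{x_1,\ldots,x_k\}$), then
$$(t\{\vec{x}:=\vec{a}\})^{\A}~=~\bigmeet_{b\in\A}\bigl(b\to(u\{\vec{x}:=\vec{a},\,y:=b\})^{\A}\bigr)\,.$$
Applying the inductive hypothesis to~$u$ with the extended list of parameters $a_1,\ldots,a_k,b$ yields $(u\{\vec{x}:=\vec{a},\,y:=b\})^{\A}\cge a_1\meet\cdots\meet a_k\meet b$. By the monotonicity of implication in its second argument, this gives
$$b\to(u\{\vec{x}:=\vec{a},\,y:=b\})^{\A}~\cge~b\to(a_1\meet\cdots\meet a_k\meet b)\,.$$
Then Heyting's adjunction applied to the trivial inequality $(a_1\meet\cdots\meet a_k)\meet b\cle a_1\meet\cdots\meet a_k\meet b$ shows that $b\to(a_1\meet\cdots\meet a_k\meet b)\cge a_1\meet\cdots\meet a_k$, a bound independent of~$b$. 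Taking the meet over all $b\in\A$ preserves this lower bound, which establishes the inductive step.

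The particular case of a closed $\lambda$-term is immediate: when $k=0$, the empty meet equals~$\top$, so $t^{\A}\cge\top$ and hence $t^{\A}=\top$. The only subtle point in the whole argument is handling the fresh bound variable~$y$ in the abstraction case correctly, but this is a matter of routine bookkeeping rather than a genuine obstacle; the substantive content of the proof lies in the interplay between Corollary~\ref{c:HeytingApp} and Heyting's adjunction, both of which are specific to complete Heyting algebras and account for why the statement fails in general implicative structures (cf.\ Example~\ref{ex:DummyImpStruct2}).
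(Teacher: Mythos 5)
Your proof is correct and follows essentially the same route as the paper's: induction on the term, with the application case handled by Corollary~\ref{c:HeytingApp} and the abstraction case by the inequality $b\cle(a\to a\meet b)$, which you simply derive explicitly from Heyting's adjunction rather than citing it. The treatment of the closed-term case via the empty meet also matches the paper.
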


\begin{proof*}
  Let us write $\vec{x}=x_1,\ldots,x_k$ and
  $\vec{a}=a_1,\ldots,a_k$.
  We reason by induction on~$t$, distinguishing the following cases:
  \begin{itemize}
  \item[$\bullet$] $t=x$ (variable).\quad
    This case is obvious.
  \item[$\bullet$] $t=t_1t_2$ (application).\quad
    In this case, we have:
    $$\begin{array}{rcl}
      (t\{\vec{x}:=\vec{a}\})^{\A}
      &=& (t_1\{\vec{x}:=\vec{a}\})^{\A}(t_2\{\vec{x}:=\vec{a}\})^{\A}\\
      &=& (t_1\{\vec{x}:=\vec{a}\})^{\A}\meet
      (t_2\{\vec{x}:=\vec{a}\})^{\A}\\
      &\cge& a_1\meet\cdots\meet a_k \\
    \end{array}\eqno\begin{tabular}{r@{}}
      \\(by Coro.~\ref{c:HeytingApp})\\(by IH)\\
    \end{tabular}$$
  \item[$\bullet$] $t=\Lam{x_0}{t_0}$ (abstraction).\quad
    In this case, we have:
    $$\begin{array}{rcl}
      (t\{\vec{x}:=\vec{a}\})^{\A}
      &=&\ds\bigmeet_{a_0\in\A}
      \bigl(a_0\to(t_0\{x_0:=a_0,\vec{x}:=\vec{a}\})^{\A}\bigr)\\
      &\cge&\ds\bigmeet_{a_0\in\A}
      \bigl(a_0\to a_0\meet a_1\meet\cdots\meet a_k\bigr) \\
      &\cge&a_1\meet\cdots\meet a_k\\
    \end{array}\eqno\begin{tabular}{r@{}}
      \\[3pt](by IH)\\\\
    \end{tabular}$$
    using the relation $b\cle(a\to a\meet b)$
    (for all $a,b\in\A$) in the last inequality.\hfill
    \usebox{\proofbox}
  \end{itemize}
\end{proof*}

\begin{remark}
  The above result is reminiscent from the fact that in forcing
  (in the sense of Kripke or Cohen), all (intuitionistic or classical)
  tautologies are interpreted by the top element (i.e.\ the weakest
  condition).
  This is clearly no longer the case in (intuitionistic or classical)
  realizability, as well as in implicative structures more generally.
\end{remark}

\subsection{Semantic typing}
\label{ss:SemTyping}

Any implicative structure $\A=(\A,{\cle},{\to})$ naturally induces a
\emph{semantic type system} whose types are the elements of~$\A$.

In this framework, a \emph{typing context} is a finite (unordered)
list $\Gamma=x_1:a_1,\ldots,x_n:a_n$, where $x_1,\ldots,x_n$ are
pairwise distinct $\lambda$-variables and where
$a_1,\ldots,a_n\in\A$.
Thinking of the elements of~$\A$ as realizers rather than as types, we
may also view every typing context
$\Gamma=x_1:a_1,\ldots,x_n:a_n$ as the substitution
$\Gamma=x_1:=a_1,\ldots,x_n:=a_n$.

Given a typing context $\Gamma=x_1:a_1,\ldots,x_n:a_n$, we write
$\dom(\Gamma)=\{x_1,\ldots,x_n\}$ its \emph{domain}, and the
concatenation $\Gamma,\Gamma'$ of two typing contexts~$\Gamma$
and~$\Gamma'$ is defined as expected, provided
$\dom(\Gamma)\cap\dom(\Gamma')=\varnothing$.
Given two typing contexts~$\Gamma$ and~$\Gamma'$, we write
$\Gamma'\cle\Gamma$ when for every declaration $(x:a)\in\Gamma$,
there is a type $b\cle a$ such that $(x:b)\in\Gamma'$.
(Note that the relation $\Gamma'\cle\Gamma$ implies that
$\dom(\Gamma')\supseteq\dom(\Gamma)$.)

Given a typing context~$\Gamma$, a $\lambda$-term~$t$ with parameters
in~$\A$ and an element $a\in\A$, we define the (semantic) typing
judgment $\Gamma\vdash t:a$ as the following shorthand:
$$\Gamma\vdash t:a\quad:\liff\quad
\FV(t)\subseteq\dom(\Gamma)~~\text{and}~~
(t[\Gamma])^{\A}\cle a$$
(using~$\Gamma$ as a substitution in the right-hand side inequality).
From this semantic definition of typing, we easily deduce that:
\begin{proposition}[Semantic typing rules]\label{p:SemTypingRules}
  For all typing contexts $\Gamma$, $\Gamma'$, for all
  $\lambda$-terms $t$, $u$ with parameters in~$\A$ and for all
  $a,a',b\in\A$, the following `semantic typing rules' are valid:
  \begin{itemize}
  \item If\quad $(x:a)\in\Gamma$,\quad then\quad
    $\Gamma\vdash x:a$\hfill(Axiom)
  \item $\Gamma\vdash a:a$\hfill(Parameter)
  \item If\quad $\Gamma\vdash t:a$\quad and\quad
    $a\cle a'$,\quad then\quad $\Gamma\vdash t:a'$\hfill(Subsumption)
  \item If\quad $\Gamma'\cle\Gamma$\quad and\quad
    $\Gamma\vdash t:a$,\quad then\quad
    $\Gamma'\vdash t:a$\hfill(Context subsumption)
  \item If\quad $\FV(t)\subseteq\dom(\Gamma)$,\quad then\quad
    $\Gamma\vdash t:\top$\hfill($\top$-intro)
  \item If\quad $\Gamma,x:a\vdash t:b$,\quad then\quad
    $\Gamma\vdash\Lam{x}{t}:a\to b$\hfill(${\to}$-intro)
  \item If\quad $\Gamma\vdash t:a\to b$\quad and\quad
    $\Gamma\vdash u:a$,\quad then\quad
    $\Gamma\vdash tu:b$\hfill(${\to}$-elim)
  \end{itemize}
  Moreover, for every family $(a_i)_{i\in I}$ of elements of~$\A$
  indexed by a set (or a class)~$I$:
  \begin{itemize}
  \item If\quad $\Gamma\vdash t:a_i$\quad
    (for all $i\in I$),\quad then\quad
    $\ds\Gamma\vdash t:\bigmeet_{i\in I}a_i$\hfill(Generalization)
  \end{itemize}
\end{proposition}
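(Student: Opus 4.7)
The plan is to unfold the semantic definition
$\Gamma\vdash t:a \iff \FV(t)\subseteq\dom(\Gamma)$ and $(t[\Gamma])^{\A}\cle a$
and verify each rule by reducing it to an already-established fact about application, abstraction, or substitution in the implicative structure $\A$. Most rules are one-line calculations; I will bundle the easy ones together and save $({\to}\text{-intro})$ and context subsumption for a separate treatment.

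First I would dispatch the trivial rules. For \emph{Axiom} and \emph{Parameter}, when $(x:a)\in\Gamma$ we have $(x[\Gamma])^{\A}=a^{\A}=a\cle a$, and similarly $(a[\Gamma])^{\A}=a\cle a$. \emph{Subsumption} is transitivity of $\cle$. The $\top$\emph{-intro} rule holds because $\top$ is the top element, so $(t[\Gamma])^{\A}\cle\top$ automatically. \emph{Generalization} is just the universal property of the meet: from $(t[\Gamma])^{\A}\cle a_i$ for every $i\in I$ we get $(t[\Gamma])^{\A}\cle\bigmeet_{i\in I}a_i$. \emph{Context subsumption} follows directly from Prop.~\ref{p:SubstMono}: if $\Gamma'\cle\Gamma$ then each parameter of $\Gamma'$ is below the corresponding parameter of $\Gamma$, so $(t[\Gamma'])^{\A}\cle(t[\Gamma])^{\A}\cle a$.

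The two more interesting rules concern the $\lambda$-calculus constructions. For $({\to}\text{-elim})$, I would use monotonicity of application (Prop.~\ref{p:PropApp}~(1)) together with $\beta$-reduction at the semantic level (Prop.~\ref{p:PropApp}~(2)): from $(t[\Gamma])^{\A}\cle a\to b$ and $(u[\Gamma])^{\A}\cle a$, I obtain
\[
((tu)[\Gamma])^{\A}~=~(t[\Gamma])^{\A}\,(u[\Gamma])^{\A}~\cle~(a\to b)\,a~\cle~b.
\]
For $({\to}\text{-intro})$, I would unfold the interpretation of the abstraction: by definition,
\[
((\Lam{x}{t})[\Gamma])^{\A}~=~\slambda\bigl(c\mapsto(t[\Gamma,x:c])^{\A}\bigr)~=~\bigmeet_{c\in\A}\bigl(c\to(t[\Gamma,x:c])^{\A}\bigr),
\]
which is in particular below the single summand at $c=a$, namely $a\to(t[\Gamma,x:a])^{\A}\cle a\to b$ by the hypothesis $\Gamma,x:a\vdash t:b$ combined with monotonicity of implication in its second operand (axiom~(1) of Def.~\ref{d:ImpStruct}).

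I do not expect any genuine obstacle: each clause reduces to one of Props.~\ref{p:PropApp}, \ref{p:PropLam}, or \ref{p:SubstMono}, or to an elementary property of meets. The only mildly subtle point is $({\to}\text{-intro})$, where it matters that the meet defining $\slambda$ ranges over \emph{all} of $\A$, so that instantiating at $c=a$ is legal even though the hypothesis only speaks about the declaration $x:a$; the anti-monotonicity in the first operand of $\to$ is what makes this instantiation safe.
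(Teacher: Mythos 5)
Your proposal is correct and follows essentially the same route as the paper's own proof: the easy rules are dispatched by unfolding the definition, context subsumption by Prop.~\ref{p:SubstMono}, $({\to}\text{-elim})$ by Prop.~\ref{p:PropApp}~(1)--(2), and $({\to}\text{-intro})$ by instantiating the meet defining the abstraction at $c=a$ and then using monotonicity of $\to$ in its second operand. One small quibble with your closing remark: instantiating at $c=a$ is legitimate simply because a meet lies below each of its components; the anti-monotonicity of $\to$ in its first operand plays no role in that step.
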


\begin{proof}
  \textit{Axiom, Parameter, Subsumption, $\top$-intro}:
  Obvious.
  \smallbreak\noindent
  \textit{Context subsumption}: Follows from
  Prop.~\ref{p:SubstMono} (monotonicity of substitution).
  \smallbreak\noindent
  \textit{$\to$-intro}: Let us assume that
  $\FV(t)\subseteq\dom(\Gamma,x:=a)$ and
  $(t[\Gamma,x:=a])^{\A}\cle b$.
  It is clear that $\FV(\Lam{x}{t})\subseteq\dom(\Gamma)$ and
  $x\notin\dom(\Gamma)$, so that:
  $$\begin{array}{rcl}
    \bigl((\Lam{x}{t})[\Gamma]\bigr)^{\A}
    &=&\ds\bigl(\Lam{x}{t[\Gamma]}\bigr)^{\A}
    ~=~\bigmeet_{\!\!a_0\in\A\!\!}
    \Bigl(a_0\to\bigl(t[\Gamma,x:=a_0]\bigr)^{\A}\Bigr)\\
    \noalign{\smallskip}
    &\cle& a\to\bigl(t[\Gamma,x:=a]\bigr)^{\A}~\cle~a\to b\,.\\
  \end{array}$$
  \textit{$\to$-elim}:  Let us assume that
  $\FV(t),\FV(u)\subseteq\dom(\Gamma)$,
  $(t[\Gamma])^{\A}\!\cle a\to b$ and
  $(u[\Gamma])^{\A}\!\cle a$.
  It is clear that $\FV(tu)\subseteq\dom(\Gamma)$, and
  from Prop.~\ref{p:PropApp}~(2) we get:
  $$\bigl((tu)[\Gamma]\bigr)^{\A}
  ~=~\bigl(t[\Gamma]\bigr)^{\A}\bigl(u[\Gamma]\bigr)^{\A}
  ~\cle~(a\to b)a~\cle~b\,.$$
  \textit{Generalization:} Obvious, by taking the meet.
\end{proof}

\subsection{Some combinators}
\label{ss:Comb}

Let us now consider the following combinators (using Curry's
notation):
$$\begin{array}{r@{~~}c@{~~}l@{\qquad\qquad}r@{~~}c@{~~}l}
  \mathbf{I}&=&\Lam{x}{x} &
  \mathbf{K}&=&\Lam{xy}{x} \\
  \mathbf{B}&=&\Lam{xyz}{x(yz)} &
  \mathbf{W}&=&\Lam{xy}{xyy} \\
  \mathbf{C}&=&\Lam{xyz}{xzy} &
  \mathbf{S}&=&\Lam{xyz}{xz(yz)} \\
\end{array}$$
It is well-known that in any polymorphic type assignment system,
the above $\lambda$-terms can be given the following (principal)
types:
$$\begin{array}{r@{~~}c@{~~}l}
  \mathbf{I}&:&\forall\alpha\,(\alpha\to\alpha)\\
  \mathbf{B}&:&\forall\alpha\,\forall\beta\,\forall\gamma\,
  ((\alpha\to\beta)\to(\gamma\to\alpha)\to\gamma\to\beta)\\
  \mathbf{K}&:&\forall\alpha\,\forall\beta\,
  (\alpha\to\beta\to\alpha)\\
  \mathbf{C}&:&\forall\alpha\,\forall\beta\,\forall\gamma\,
  ((\alpha\to\beta\to\gamma)\to\beta\to\alpha\to\gamma)\\
  \mathbf{W}&:&\forall\alpha\,\forall\beta\,
  ((\alpha\to\alpha\to\beta)\to\alpha\to\beta)\\
  \mathbf{S}&:&\forall\alpha\,\forall\beta\,\forall\gamma\,
  ((\alpha\to\beta\to\gamma)\to(\alpha\to\beta)\to\alpha\to\gamma)\\
\end{array}$$
Turning the above syntactic type judgments into semantic type
judgments (Section~\ref{ss:SemTyping}) using the typing rules of
Prop.~\ref{p:SemTypingRules}, it is clear that in any implicative
structure $\A=(\A,{\cle},{\to})$, we have the following inequalities:
$$\begin{array}{l}
  \ds\mathbf{I}^{\A}~\cle~\bigmeet_{\!\!a\in\A\!\!}(a\to a),\qquad
  \mathbf{K}^{\A}~\cle~\bigmeet_{\!\!\!\!a,b\in\A\!\!\!\!}(a\to b\to a),\\
  \ds\mathbf{S}^{\A}~\cle~\bigmeet_{\!\!\!\!\!\!a,b,c\in\A\!\!\!\!\!\!}
  ((a\to b\to c)\to(a\to b)\to a\to c),\quad\text{etc.}\\
\end{array}$$
A remarkable property of implicative structures is that the above
inequalities are actually equalities, for each one of the six
combinators $\mathbf{I}$, $\mathbf{B}$,  $\mathbf{K}$,  $\mathbf{C}$,
$\mathbf{W}$ and $\mathbf{S}$:
\begin{proposition}\label{p:CombTypes}
  In any implicative structure $(\A,{\cle},{\to})$, we have:
  $$\begin{array}{@{}r@{~~}c@{~~}l@{\qquad}r@{~~}c@{~~}l@{}}
    \mathbf{I}^{\A}&=&\ds\bigmeet_{\!\!a\in\A\!\!}(a\to a)&
    \mathbf{B}^{\A}&=&\ds\bigmeet_{\!\!\!\!\!\!a,b,c\in\A\!\!\!\!\!\!}
    ((a\to b)\to(c\to a)\to c\to b)\\
    \mathbf{K}^{\A}&=&\ds\bigmeet_{\!\!\!\!a,b\in\A\!\!\!\!}(a\to b\to a)&
    \mathbf{C}^{\A}&=&\ds\bigmeet_{\!\!\!\!\!\!a,b,c\in\A\!\!\!\!\!\!}
    ((a\to b\to c)\to b\to a\to c)\\
    \mathbf{W}^{\A}&=&\ds\bigmeet_{\!\!\!\!a,b\in\A\!\!\!\!}
    ((a\to a\to b)\to a\to b)&
    \mathbf{S}^{\A}&=&\ds\bigmeet_{\!\!\!\!\!\!a,b,c\in\A\!\!\!\!\!\!}
    ((a\to b\to c)\to b\to a\to c)\\
  \end{array}$$
\end{proposition}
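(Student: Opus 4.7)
The plan is to establish each equality by proving the two inequalities separately. The $\cle$ direction is the one already indicated in the preamble to the statement: since each of $\mathbf{I},\mathbf{K},\mathbf{B},\mathbf{C},\mathbf{W},\mathbf{S}$ admits the displayed polymorphic type in any Curry-style type-assignment system, the semantic typing rules of Prop.~\ref{p:SemTypingRules} (namely axiom, ${\to}$-intro, ${\to}$-elim, and generalization taken over all $a,b,c\in\A$) yield $\mathbf{I}^{\A}\cle\bigmeet_a(a\to a)$, and analogously for the other five combinators. So the substance of the proof lies in the converse inequality $\cge$.

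For $\mathbf{I}$ and $\mathbf{K}$, this direction is immediate from the definition of $\slambda$ combined with axiom~(2) of Def.~\ref{d:ImpStruct}: direct unfolding yields $\mathbf{I}^{\A}=\slambda(a\mapsto a)=\bigmeet_a(a\to a)$ and
$$\mathbf{K}^{\A}~=~\slambda\bigl(a\mapsto\slambda(b\mapsto a)\bigr)~=~\bigmeet_{a\in\A}\Bigl(a\to\bigmeet_{b\in\A}(b\to a)\Bigr)~=~\bigmeet_{a,b\in\A}(a\to b\to a),$$
so both equalities hold on the nose.

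For $\mathbf{B},\mathbf{C},\mathbf{W},\mathbf{S}$, unfolding $t^{\A}$ produces a meet whose typical summand has the shape $x\to y\to z\to E(x,y,z)$, where $E(x,y,z)$ involves $\A$-application (e.g.\ $E=x(yz)$ for $\mathbf{B}$). To establish $\cge$, I would fix $x,y,z\in\A$ and specialize the \emph{target} meet at a carefully chosen triple $(a,b,c)$ so that the resulting summand is $\cle$ the one coming from $\mathbf{B}^{\A}$ (etc.). The choice is read off from the body of the combinator: for $\mathbf{B}$, take $a:=yz$, $b:=x(yz)$, $c:=z$; for $\mathbf{C}$, take $a:=z$, $b:=y$, $c:=xzy$; for $\mathbf{W}$, take $a:=y$, $b:=xyy$; for $\mathbf{S}$, take $a:=z$, $b:=yz$, $c:=xz(yz)$. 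In each case, repeated use of $\eta$-expansion (Prop.~\ref{p:PropApp}~(3)), combined with the anti-monotonicity of $\to$ in its first argument (axiom~(1)), propagates the chain of inequalities through the nested implications.

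The only subtlety is to carry out this propagation explicitly. For $\mathbf{B}$, $\eta$-expansion gives $x\cle(yz\to x(yz))=(a\to b)$ and $y\cle(z\to yz)=(c\to a)$; then from the specialized target summand $(a\to b)\to(c\to a)\to c\to b$, two applications of anti-monotonicity of $\to$ in the first argument (replacing $a\to b$ by $x$ and $c\to a$ by $y$) together with the identity $z=c$ yield $(a\to b)\to(c\to a)\to c\to b\cle x\to y\to z\to b=x\to y\to z\to x(yz)$, which is exactly the summand of $\mathbf{B}^{\A}$ under consideration. The analogous computations for $\mathbf{C},\mathbf{W},\mathbf{S}$ are routine once the specializations above are in hand. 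I expect the main obstacle to be bookkeeping rather than conceptual: one must make sure that the chosen $(a,b,c)$ produces inequalities that match up through the nested $\to$, and that monotonicity is applied in the correct (alternating) positions.
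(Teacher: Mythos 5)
Your proof is correct and takes essentially the same route as the paper's: the $\cle$ direction by semantic typing, $\mathbf{I}$ and $\mathbf{K}$ by direct unfolding together with axiom~(2), and the converse inequality for $\mathbf{B},\mathbf{C},\mathbf{W},\mathbf{S}$ by instantiating the polymorphic type at the application-types read off from the combinator body and then descending via $\eta$-expansion (Prop.~\ref{p:PropApp}~(3)) and the variance of implication. This is exactly the paper's computation for $\mathbf{S}$ (with your $x,y$ playing the role of its $d,e$), merely phrased pointwise for each fixed $x,y,z$ rather than carried out under the meets and re-folded with axiom~(2) at the end.
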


\begin{proof}
  Indeed, we have:
  \begin{itemize}
  \item[$\bullet$] $\ds\mathbf{I}^{\A}=(\Lam{x}{x})^{\A}=
    \bigmeet_{\!\!a\in\A\!\!}(a\to a)$\hfill(by definition)
  \item[$\bullet$] $\ds\mathbf{K}^{\A}=(\Lam{xy}{x})^{\A}=
    \bigmeet_{\!\!a\in\A\!\!}\biggl(a\to
    \bigmeet_{\!\!b\in\A\!\!}(b\to a)\biggr)
    =\bigmeet_{\!\!\!\!a,b\in\A\!\!\!\!}(a\to b\to a)$\hfill
    (by axiom~(2))
  \item[$\bullet$] By semantic typing, it is clear that:\medbreak
    $\ds\mathbf{S}^{\A}~=~\bigl(\Lam{xyz}{xz(yz)}\bigr)^{\A}
    ~\cle~\bigmeet_{\!\!\!\!\!\!a,b,c\in\A\!\!\!\!\!\!}
    ((a\to b\to c)\to(a\to b)\to a\to c)$.\medbreak
    Conversely, we have:\medbreak
    $\begin{array}[t]{c@{~~}l}
      &\ds\bigmeet_{\!\!\!\!\!\!a,b,c\in\A\!\!\!\!\!\!}
      ((a\to b\to c)\to(a\to b)\to a\to c)\\
      \cle&\ds\bigmeet_{\!\!\!\!\!\!a,d,e\in\A\!\!\!\!\!\!}
      \bigl((a\to ea\to da(ea))\to(a\to ea)\to a\to da(ea)\bigr)\\
      \cle&\ds\bigmeet_{\!\!\!\!\!\!a,d,e\in\A\!\!\!\!\!\!}
      \bigl((a\to da)\to e\to a\to da(ea)\bigr)\\
      \cle&\ds\bigmeet_{\!\!\!\!\!\!a,d,e\in\A\!\!\!\!\!\!}
      (d\to e\to a\to da(ea))\\[-3pt]
      &\ds{=}~~\bigmeet_{\!\!d\in\A\!\!}\biggl(d\to
      \bigmeet_{\!\!e\in\A\!\!}\biggl(e\to
      \bigmeet_{\!\!a\in\A\!\!}\bigl(a\to da(ea)\bigr)\biggr)\biggr)
      ~=~\bigl(\Lam{xyz}{xz(yz)}\bigr)^{\A}~=~\mathbf{S}^{\A}\\
    \end{array}$\hfill\hskip-50mm\begin{tabular}[t]{r@{}}
      \\[12.5pt]\\[12.5pt]
      (using Prop.~\ref{p:PropApp}~(3) twice)\\[12.5pt]
      (using Prop.~\ref{p:PropApp}~(3) again)\\[12.5pt]\\
    \end{tabular}\smallbreak
  \item[$\bullet$] The proofs for $\mathbf{B}$, $\mathbf{W}$ and
    $\mathbf{C}$ proceed similarly.
  \end{itemize}
\end{proof}

\begin{remarks}
  (1)~~The above property does not generalize to typable terms that are
  not in $\beta$-normal form.
  For instance, the term
  $\mathbf{I}\mathbf{I}=(\Lam{x}{x})(\Lam{x}{x})$ has the
  principal polymorphic type $\forall\alpha\,(\alpha\to\alpha)$, but
  in the dummy implicative structure used in
  Example~\ref{ex:DummyImpStruct2} (where $a\to b=\top$ for all
  $a,b\in\A$), we have seen that 
  $$\mathbf{I}\mathbf{I}~({=}\,\bot)\quad\neq\quad
  \bigmeet_{a\in\A}(a\to a)\quad({=}~\mathbf{I}=\top)\,.$$
  However, we conjecture that in any implicative structure
  $(\A,{\cle},{\to})$, the interpretation of each closed
  $\lambda$-term in $\beta$-normal form is equal to the interpretation
  of its principal type in a polymorphic type system with binary
  intersections~\cite{CDV80,RRV84}.\par
  (2)~~Combining the definitions of $\mathbf{K}^{\A}$
  and $\mathbf{S}^{\A}$ with Prop.~\ref{p:BetaEta}, it is clear that
  $$\mathbf{K}^{\A}ab~\cle~a\qquad\text{and}\qquad
  \mathbf{S}^{\A}abc~\cle~ac(bc)\eqno(\text{for all}~a,b,c\in\A)$$
  These inequalities actually mean that each implicative structure~$\A$
  is also an Ordered Combinatory Algebra (OCA)~\cite{Oos08}, and even
  an Intuitionistic Ordered Combinatory Algebra (${}^I$OCA) in the
  sense of~\cite{FFGMM17}.
\end{remarks}

\subsubsection{Interpreting call/cc}
Since Griffin's seminal work~\cite{Gri90}, it is well-known that the
control operator~$\cc$ (`call/cc', for: \emph{call with current
  continuation}) can be given the type
$((\alpha\to\beta)\to\alpha)\to\alpha$
that corresponds to Peirce's law.
In classical realizability~\cite{Kri09}, the control operator~$\cc$
(that naturally realizes Peirce's law) is the key ingredient to bring
the full expressiveness of classical logic into the realm of
realizability.

By analogy with Prop.~\ref{p:CombTypes}, it is possible to interpret
the control operator~$\cc$ in any implicative structure
$\A=(\A,{\cle},{\to})$ by \emph{identifying} it with Peirce's law,
thus letting
$$\begin{array}[t]{rrl}
  \cc^{\A}
  &:=&\ds\bigmeet_{\!\!\!\!a,b\in\A\!\!\!\!}(((a\to b)\to a)\to a)\\
  &=&\ds\bigmeet_{\!\!a\in\A\!\!}((\lnot a\to a)\to a)\\
\end{array}\eqno\hskip-30mm\text{(Peirce's law)}$$
where negation is defined by $\lnot a:=(a\to\bot)$ for all $a\in\A$.
(The second equality easily follows from the properties of meets and
from the variance of implication.)

Of course, the fact that it is possible to interpret the control
operator~$\cc$ in any implicative structure does not mean that any
implicative structure is well suited for classical logic, since it may
be the case that $\cc^{\A}=\bot$, as shown in the following example:
\begin{example}[Dummy implicative structure]\label{ex:DummyImpStruct3}
  Let us consider the dummy implicative structure (cf
  Example~\ref{ex:DummyImpStruct}~(1)) constructed on the top of a
  complete lattice $(L,{\cle})$ by putting $a\to b:=b$ for all
  $a,b\in L$.
  In this structure, we have:
  $$\cc^{\A}~=~
  \bigmeet_{\!\!\!\!a,b\in L\!\!\!\!}(((a\to b)\to a)\to a)\\
  ~=~\bigmeet_{\!\!a\in L\!\!}a~=~\bot\,.$$
\end{example}

The interpretation $t\mapsto t^{\A}$ of pure $\lambda$-terms naturally
extends to all $\lambda$-terms containing the constant~$\cc$, by
interpreting the latter by $\cc^{\A}$.
\begin{proposition}[$\cc$ in a complete Heyting
    algebra]\label{p:BooleanLam}
  Let $(\A,{\cle},{\to})$ be a complete Heyting algebra.
  Then the following are equivalent:
  \begin{enumerate}[(99)]
  \item[(1)] $(\A,{\cle},{\to})$ is a (complete) Boolean algebra;
  \item[(2)] $\cc^{\A}=\top$;
  \item[(3)] $t^{\A}=\top$ for all closed $\lambda$-terms with~$\cc$.
  \end{enumerate}
\end{proposition}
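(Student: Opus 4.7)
The plan is to prove the cycle $(3)\Rightarrow(2)\Rightarrow(1)\Rightarrow(3)$.

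The implication $(3)\Rightarrow(2)$ is immediate, since $\cc$ itself is a closed $\lambda$-term with~$\cc$.

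For $(2)\Rightarrow(1)$, I would use the alternative form of the definition given in the excerpt, namely $\cc^{\A}=\bigmeet_{a\in\A}((\lnot a\to a)\to a)$. Assuming this meet equals~$\top$, every factor must equal~$\top$, which by the Heyting adjunction (or just the characterization $x=\top\liff\top\cle x$ together with $u\to v=\top\liff u\cle v$ in an HA) gives $(\lnot a\to a)\cle a$ for all $a\in\A$. Now in any Heyting algebra one always has $\lnot\lnot a\cle(\lnot a\to a)$ : this is because $\lnot\lnot a\meet\lnot a=\bot\cle a$, so by Heyting's adjunction $\lnot\lnot a\cle(\lnot a\to a)$. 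Combining these two inequalities yields $\lnot\lnot a\cle a$, and together with the standard intuitionistic inequality $a\cle\lnot\lnot a$ we obtain $\lnot\lnot a=a$, i.e.\ $\A$ is Boolean.

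For $(1)\Rightarrow(3)$, the first step is to show $\cc^{\A}=\top$: in a Boolean algebra, Peirce's law $((a\to b)\to a)\to a=\top$ holds for all $a,b$, so each factor in the defining meet of $\cc^{\A}$ is~$\top$, hence so is the meet. The second step is a routine extension of Prop.~\ref{p:HeytingLam} to $\lambda$-terms with the constant~$\cc$. I would re-run the induction of that proposition on terms possibly containing the constant $\cc$, adding exactly one case: if $t=\cc$, then $(t\{\vec x:=\vec a\})^{\A}=\cc^{\A}=\top\cge a_1\meet\cdots\meet a_k$. The variable, application, and abstraction cases go through unchanged (using Cor.~\ref{c:HeytingApp} and the inequality $b\cle(a\to a\meet b)$). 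Specialising to closed terms (where $k=0$) gives $t^{\A}\cge\top$, hence $t^{\A}=\top$.

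The only non-routine step is the Heyting-algebra calculation in $(2)\Rightarrow(1)$; the other two directions reduce respectively to instantiation at $t=\cc$ and to a mild extension of Prop.~\ref{p:HeytingLam} using Peirce's law.
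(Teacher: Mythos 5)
Your proof is correct and takes essentially the same approach as the paper's: both directions of substance come down to deriving double-negation elimination from Peirce's law by an elementary Heyting-algebra computation, and to using Prop.~\ref{p:HeytingLam} to lift $\cc^{\A}=\top$ to all closed $\lambda$-terms with~$\cc$. The differences are cosmetic: you run the cycle $(3)\Rightarrow(2)\Rightarrow(1)\Rightarrow(3)$ rather than $(1)\Rightarrow(2)\Rightarrow(3)\Rightarrow(1)$, you prove $\lnot\lnot a\cle a$ via the auxiliary inequality $\lnot\lnot a\cle(\lnot a\to a)$ where the paper uses variance of implication on $((\lnot a\to\bot)\to a)\cge((\lnot a\to a)\to a)$, and for $(1)\Rightarrow(3)$ you re-run the induction of Prop.~\ref{p:HeytingLam} with an extra case for the constant~$\cc$, whereas the paper avoids redoing the induction by writing $t=t_0\{x:=\cc\}$ and applying Prop.~\ref{p:HeytingLam} with $\cc^{\A}$ as a parameter.
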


\begin{proof}
  Let us assume that $(\A,{\cle},{\to})$ is a complete Heyting
  algebra.\smallbreak\noindent
  $(1)\limp(2)$.\quad
  In the case where $(\A,{\cle},{\to})$ is a Boolean algebra, Peirce's
  law is valid in~$\A$, so that $((\lnot a\to a)\to a)=\top$ for all
  $a\in\A$. Hence $\cc^{\A}=\top$, taking the meet.
  \smallbreak\noindent
  $(1)\limp(3)$.\quad Let us assume that $\cc^{\A}=\top$.
  Given a closed $\lambda$-term~$t$ with~$\cc$, we have
  $t=t_0\{x:=\cc\}$ for some pure $\lambda$-term~$t_0$ such
  that $\FV(t_0)\subseteq\{x\}$.
  From Prop.~\ref{p:HeytingLam}, we thus get
  $t^{\A}=(t_0\{x:=\cc^{\A}\})^{\A}\cge\cc^{\A}=\top$,
  hence $t^{\A}=\top$.
  \smallbreak\noindent
  $(3)\limp(1)$.\quad From $(3)$ it is clear that $\cc^{\A}=\top$,
  hence $((\lnot a\to a)\to a)=\top$ for all $a\in\A$.
  Therefore $(\lnot\lnot a\to a)=((\lnot a\to\bot)\to a)
  \cge((\lnot a\to a)\to a)=\top$, hence $(\lnot\lnot a\to a)=\top$
  for all $a\in\A$, which means that $(\A,{\cle},{\to})$ is a Boolean
  algebra.
\end{proof}

\subsection{The problem of consistency}
\label{ss:Consistency}

Although it is possible to interpret all closed $\lambda$-terms
(and even the control operator~$\cc$) in any implicative structure
$(\A,{\cle},{\to})$, the counter-examples given in
Examples~\ref{ex:DummyImpStruct2} and~\ref{ex:DummyImpStruct3} should
make clear to the reader that not all implicative structures are well
suited to interpret intuitionistic or classical logic.
In what follows, we shall say that:
\begin{definition}[Consistency]\label{d:ConsImpStruct}
  An implicative structure $(\A,{\cle},{\to})$ is:
  \begin{itemize}
  \item\emph{intuitionistically consistent} when
    $t^{\A}\neq\bot$ for all closed $\lambda$-terms;
  \item\emph{classically consistent} when
    $t^{\A}\neq\bot$ for all closed $\lambda$-terms with~$\cc$.
  \end{itemize}
\end{definition}

We have seen that complete Heyting/Boolean algebras are particular
cases of implicative structures.
From Prop.~\ref{p:HeytingLam} and~\ref{p:BooleanLam}, it is clear
that:
\begin{proposition}[Consistency of complete Heyting/Boolean
    algebras]\label{p:ConsHeytBool}
  All non-degenerate complete Heyting (resp.\ Boolean) algebras
  are intuitionistically (resp.\ classically) consistent, as
  implicative structures.
\end{proposition}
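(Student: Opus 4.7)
The plan is to observe that consistency, in the sense of Definition~\ref{d:ConsImpStruct}, only requires that $t^{\A}\neq\bot$, and that both constituent results have already done the heavy lifting by establishing the much stronger equality $t^{\A}=\top$. So the proof is really a reading off of Prop.~\ref{p:HeytingLam} and Prop.~\ref{p:BooleanLam}, combined with the hypothesis of non-degeneracy to ensure that $\top\neq\bot$.

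For the intuitionistic half, let $(\A,{\cle},{\to})$ be a non-degenerated complete Heyting algebra, viewed as an implicative structure. Given any closed $\lambda$-term~$t$, Prop.~\ref{p:HeytingLam} yields $t^{\A}=\top$. Since non-degeneracy means $\top\neq\bot$ in~$\A$, we conclude $t^{\A}\neq\bot$, which is exactly intuitionistic consistency.

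For the classical half, assume $(\A,{\cle},{\to})$ is a non-degenerated complete Boolean algebra. Then in particular it is a complete Heyting algebra, and the implication $(1)\limp(3)$ of Prop.~\ref{p:BooleanLam} applies, giving $t^{\A}=\top$ for every closed $\lambda$-term possibly containing the constant~$\cc$. Again by non-degeneracy we have $\top\neq\bot$, hence $t^{\A}\neq\bot$, which is classical consistency.

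There is no real obstacle here: the only subtle point is to make sure the non-degeneracy hypothesis is used in exactly the right place, namely to pass from the equality $t^{\A}=\top$ (established earlier) to the disequality $t^{\A}\neq\bot$ required by Def.~\ref{d:ConsImpStruct}. Everything else is a direct citation of previous propositions, so the statement essentially packages Prop.~\ref{p:HeytingLam} and Prop.~\ref{p:BooleanLam} into the language of consistency.
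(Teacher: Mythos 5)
Your proof is correct and follows exactly the paper's own route: the paper derives this proposition directly from Prop.~\ref{p:HeytingLam} and Prop.~\ref{p:BooleanLam}, with non-degeneracy supplying $\top\neq\bot$ so that $t^{\A}=\top$ yields $t^{\A}\neq\bot$. Nothing to add.
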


\subsubsection{The case of intuitionistic
  realizability}\label{sss:CaseRealizJ}

Let us recall~\cite{Oos08} that:
\begin{definition}[Partial combinatory algebra]\label{d:PCA}
  A \emph{partial combinatory algebra} (or \emph{PCA}, for short)
  is a partial applicative structure $(P,{\,\cdot\,})$
  (Section~\ref{sss:ImpStructRealizJ}) with two elements
  $\mathtt{k},\mathtt{s}\in P$ satisfying the following properties
  for all $x,y,z\in P$:
  \begin{enumerate}[(99)]
  \item[(1)] $(\mathtt{k}\cdot x){\downarrow}$,
    $(\mathtt{s}\cdot x){\downarrow}$ and
    $((\mathtt{s}\cdot x)\cdot y){\downarrow}$;
  \item[(2)] $(\mathtt{k}\cdot x)\cdot y\simeq x$;
  \item[(3)] $((\mathtt{s}\cdot x)\cdot y)\cdot z\simeq
    (x\cdot z)\cdot(y\cdot z)$.
  \end{enumerate}
  (As usual, the symbol  $\simeq$ indicates that either both sides of
  the equation are undefined, or that they are both defined and
  equal.)
\end{definition}

Let $(P,{\,\cdot\,},\mathtt{k},\mathtt{s})$ be a PCA.
In Section~\ref{sss:ImpStructRealizJ}, we have seen (Fact~\ref{f:PAS})
that the underlying partial applicative structure $(P,{\,\cdot\,})$
induces a quasi-implicative structure $(\Pow(P),{\subseteq},{\to})$
based on Kleene's implication.
Since we are only interested here in full implicative structures (in
which $(\top\to\top)=\top$), we shall now assume that the
operation of application $({\cdot}):P^2\to P$ is total, so that the
above axioms on~$\mathtt{k},\mathtt{s}\in P$ simplify to:
$$(\mathtt{k}\cdot x)\cdot y=x\qquad\text{and}\qquad
((\mathtt{s}\cdot x)\cdot y)\cdot z=(x\cdot z)\cdot(y\cdot z)
\eqno(\text{for all}~x,y,z\in P)$$
The quadruple $(P,{\,\cdot\,},\mathtt{k},\mathtt{s})$ is then called a
(total) \emph{combinatory algebra} (\emph{CA}).

We want to show that the implicative structure
$\A=(\Pow(P),{\subseteq},{\to})$ induced by any (total) combinatory
algebra $(P,{\,\cdot\,},\mathtt{k},\mathtt{s})$ is intuitionistically
consistent, thanks to the presence of the combinators $\mathtt{k}$
and~$\mathtt{s}$.
For that, we call a \emph{closed combinatory term} any closed
$\lambda$-term that is either $\mathbf{K}~({=}~\Lam{xy}{x})$,
either $\mathbf{S}~({=}~\Lam{xyz}{xz(yz)})$, or the application
$t_1t_2$ of two closed combinatory terms~$t_1$ and~$t_2$.
Each closed combinatory term~$t$ is naturally interpreted in the
set~$P$ by an element $t^P\in P$ that is recursively defined by:
$$\mathbf{K}^P:=\mathtt{k},\qquad
\mathbf{S}^P:=\mathtt{s}\qquad\text{and}\qquad
\mathbf(t_1t_2)^P:=t_1^P\cdot t_2^P\,.$$
We then easily check that:
\begin{lemma}\label{l:IntPinIntA}
  For each closed combinatory term~$t$, we have:\quad
  $t^P\in t^{\A}$.
\end{lemma}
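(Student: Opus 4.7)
My plan is to prove the lemma by structural induction on the closed combinatory term~$t$, exploiting the explicit description of application in~$\A$ given by Corollary~\ref{c:TotalApp} and the closed-form expressions of $\mathbf{K}^{\A}$ and $\mathbf{S}^{\A}$ provided by Proposition~\ref{p:CombTypes}.

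For the inductive case $t = t_1 t_2$, I would simply invoke Corollary~\ref{c:TotalApp}, which states that application in $\A=(\Pow(P),{\subseteq},{\to})$ is given by $ab = \{x\cdot y : x\in a,\ y\in b\}$. By the induction hypothesis, $t_1^P \in t_1^{\A}$ and $t_2^P \in t_2^{\A}$; hence $t^P = t_1^P\cdot t_2^P$ lies in $t_1^{\A}\,t_2^{\A} = (t_1 t_2)^{\A} = t^{\A}$. This step is completely mechanical.

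The two base cases are where the combinatory axioms come in. For $t = \mathbf{K}$, Proposition~\ref{p:CombTypes} gives
\[
\mathbf{K}^{\A} ~=~ \bigmeet_{a,b\in\A}(a\to b\to a) ~=~ \bigcap_{a,b\subseteq P}\bigl\{z\in P : \forall x\,{\in}\,a,\ \forall y\,{\in}\,b,\ z\cdot x\cdot y\in a\bigr\},
\]
after unfolding Kleene's implication twice. To show $\mathtt{k}\in\mathbf{K}^{\A}$ it then suffices to observe that $\mathtt{k}\cdot x\cdot y = x \in a$ for any $a, b\subseteq P$, $x\in a$, $y\in b$, which is precisely axiom~(2) of Definition~\ref{d:PCA} (in its total form). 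The case $t = \mathbf{S}$ is analogous: unfolding Kleene's implication three times in $\mathbf{S}^{\A}=\bigmeet_{a,b,c}((a\to b\to c)\to(a\to b)\to a\to c)$ reduces $\mathtt{s}\in\mathbf{S}^{\A}$ to verifying that for all $x\in(a\to b\to c)$, $y\in(a\to b)$, $z\in a$, we have $\mathtt{s}\cdot x\cdot y\cdot z\in c$; and axiom~(3) of Definition~\ref{d:PCA} rewrites this into $(x\cdot z)\cdot(y\cdot z)$, which belongs to $c$ by two successive applications of the assumptions on $x$, $y$, $z$.

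None of the steps is genuinely difficult; the only thing that requires care is unfolding the meet representation of $\mathbf{K}^{\A}$ and $\mathbf{S}^{\A}$ through the explicit formula for Kleene's implication, so that the combinatory axioms of Definition~\ref{d:PCA}~(2)--(3) become directly applicable. The inductive case is essentially free once Corollary~\ref{c:TotalApp} is available.
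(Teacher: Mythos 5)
Your proof is correct and follows essentially the same route as the paper: induction on the combinatory term, using Proposition~\ref{p:CombTypes} together with the combinatory axioms for the base cases $\mathbf{K},\mathbf{S}$, and Corollary~\ref{c:TotalApp} for the application case. The only difference is that you spell out the unfolding of Kleene's implication that the paper leaves implicit.
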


\begin{proof*}
  By induction on~$t$, distinguishing the following cases:
  \begin{itemize}
  \item[$\bullet$] $t=\mathbf{K}$.\quad
    In this case, we have:
    $$\mathbf{K}^P~=~\mathtt{k}~\in~
    \bigcap_{\!\!\!\!\!a,b\in\Pow(P)\!\!\!\!\!}(a\to b\to a) 
    ~=~\mathbf{K}^{\A}\eqno(\text{by Prop.~\ref{p:CombTypes}})$$
  \item[$\bullet$] $t=\mathbf{S}$.\quad
    In this case, we have:
    $$\mathbf{S}^P~=~\mathtt{s}~\in~
    \bigcap_{\!\!\!\!\!\!\!a,b,c\in\Pow(P)\!\!\!\!\!\!\!}
    ((a\to b\to c)\to(a\to b)\to a\to c)
    ~=~\mathbf{S}^{\A}\eqno(\text{by Prop.~\ref{p:CombTypes}})$$
  \item[$\bullet$] $t=t_1t_2$, where $t_1$, $t_2$ are closed
    combinatory terms.\quad
    By IH, we have $t_1^P\in t_1^{\A}$ and $t_2^P\in t_2^{\A}$, hence
    $t^P=t_1^P\cdot t_2^P\in t_1^{\A}t_2^{\A}=t^{\A}$, by
    Coro.~\ref{c:TotalApp}.\hfill\usebox{\proofbox}
  \end{itemize}
\end{proof*}

From the above observation, we immediately get that:
\begin{proposition}[Consistency]
  The implicative structure $(\Pow(P),{\subseteq},{\to})$ induced
  by any (total) combinatory algebra
  $(P,{\,\cdot\,},\mathtt{k},\mathtt{s})$ is
  intuitionistically consistent.
\end{proposition}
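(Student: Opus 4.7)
The plan is to reduce the statement to Lemma~\ref{l:IntPinIntA} via bracket abstraction. First I would unwind the goal: in the structure $\A = (\Pow(P), {\subseteq}, {\to})$, the bottom element is $\bot = \bigmeet \Pow(P) = \bigcap \Pow(P) = \varnothing$, so intuitionistic consistency amounts to showing that $t^{\A}$ is non-empty for every closed $\lambda$-term $t$, i.e.\ that it contains at least one genuine element of $P$.

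To produce such an element, I would invoke the classical bracket abstraction, defining by structural induction a translation $t \mapsto t^{\sharp}$ that sends each $\lambda$-term $t$ to a combinatory term $t^{\sharp}$ built from $\mathbf{K}$, $\mathbf{S}$, application, and the free variables of $t$. Variables are left unchanged, application is translated pointwise, and $(\Lam{x}{u})^{\sharp} = [x]\,u^{\sharp}$, where the bracket abstraction $[x]\,M$ is defined by cases as $\mathbf{S}\,\mathbf{K}\,\mathbf{K}$ if $M = x$, as $\mathbf{S}\,([x]\,M_1)\,([x]\,M_2)$ if $M = M_1 M_2$, and as $\mathbf{K}\,M$ if $x \notin \FV(M)$. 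The key technical lemma to establish is that, viewed as pure $\lambda$-terms, $t^{\sharp} \redd_{\beta} t$. This follows by induction from the auxiliary fact that $[x]\,M \redd_{\beta} \Lam{x}{M}$ in each case, which in turn requires performing $\beta$-reductions inside $\lambda$-binders and chaining with the inductive hypothesis (for example, in the application case, so that $\mathbf{S}\,([x]\,M_1)\,([x]\,M_2)$ ultimately $\beta$-reduces to $\Lam{x}{M_1 M_2}$ after invoking the induction on $M_1$ and $M_2$).

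Once the lemma is in place, the conclusion assembles quickly. For a closed $\lambda$-term $t$, its translation $t^{\sharp}$ is a closed combinatory term in the sense of the paper, with $t^{\sharp} \redd_{\beta} t$ as pure $\lambda$-terms. Prop.~\ref{p:BetaEta}~(1) then yields $(t^{\sharp})^{\A} \cle t^{\A}$, which in the present structure reads $(t^{\sharp})^{\A} \subseteq t^{\A}$, and Lemma~\ref{l:IntPinIntA} supplies $(t^{\sharp})^{P} \in (t^{\sharp})^{\A}$. Hence $(t^{\sharp})^{P}$ witnesses that $t^{\A} \neq \varnothing = \bot$. The principal obstacle is the bracket-abstraction lemma itself, where the direction of reduction is genuinely delicate: the usual translation is only $\beta\eta$-equivalent to its source, and $\eta$-reduction would point the wrong way through Prop.~\ref{p:BetaEta}~(2), yielding no useful inclusion. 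One must therefore carefully verify that a pure sequence of $\beta$-reductions (under binders) suffices to turn $t^{\sharp}$ into $t$.
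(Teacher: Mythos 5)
Your proof is correct and takes essentially the same route as the paper, which likewise reduces the statement to Lemma~\ref{l:IntPinIntA} and Prop.~\ref{p:BetaEta}~(1) by invoking, as a standard fact of $\lambda$-calculus, the existence of a closed combinatory term $t_0$ with $t_0\redd_{\beta}t$ --- exactly what your bracket-abstraction lemma supplies. Your attention to the direction of reduction is well placed: the un-optimized abstraction algorithm you use (without the rule $[x](Mx)=M$) does yield $t^{\sharp}\redd_{\beta}t$ by pure $\beta$-steps under binders, so the argument goes through.
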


\begin{proof}
  Let $t$ be a closed $\lambda$-term.
  From the theory of the $\lambda$-calculus~\cite[Theorem
    7.3.10(i)]{Bar84}, there is a closed combinatory
  term~$t_0$ such that $t_0\redd_{\beta}t$.
  We have $t_0^P\in t_0^{\A}$ (by Lemma~\ref{l:IntPinIntA}) and
  $t_0^{\A}\subseteq t^{\A}$ (by Prop.~\ref{p:BetaEta}), hence
  $t^{\A}\neq\varnothing~({=}~\bot)$.
\end{proof}

(The implicative structure $(\Pow(P),{\subseteq},{\to})$ is
not classically consistent, in general.)

\subsubsection{The case of classical
  realizability}\label{sss:CaseRealizK}

\begin{definition}[Abstract Krivine Structure]\label{d:AKS}
  An \emph{abstract Krivine structure} (or \emph{AKS}) is any
  structure of the form\quad 
  $\mathcal{K}=(\Lambda,\Pi,@,{\,\cdot\,},\mathtt{k}_{\_},
  \mathtt{K},\mathtt{S},\cc,\mathrm{PL},\Bot)$,\quad where:
  \begin{itemize}
  \item $\Lambda$ and $\Pi$ are nonempty sets, whose elements are
    respectively called the \emph{$\mathcal{K}$-terms} and
    the \emph{$\mathcal{K}$-stacks} of the AKS $\mathcal{K}$;
  \item $@:\Lambda\times\Lambda\to\Lambda$ (`application')
    is an operation that associates to each pair of
    $\mathcal{K}$-terms $t,u\in\Lambda$ a $\mathcal{K}$-term
    $@(t,u)\in\Lambda$, usually written $tu$ (by juxtaposition);
  \item $({\,\cdot\,}):\Lambda\times\Pi\to\Pi$ (`push') is an
    operation that associates to each $\mathcal{K}$-term $t\in\Lambda$
    and to each $\mathcal{K}$-stack $\pi\in\Pi$ a $\mathcal{K}$-stack
    $t\cdot\pi\in\Pi$;
  \item $\mathtt{k}_{\_}:\Pi\to\Lambda$ is a function that turns each
    $\mathcal{K}$-stack $\pi\in\Pi$ into a $\mathcal{K}$-term
    $\texttt{k}_{\pi}\in\Pi$, called the \emph{continuation}
    associated to~$\pi$;
  \item $\mathtt{K},\mathtt{S},\cc\in\Lambda$ are three distinguished
    $\mathcal{K}$-terms;
  \item $\mathrm{PL}\subseteq\Lambda$ is a set of $\mathcal{K}$-terms,
    called the set of \emph{proof-like $\mathcal{K}$-terms}, that
    contains the three $\mathcal{K}$-terms $\mathtt{K}$, $\mathtt{S}$
    and~$\cc$, and that is closed under application;
  \item $\Bot\subseteq\Lambda\times\Pi$ is a binary relation between
    $\mathcal{K}$-terms and $\mathcal{K}$-stacks, called the
    \emph{pole} of the AKS $\mathcal{K}$, that fulfills the following
    axioms
    $$\begin{array}{r@{~~}c@{~~}l@{\quad}l@{\quad}r@{~~}c@{~~}l}
      t&\Bot&u\cdot\pi &\text{implies}& tu&\Bot&\pi \\
      t&\Bot&\pi &\text{implies}&
      \mathtt{K}&\Bot&t\cdot u\cdot\pi\\
      t&\Bot&v\cdot uv\cdot\pi &\text{implies}&
      \mathtt{S}&\Bot&t\cdot u\cdot v\cdot\pi\\
      t&\Bot&\mathtt{k}_{\pi}\cdot\pi &\text{implies}&
      \cc&\Bot&t\cdot\pi \\
      t&\Bot&\pi &\text{implies}&
      \mathtt{k}_{\pi}&\Bot&t\cdot\pi'\\
    \end{array}$$
    for all $t,u,v\in\Lambda$ and $\pi,\pi'\in\Pi$.
  \end{itemize}
\end{definition}

\begin{remarks}
  (1)~~The above closure conditions on the pole
  $\Bot\subseteq\Lambda\times\Pi$ actually express that it is closed
  by \emph{anti-evaluation}, in the sense of the evaluation rules
  $$\begin{array}{r@{~}c@{~}lcr@{~}c@{~}l}
    tu&\star&\pi &\succ& t&\star&u\cdot\pi \\
    \mathtt{K}&\star&t\cdot u\cdot\pi &\succ& t&\star&\pi \\
    \mathtt{S}&\star&t\cdot u\cdot v\cdot\pi
    &\succ& t&\star&v\cdot uv\cdot\pi \\
    \cc&\star&t\cdot\pi&\succ&t&\star&\mathtt{k}_{\pi}\cdot\pi\\
    \mathtt{k}_{\pi}&\star&t\cdot\pi'&\succ&t&\star&\pi\\
  \end{array}$$
  (writing $t\star\pi=(t,\pi)$ the \emph{process} formed by a
  $\mathcal{K}$-term $t$ and a $\mathcal{K}$-stack~$\pi$).\par
  (2)~~The notion of AKS---which was introduced by
  Streicher~\cite{Str13}---is very close to the notion of
  \emph{realizability structure} such as introduced by
  Krivine~\cite{Kri11}, the main difference being that the latter
  notion introduces more primitive combinators, essentially to mimic
  the evaluation strategy of the $\lambda_c$-calculus~\cite{Kri09}.
  However, in what follows, we shall not need such a level of
  granularity, so that we shall stick to Streicher's definition.
\end{remarks}

In Section~\ref{sss:ImpStructRealizK}, we have seen
(Fact~\ref{f:KrivineImpAlg}) that the quadruple
$(\Lambda,\Pi,{\,\cdot\,},\Bot)$ underlying any abstract Krivine
structure\quad
$\mathcal{K}=(\Lambda,\Pi,@,{\,\cdot\,},\mathtt{k}_{\_},
\mathtt{K},\mathtt{S},\cc,\mathrm{PL},\Bot)$\quad
induces an implicative structure $\A=(\A,{\cle},{\to})$ that is
defined by:
\begin{itemize}
\item[$\bullet$] $\A~:=~\Pow(\Pi)$;
\item[$\bullet$] $a\cle b~:\liff~a\supseteq b$\hfill
  (for all $a,b\in\A$)
\item[$\bullet$] $a\to b~:=~a^{\Bot}\cdot b~=~
  \{t\cdot\pi:t\in a^{\Bot},~\pi\in b\}$\hfill
  (for all $a,b\in\A$)
\end{itemize}
where
$a^{\Bot}:=\{t\in\Lambda:\forall\pi\in a,~(t,\pi)\in\Bot\}
\in\Pow(\Lambda)$ is the orthogonal of the set $a\in\Pow(\Pi)$
w.r.t.\ the pole $\Bot\subseteq\Lambda\times\Pi$.

Note that since the ordering of subtyping $a\cle b$ is defined here as
the relation of \emph{inverse inclusion} $a\supseteq b$ (between two
sets of stacks $a,b\in\Pow(\Pi)$), the smallest element of the induced
implicative structure $\A=(\A,{\cle},{\to})$ is given by $\bot=\Pi$.

\begin{remark}
  In~\cite{Str13}, Streicher only considers sets of stacks
  $a\in\Pow(\Pi)$ such that $a^{\Bot\Bot}=a$, thus working with a
  smaller set of `truth values' $\A'$ given by:
  $$\A'~:=~\Pow_{\Bot}(\Pi)~=~\{a\in\Pow(\Pi):a^{\Bot\Bot}=a\}\,.$$
  Technically, such a restriction requires to alter the interpretation
  of implication, by adding another step of bi-orthogonal closure:
  $$a\to'b~:=~\bigl(a^{\Bot}\cdot b\bigr)^{\Bot\Bot}
  \eqno(\text{for all}~a,b\in\A')$$
  However, the resulting triple $(\A',{\cle},{\to'})$ is in general
  not an implicative structure, since it does not fulfill axiom~(2) of
  Def.~\ref{d:ImpStruct}%
  \footnote{As a consequence, the constructions presented
    in~\cite{Str13,FFGMM17} only fulfill half of the adjunction of
    Prop.~\ref{p:PropApp}~(5), the missing implication being recovered
    only up to a step of $\eta$-expansion, by inserting the
    combinator~$\mathtt{E}=\Lam{xy}{xy}$ appropriately
    (see~\cite{Str13,FFGMM17} for the details).}.
  For this reason, we shall follow Krivine by considering all sets of
  stacks as truth values in what follows. 
\end{remark}

The basic intuition underlying Krivine's realizability is that each
set of $\mathcal{K}$-stacks $a\in\Pow(\Pi)$ represents the set of
\emph{counter-realizers} (or \emph{attackers}) of a particular
formula, whereas its orthogonal $a^{\Bot}\in\Pow(\Lambda)$ represents
the set of \emph{realizers} (or \emph{defenders}) of the same formula%
\footnote{This is why sets of stacks are sometimes called
  \emph{falsity values}, as in~\cite{Miq10,Miq11}.}.
In this setting, the realizability relation is naturally defined by
$$t\Vdash a~~:\liff~~t\in a^{\Bot}
\eqno(\text{for all}~t\in\Lambda,~a\in\A)$$

However, when the pole $\Bot\subseteq\Lambda\times\Pi$ is not empty,
we can observe that:
\begin{fact}
  Given a fixed $(t_0,\pi_0)\in\Bot$, we have\quad
  $\mathtt{k}_{\pi_0}t_0\Vdash a$\quad for all $a\in\A$.
\end{fact}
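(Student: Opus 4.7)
The plan is to unfold the definition of the realizability relation and then apply two of the pole's anti-evaluation axioms in sequence.

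First I would note that, by definition, $\mathtt{k}_{\pi_0} t_0 \Vdash a$ means $\mathtt{k}_{\pi_0} t_0 \in a^{\Bot}$, i.e.\ $(\mathtt{k}_{\pi_0} t_0, \pi) \in \Bot$ for every $\pi \in a$. Since $a \in \A = \Pow(\Pi)$ is arbitrary, it suffices to prove the stronger statement that $\mathtt{k}_{\pi_0} t_0 \Bot \pi$ holds for \emph{every} $\pi \in \Pi$. So the fact really amounts to saying that $\mathtt{k}_{\pi_0} t_0$ is universal: it is orthogonal to every stack.

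Fix an arbitrary $\pi \in \Pi$. By the application closure axiom ($t \Bot u \cdot \pi$ implies $tu \Bot \pi$), it is enough to show $\mathtt{k}_{\pi_0} \Bot t_0 \cdot \pi$. But the axiom governing continuations states that if $t \Bot \pi$ then $\mathtt{k}_{\pi} \Bot t \cdot \pi'$ for \emph{any} $\pi' \in \Pi$. Applying this to the hypothesis $t_0 \Bot \pi_0$, and specializing $\pi' := \pi$, gives exactly $\mathtt{k}_{\pi_0} \Bot t_0 \cdot \pi$, as required.

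There is no real obstacle here: the argument is a two-line direct application of the pole's closure conditions, and the content of the fact is simply that, as soon as $\Bot$ is nonempty, the term $\mathtt{k}_{\pi_0} t_0$ realizes everything. This is precisely the failure of consistency that motivates the later introduction of separators and of the subclass of \emph{proof-like} terms: universal quantification in the realizability interpretation will have to be restricted to proof-like realizers in order to rule out pathological witnesses such as $\mathtt{k}_{\pi_0} t_0$, which involves the non-proof-like continuation constant $\mathtt{k}_{\pi_0}$.
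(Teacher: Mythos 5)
Your proof is correct: the paper states this Fact without giving a proof, and your two-step argument---first the continuation axiom ($t_0\Bot\pi_0$ gives $\mathtt{k}_{\pi_0}\Bot t_0\cdot\pi$ for arbitrary $\pi$), then the application-closure axiom to conclude $\mathtt{k}_{\pi_0}t_0\Bot\pi$---is exactly the intended verification. Your closing remark about why this motivates the restriction to proof-like terms also matches the discussion immediately following the Fact in the paper.
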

so that \emph{any} element of the implicative structure is actually
realized by some $\mathcal{K}$-term (which does not even depend on the
considered element of~$\A$).
This is the reason why Krivine introduces an extra parameter, the set
of \emph{proof-like} ($\mathcal{K}$)-terms
$\mathrm{PL}\subseteq\Lambda$, whose elements are (by convention) the
realizers that are considered as valid certificates of the truth of a
formula.
(The terminology `proof-like' comes from the fact that all realizers
that come from actual proofs belong to the subset
$\mathrm{PL}\subseteq\Lambda$.)

Following Krivine, we say that a truth value $a\in\A$ is
\emph{realized} when it is realized by a proof-like term, that is:
$$\begin{array}{r@{\quad}r@{\quad}l}
  a~\text{realized}&:\liff&
  \exists t\in\mathrm{PL},~t\Vdash a\\
  &\liff&a^{\Bot}\cap\mathrm{PL}\neq\varnothing\\
\end{array}$$
More generally, we say that the abstract Krivine structure
$\mathcal{K}=(\Lambda,\Pi,\ldots,\mathrm{PL},\Bot)$ is
\emph{consistent} when the smallest truth value $\bot=\Pi$ is not
realized, that is:
$$\mathcal{K}~\text{consistent}~~:\liff~~
\Pi^{\Bot}\cap\textrm{PL}=\varnothing\,.$$

We now need to check that Krivine's notion of consistency is
consistent with the one that comes with implicative structures
(Def.~\ref{d:ConsImpStruct}).
For that, we call a \emph{closed classical combinatory term} any
closed $\lambda$-term with~$\cc$ that is either
$\mathbf{K}~({=}~\Lam{xy}{x})$, either
$\mathbf{S}~({=}~\Lam{xyz}{xz(yz)})$, either the constant~$\cc$,
or the application $t_1t_2$ of two closed classical combinatory
terms~$t_1$ and~$t_2$.
Each closed classical combinatory term~$t$ is naturally interpreted by
an element $t^{\Lambda}\in\Lambda$ that is recursively defined by:
$$\mathbf{K}^{\Lambda}:=\mathtt{K},\qquad
\mathbf{S}^{\Lambda}:=\mathtt{S},\qquad
\cc^{\Lambda}:=\cc\qquad\text{and}\qquad
\mathbf(t_1t_2)^{\Lambda}:=t_1^{\Lambda}t_2^{\Lambda}\,.$$
From the closure properties of the set $\mathrm{PL}$ of proof-like
terms, it is clear that $t^{\Lambda}\in\mathrm{PL}$ for each closed
classical combinatory term~$t$.
Moreover:
\begin{lemma}\label{l:IntPrealIntA}
  For each closed classical combinatory term~$t$, we have:\quad
  $t^{\Lambda}\Vdash t^{\A}$.
\end{lemma}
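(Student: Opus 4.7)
The plan is to prove the lemma by induction on the structure of the closed classical combinatory term~$t$, with the three atoms $\mathbf{K}$, $\mathbf{S}$, $\cc$ as base cases and application as the inductive case. The key observation is that Prop.~\ref{p:CombTypes} (together with the analogous formula defining $\cc^{\A}$) computes $\mathbf{K}^{\A}$, $\mathbf{S}^{\A}$ and $\cc^{\A}$ as explicit meets of implicational types, and since in the Krivine implicative structure meets are unions of sets of stacks, showing $t^{\Lambda} \Vdash t^{\A}$ reduces to showing, for each stack $\sigma$ lying in one of the constituent arrow-types, that the corresponding $\mathcal{K}$-term is orthogonal to~$\sigma$. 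This is exactly what the AKS closure axioms on $\Bot$ are designed to guarantee.

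For $t = \mathbf{K}$: a stack in $\mathbf{K}^{\A} = \bigmeet_{a,b\in\A}(a\to b\to a)$ unfolds, by the definition $a\to b = a^{\Bot}\cdot b$, to a stack of the form $u_1\cdot u_2\cdot\pi$ with $u_1\in a^{\Bot}$, $u_2\in b^{\Bot}$, $\pi\in a$; then $u_1\Bot\pi$, and the $\mathtt{K}$-axiom gives $\mathtt{K}\Bot u_1\cdot u_2\cdot\pi$. The case of $\mathbf{S}$ is similar but longer: a stack in $\mathbf{S}^{\A}$ unfolds to $T\cdot U\cdot v\cdot\pi$ with $T\Vdash a\to b\to c$, $U\Vdash a\to b$, $v\in a^{\Bot}$, $\pi\in c$; one first uses the first AKS axiom ($U\Bot v\cdot\sigma$ implies $Uv\Bot\sigma$) to deduce $Uv\Vdash b$, then concludes $T\Bot v\cdot Uv\cdot\pi$ since $v\cdot Uv\cdot\pi\in a\to b\to c$, and finally invokes the $\mathtt{S}$-axiom.

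The case $t = \cc$ is the most delicate and is the step I expect to be the main obstacle, because it requires the \emph{continuation} axiom in an essential way. A stack in $\cc^{\A} = \bigmeet_{a,b}(((a\to b)\to a)\to a)$ unfolds to $T\cdot\pi$ with $T\Vdash(a\to b)\to a$ and $\pi\in a$. By the $\cc$-axiom, it suffices to show $T\Bot \mathtt{k}_{\pi}\cdot\pi$, and for this it suffices to show $\mathtt{k}_{\pi}\Vdash a\to b$, i.e.\ that $\mathtt{k}_{\pi}\Bot u\cdot\sigma$ for every $u\in a^{\Bot}$ and every $\sigma\in b$. But since $u\in a^{\Bot}$ and $\pi\in a$, we have $u\Bot\pi$, so the continuation axiom ($t\Bot\pi$ implies $\mathtt{k}_{\pi}\Bot t\cdot\pi'$ for any $\pi'$) closes the case. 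Note how the set $b$ plays no role whatsoever in this argument, which is exactly the semantic content of the fact that $b$ is bound by a vacuous universal quantifier in Peirce's law.

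For the inductive step $t = t_1t_2$, I rely on Prop.~\ref{p:PropApp}~(3), which yields $t_1^{\A} \cle (t_2^{\A}\to t_1^{\A}t_2^{\A})$, i.e.\ $t_2^{\A}\to t_1^{\A}t_2^{\A}\subseteq t_1^{\A}$ in the Krivine structure. Given $\pi \in (t_1t_2)^{\A} = t_1^{\A}t_2^{\A}$, the induction hypothesis gives $t_2^{\Lambda}\in(t_2^{\A})^{\Bot}$, so $t_2^{\Lambda}\cdot\pi\in t_2^{\A}\to t_1^{\A}t_2^{\A}\subseteq t_1^{\A}$; combined with the induction hypothesis $t_1^{\Lambda}\Vdash t_1^{\A}$, this yields $t_1^{\Lambda}\Bot t_2^{\Lambda}\cdot\pi$, and the first AKS axiom then gives $t_1^{\Lambda}t_2^{\Lambda}\Bot\pi$, as required.
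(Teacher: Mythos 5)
Your proof is correct and follows essentially the same route as the paper: induction on the combinatory term, using Prop.~\ref{p:CombTypes} to identify $\mathbf{K}^{\A}$, $\mathbf{S}^{\A}$, $\cc^{\A}$ with their meet formulas, and Prop.~\ref{p:PropApp}~(3) plus the first pole axiom (modus ponens) for the application case. The only difference is that where the paper simply cites standard classical realizability results for $\mathtt{K}\Vdash\mathbf{K}^{\A}$, $\mathtt{S}\Vdash\mathbf{S}^{\A}$ and $\cc\Vdash\cc^{\A}$, you verify these facts directly from the anti-evaluation axioms on $\Bot$, which is a correct filling-in of detail rather than a different argument.
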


\begin{proof*}
  By induction on~$t$, distinguishing the following cases:
  \begin{itemize}
  \item[$\bullet$] $t=\mathbf{K},\mathbf{S},\cc$.\quad
    In this case, combining standard results of classical
    realizability~\cite{Kri11} with the properties of implicative
    structures, we get:
    $$\begin{array}{@{}r@{~~}c@{~~}l@{~~}c@{~~}l@{}}
      \mathbf{K}^{\Lambda}&=&\mathtt{K}&\Vdash&\ds
      \bigmeet_{\!\!\!\!a,b\in\A\!\!\!\!}(a\to b\to a)
      ~=~\mathbf{K}^{\A}\\
      \mathbf{S}^{\Lambda}&=&\mathtt{S}&\Vdash&\ds
      \bigmeet_{\!\!\!\!\!\!a,b,c\in\A\!\!\!\!\!\!}
      ((a\to b\to c)\to(a\to b)\to a\to c)
      ~=~\mathbf{S}^{\A}\\
      \cc^{\Lambda}&=&\cc&\Vdash&\ds
      \bigmeet_{\!\!\!\!a,b\in\A\!\!\!\!}(((a\to b)\to a)\to a)
      ~=~\cc^{\A}\\
    \end{array}\eqno\begin{tabular}{r@{}}
    (by Prop.~\ref{p:CombTypes})\\[12.5pt]
    (by Prop.~\ref{p:CombTypes})\\[12.5pt]
    (by definition)\\[12pt]
    \end{tabular}$$
  \item[$\bullet$] $t=t_1t_2$, where $t_1$, $t_2$ are closed
    classical combinatory terms.\quad
    In this case, we have $t_1^{\Lambda}\Vdash t_1^{\A}$ and
    $t_2^{\Lambda}\Vdash t_2^{\A}$ by IH.
    And since $t_1^{\A}\cle(t_2^{\A}\to t_1^{\A}t_2^{\A})$ (from
    Prop.~\ref{p:PropApp}~(3)), we also have 
    $t_1^{\Lambda}\Vdash t_2^{\A}\to t_1^{\A}t_2^{\A}$ (by subtyping),
    so that we get $t^{\Lambda}=t_1^{\Lambda}t_2^{\Lambda}\Vdash
    t_1^{\A}t_2^{\A}=t^{\A}$ (by modus ponens).\hfill
    \usebox{\proofbox}
  \end{itemize}
\end{proof*}

We can now conclude:
\begin{proposition}
  If an abstract Krivine structure
  $\mathcal{K}=(\Lambda,\Pi,\ldots,\mathrm{PL},\Bot)$ is consistent
  (in the sense that $\Pi^{\Bot}\cap\mathrm{PL}=\varnothing$), then
  the induced implicative structure $\A=(\Pow(\Pi),{\supseteq},{\to})$
  is classically consistent (in the sense of
  Def.~\ref{d:ConsImpStruct}).
\end{proposition}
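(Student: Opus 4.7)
The plan is to mirror, step by step, the argument used for the intuitionistic case at the end of Section~\ref{sss:CaseRealizJ}, replacing the ambient combinatory terms $\mathbf{K},\mathbf{S}$ by the classical combinatory terms $\mathbf{K},\mathbf{S},\cc$, Lemma~\ref{l:IntPinIntA} by Lemma~\ref{l:IntPrealIntA}, and membership $t^P\in t^{\A}$ by the realizability relation $t^{\Lambda}\Vdash t^{\A}$. The key observation is that in the implicative structure $\A=(\Pow(\Pi),{\supseteq},{\to})$ induced by~$\mathcal{K}$, the bottom element is $\bot=\Pi$; so proving classical consistency amounts to exhibiting, for every closed $\lambda_{\cc}$-term~$t$, some $\mathcal{K}$-stack that does \emph{not} belong to $t^{\A}$.

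Concretely, given an arbitrary closed $\lambda$-term~$t$ with~$\cc$, I first invoke the classical analogue of combinatory completeness: by bracket abstraction over the basis $\{\mathbf{K},\mathbf{S}\}$ (treating~$\cc$ as an inert constant), there is a closed classical combinatory term~$t_0$ (in the sense of Section~\ref{sss:CaseRealizK}) such that $t_0\redd_{\beta}t$. Then Lemma~\ref{l:IntPrealIntA} yields $t_0^{\Lambda}\Vdash t_0^{\A}$, i.e.\ $t_0^{\Lambda}\in(t_0^{\A})^{\Bot}$; and because $\mathrm{PL}$ contains $\mathtt{K},\mathtt{S},\cc$ and is closed under application, we have $t_0^{\Lambda}\in\mathrm{PL}$. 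Now by Prop.~\ref{p:BetaEta}~(1), $t_0^{\A}\cle t^{\A}$, which in this structure unfolds as $t^{\A}\subseteq t_0^{\A}$; taking orthogonals reverses the inclusion, so $(t_0^{\A})^{\Bot}\subseteq(t^{\A})^{\Bot}$, and therefore $t_0^{\Lambda}\in(t^{\A})^{\Bot}\cap\mathrm{PL}$.

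To close the argument, I reason by contradiction: if $t^{\A}=\bot=\Pi$, then $(t^{\A})^{\Bot}=\Pi^{\Bot}$, so we would obtain $\Pi^{\Bot}\cap\mathrm{PL}\neq\varnothing$, directly contradicting the assumed consistency of the AKS~$\mathcal{K}$. Hence $t^{\A}\neq\bot$, which by Def.~\ref{d:ConsImpStruct} is exactly classical consistency of the induced implicative structure.

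The only genuinely non-routine step is the translation of $t$ into a closed classical combinatory term~$t_0$ with $t_0\redd_{\beta}t$; but this is a standard application of bracket abstraction in $\{\mathbf{K},\mathbf{S}\}$-combinatory logic, extended trivially by allowing~$\cc$ as a free atomic constant, so it presents no real obstacle. Everything else is a direct bookkeeping exercise, with the care that the ordering $\cle$ on $\Pow(\Pi)$ is reverse inclusion and that~$\bot$ is therefore the \emph{whole} set of stacks~$\Pi$.
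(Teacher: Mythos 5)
Your proposal is correct and follows essentially the same route as the paper's own proof: translate $t$ into a closed classical combinatory term $t_0$ with $t_0\redd_{\beta}t$, apply Lemma~\ref{l:IntPrealIntA} to get $t_0^{\Lambda}\Vdash t_0^{\A}$ with $t_0^{\Lambda}\in\mathrm{PL}$, use Prop.~\ref{p:BetaEta} and the antitonicity of orthogonality (the paper calls this ``subtyping'') to conclude $t_0^{\Lambda}\in(t^{\A})^{\Bot}\cap\mathrm{PL}$, and hence $t^{\A}\neq\bot=\Pi$. Your only cosmetic deviation is phrasing the last step as a contradiction rather than directly, which changes nothing of substance.
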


\begin{proof}
  Let us assume that $\Pi^{\Bot}\cap\mathrm{PL}=\varnothing$.
  Given a closed $\lambda$-term~$t$ with~$\cc$, there exists
  a closed classical combinatory term~$t_0$ such that
  $t_0\redd_{\beta}t$. 
  So that we have $t_0^{\Lambda}\Vdash t_0^{\A}$ (by
  Lemma~\ref{l:IntPrealIntA}) and $t_0^{\A}\cle t^{\A}$ (by
  Prop.~\ref{p:BetaEta}), hence $t_0^{\Lambda}\Vdash t^{\A}$ (by
  subtyping).
  But this implies that $t^{\A}\neq\bot~({=}~\Pi)$,
  since
  $t^{\Lambda}\in(t^{\A})^{\Bot}\cap\mathrm{PL}\neq\varnothing$.
\end{proof}

Note that the converse implication does not hold in general.
The reason is that the criterion of consistency for the considered
abstract Krivine structure depends both on the pole~$\Bot$ and on the
conventional set $\mathrm{PL}$ of proof-like terms.
(In particular, it should be clear to the reader that the larger the
set $\mathrm{PL}$, the stronger the corresponding criterion of
consistency.) 
On the other hand, the construction of the induced implicative
structure $\A=(\Pow(\Pi),{\supseteq},{\to})$ does not depend on the
set $\mathrm{PL}$, so that the criterion of classical consistency of
Def.~\ref{d:ConsImpStruct}---which does not depend on~$\mathrm{PL}$
either---can only be regarded as a minimal criterion of consistency.

In order to reflect more faithfully Krivine's notion of consistency
at the level of the induced implicative structure, it is now time to
introduce the last ingredient of implicative algebras: the notion of
\emph{separator}.

\section{Separation}
\label{s:Separation}

\subsection{Separators and implicative algebras}

Let $\A=(\A,{\cle},{\to})$ be an implicative structure.

\begin{definition}[Separator]\label{d:Separator}
  We call a \emph{separator} of~$\A$ any subset $S\subseteq\A$
  that fulfills the following conditions for all $a,b\in\A$:
  \begin{enumerate}[(99)]
  \item[(1)] If $a\in S$ and $a\cle b$, then $b\in S$\hfill
    ($S$ is upwards closed)
  \item[(2)] $\mathbf{K}^{\A}=(\Lam{xy}{x})^{\A}\in S$ and
    $\mathbf{S}^{\A}=(\Lam{xyz}{xz(yz)})^{\A}\in S$\hfill
    ($S$ contains $\mathbf{K}$ and $\mathbf{S}$)
  \item[(3)] If $(a\to b)\in S$ and $a\in S$, then $b\in S$\hfill
    ($S$ is closed under modus ponens)
  \end{enumerate}
  A separator $S\subseteq\A$ is said to be:
  \begin{itemize}
  \item\emph{consistent} when $\bot\notin S$;
  \item\emph{classical} when $\cc^{\A}\in S$.
  \end{itemize}
\end{definition}

\begin{remark}
  In the presence of condition~(1) (upwards closure), condition~(3)
  (closure under modus ponens) is actually equivalent to:
  \begin{enumerate}[(99)]
  \item[$(3')$] If $a,b\in S$, then $ab\in S$\hfill
    (closure under application)
  \end{enumerate}
\end{remark}

\begin{proof*}
  Let $S\subseteq\A$ be an upwards closed subset of~$\A$.
  \begin{itemize}
  \item[$\bullet$] $(3)\limp(3')$\quad
    Suppose that $a,b\in S$.
    Since $a\cle(b\to ab)$ (from Prop.~\ref{p:PropApp}~(3)), we get
    $(b\to ab)\in S$ by upwards closure, hence
    $ab\in S$ by~(3).
  \item[$\bullet$] $(3')\limp(3)$\quad
    Suppose that $(a\to b),a\in S$.
    By $(3')$ we have $(a\to b)a\in S$, and since
    $(a\to b)a\cle b$ (from Prop.~\ref{p:PropApp}~(2)), we get
    $b\in S$ by upwards closure.\hfill
    \usebox{\proofbox}
  \end{itemize}
\end{proof*}

Intuitively, each separator $S\subseteq\A$ defines a particular
`criterion of truth' within the implicative structure
$\A=(\A,{\cle},{\to})$.
In implicative structures, separators play the very same role as
filters in Heyting algebras, and it is easy to check that:
\begin{proposition}[Separators in a complete Heyting
    algebra]\label{p:HeytingSepar}
  If $\A=(\A,{\cle},{\to})$ is a complete Heyting algebra, then
  a subset $S\subseteq\A$ is a separator (in the sense of implicative
  structures) if and only if~$S$ is a filter (in the sense
  of Heyting algebras).
\end{proposition}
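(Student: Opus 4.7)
The plan is to exploit the two structural collapses that occur in a complete Heyting algebra: by Corollary~\ref{c:HeytingApp}, application coincides with meet, i.e.\ $ab=a\meet b$; and by Proposition~\ref{p:HeytingLam}, every closed $\lambda$-term is interpreted by $\top$, so in particular $\mathbf{K}^{\A}=\mathbf{S}^{\A}=\top$. Once these two identifications are in hand, the equivalence between separators and filters becomes an almost mechanical translation of definitions. I would use the reformulation of axiom~(3) as closure under application (noted in the remark following Def.~\ref{d:Separator}), since this is the form that will directly match closure under binary meets.

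For the direction \emph{filter $\Rightarrow$ separator}, the proof splits across the three axioms. Axiom~(1) is immediate since upward closure is part of the definition of a filter. For axiom~(2), I would note that any filter contains $\top$, and since $\mathbf{K}^{\A}=\mathbf{S}^{\A}=\top$ by Prop.~\ref{p:HeytingLam}, both combinators lie in $S$. For axiom~(3), I would use the variant $(3')$: given $a,b\in S$, the filter is closed under binary meet, so $a\meet b\in S$; but $ab=a\meet b$ by Coro.~\ref{c:HeytingApp}, whence $ab\in S$. (Alternatively, one may directly derive~(3) via Heyting's adjunction $(a\to b)\meet a\cle b$ combined with upward closure.)

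For the direction \emph{separator $\Rightarrow$ filter}, again axiom~(1) gives upward closure for free. The element $\top$ belongs to $S$ because $\top=\mathbf{K}^{\A}\in S$. Closure under binary meets follows from~$(3')$ applied to $a,b\in S$: we get $ab\in S$, which by Coro.~\ref{c:HeytingApp} is precisely $a\meet b\in S$.

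There is no real obstacle here: the only subtle point is remembering to invoke Prop.~\ref{p:HeytingLam} (not just the bare definitions) to identify $\mathbf{K}^{\A}$ and $\mathbf{S}^{\A}$ with $\top$, and to use Coro.~\ref{c:HeytingApp} to convert application to meet. Both directions are then one-liners once these are in place, so the entire proof can be written very concisely, essentially as a pair of bullet lists going through the three axioms each way.
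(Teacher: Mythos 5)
Your proof is correct and follows essentially the same route as the paper's: both rely on Coro.~\ref{c:HeytingApp} to identify application with meet, on Prop.~\ref{p:HeytingLam} to identify $\mathbf{K}^{\A}$ and $\mathbf{S}^{\A}$ with $\top$, and on the reformulation of axiom~(3) as closure under application, after which the separator axioms visibly become the filter axioms. Nothing is missing.
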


\begin{proof}
  Indeed, when the implicative structure $\A=(\A,{\cle},{\to})$ is a
  complete Heyting algebra, the conditions (1), (2) and $(3')$
  defining separators simplify to:
  \begin{enumerate}[(99')]
  \item[(1)] If $a\in S$ and $a\cle b$, then $b\in S$\hfill
    (upwards closure)
  \item[(2)] $\top~({=}~\mathbf{K}^{\A}\!=\mathbf{S}^{\A})\in S$\hfill
    (from Prop.~\ref{p:HeytingLam})
  \item[$(3')$] If $a,b\in S$, then $a\meet b~({=}~ab)\in S$\hfill
    (from Coro.~\ref{c:HeytingApp})
  \end{enumerate}
  which is precisely the definition of the notion of a filter.
\end{proof}

However, separators are in general \emph{not} filters, since they are
not closed under binary meets (i.e.\ $a\in S$ and $b\in S$ do not
necessarily imply that $a\meet b\in S$).
Actually, one of the key ideas we shall develop in the rest of this
paper is that the difference between (intuitionistic or classical)
realizability and forcing (in the sense of Kripke or Cohen) lies
precisely in the difference between separators and filters.

\begin{proposition}\label{p:SeparLam}
  If $S\subseteq\A$ is a separator, then for all
  $\lambda$-terms~$t$ with free variables $x_1,\ldots,x_n$ and for all
  parameters $a_1,\ldots,a_n\in S$, we have:
  $$(t\{x_1:=a_1,\ldots,x_n:=a_n\})^{\A}\in S\,.$$
  In particular, for all closed $\lambda$-terms $t$, we have
  $t^{\A}\in S$.
\end{proposition}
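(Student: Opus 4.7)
We argue by structural induction on $t$, abbreviating $\sigma = \{x_1{:=}a_1, \ldots, x_n{:=}a_n\}$. The variable case is immediate: $(x_i\sigma)^\A = a_i \in S$. For an application $t = t_1 t_2$, the inductive hypothesis yields $(t_1\sigma)^\A, (t_2\sigma)^\A \in S$, and their $\A$-application $(t_1\sigma)^\A\,(t_2\sigma)^\A = (t\sigma)^\A$ again belongs to $S$ by the equivalent formulation $(3')$ of modus ponens noted in the remark following Def.~\ref{d:Separator}.

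The real obstacle is the abstraction case $t = \lambda y.\, t_0$, where $(t\sigma)^\A = \bigmeet_{b \in \A}(b \to (t_0\sigma\{y:=b\})^\A)$ is a meet indexed over \emph{all} of $\A$, whereas the inductive hypothesis for $t_0$ only controls the summands with $b \in S$. To bypass this, we construct a ``combinator skeleton'' of $t$ whose interpretation is dominated by $(t\sigma)^\A$ and manifestly lies in $S$. Concretely, we establish in parallel that every $\lambda$-term $u$ admits a companion $\lambda$-term $C(u)$ built only from $\mathbf{K}$, $\mathbf{S}$, variables, and application (with no $\lambda$-binder) such that $(C(u)\tau)^\A \cle (u\tau)^\A$ for every substitution $\tau$ mapping $\FV(u)$ into $\A$. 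The variable and application cases are routine (the latter using Prop.~\ref{p:PropApp}(1)); for $u = \lambda y.\, u_0$ one sets $C(u) := [y]\,C(u_0)$, using the standard bracket-abstraction clauses $[y]y := \mathbf{S}\mathbf{K}\mathbf{K}$, $[y]z := \mathbf{K}\,z$ for $z \neq y$, and $[y](v_1 v_2) := \mathbf{S}\,([y]v_1)\,([y]v_2)$.

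The correctness of this clause rests on the well-known identity $([y]v)\cdot y \redd_\beta v$ together with the fact $y \notin \FV([y]v)$: the term $\lambda y.\,([y]v)\cdot y$ both $\beta$-reduces to $\lambda y.\,v$ and $\eta$-reduces to $[y]v$, so Proposition~\ref{p:BetaEta} yields $([y]v)^\A \cle (\lambda y.\,([y]v)\cdot y)^\A \cle (\lambda y.\,v)^\A$. Applied pointwise to $v = C(u_0)$ and combined with the inductive pointwise bound for $u_0$ via monotonicity of $\slambda$ (Prop.~\ref{p:PropLam}(1)), this gives $(C(u)\tau)^\A \cle (u\tau)^\A$ as required. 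Instantiating the construction at $t$, the term $C(t)\sigma$ is a finite application tree whose leaves are among $\mathbf{K}$, $\mathbf{S}$, $a_1, \ldots, a_n$, so its interpretation lies in $S$ by clauses~(2) and~$(3')$ of Def.~\ref{d:Separator}. Upward closure~(1) then lifts this to $(t\sigma)^\A \in S$ along $(C(t)\sigma)^\A \cle (t\sigma)^\A$, completing the induction; the final assertion for closed terms is the case $n = 0$.
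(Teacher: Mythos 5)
Your proof is correct and follows essentially the same route as the paper's: the paper invokes combinatory completeness --- a closed $\mathbf{K},\mathbf{S}$-term $t_0$ with $t_0\redd_{\beta}\Lam{x_1\cdots x_n}{t}$ --- together with Prop.~\ref{p:BetaEta}, the separator clauses (2) and $(3')$, and upwards closure, which are exactly the ingredients you use. The only difference is that you inline the combinatory-completeness step, building the bracket abstraction explicitly and checking the inequality $(C(t)\{x_1:=a_1,\ldots,x_n:=a_n\})^{\A}\cle(t\{x_1:=a_1,\ldots,x_n:=a_n\})^{\A}$ by a pointwise induction, whereas the paper simply cites this as a standard fact of the $\lambda$-calculus.
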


\begin{proof}
  Let $t$ be a $\lambda$-term with free variables $x_1,\ldots,x_n$,
  and let $a_1,\ldots,a_n$ be parameters taken in~$S$.
  From the theory of the $\lambda$-calculus, there exists a closed
  combinatory term~$t_0$ such that
  $t_0\redd_{\beta}\Lam{x_1\cdots x_n}{t}$.
  It is clear that $t_0^{\A}a_1\cdots a_n\in S$
  from the conditions (2) and $(3')$ on the separator~$S$.
  Moreover, by Prop.~\ref{p:BetaEta} we have
  $$t_0^{\A}a_1\cdots a_n
  ~\cle~(\Lam{x_1\cdots x_n}{t})^{\A}a_1\cdots a_n
  ~\cle~(t\{x_1:=a_1,\ldots,x_n:=a_n\})^{\A}\,,$$
  so that we get $(t\{x_1:=a_1,\ldots,x_n:=a_n\})^{\A}\in S$,
  by upwards closure.
\end{proof}

\begin{definition}[Implicative algebra]\label{d:ImpAlg}
  We call an \emph{implicative algebra} any implicative structure
  $(\A,{\cle},{\to})$ equipped with a separator $S\subseteq\A$.
  An implicative algebra $(\A,{\cle},{\to},S)$ is said to be
  \emph{consistent} (resp.\ \emph{classical}) when the underlying
  separator $S\subseteq\A$ is \emph{consistent}
  (resp.\ \emph{classical}).
\end{definition}

\subsection{Examples}

\subsubsection{Complete Heyting algebras}
\label{sss:ImpAlgHeyting}

We have seen that a complete Heyting algebra $(H,{\cle})$ can be
seen as an implicative structure $(H,{\cle},{\to})$ where implication
is defined by:
$$a\to b~:=~\max\{c\in H:(c\meet a)\cle b\}
\eqno(\text{for all}~a,b\in H)$$
The complete Heyting algebra $(H,{\cle})$ can also be seen as an
implicative \emph{algebra}, by endowing it with the trivial separator
$S=\{\top\}$ (i.e.\ the smallest filter of~$H$).

\subsubsection{Implicative algebras of intuitionistic realizability}
\label{sss:ImpAlgRealizJ}

Let $(P,{\,\cdot\,},\mathtt{k},\mathtt{s})$ be a (total) combinatory
algebra.
In section~\ref{sss:CaseRealizJ}, we have seen that
such a structure induces an implicative structure
$(\Pow(P),{\subseteq}\,{\to})$ whose implication is defined by:
$$a\to b~:=~\{z\in P:\forall x\in a,~z\cdot x\in b\}
\eqno(\text{for all}~a,b\in\Pow(P))$$
The above implicative structure is naturally turned into an
implicative algebra by endowing it with the separator
$S=\Pow(P)\setminus\{\varnothing\}$ formed by all truth values that
contain at least a realizer.
In this case, the separator $S=\Pow(P)\setminus\{\varnothing\}$ is not
only consistent (in the sense of Def.~\ref{d:Separator}), but it is
also a \emph{maximal separator} (see Section~\ref{ss:SeparMax}
below).

\begin{remark}
  In an arbitrary implicative structure $(\A,{\cle},{\to})$, we
  can observe that the subset $\A\setminus\{\bot\}\subset\A$ is in
  general \emph{not} a separator.
  (Counter-example: consider the Boolean algebra with 4~elements.)
  The property that $\A\setminus\{\bot\}$ is a separator is thus a
  specific property of the implicative structures induced by (total)
  combinatory algebras, and the existence of such a separator that is
  trivially consistent explains why there is no need to introduce a
  notion of proof-like term in intuitionistic realizability.
\end{remark}

\subsubsection{Implicative algebras of classical realizability}
\label{sss:ImpAlgRealizK}

Let
$$\mathcal{K}~=~(\Lambda,\Pi,@,{\,\cdot\,},\mathtt{k}_{\_},
\mathtt{K},\mathtt{S},\cc,\mathrm{PL},\Bot)$$
be an abstract Krivine structure (Def.~\ref{d:AKS}
p.~\pageref{d:AKS}).
We have seen (Section~\ref{sss:CaseRealizK}) that such a structure
induces an implicative structure $(\A,{\cle},{\to})$ where:
\begin{itemize}
\item[$\bullet$] $\A~:=~\Pow(\Pi)$;
\item[$\bullet$] $a\cle b~:\liff~a\supseteq b$\hfill
  (for all $a,b\in\A$)
\item[$\bullet$] $a\to b~:=~a^{\Bot}\cdot b~=~
  \{t\cdot\pi:t\in a^{\Bot},~\pi\in b\}$\hfill(for all $a,b\in\A$)
\end{itemize}
Using the set $\mathrm{PL}$ of proof-like terms, we can now turn the
former implicative structure into an implicative algebra
$(\A,{\cle},{\to},S)$, letting:
$$S~:=~\{a\in\A:a^{\Bot}\cap\mathrm{PL}\neq\varnothing\}\,.$$

\begin{proposition}
  The subset
  $S=\{a\in\A:a^{\Bot}\cap\mathrm{PL}\neq\varnothing\}\subseteq\A$
  is a classical separator of the implicative structure
  $(\A,{\cle},{\to})$.
\end{proposition}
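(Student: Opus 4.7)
The plan is to check the three axioms of Definition~\ref{d:Separator} plus the classicality condition, unfolding everything using the concrete description of $\A$, $\cle$ and $\to$ coming from the AKS, together with the closure properties of the pole $\Bot$ and of the set $\mathrm{PL}$ of proof-like terms.

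First I would dispatch upwards closure. If $a\in S$ and $a\cle b$, the ordering $a\cle b$ means $a\supseteq b$ as subsets of $\Pi$, so $a^{\Bot}\subseteq b^{\Bot}$ by an immediate calculation; hence any witness $t\in a^{\Bot}\cap\mathrm{PL}$ is also a witness that $b\in S$. Next I would observe that membership of $\mathbf{K}^{\A}$, $\mathbf{S}^{\A}$ and $\cc^{\A}$ in $S$ is exactly what Lemma~\ref{l:IntPrealIntA} gives: the distinguished $\mathcal{K}$-terms $\mathtt{K},\mathtt{S},\cc$ belong to $\mathrm{PL}$ by definition of an AKS, and $\mathtt{K}\Vdash\mathbf{K}^{\A}$, $\mathtt{S}\Vdash\mathbf{S}^{\A}$, $\cc\Vdash\cc^{\A}$. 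This simultaneously establishes condition~(2) of Def.~\ref{d:Separator} and the classicality of $S$.

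The only step that requires a short argument is closure under modus ponens. Suppose $(a\to b)\in S$ and $a\in S$, witnessed by $t\in (a\to b)^{\Bot}\cap\mathrm{PL}$ and $u\in a^{\Bot}\cap\mathrm{PL}$. I would show $tu\in b^{\Bot}\cap\mathrm{PL}$. On one hand, $\mathrm{PL}$ is stipulated to be closed under application, so $tu\in\mathrm{PL}$. On the other hand, for any $\pi\in b$ we have $u\cdot\pi\in a^{\Bot}\cdot b=(a\to b)$, hence $t\Bot u\cdot\pi$ by definition of orthogonal; the first closure axiom of the pole ($t\Bot u\cdot\pi\Rightarrow tu\Bot\pi$) then yields $tu\Bot\pi$, showing $tu\in b^{\Bot}$. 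Therefore $b\in S$, as required.

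There is no real obstacle: the whole proof is a sequence of routine verifications, and the only mildly interesting point is that it is precisely the first pole closure axiom of Def.~\ref{d:AKS} together with the closure of $\mathrm{PL}$ under application that provides exactly what is needed for modus ponens. The other pole axioms, and the $\beta$-reduction inequalities for $\mathbf{K}^{\A}$, $\mathbf{S}^{\A}$, $\cc^{\A}$, have already been absorbed in Lemma~\ref{l:IntPrealIntA}, so nothing further is needed here.
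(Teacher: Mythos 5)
Your proof is correct and follows essentially the same route as the paper's: upwards closure by subtyping, membership of $\mathbf{K}^{\A}$, $\mathbf{S}^{\A}$ and $\cc^{\A}$ in $S$ via the realizability facts from the proof of Lemma~\ref{l:IntPrealIntA} together with $\mathtt{K},\mathtt{S},\cc\in\mathrm{PL}$, and closure under modus ponens from the closure of $\mathrm{PL}$ under application. The only difference is one of detail: where the paper simply asserts $tu\Vdash b$, you unfold that step explicitly through the first closure axiom of the pole, which is exactly the standard argument being invoked.
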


\begin{proof*}
  By construction, we have
  $S=\{a\in\A:\exists t\in\mathrm{PL},~t\Vdash a\}$.
  \begin{enumerate}[(99)]
  \item[(1)] Upwards closure: obvious, by subtyping.
  \item[(2)] We have seen in Section~\ref{sss:CaseRealizK} (Proof of
    Lemma~\ref{l:IntPrealIntA}) that
    $\mathtt{K}\Vdash\mathbf{K}^{\A}$,
    $\mathtt{S}\Vdash\mathbf{S}^{\A}$ and
    $\cc\Vdash\cc^{\A}$, and since
    $\mathtt{K},\mathtt{S},\cc\in\mathrm{PL}$, we get
    $\mathbf{K}^{\A},\mathbf{S}^{\A},\cc^{\A}\in S$.
  \item[(3)] Suppose that $(a\to b),a\in S$.
    From the definition of~$S$, we have $t\Vdash a\to b$ and
    $u\Vdash a$ for some $t,u\in\mathrm{PL}$, so that
    $tu\Vdash b$, where $tu\in\mathrm{PL}$.
    Hence $b\in S$.\hfill\usebox{\proofbox}
  \end{enumerate}
\end{proof*}

Moreover, it is obvious that:
\begin{proposition}[Consistency]
  The classical implicative algebra $(\A,{\cle},{\to},S)$ induced
  by the abstract Krivine structure
  $\mathcal{K}=(\Lambda,\Pi,\ldots,\mathrm{PL},\Bot)$ is consistent
  (in the sense of Def.~\ref{d:ImpAlg}) if and only if $\mathcal{K}$
  is consistent (in the sense that
  $\Pi^{\Bot}\cap\mathrm{PL}=\varnothing$).
\end{proposition}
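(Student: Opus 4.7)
The plan is to observe that this proposition is essentially a tautological unfolding of definitions, since both notions of consistency are defined in terms of the same object, namely the intersection $\Pi^{\Bot}\cap\mathrm{PL}$.

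First, I would recall the crucial fact noted in Section~\ref{sss:ImpStructRealizK} that in the implicative structure $(\A,{\cle},{\to})$ induced by~$\mathcal{K}$, the subtyping order is reverse inclusion ($a\cle b\liff a\supseteq b$), so the smallest element---the bottom $\bot$ of~$\A$---is precisely the full set of stacks $\Pi$. Then I would unfold the definition of the separator,
\[
  S~=~\{a\in\A:a^{\Bot}\cap\mathrm{PL}\neq\varnothing\}\,,
\]
applied to the element $a=\bot=\Pi$, to obtain the equivalence
\[
  \bot\in S\quad\liff\quad\Pi^{\Bot}\cap\mathrm{PL}\neq\varnothing\,.
\]

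Finally, I would appeal to Def.~\ref{d:ImpAlg}, which states that the implicative algebra $(\A,{\cle},{\to},S)$ is consistent iff $\bot\notin S$. Combining this with the previous equivalence gives
\[
  (\A,{\cle},{\to},S)~\text{consistent}\quad\liff\quad
  \bot\notin S\quad\liff\quad
  \Pi^{\Bot}\cap\mathrm{PL}=\varnothing\,,
\]
and the right-hand side is by definition the consistency of~$\mathcal{K}$ (as recalled in Section~\ref{sss:CaseRealizK}).

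There is no real obstacle here; the statement is essentially definitional once one has identified $\bot$ with $\Pi$ in the reverse-inclusion order. The proof is a one-line chain of equivalences, which is presumably why the author writes ``it is obvious that'' before stating the proposition.
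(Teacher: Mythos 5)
Your proof is correct and matches the paper's argument exactly: the paper also just unfolds the definition of~$S$ at the element $\bot=\Pi$ (noting $\bot\notin S$ iff $\bot^{\Bot}\cap\mathrm{PL}=\varnothing$, i.e.\ iff $\Pi^{\Bot}\cap\mathrm{PL}=\varnothing$). Nothing is missing; the statement is indeed definitional once $\bot$ is identified with $\Pi$ under the reverse-inclusion ordering.
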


\begin{proof}
  Indeed, we have $\bot\notin S$ iff
  $\bot^{\Bot}\cap\mathrm{PL}=\varnothing$, that is:
  iff $\Pi^{\Bot}\cap\mathrm{PL}=\varnothing$.
\end{proof}

\subsection{Generating separators}
\label{ss:SeparGen}

Let $\A=(\A,{\cle},{\to})$ be an implicative structure.
For each subset $X\subseteq\A$, we write:
\begin{itemize}
\item[$\bullet$] ${\uparrow}X=\{a\in\A:\exists a_0\in X,~a_0\cle a\}$
  the \emph{upwards closure} of $X$ in~$\A$;
\item[$\bullet$] $@(X)$ the \emph{applicative closure of~$X$}, defined
  as the smallest subset of~$\A$ containing~$X$ (as a subset) and
  closed under application;
\item[$\bullet$] $\Lambda(X)$ the \emph{$\lambda$-closure of~$X$},
  formed by all elements $a\in\A$ that can be written
  $a=(t\{x_1:=a_1,\ldots,x_n:=a_n\})^{\A}$ for some pure
  $\lambda$-term~$t$ with free variables $x_1,\ldots,x_n$ and for some
  parameters $a_1,\ldots,a_n\in X$.
\end{itemize}
Note that in general, the sets $@(X)$ and $\Lambda(X)$ are
not upwards closed, but we obviously have the inclusion
$@(X)\subseteq\Lambda(X)$.

\begin{proposition}[Generated separator]\label{p:SeparGen}
  Given any subset $X\subseteq\A$ we have:
  $${\uparrow}\Lambda(X)~=~
  {\uparrow}@(X\cup\{\mathbf{K}^{\A},\mathbf{S}^{\A}\})\,.$$
  By construction, the above set is the smallest separator of~$\A$
  that contains~$X$ as a subset; it is called the \emph{separator
    generated by~$X$}, and written $\Sep(X)$.
\end{proposition}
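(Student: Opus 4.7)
The plan is to establish first the set-theoretic equality, and then verify that the common set $T := {\uparrow}\Lambda(X) = {\uparrow}@(X\cup\{\mathbf{K}^{\A},\mathbf{S}^{\A}\})$ is a separator and is the smallest one containing~$X$. The argument combines combinatory completeness of the pure $\lambda$-calculus with the monotonicity and $\beta$-reduction machinery already established in Prop.~\ref{p:PropApp}, Prop.~\ref{p:PropLam} and Prop.~\ref{p:BetaEta}.

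For the inclusion $@(X\cup\{\mathbf{K}^{\A},\mathbf{S}^{\A}\})\subseteq\Lambda(X)$ (which yields the $\supseteq$ direction after upwards closure), I would observe that $\mathbf{K}^{\A}$ and $\mathbf{S}^{\A}$ already lie in $\Lambda(X)$ (take $n=0$ with $t=\Lam{xy}{x}$ and $t=\Lam{xyz}{xz(yz)}$ respectively), and that $\Lambda(X)$ is closed under application: given $a=(t\{\vec{x}:=\vec{a}\})^{\A}$ and $b=(u\{\vec{y}:=\vec{b}\})^{\A}$ in $\Lambda(X)$, after renaming bound variables to make the two lists disjoint, one has $ab=((tu)\{\vec{x}:=\vec{a},\vec{y}:=\vec{b}\})^{\A}$, which is again of the required form.

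The reverse inclusion ${\uparrow}\Lambda(X)\subseteq{\uparrow}@(X\cup\{\mathbf{K}^{\A},\mathbf{S}^{\A}\})$ is the main technical step and follows the same strategy as the proof of Prop.~\ref{p:SeparLam}. Given $a=(t\{x_1:=a_1,\ldots,x_n:=a_n\})^{\A}\in\Lambda(X)$, combinatory completeness supplies a closed combinatory term $t_0$, built only from $\mathbf{K}$, $\mathbf{S}$ and application, with $t_0\redd_{\beta}\Lam{x_1\cdots x_n}{t}$. Then $t_0^{\A}a_1\cdots a_n\in@(X\cup\{\mathbf{K}^{\A},\mathbf{S}^{\A}\})$, and combining Prop.~\ref{p:BetaEta}~(1) with the monotonicity of application (Prop.~\ref{p:PropApp}~(1)) yields the chain $t_0^{\A}a_1\cdots a_n\cle(\Lam{x_1\cdots x_n}{t})^{\A}a_1\cdots a_n\cle(t\{x_1:=a_1,\ldots,x_n:=a_n\})^{\A}=a$, so $a$ lies above an element of $@(X\cup\{\mathbf{K}^{\A},\mathbf{S}^{\A}\})$.

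To finish, I would check that $T$ is a separator containing $X$, and that it is the smallest such. Upwards closure is built into the definition; the containments $\mathbf{K}^{\A},\mathbf{S}^{\A}\in T$ and $X\subseteq T$ are clear; closure under application (the equivalent form $(3')$ of modus ponens, from the remark following Def.~\ref{d:Separator}) follows from Prop.~\ref{p:PropApp}~(1): if $a_0,b_0\in@(X\cup\{\mathbf{K}^{\A},\mathbf{S}^{\A}\})$ satisfy $a_0\cle a$ and $b_0\cle b$, then $a_0b_0$ stays in $@(X\cup\{\mathbf{K}^{\A},\mathbf{S}^{\A}\})$ and $a_0b_0\cle ab$, so $ab\in T$. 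For minimality, any separator $S'\supseteq X$ must contain $\mathbf{K}^{\A}$ and $\mathbf{S}^{\A}$ by axiom~(2), must be closed under application by condition~$(3')$, and must be upwards closed, hence $S'\supseteq T$. The only delicate point is the $\subseteq$ direction of the set equality, which rests crucially on combinatory completeness; everything else is routine bookkeeping with variance, $\beta$-reduction and the monotonicity of $\lambda$-interpretation.
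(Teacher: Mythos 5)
Your proposal is correct and follows essentially the same route as the paper: the equality of the two closures rests on combinatory completeness together with Prop.~\ref{p:BetaEta} and the monotonicity of application (Prop.~\ref{p:PropApp}~(1)), exactly as in the paper's proof, and your spelled-out reverse inclusion is just the argument of Prop.~\ref{p:SeparLam} re-run with parameters in~$X$. The only cosmetic difference is that you establish minimality directly from the separator axioms via the applicative-closure description, where the paper instead cites Prop.~\ref{p:SeparLam}; both are valid and equivalent in substance.
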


\begin{proof}
  The inclusion
  ${\uparrow}@(X\cup\{\mathbf{K}^{\A},\mathbf{S}^{\A}\})
  \subseteq{\uparrow}\Lambda(X)$ is obvious, and the converse
  inclusion follows from Prop.~\ref{p:BetaEta} using the fact each
  $\lambda$-term is the $\beta$-contracted of some combinatory term
  constructed from variables, $\mathbf{K}$, $\mathbf{S}$ and
  application.
  The set ${\uparrow}\Lambda(X)$ is clearly a separator (closure under
  application follows from Prop.~\ref{p:PropApp}~(1)), and from
  Prop.~\ref{p:SeparLam}, it is included in any separator
  containing~$X$ as a subset.
\end{proof}

An important property of first-order logic is the deduction lemma,
which states that an implication $\phi\limp\psi$ is provable in a
theory $\mathscr{T}$ if and only if the formula $\psi$ is provable in
the theory $\mathscr{T}+\phi$ that is obtained by
enriching~$\mathscr{T}$ with the axiom~$\phi$.
Viewing separators $S\subseteq\A$ as theories, this naturally suggests
the following semantic counterpart:
\begin{lemma}[Deduction in a separator]
  For each separator $S\subseteq\A$, we have
  $$(a\to b)\in S\qquad\text{iff}\qquad
  b\in\Sep(S\cup\{a\})\eqno(\text{for all}~a,b\in\A)$$
\end{lemma}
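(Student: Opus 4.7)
The statement is a biconditional, so I would prove the two implications separately, with the forward direction being essentially immediate and the reverse direction requiring the bulk of the argument.

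For the easy direction, suppose $(a\to b)\in S$. Since $S\subseteq\Sep(S\cup\{a\})$, we have $(a\to b)\in\Sep(S\cup\{a\})$, and trivially $a\in\Sep(S\cup\{a\})$. By closure of $\Sep(S\cup\{a\})$ under modus ponens (axiom~(3) of Def.~\ref{d:Separator}), we conclude $b\in\Sep(S\cup\{a\})$.

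For the reverse direction, suppose $b\in\Sep(S\cup\{a\})$. By Prop.~\ref{p:SeparGen}, $\Sep(S\cup\{a\})={\uparrow}\Lambda(S\cup\{a\})$, so there exist a pure $\lambda$-term $t$ with free variables $x_1,\ldots,x_n$ and parameters $c_1,\ldots,c_n\in S\cup\{a\}$ with
$$c~:=~(t\{x_1:=c_1,\ldots,x_n:=c_n\})^{\A}~\cle~b.$$
The plan is to convert the occurrences of the parameter $a$ into a single bound variable: let $I=\{i:c_i=a\}$, pick a fresh variable $y$, let $t_0:=t\{x_i:=y\,(i\in I)\}$, and consider $t':=\Lam{y}{t_0}$, which has free variables among $\{x_j:j\notin I\}$ with corresponding parameters $c_j\in S$. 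Setting $d:=(t'\{x_j:=c_j\,(j\notin I)\})^{\A}$, unfolding the interpretation of abstraction gives
$$d~=~\bigmeet_{a'\in\A}\bigl(a'\to(t_0\{y:=a',~x_j:=c_j\,(j\notin I)\})^{\A}\bigr)~\cle~a\to c,$$
by instantiating the meet at $a'=a$ and observing that the resulting substitution reproduces $c$.

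To finish, I would note two things. First, $d\in S$: indeed $d\in\Lambda(S)\subseteq\Sep(S)=S$, since $S$ is already a separator (applying Prop.~\ref{p:SeparGen} again, with the fact that $\Sep(S)=S$). Second, $d\cle a\to c\cle a\to b$ by monotonicity of implication in its second argument (axiom~(1) of Def.~\ref{d:ImpStruct}), using $c\cle b$. Combining these with upwards closure of $S$ yields $(a\to b)\in S$, as required. The main subtlety is the bookkeeping in the step $d\cle a\to c$: one must correctly identify the occurrences of $a$ among the parameters and verify that substituting $y:=a$ in $t_0$ produces the original term; the uniform treatment via a single fresh variable $y$ handles both the case $I=\varnothing$ (where $t_0=t$ and the inequality holds vacuously by taking any $a'$) and the case of multiple occurrences of $a$ without extra cases.
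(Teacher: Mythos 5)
Your proof is correct and follows essentially the same route as the paper's: extract a $\lambda$-term witness with parameters in $S\cup\{a\}$, abstract over the occurrences of the parameter~$a$, show the resulting abstraction lies in~$S$ (Prop.~\ref{p:SeparLam}), and conclude by monotonicity and upwards closure. The only cosmetic differences are that the paper obtains the bound $\cle(a\to b)$ via $\beta$-reduction (Prop.~\ref{p:BetaEta}) and the adjunction of Prop.~\ref{p:PropApp}~(5) whereas you instantiate the defining meet of the abstraction at $a'=a$ directly, and your explicit bookkeeping with the index set of occurrences of~$a$ just spells out the paper's ``without loss of generality'' step.
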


\begin{proof}
  Suppose that $(a\to b)\in S$.
  Then $(a\to b)\in\Sep(S\cup\{a\})$ (by inclusion), and
  since $a\in\Sep(S\cup\{a\})$ (by construction), we get
  $b\in\Sep(S\cup\{a\})$ (by modus ponens).
  Conversely, let us suppose that $b\in\Sep(S\cup\{a\})$.
  From the definition of the separator $\Sep(S\cup\{a\})$,
  this means that there are a $\lambda$-term~$t$ with free variables
  $x_1,\ldots,x_n$ and parameters $a_1,\ldots,a_n\in S\cup\{a\}$ such
  that $(t\{x_1:=a_1,\ldots,x_n:=a_n\})^{\A}\cle b$.
  Without loss of generality, we can assume that $a_1=a$ and
  $a_2,\ldots,a_n\in S$ (with $n\ge 1$).
  Letting
  $c:=(\Lam{x_1}{t\{x_2:=a_2,\ldots,x_n:=a_n\}})^{\A}$, we observe
  that $c\in S$, by Prop.~\ref{p:SeparLam}.
  Moreover, we have
  $ca\cle(t\{x_1:=a_1,x_2:=a_2,\ldots,x_n:=a_n\})^{\A}\cle b$
  by Prop~\ref{p:BetaEta}.
  And by adjunction, we deduce that $c\cle(a\to b)$, hence
  $(a\to b)\in S$.
\end{proof}

In what follows, we shall say that a separator $S\subseteq\A$ is
\emph{finitely generated} when $S=\Sep(X)$ for some finite
subset~$X\subseteq\A$.
Two important examples of finitely generated separators of an
implicative structure $\A=(\A,{\cle},{\to})$ are:
\begin{itemize}
\item the \emph{intuitionistic core} of~$\A$, defined by
  $\SJ(\A):=\Sep(\varnothing)$;
\item the \emph{classical core} of~$\A$, defined by
  $\SK(\A):=\Sep(\{\cc^{\A}\})$.
\end{itemize}
By definition, the set $\SJ(\A)$ (resp.\ $\SK(\A)$) is the smallest
separator (resp.\ the smallest classical separator) of~$\A$; and from
Prop.~\ref{p:SeparGen}, it is clear that the implicative structure
$\A=(\A,{\cle},{\to})$ is intuitionistically consistent
(resp.\ classically consistent) in the sense of
Def.~\ref{d:ConsImpStruct} if and only if $\bot\notin\SJ(\A)$
(resp.\ $\bot\notin\SK(\A)$).

\subsection{Interpreting first-order logic}
\label{ss:InterpFOL}

\subsubsection{Conjunction and disjunction}
Each implicative structure $\A=(\A,{\cle},{\to})$ describes a
particular logic from the interaction between implication $a\to b$
and universal quantification, seen as a meet w.r.t.\ the ordering
$a\cle b$ of subtyping.
In such a framework, conjunction (notation: $a\times b$) and
disjunction (notation: $a+b$) are naturally defined using the standard
encodings of minimal second-order logic~\cite{Gir72,Gir89}:
$$\begin{array}{r@{~~}c@{~~}l}
  a\times b&:=&\ds\bigmeet_{c\in\A}((a\to b\to c)\to c)\\
  a+b&:=&\ds\bigmeet_{c\in\A}((a\to c)\to(b\to c)\to c)\\
\end{array}$$
Finally, negation and logical equivalence are defined as expected,
letting $\lnot a:=(a\to\bot)$ and
$a\leftrightarrow b:=(a\to b)\times(b\to a)$.
We easily check that:
\begin{proposition}
  When $(\A,{\cle},{\to})$ is a complete Heyting algebra:
  $$a\times b~=~a\meet b\qquad\text{and}\qquad
  a+b~=~a\join b\eqno(\text{for all}~a,b\in\A)$$
\end{proposition}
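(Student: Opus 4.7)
The plan is to reduce both identities to a single auxiliary computation, namely that in a complete Heyting algebra $\bigmeet_{c\in\A}((p\to c)\to c)=p$ for every $p\in\A$. Once this lemma is in place, both identities follow by rewriting the second-order encodings so that the outer ``$({-})\to c$'' wraps a single element of $\A$ that can be read off as either $a\meet b$ or $a\join b$.

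First I would prove the auxiliary lemma. By Heyting's adjunction, $p\meet(p\to c)\cle c$, which rearranges to $p\cle(p\to c)\to c$ for every $c\in\A$; taking the meet over $c$ gives $p\cle\bigmeet_{c}((p\to c)\to c)$. For the converse inequality, I would instantiate $c$ with $p$: then $(p\to p)\to p=\top\to p=p$, and since one particular member of the family equals $p$, the meet is below $p$.

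Next I would apply the lemma to the two encodings. For conjunction, I would use the standard Heyting currying identity $a\to(b\to c)=(a\meet b)\to c$ (an immediate consequence of the adjunction, since both sides are characterised by $d\meet a\meet b\cle c$) to rewrite $(a\to b\to c)\to c$ as $((a\meet b)\to c)\to c$; taking the meet over $c$ and applying the lemma with $p:=a\meet b$ yields $a\times b=a\meet b$. For disjunction, the same currying identity turns $(a\to c)\to((b\to c)\to c)$ into $((a\to c)\meet(b\to c))\to c$; then the Heyting identity $(a\to c)\meet(b\to c)=(a\join b)\to c$ reduces this to $((a\join b)\to c)\to c$, and the lemma with $p:=a\join b$ delivers $a+b=a\join b$.

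The only non-mechanical step is the appeal to $(a\to c)\meet(b\to c)=(a\join b)\to c$, which is a standard Heyting algebra identity provable directly from the adjunction: an element $d$ is bounded by the left-hand side iff $d\meet a\cle c$ and $d\meet b\cle c$, that is, iff $d\meet(a\join b)\cle c$, which by adjunction amounts to $d\cle(a\join b)\to c$. Apart from this, the argument is purely a repeated use of the adjunction together with Corollary~\ref{c:HeytingApp} (application coincides with binary meet in a complete Heyting algebra), which is what allows the second-order encodings to collapse so cleanly. I do not foresee any genuine obstacle, only the bookkeeping of making sure the inequalities lifted from the adjunction commute properly with the arbitrary meet over $c\in\A$.
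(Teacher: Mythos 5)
Your proof is correct: the paper leaves this proposition as an exercise, and your reduction to the lemma $\bigmeet_{c\in\A}((p\to c)\to c)=p$, combined with the currying identity $a\to(b\to c)=(a\meet b)\to c$ and the standard identity $(a\to c)\meet(b\to c)=(a\join b)\to c$ (both immediate from Heyting's adjunction), is exactly the routine verification the author intends. The only cosmetic remark is that your closing appeal to Corollary~\ref{c:HeytingApp} is not actually needed anywhere in the argument, since the encodings of $a\times b$ and $a+b$ are meets of implications and never involve the application operation.
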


(The proof is left as an exercise to the reader.)

\bigbreak
In the general case, the introduction and elimination rules of
conjunction and disjunction are naturally expressed as semantic typing
rules (see Section~\ref{ss:SemTyping}) using the very same proof-terms
as in Curry-style system~F~\cite{Lei83,BLRU94}:
\begin{proposition}[Typing rules for $\times$ and
    $+$]\label{p:ValidConjDisjRules}
  The semantic typing rules
  $$\begin{array}{c}
    \infer{\Gamma\vdash\Lam{z}{z\,t\,u}:a\times b}{
      \Gamma\vdash t:a & \Gamma\vdash u:b}\qquad
    \infer{\Gamma\vdash t\,(\Lam{xy}{x}):a}{
      \Gamma\vdash t:a\times b}\qquad
    \infer{\Gamma\vdash t\,(\Lam{xy}{y}):b}{
      \Gamma\vdash t:a\times b}\\
    \noalign{\medskip}
    \infer{\Gamma\vdash\Lam{zw}{z\,t}:a+b}{\Gamma\vdash t:a}\qquad
    \infer{\Gamma\vdash\Lam{zw}{w\,t}:a+b}{\Gamma\vdash t:b}\\
    \noalign{\medskip}
    \infer{\Gamma\vdash t\,(\Lam{x}{u})\,(\Lam{y}{v}):c}{
      \Gamma\vdash t:a+b &
      \Gamma,x:a\vdash u:c & \Gamma,y:b\vdash v:c}\\
  \end{array}$$
  are valid in any implicative structure.
\end{proposition}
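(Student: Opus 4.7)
The plan is to verify each of the six (or seven) rules by a direct application of the semantic typing rules of Prop.~\ref{p:SemTypingRules}, combined with the definitions of $a\times b$ and $a+b$ as infinite meets. The two key moves are: (i) using the Generalization rule to reduce a meet-typing goal $\Gamma\vdash M:\bigmeet_c \phi(c)$ to showing $\Gamma\vdash M:\phi(c)$ uniformly in $c\in\A$, and (ii) using Subsumption to instantiate such a meet at a well-chosen $c$ when the product or sum appears in a hypothesis.

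For the pairing rule, I would apply Generalization to reduce the goal $\Gamma\vdash \Lam{z}{ztu}: \bigmeet_{c}((a\to b\to c)\to c)$ to showing $\Gamma\vdash \Lam{z}{ztu}:(a\to b\to c)\to c$ for an arbitrary $c\in\A$. Then ${\to}$-intro with $z:a\to b\to c$ reduces this to $\Gamma,z:a\to b\to c\vdash ztu:c$, which follows by two applications of ${\to}$-elim using the hypotheses $\Gamma\vdash t:a$ and $\Gamma\vdash u:b$. For the two projections, from $\Gamma\vdash t:a\times b$ I would apply Subsumption with the instances $c:=a$ (resp.\ $c:=b$) to get $\Gamma\vdash t:(a\to b\to a)\to a$ (resp.\ $\Gamma\vdash t:(a\to b\to b)\to b$). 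Then ${\to}$-intro (twice) applied to Axiom gives $\Gamma\vdash \Lam{xy}{x}:a\to b\to a$ and $\Gamma\vdash\Lam{xy}{y}:a\to b\to b$, and a single ${\to}$-elim concludes each projection.

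For the two injections, the argument mirrors that of the pair: Generalization reduces the goal to fixing $c\in\A$, then two ${\to}$-intro steps (introducing $z:a\to c$ and $w:b\to c$) leave a goal $\Gamma,z:a\to c,w:b\to c\vdash zt:c$ (or $wt:c$), which follows from the hypothesis on $t$ by ${\to}$-elim together with Axiom. For the case analysis rule, from $\Gamma\vdash t:a+b$ I use Subsumption at the instance of $c\in\A$ appearing in the conclusion to obtain $\Gamma\vdash t:(a\to c)\to(b\to c)\to c$; from the branch hypotheses $\Gamma,x:a\vdash u:c$ and $\Gamma,y:b\vdash v:c$, two ${\to}$-intro steps yield $\Gamma\vdash\Lam{x}{u}:a\to c$ and $\Gamma\vdash\Lam{y}{v}:b\to c$; then two ${\to}$-elim applications produce $\Gamma\vdash t(\Lam{x}{u})(\Lam{y}{v}):c$.

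There is no real obstacle: the proof is essentially a bookkeeping exercise verifying that the standard System~F encodings of products and sums are valid once conjunction and disjunction are \emph{defined} by those encodings. The only subtlety worth noting is that Generalization is the crucial ingredient making the introduction rules work, while Subsumption (at a well-chosen instance of the bound variable $c$) is what makes the elimination rules work; every other step is routine application of the introduction/elimination rules for implication already established in Prop.~\ref{p:SemTypingRules}.
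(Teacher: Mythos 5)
Your proof is correct and is exactly the argument the paper leaves implicit for this proposition: a routine derivation from the semantic typing rules of Prop.~\ref{p:SemTypingRules}, using Generalization for the introduction rules (pairing and injections) and Subsumption at a chosen instance of the bound meet variable $c$ for the eliminations, with ${\to}$-intro/${\to}$-elim and Axiom doing the rest. The only detail you leave tacit is the use of Context subsumption to weaken the premises $\Gamma\vdash t:a$, $\Gamma\vdash u:b$ into the extended contexts (with $z$, $w$ chosen fresh), which is harmless since that rule is also provided by Prop.~\ref{p:SemTypingRules}.
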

(Recall that~~$\Gamma\vdash t:a$~~means:\ \
$\FV(t)\subseteq\dom(\Gamma)$ and $(t[\Gamma])^{\A}\cle a$.)

Following the spirit of Prop.~\ref{p:CombTypes}, we can notice that
via the interpretation $t\mapsto t^{\A}$ of pure $\lambda$-terms into
the implicative structure~$\A$ (Section~\ref{ss:AppLam}), the pairing
construct $\<t,u\>:=\Lam{z}{z\,t\,u}$ appears to be the same as
conjunction itself:
\begin{proposition}
  For all $a,b\in\A$:\quad
  $\<a,b\>^{\A}=(\Lam{z}{z\,a\,b})^{\A}=a\times b$.
\end{proposition}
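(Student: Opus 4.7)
The first equality $\langle a,b\rangle^{\A}=(\Lam{z}{z\,a\,b})^{\A}$ is just the definition of the pairing notation, treating $a$ and $b$ as parameters. The content lies in the second equality. My plan is to unfold both sides as meets and to establish $\cle$ and $\cge$ separately by choosing, for each index of the opposing meet, a single convenient index on the near side together with one application of $\beta$-reduction or $\eta$-expansion (Prop.~\ref{p:PropApp}~(2) and~(3)).

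Unfolding the definition of abstraction (Def.~\ref{d:Abs}) gives
$$(\Lam{z}{z\,a\,b})^{\A}~=~\bigmeet_{c\in\A}\bigl(c\to(ca)b\bigr),
\qquad a\times b~=~\bigmeet_{c\in\A}\bigl((a\to b\to c)\to c\bigr).$$

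\emph{Direction $(\Lam{z}{z\,a\,b})^{\A}\cle a\times b$.} Fix $c\in\A$; it suffices to show $(\Lam{z}{z\,a\,b})^{\A}\cle(a\to b\to c)\to c$. Instantiating the left-hand meet at $c':=(a\to b\to c)$, the term under the meet is $(a\to b\to c)\to((a\to b\to c)a)b$. By two applications of $\beta$-reduction (Prop.~\ref{p:PropApp}~(2)), $((a\to b\to c)a)b\cle(b\to c)b\cle c$, and anti/monotonicity of implication in its second argument (axiom~(1)) yields $(a\to b\to c)\to((a\to b\to c)a)b\cle(a\to b\to c)\to c$. Taking the meet over $c$ gives the desired inequality.

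\emph{Direction $a\times b\cle(\Lam{z}{z\,a\,b})^{\A}$.} Fix $c\in\A$; it suffices to show $a\times b\cle c\to(ca)b$. Instantiating the right-hand meet at $c':=(ca)b$, the term under the meet is $(a\to b\to(ca)b)\to(ca)b$. By two applications of $\eta$-expansion (Prop.~\ref{p:PropApp}~(3)), $ca\cle b\to(ca)b$ and $c\cle a\to ca$, whence by monotonicity of implication in its second argument, $c\cle a\to b\to(ca)b$. Anti-monotonicity of implication in its first argument (axiom~(1)) then gives $(a\to b\to(ca)b)\to(ca)b\cle c\to(ca)b$, so $a\times b\cle c\to(ca)b$. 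Taking the meet over $c$ concludes.

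There is no real obstacle; the only mild subtlety is choosing the non-obvious instantiations $c':=a\to b\to c$ and $c':=(ca)b$ when bounding one meet against the other. Alternatively, one can observe that the $\cle$-direction is an immediate instance of the $\times$-introduction typing rule of Prop.~\ref{p:ValidConjDisjRules} applied to the Parameter rules $\Gamma\vdash a:a$ and $\Gamma\vdash b:b$, but the direct computation above is uniform and self-contained.
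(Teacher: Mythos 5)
Your proof is correct and follows essentially the same route as the paper, which simply invokes ``the same proof technique as for Prop.~\ref{p:CombTypes}'': one inequality by bounding the abstraction's meet via instantiation and $\beta$-reduction (equivalently, the semantic typing rules, as you note), the other by instantiating the meet defining $a\times b$ at $(ca)b$ and using $\eta$-expansion with the variance of implication. The only cosmetic omission is that the step $((a\to b\to c)a)b\cle(b\to c)b$ also implicitly uses monotonicity of application (Prop.~\ref{p:PropApp}~(1)), which is harmless.
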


\begin{proof}
  Same proof technique as for Prop.~\ref{p:CombTypes}.
\end{proof}

\subsubsection{Quantifiers}
In any implicative structure $\A=(\A,{\cle},{\to})$, the universal
quantification of a family of truth values $(a_i)_{i\in I}\in\A^I$ is
naturally defined as its meet:
$$\bigforall_{\!\!i\in I\!\!}a_i~:=~\bigmeet_{\!\!i\in I\!\!}a_i\,.$$
It is obvious that:
\begin{proposition}[Rules for $\forall$]\label{p:ValidForRules}
  The following semantic typing rules
  $$\infer{\Gamma\vdash t:\bigforall_{i\in I}a_i}{
    \Gamma\vdash t:a_i\quad(\text{for all}~i\in I)}\qquad\qquad
  \infer[(i_0\in I)]{\Gamma\vdash t:a_{i_0}}{
    \Gamma\vdash t:\bigforall_{i\in I}a_i}$$
  are valid in any implicative structure.
\end{proposition}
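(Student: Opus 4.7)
The proof is essentially a direct unfolding of the definition of meet combined with the rules already established in Proposition~\ref{p:SemTypingRules}. Recall that by convention $\bigforall_{i\in I}a_i := \bigmeet_{i\in I}a_i$ and that the judgment $\Gamma\vdash t:a$ abbreviates ``$\FV(t)\subseteq\dom(\Gamma)$ and $(t[\Gamma])^{\A}\cle a$''.

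For the introduction rule on the left, I would simply observe that it is a restatement of the Generalization rule from Prop.~\ref{p:SemTypingRules}: assuming $\Gamma\vdash t:a_i$ for every $i\in I$, one has $\FV(t)\subseteq\dom(\Gamma)$ and $(t[\Gamma])^{\A}\cle a_i$ for all $i$; since $\bigmeet_{i\in I}a_i$ is by definition the greatest lower bound of the family $(a_i)_{i\in I}$, it follows that $(t[\Gamma])^{\A}\cle\bigmeet_{i\in I}a_i=\bigforall_{i\in I}a_i$, which is exactly $\Gamma\vdash t:\bigforall_{i\in I}a_i$.

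For the elimination rule on the right, I would combine the fact that a meet is a lower bound of its family with the Subsumption rule. Assuming $\Gamma\vdash t:\bigforall_{i\in I}a_i$, one has $(t[\Gamma])^{\A}\cle\bigmeet_{i\in I}a_i$; for any fixed $i_0\in I$, the inequality $\bigmeet_{i\in I}a_i\cle a_{i_0}$ holds by definition of meet, so by transitivity of $\cle$ we obtain $(t[\Gamma])^{\A}\cle a_{i_0}$, i.e.\ $\Gamma\vdash t:a_{i_0}$.

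There is no real obstacle here: both rules are immediate from the universal property of meets, and the only point to be careful about is the side condition $\FV(t)\subseteq\dom(\Gamma)$, which is preserved trivially in both directions since it is a condition on~$t$ and~$\Gamma$ alone and does not depend on the type.
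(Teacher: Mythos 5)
Your proof is correct and matches the paper's intent: the paper states this proposition as obvious, relying precisely on the Generalization rule of Prop.~\ref{p:SemTypingRules} for the introduction rule and on the lower-bound property of meets together with Subsumption for the elimination rule, exactly as you argue.
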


In such a framework, it would be quite natural to define existential
quantification dually, that is: as a join.
Alas, this interpretation does not fulfill (in general) the elimination
rule for~$\exists$---remember that joins only exist by accident.
As for conjunction and disjunction, we shall use the corresponding
encoding in second-order minimal logic~\cite{Gir72,Gir89}, letting:
$$\bigexists_{\!\!i\in I\!\!}a_i~:=~\bigmeet_{c\in\A}
\biggl(\bigmeet_{\!\!i\in I\!\!}(a_i\to c)~\to~c\biggr)\,.$$
Again, we easily check that:
\begin{proposition}
  When $(\A,{\cle},{\to})$ is a complete Heyting algebra:
  $$\bigexists_{i\in I}a_i~=~\bigjoin_{i\in I}a_i
  \eqno\hskip-30mm(\text{for all}~(a_i)_{i\in I}\in\A^I)$$
\end{proposition}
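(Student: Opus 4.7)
The plan is to prove both inequalities $\bigexists_{i \in I} a_i \cle \bigjoin_{i \in I} a_i$ and $\bigjoin_{i \in I} a_i \cle \bigexists_{i \in I} a_i$ in the complete Heyting algebra $(\A,{\cle},{\to})$, using only the Heyting adjunction $c \meet a \cle b \liff c \cle (a \to b)$ and the monotonicity/antimonotonicity of $\to$.

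For the first direction, I would instantiate the meet in the definition of $\bigexists$ at the single element $c := \bigjoin_{i \in I} a_i$. Since $a_i \cle \bigjoin_{j} a_j$ for every $i$, Heyting's adjunction gives $a_i \to \bigjoin_j a_j = \top$ (because $\top \meet a_i = a_i \cle \bigjoin_j a_j$). Taking the meet over $i$ yields $\bigmeet_i (a_i \to \bigjoin_j a_j) = \top$, and then $\top \to \bigjoin_j a_j = \bigjoin_j a_j$. This particular instance bounds the whole meet, giving $\bigexists_i a_i \cle \bigjoin_i a_i$.

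For the converse direction, it suffices to show $a_i \cle \bigexists_j a_j$ for each fixed $i$, since the join is by definition the least upper bound. Unfolding the definition, I need $a_i \cle \bigmeet_j (a_j \to c) \to c$ for every $c \in \A$. By Heyting's adjunction this is equivalent to $a_i \meet \bigmeet_j (a_j \to c) \cle c$. Now the key step is to observe that $\bigmeet_j (a_j \to c) \cle a_i \to c$ (projecting the meet onto its $i$-th component), so after meeting with $a_i$ and applying the adjunction in the other direction (modus ponens: $a_i \meet (a_i \to c) \cle c$), the inequality follows. Taking the join over $i$ then gives $\bigjoin_i a_i \cle \bigexists_i a_i$.

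There is no real obstacle here — each half reduces to one application of the adjunction plus elementary meet/join manipulations in a Heyting algebra. The only small care needed is to keep the variance of $\to$ straight and to remember that, in the general definition, the bound variable $c$ ranges over all of $\A$ (so one is free to specialise it, as in the first direction).
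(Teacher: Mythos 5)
Your proof is correct: both inequalities go through exactly as you describe, including the degenerate case $I=\varnothing$ (where the empty meet $\bigmeet_{i\in I}(a_i\to c)=\top$ makes both sides equal $\bot$). The paper gives no proof of this proposition (it is stated as something ``we easily check''), and your argument---specialising $c:=\bigjoin_i a_i$ for one direction and using the adjunction plus $a_i\meet(a_i\to c)\cle c$ for the other---is precisely the routine verification the paper intends.
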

Coming back to the general case:
\begin{proposition}[Rules for $\exists$]\label{p:ValidExRules}
  The following semantic typing rules
  $$\infer[(i_0\in I)]{\Gamma\vdash\Lam{z}{z\,t}:
    \bigexists_{i\in I}a_i}{\Gamma\vdash t:a_{i_0}}\qquad
  \infer{\Gamma\vdash t\,(\Lam{x}{u}):c}{
    \Gamma\vdash t:\bigexists_{i\in I}a_i &\quad
    \Gamma,x:a_i\vdash u:c\quad(\text{for all}~i\in I)}$$
  are valid in any implicative structure.
\end{proposition}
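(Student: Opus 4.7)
The plan is to treat both rules as semantic consequences of the typing rules already established in Proposition~\ref{p:SemTypingRules}, by unfolding the impredicative encoding
$\bigexists_{i\in I}a_i=\bigmeet_{c\in\A}\bigl(\bigmeet_{i\in I}(a_i\to c)\to c\bigr)$
in precisely the same way one derives $\exists$-introduction and $\exists$-elimination in second-order minimal logic.

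For the introduction rule, I would fix $i_0\in I$ together with the hypothesis $\Gamma\vdash t:a_{i_0}$, and reduce the goal to deriving, for an arbitrary $c\in\A$, the judgment $\Gamma\vdash\Lam{z}{z\,t}:\bigmeet_{i\in I}(a_i\to c)\to c$; the quantification on~$c$ is then absorbed by the Generalization rule. To obtain this judgment, I would apply $\to$-intro after deriving $\Gamma,z:\bigmeet_{i\in I}(a_i\to c)\vdash z\,t:c$. That last judgment follows by $\to$-elim from: (i)~$\Gamma,z:\bigmeet_{i\in I}(a_i\to c)\vdash z:a_{i_0}\to c$, obtained by Axiom and Subsumption using $\bigmeet_{i\in I}(a_i\to c)\cle a_{i_0}\to c$; and (ii)~$\Gamma,z:\bigmeet_{i\in I}(a_i\to c)\vdash t:a_{i_0}$, obtained from the hypothesis by Context subsumption (the added variable~$z$ can be assumed fresh by $\alpha$-conversion).

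For the elimination rule, I would proceed symmetrically from the two hypotheses $\Gamma\vdash t:\bigexists_{i\in I}a_i$ and $\Gamma,x:a_i\vdash u:c$ (for every $i\in I$). From each premise of the second family, $\to$-intro yields $\Gamma\vdash\Lam{x}{u}:a_i\to c$; taking meets over $i\in I$ via Generalization gives $\Gamma\vdash\Lam{x}{u}:\bigmeet_{i\in I}(a_i\to c)$. On the other hand, since $\bigexists_{i\in I}a_i\cle\bigmeet_{i\in I}(a_i\to c)\to c$ by construction (it is the meet on $c'$, instantiated at $c'=c$), Subsumption converts the first hypothesis into $\Gamma\vdash t:\bigmeet_{i\in I}(a_i\to c)\to c$. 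A final application of $\to$-elim yields $\Gamma\vdash t\,(\Lam{x}{u}):c$, as required.

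I do not expect any real obstacle: both derivations are entirely algebraic manipulations of the already-proved semantic typing rules, and the only mild subtlety is to keep the bound variables~$z$ and~$x$ fresh with respect to~$\Gamma$, which is standard via $\alpha$-conversion. No new property of the implicative structure beyond Proposition~\ref{p:SemTypingRules} is needed.
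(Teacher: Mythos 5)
Your derivation is correct: both rules follow exactly as you describe from the second-order encoding of $\bigexists$ together with the Axiom, Subsumption, Context subsumption, ${\to}$-intro, ${\to}$-elim and Generalization rules of Prop.~\ref{p:SemTypingRules}, and the paper itself leaves this verification implicit (no proof is given), treating it as the routine argument you have written out. This is precisely the intended route, so there is nothing to add beyond your (correct) remark that freshness of the bound variables is handled by $\alpha$-conversion.
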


\subsubsection{Leibniz equality}
Given any two objects $\alpha$ and~$\beta$, the \emph{identity}
of~$\alpha$ and~$\beta$ (in the sense of Leibniz) is expressed by the
truth value $\mathbf{id}^{\A}(\alpha,\beta)\in\A$ defined by:
$$\mathbf{id}^{\A}(\alpha,\beta)~:=~\begin{cases}
  \mathbf{I}^{\A}&\text{if}~\alpha=\beta\\
  \top\to\bot&\text{if}~\alpha\neq\beta\\
\end{cases}$$
It is a straightforward exercise to check that when~$\alpha$
and~$\beta$ belong to a given set~$M$, the above interpretation of
Leibniz equality amounts to the usual second-order encoding:
\begin{proposition}
  For all sets~$M$ and for all $\alpha,\beta\in M$, we have:
  $$\mathbf{id}^{\A}(\alpha,\beta)~=~
  \bigmeet_{p\in\A^{M}}(p(\alpha)\to p(\beta))\,.$$
\end{proposition}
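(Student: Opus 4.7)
The plan is to prove the equality by case analysis on whether $\alpha = \beta$, and in each case to establish both directions $\cle$ and $\cge$. The right-hand side meet is taken over all functions $p : M \to \A$, so the lower bound direction will always be supplied by exhibiting clever witness functions $p$, while the upper bound direction will follow from the variance properties of implication (Def.~\ref{d:ImpStruct}(1)) combined with Prop.~\ref{p:CombTypes}.

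For the case $\alpha = \beta$, the right-hand side becomes $\bigmeet_{p \in \A^M}(p(\alpha) \to p(\alpha))$, and the target $\mathbf{id}^{\A}(\alpha,\alpha) = \mathbf{I}^{\A}$ equals $\bigmeet_{a \in \A}(a \to a)$ by Prop.~\ref{p:CombTypes}. The inequality $\mathbf{I}^{\A} \cle \bigmeet_{p} (p(\alpha) \to p(\alpha))$ is immediate since for each $p$, instantiating the meet defining $\mathbf{I}^{\A}$ at $a := p(\alpha)$ gives one of the conjuncts on the right. Conversely, for each $a \in \A$, take $p$ to be the constant function with value $a$ (using $M \neq \varnothing$, which holds since $\alpha \in M$); then $(a \to a)$ appears as a conjunct on the right, yielding $\bigmeet_{p}(p(\alpha) \to p(\alpha)) \cle (a \to a)$, and taking the meet over $a$ gives the required upper bound.

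For the case $\alpha \neq \beta$, I would exhibit $(\top \to \bot)$ as a conjunct on the right by taking $p$ defined by $p(\alpha) := \top$ and $p(\gamma) := \bot$ for all $\gamma \neq \alpha$ (in particular $p(\beta) = \bot$); this choice is possible precisely because $\alpha \neq \beta$, and it yields $\bigmeet_{p}(p(\alpha) \to p(\beta)) \cle (\top \to \bot)$. For the reverse inequality, for every $p \in \A^M$ we have $p(\alpha) \cle \top$ and $\bot \cle p(\beta)$, so axiom~(1) of Def.~\ref{d:ImpStruct} (anti-monotonicity in the first operand, monotonicity in the second) gives $(\top \to \bot) \cle (p(\alpha) \to p(\beta))$; taking the meet over $p$ yields $(\top \to \bot) \cle \bigmeet_{p}(p(\alpha) \to p(\beta))$.

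There is no real obstacle here; the only mildly delicate point is making sure in the first case that $M$ is nonempty so that constant functions $M \to \A$ exist, which is automatic since $\alpha \in M$. The proof is otherwise a direct unfolding of definitions combined with two well-chosen witness functions.
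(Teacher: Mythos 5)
Your proof is correct, and since the paper states this proposition as a ``straightforward exercise'' without giving its own argument, your case analysis on $\alpha=\beta$ versus $\alpha\neq\beta$ --- using constant functions and the characteristic function of $\{\alpha\}$-style witness $p$ for the lower bounds, and the variance of implication together with $\mathbf{I}^{\A}=\bigmeet_{a\in\A}(a\to a)$ for the upper bounds --- is exactly the intended routine verification. Nothing is missing; the remark about $M\neq\varnothing$ is indeed automatic from $\alpha\in M$.
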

Moreover:
\begin{proposition}[Rules for
    $\mathbf{id}^{\A}$]\label{p:ValidIdRules}
  Given a set $M$, a function $p:M\to\A$ and two objects
  $\alpha,\beta\in M$, the following semantic typing rules are valid:
  $$\infer{\Gamma\vdash\Lam{x}{x}:\alpha=\alpha}{}\qquad\qquad
  \infer{\Gamma\vdash t\,u:p(\beta)}{
    \Gamma\vdash t:\mathbf{id}^{\A}(\alpha,\beta) &
    \Gamma\vdash u:p(\alpha)}$$
\end{proposition}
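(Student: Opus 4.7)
The plan is to unfold the abbreviation $\Gamma\vdash t:a$ from Section~\ref{ss:SemTyping} (namely $\FV(t)\subseteq\dom(\Gamma)$ together with $(t[\Gamma])^{\A}\cle a$), substitute the two-case definition of $\mathbf{id}^{\A}(\alpha,\beta)$, and argue by case analysis on whether $\alpha=\beta$ or $\alpha\neq\beta$. The only tools needed are the typing rules collected in Prop.~\ref{p:SemTypingRules} (in particular $\to$-elim and subsumption) and the identity $\mathbf{I}^{\A}=\bigmeet_{a\in\A}(a\to a)$ established in Prop.~\ref{p:CombTypes}.

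For the reflexivity rule, the side condition $\FV(\Lam{x}{x})\subseteq\dom(\Gamma)$ is vacuous, so it suffices to check $((\Lam{x}{x})[\Gamma])^{\A}\cle\mathbf{id}^{\A}(\alpha,\alpha)$. But unfolding both sides gives $(\Lam{x}{x})^{\A}=\mathbf{I}^{\A}=\mathbf{id}^{\A}(\alpha,\alpha)$ directly from the definitions, so equality (and a fortiori the required inequality) holds on the nose.

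For the Leibniz replacement rule, assume $(t[\Gamma])^{\A}\cle\mathbf{id}^{\A}(\alpha,\beta)$ and $(u[\Gamma])^{\A}\cle p(\alpha)$. If $\alpha=\beta$, then $\mathbf{id}^{\A}(\alpha,\beta)=\mathbf{I}^{\A}=\bigmeet_{a\in\A}(a\to a)\cle p(\alpha)\to p(\alpha)=p(\alpha)\to p(\beta)$, and $\to$-elim immediately yields $(tu[\Gamma])^{\A}\cle p(\beta)$. If instead $\alpha\neq\beta$, then $\mathbf{id}^{\A}(\alpha,\beta)=\top\to\bot$, and since $(u[\Gamma])^{\A}\cle p(\alpha)\cle\top$, the $\to$-elim rule forces $(tu[\Gamma])^{\A}\cle\bot$, whence $(tu[\Gamma])^{\A}\cle p(\beta)$ by minimality of $\bot$.

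There is no genuine obstacle: the whole content reduces to a two-case unfolding of a definition. The only mildly notable point is that in the degenerate branch $\alpha\neq\beta$ the conclusion is obtained \emph{vacuously} via the ex falso principle $\bot\cle p(\beta)$, which is exactly how the encoding $\mathbf{id}^{\A}$ mimics Leibniz equality in the case of distinct objects, and which explains why no separate proof obligation arises on that side.
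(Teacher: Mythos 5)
Your proof is correct. Note that the paper itself gives no explicit proof of this proposition (it is stated right after the second-order characterization $\mathbf{id}^{\A}(\alpha,\beta)=\bigmeet_{q\in\A^{M}}(q(\alpha)\to q(\beta))$ and is evidently regarded as immediate), so there is nothing to conflict with; your two-case unfolding of the definition of $\mathbf{id}^{\A}$ is a perfectly valid elementary argument: in the case $\alpha=\beta$ you use $\mathbf{I}^{\A}=\bigmeet_{a\in\A}(a\to a)\cle p(\alpha)\to p(\beta)$ plus subsumption and $\to$-elim, and in the case $\alpha\neq\beta$ you use $\top\to\bot$, subsumption of $u$ to $\top$, $\to$-elim, and $\bot\cle p(\beta)$ (all legitimate instances of Prop.~\ref{p:SemTypingRules}, with the free-variable side conditions inherited trivially). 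The route the paper implicitly suggests is marginally slicker: from the preceding proposition one gets $\mathbf{id}^{\A}(\alpha,\beta)\cle p(\alpha)\to p(\beta)$ \emph{uniformly}, for the given $p$ and regardless of whether $\alpha=\beta$, so the Leibniz rule follows by a single application of subsumption and $\to$-elim with no case split, while reflexivity follows as you did. Your version trades that uniformity for a direct appeal to the two-case definition; both are equally rigorous, and your remark that the $\alpha\neq\beta$ branch goes through ex falso correctly identifies why the encoding works there.
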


\subsubsection{Interpreting a first-order language}
Let $\A=(\A,{\cle},{\to})$ be an implicative structure.
An $\A$-valued interpretation of a first-order language $\mathscr{L}$
is defined by:
\begin{itemize}
\item a domain of interpretation $M\neq\varnothing$;
\item an $M$-valued function $f^{M}:M^k\to M$ for each $k$-ary
  function symbol of $\mathscr{L}$;
\item a truth-value function $p^{\A}:M^k\to\A$ for each $k$-ary
  predicate symbol of $\mathscr{L}$.
\end{itemize}
As usual, we call a \emph{term with parameters in~$M$} (resp.\ a
\emph{formula with parameters in~$M$}) any first-order term
(resp.\ any formula) of the first-order language $\mathscr{L}$
enriched with constant symbols taken in~$M$.
Each closed term~$t$ with parameters in~$M$ is naturally interpreted
as the element $t^M\in M$ defined from the equations
$$a^M=a\quad(\text{if}~a~\text{is a parameter})\qquad\qquad
f(t_1,\ldots,t_k)^{M}=f^{M}(t_1^M,\ldots,t_k^M)$$
whereas each closed formula~$\phi$ with parameters in~$M$ is
interpreted as the truth value $\phi^{\A}\in\A$ defined from the
equations:
$$\begin{array}{r@{~~}c@{~~}l@{\qquad}r@{~~}c@{~~}l}
  (t_1=t_2)^{\A} &:=& \mathbf{id}^{\A}(t_1^M,t_2^M) &
  (p(t_1,\ldots,t_k))^{\A} &:=& p^{\A}(t_1^M,\ldots,t_k^M) \\[3pt]
  (\phi\limp\psi)^{\A} &:=& \phi^{\A}\to\psi^{\A} &
  (\lnot\phi)^{\A} &:=& \phi^{\A}\to\bot \\[3pt]
  (\phi\land\psi)^{\A} &:=& \phi^{\A}\times\psi^{\A} &
  (\phi\lor\psi)^{\A} &:=& \phi^{\A}+\psi^{\A} \\[3pt]
  (\forall x\,\phi(x))^{\A} &:=&
  \ds\bigforall_{\alpha\in M}(\phi(\alpha))^{\A} &
  (\exists x\,\phi(x))^{\A} &:=&
  \ds\bigexists_{\alpha\in M}(\phi(\alpha))^{\A} \\
\end{array}$$

\begin{proposition}[Soundness]\label{p:FOSoundness}
  If a closed formula~$\phi$ of the language~$\mathscr{L}$ is an
  intuitionistic tautology (resp.\ a classical tautology), then
  $$\phi^{\A}\in~\SJ(\A)\qquad(\text{resp.}~\phi^{\A}\in~\SK(\A))$$
  where $\SJ(\A)$ (resp.\ $\SK(\A)$) is the intuitionistic core
  (resp.\ the classical core) of~$\A$.
\end{proposition}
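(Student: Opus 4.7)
The plan is to invoke the Curry--Howard correspondence: given a natural deduction derivation $\pi$ of a closed tautology $\phi$, I will extract a closed $\lambda$-term $t_\pi$ (resp.\ a closed $\lambda$-term with $\cc$, in the classical case) such that the semantic typing judgment $\vdash t_\pi : \phi^{\A}$ holds, and then conclude by the very definitions of $\SJ(\A)$ and $\SK(\A)$ together with the upwards closure of separators.

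First I would fix a standard natural deduction calculus for intuitionistic first-order logic with equality, extended in the classical case by Peirce's axiom scheme $((\phi\limp\psi)\limp\phi)\limp\phi$. The point is that each inference rule of this calculus is matched by a semantic typing rule that has already been validated in the excerpt: the propositional rules by Prop.~\ref{p:SemTypingRules} and Prop.~\ref{p:ValidConjDisjRules}; the quantifier rules by Prop.~\ref{p:ValidForRules} and Prop.~\ref{p:ValidExRules}; the equality rules by Prop.~\ref{p:ValidIdRules}; \emph{ex falso quodlibet} by the fact that $\bot\cle a$ for every $a\in\A$ (so that a realizer of $\bot$ is already a realizer of any $\phi^{\A}$, by Subsumption); and, in the classical case, Peirce's rule by the definition $\cc^{\A}=\bigmeet_{a,b\in\A}(((a\to b)\to a)\to a)$ together with Subsumption.

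The heart of the argument is then a straightforward induction on $\pi$, building simultaneously, for every derivable sequent $\phi_1,\ldots,\phi_n\vdash\phi$, a $\lambda$-term $t_\pi$ whose free variables lie among $x_1,\ldots,x_n$ and such that $x_1:\phi_1^{\A},\ldots,x_n:\phi_n^{\A}\vdash t_\pi:\phi^{\A}$ in the sense of Section~\ref{ss:SemTyping}. Each induction step just applies the semantic typing rule corresponding to the last rule of $\pi$; in the classical case the term $t_\pi$ may additionally contain the constant $\cc$. When $\phi$ is a closed tautology, the context is empty and $t_\pi$ is a closed $\lambda$-term (possibly with $\cc$) satisfying $t_\pi^{\A}\cle\phi^{\A}$.

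To finish, I recall that by Prop.~\ref{p:SeparGen} the intuitionistic core $\SJ(\A)=\Sep(\varnothing)$ is the upwards closure of the set of interpretations of all closed pure $\lambda$-terms, while $\SK(\A)=\Sep(\{\cc^{\A}\})$ is the upwards closure of the set of interpretations of all closed $\lambda$-terms containing the parameter~$\cc^{\A}$. So from $t_\pi^{\A}\cle\phi^{\A}$ I get $\phi^{\A}\in\SJ(\A)$ (resp.\ $\SK(\A)$) by upwards closure. The main delicate point I expect is the $\exists$-elimination rule, where the eigenvariable condition must be discharged \emph{uniformly} in the eliminated variable; this is precisely what the uniform meet in the definition of $\bigexists$ enables, and it has already been packaged into the valid typing rule of Prop.~\ref{p:ValidExRules}, so no new work is required beyond a careful bookkeeping of substitutions.
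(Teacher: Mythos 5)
Your proposal is correct and follows essentially the same route as the paper: an induction on the natural deduction derivation producing a closed $\lambda$-term (with $\cc$ in the classical case) that is semantically typed by $\phi^{\A}$ via Prop.~\ref{p:SemTypingRules}, \ref{p:ValidConjDisjRules}, \ref{p:ValidForRules}, \ref{p:ValidExRules} and \ref{p:ValidIdRules}, then membership in the core by upwards closure. The only cosmetic difference is that you invoke the characterization of generated separators (Prop.~\ref{p:SeparGen}) for the final step, where the paper cites Prop.~\ref{p:SeparLam}; these are interchangeable here.
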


\begin{proof}
  By induction on the derivation~$d$ of the formula~$\phi$ (in
  natural deduction), we construct a closed $\lambda$-term~$t$
  (possibly containing the constant~$\cc$ when the derivation~$d$ is
  classical) such that $\vdash t:\phi^{\A}$, using the semantic typing
  rules given in Prop.~\ref{p:SemTypingRules},
  \ref{p:ValidConjDisjRules}, \ref{p:ValidForRules},
  \ref{p:ValidExRules} and \ref{p:ValidIdRules}.
  So that $t^{\A}\cle\phi^{\A}$.
  We conclude by Prop.~\ref{p:SeparLam}.
\end{proof}

\subsection{Entailment and the induced Heyting algebra}
\label{ss:InducedHA}

Let $(\A,{\cle},{\to})$ be an implicative structure.
Each separator $S\subseteq\A$ induces a binary relation of
\emph{entailment}, written $a\ent_Sb$ and defined by
$$a\ent_Sb~~:\liff~~(a\to b)\in S
\eqno(\text{for all}~a,b\in\A)$$

\begin{proposition}
  The relation $a\ent_Sb$ is a preorder on~$\A$.
\end{proposition}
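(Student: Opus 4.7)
The plan is to verify the two conditions for a preorder—reflexivity and transitivity—by exhibiting suitable closed $\lambda$-terms whose interpretations, being automatically in $S$ by Prop.~\ref{p:SeparLam}, produce the required entailments after applying modus ponens and upwards closure.

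For reflexivity, I want to show $(a\to a)\in S$ for every $a\in\A$. The natural witness is the identity combinator $\mathbf{I}=\Lam{x}{x}$. Since $\mathbf{I}$ is a closed $\lambda$-term, Prop.~\ref{p:SeparLam} gives $\mathbf{I}^{\A}\in S$. By Prop.~\ref{p:CombTypes}, $\mathbf{I}^{\A}=\bigmeet_{a\in\A}(a\to a)$, and since $\bigmeet_{a\in\A}(a\to a)\cle(a\to a)$ for each fixed $a$, the upwards closure of $S$ yields $(a\to a)\in S$, i.e.\ $a\ent_S a$.

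For transitivity, assume $a\ent_S b$ and $b\ent_S c$, that is $(a\to b)\in S$ and $(b\to c)\in S$; I need $(a\to c)\in S$. The natural witness is a composition combinator, e.g.\ $t:=\Lam{fg}{\Lam{x}{g(fx)}}$. Using the semantic typing rules of Prop.~\ref{p:SemTypingRules}, one checks immediately that $\vdash t:(a\to b)\to(b\to c)\to(a\to c)$, so that $t^{\A}\cle(a\to b)\to(b\to c)\to(a\to c)$. By Prop.~\ref{p:SeparLam}, $t^{\A}\in S$, and upwards closure gives $\bigl((a\to b)\to(b\to c)\to(a\to c)\bigr)\in S$. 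Two applications of closure under modus ponens (condition~(3) of Def.~\ref{d:Separator}), first with the hypothesis $(a\to b)\in S$ and then with $(b\to c)\in S$, deliver $(a\to c)\in S$.

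I do not expect any real obstacle: both steps reduce to the single general fact that the interpretation of any closed $\lambda$-term lands in $S$, combined with upwards closure and modus ponens, which are built into the definition of a separator. If one prefers a more elementary route avoiding the naming of a composition combinator, transitivity can also be obtained directly by chaining Prop.~\ref{p:PropApp}~(5) (adjunction) with condition~$(3')$ (closure under application), since from $(a\to b),(b\to c)\in S$ one gets $(b\to c)(a\to b)a\in S$, and $\beta$-reduction together with Prop.~\ref{p:BetaEta} shows this element lies below $c$; by the adjunction applied twice one then recovers $(a\to c)\in S$. Either formulation is a few lines long.
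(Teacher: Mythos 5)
Your main argument is correct and is essentially the paper's own proof: reflexivity via $\mathbf{I}^{\A}\cle(a\to a)$ plus upwards closure, and transitivity by interpreting a composition combinator and applying modus ponens twice --- the paper uses $\mathbf{B}=\Lam{xyz}{x(yz)}$ with $\mathbf{B}^{\A}\cle(b\to c)\to(a\to b)\to a\to c$, while you use $\Lam{fg}{\Lam{x}{g(fx)}}$, which is the same argument up to the order of the premises.

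However, the ``more elementary route'' you sketch at the end does not work. First, condition $(3')$ (closure under application) only applies to elements of the separator: from $(a\to b),(b\to c)\in S$ you may conclude $(b\to c)(a\to b)\in S$, but not $(b\to c)(a\to b)\,a\in S$, since $a$ need not belong to~$S$. Second, and more seriously, the inequality $\bigl((b\to c)(a\to b)\bigr)a\cle c$ --- equivalently, by the adjunction of Prop.~\ref{p:PropApp}~(5), $(b\to c)(a\to b)\cle(a\to c)$ --- is not an instance of $\beta$-reduction: Prop.~\ref{p:BetaEta} concerns interpretations of $\lambda$-terms, and this left-associated application of three parameters contains no redex (no $\slambda$ occurs in it). In fact the inequality fails in general implicative structures. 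For instance, in a Krivine-style structure $(\Pow(\Pi),{\supseteq},{\to})$ (Fact~\ref{f:KrivineImpAlg}) with $\Lambda=\{t\}$, $\Pi=\{\pi_0,\pi_1\}$, push given by $t\cdot\pi_0=\pi_1$, $t\cdot\pi_1=\pi_0$, and pole $\Bot=\{(t,\pi_0)\}$, taking $a=\{\pi_0\}$ and $b=c=\{\pi_1\}$ gives $b\to c=\varnothing=\top$ and $a\to b=a\to c=\{\pi_0\}$, whence $(b\to c)(a\to b)=\top\not\cle(a\to c)$. (What does hold by Prop.~\ref{p:PropApp} is $(b\to c)\bigl((a\to b)a\bigr)\cle c$, but with that parenthesization you again need $a\in S$ for the separator conditions to bite.) This is precisely why composition has to be routed through the interpretation of an actual $\lambda$-term, as in your main argument and in the paper.
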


\begin{proof}
  Reflexivity: given $a\in\A$, we have
  $\mathbf{I}^{\A}\cle(a\to a)\in S$.
  Transitivity: given $a,b,c\in\A$ such that $(a\to b)\in S$ and
  $(b\to c)\in S$, we observe that
  $\mathbf{B}^{\A}=(\Lam{xyz}{x(yz)})^{\A}\cle
  (b\to c)\to(a\to b)\to a\to c\in S$, hence
  $(a\to c)\in S$, by modus ponens.
\end{proof}

In what follows, we shall write $\A/S=(\A/S,{\le_S})$ the \emph{poset
  reflection} of the pre-ordered set $(\A,{\ent_S})$, where:
\begin{itemize}
\item[$\bullet$] $\A/S:=\A/{\tnent_S}$ is the quotient of~$\A$ by the
  equivalence relation $a\tnent_Sb$ induced by the preorder
  $a\ent_Sb$, which is defined by:
  $$a\tnent_Sb~~:\liff~~(a\to b)\in S\land(b\to a)\in S
  \eqno(\text{for all}~a,b\in\A)$$
\item[$\bullet$] $\alpha\le_S\beta$ is the order induced by the
  preorder $a\ent_Sb$ in the quotient set~$\A/S$, which is
  characterized by: 
  $$[a]\le_S[b]~~\liff~~a\ent_Sb
  \eqno(\text{for all}~a,b\in\A)$$
  writing $[a],[b]$ the equivalence classes of~$a,b\in\A$
  in the quotient~$\A/S$.
\end{itemize}

\begin{proposition}[Induced Heyting algebra]\label{p:InducedHA}
  For each separator $S\subseteq\A$, the poset reflection
  $H:=(\A/S,{\le_S})$ of the pre-ordered set $(\A,{\ent_S})$ is a
  Heyting algebra whose operations are given for all $a,b\in\A$ by:
  $$\begin{array}{r@{~~}c@{~~}l@{\qquad}r@{~~}c@{~~}l}
    {[a]\to_H[b]}&=&[a\to b] \\
    {[a]\land_H[b]}&=&[a\times b]&
    \top_H&=&[\top]~=~S \\
    {[a]\lor_H[b]}&=&[a+b] &
    \bot_H&=&[\bot]~=~\{c\in\A:(\lnot c)\in S\}\\
  \end{array}$$
  (writing $[a]$ the equivalence class of $a$).
  If, moreover, the separator $S\subseteq\A$ is classical, then the
  induced Heyting algebra~$H=(\A/S,{\le_S})$ is a Boolean algebra.
\end{proposition}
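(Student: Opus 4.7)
The plan is to show that every clause of the Heyting-algebra definition (and later the Boolean-algebra one) can be reduced to an inequality of the form $x \ent_S y$, and that each such inequality is witnessed by a closed pure $\lambda$-term~$t$ via the semantic typing rules of Prop.~\ref{p:SemTypingRules}, \ref{p:ValidConjDisjRules}, \ref{p:ValidForRules}. Then Prop.~\ref{p:SeparLam} guarantees that $t^{\A}\in S$, hence (by upwards closure) that the relevant implication lies in~$S$. Since all realizers used are pure $\lambda$-terms, only the intuitionistic core $\SJ(\A)$ is needed to establish the Heyting structure, and the classical core $\SK(\A)$ will be enough for the Boolean case.

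First I would check that ${\to}$, ${\times}$, ${+}$ respect $\tnent_S$, so that the proposed operations descend to the quotient. For implication, if $(a'\to a),(b\to b')\in S$, then $\Lam{fx}{g(f(hx))}$ with $g,h$ taken as formal realizers of $b\to b'$ and $a'\to a$ shows $((a\to b)\to(a'\to b'))\in S$; for conjunction and disjunction one uses standard combinators acting on the Church encodings. Then I would establish the order-theoretic content:
\begin{itemize}
\item $[\top]$ is the greatest element, since $(a\to\top)=\top$ by axiom~(2) with $B=\varnothing$, and $\top\in S$ by upwards closure of $\mathbf{K}^{\A}$;
\item $[\bot]$ is the least, since $\bot\cle a$ implies $\mathbf{I}^{\A}\cle(a\to a)\cle(\bot\to a)$ by anti-monotonicity of ${\to}$ in its first argument, so $(\bot\to a)\in S$;
\item $[a\times b]$ is the binary meet via the projections $\Lam{p}{p(\Lam{xy}{x})}$ and $\Lam{p}{p(\Lam{xy}{y})}$ together with the pairing $\Lam{xy}{\Lam{z}{zxy}}$;
\item $[a+b]$ is the binary join via the injections $\Lam{x}{\Lam{zw}{zx}}$, $\Lam{x}{\Lam{zw}{wx}}$ and the case-analysis combinator from Prop.~\ref{p:ValidConjDisjRules}.
\end{itemize}

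The heart of the argument is the adjunction $[c\times a]\le_S[b]\liff[c]\le_S[a\to b]$, which amounts to exhibiting closed $\lambda$-terms realizing $((c\times a)\to b)\to(c\to a\to b)$ and $(c\to a\to b)\to((c\times a)\to b)$. The first is witnessed by $\Lam{fxy}{f\langle x,y\rangle}$, the second by $\Lam{fp}{p(\Lam{xy}{fxy})}$; under application to a realizer of the hypothesis, the resulting element of~$\A$ lies in~$S$ by Prop.~\ref{p:SeparLam}. The characterization $[\bot]=\{c:\lnot c\in S\}$ follows immediately: $[c]=[\bot]$ unfolds as $(c\to\bot)\in S$ and $(\bot\to c)\in S$, and the second conjunct is automatic by the previous paragraph.

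For the Boolean case, assume $\cc^{\A}\in S$. The direction $a\ent_S\lnot\lnot a$ is intuitionistic, realized by $\Lam{xf}{fx}$, so it only remains to check $\lnot\lnot a\ent_Sa$. Since $\bot\cle a$, monotonicity of ${\to}$ in its second argument gives $\lnot\lnot a=(\lnot a\to\bot)\cle(\lnot a\to a)$, and by Prop.~\ref{p:CombTypes}-style reasoning $\cc^{\A}\cle(\lnot a\to a)\to a$ (taking $b=\bot$ in Peirce's law); composing these yields a $\lambda$-term with~$\cc$ realizing $\lnot\lnot a\to a$, whose interpretation lies in $\SK(\A)\subseteq S$. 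The main obstacle I anticipate is the bookkeeping for the adjunction and for the disjunction elimination rule, where one must be careful that the Church-encoded eliminators really produce, after expansion, inequalities in~$\A$ (not just typing judgments modulo~$S$) that can then be pushed through $S$ by Prop.~\ref{p:SeparLam}.
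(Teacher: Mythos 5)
Your proposal is correct and follows essentially the same route as the paper's proof: each order-theoretic clause is established by exhibiting a $\lambda$-term (closed or with parameters in~$S$) realizing the required implication, then invoking Prop.~\ref{p:SeparLam} together with upwards closure and modus ponens, with the currying/uncurrying adjunction characterizing $[a\to b]$ and Peirce's law at $b=\bot$ giving $\lnot\lnot a\ent_S a$ in the classical case. The only difference is your explicit check that ${\to},{\times},{+}$ descend to the quotient, which is harmless but redundant, since the order-theoretic characterizations (meet, join, relative pseudo-complement) already determine the operations on $\A/S$ up to equality.
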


In what follows, the quotient poset $H:=(\A/S,{\le_S})$ is called the
\emph{Heyting algebra induced by the implicative algebra
  $(\A,{\cle},{\to},S)$}.

\begin{proof}
  Given $a,b\in\A$, we observe the following:
  \begin{itemize}
  \item[$\bullet$] For all $c\in\A$, we have
    $\mathbf{I}^{\A}\cle(\bot\to c)\in S$, hence $[\bot]\le_S[c]$.
  \item[$\bullet$] For all $c\in\A$, we have
    $(c\to\top)=\top\in S$, hence $[c]\le_S[\top]$.
  \item[$\bullet$]
    $(\Lam{z}{z\,(\Lam{xy}{x})})^{\A}\cle(a\times b\to a)\in S$
    and $(\Lam{z}{z\,(\Lam{xy}{y})})^{\A}\cle(a\times b\to b)$, hence
    $[a\times b]\le_S[a]$ and $[a\times b]\le_S[b]$.
    Conversely, if $c\in\A$ is such that $[c]\le_S[a]$ and
    $[c]\le_S[b]$, we have $(c\to a)\in S$ and $(c\to b)\in S$.
    From Prop.~\ref{p:SeparLam} and Prop.~\ref{p:PropApp}~(2), we get
    $(\Lam{zw}{w\,((c\to a)\,z)\,((c\to b)\,z)})^{\A}
    \cle(c\to a\times b)\in S$, hence $[c]\le_S[a\times b]$.
    Therefore: $[a\times b]=\inf_H([a],[b])=[a]\wedge_H[b]$.
  \item[$\bullet$] $(\Lam{xzw}{z\,x})^{\A}\cle(a\to a+b)\in S$ and
    $(\Lam{yzw}{w\,y})^{\A}\cle(b\to a+b)\in S$, hence
    $[a]\le_S[a+b]$ and $[b]\le_S[a+b]$.
    Conversely, if $c\in\A$ is such that $[a]\le_S[c]$ and
    $[b]\le_S[c]$, we have $(a\to c)\in S$ and $(b\to c)\in S$.
    From Prop.~\ref{p:SeparLam} we get
    $(\Lam{z}{z\,(a\to c)\,(b\to c)})^{\A}\cle(a+b\to c)\in S$,
    hence $[a+b]\le_S[c]$.
    Therefore: $[a+b]=\sup_H([a],[b])=[a]\vee_H[b]$.
  \item[$\bullet$] For all $c\in\A$, we have
    $(\Lam{wz}{z\,w})^{\A}\cle((c\to a\to b)\to c\times a\to b)\in S$
    and $(\Lam{wxy}{w\,\<x,y\>})^{\A}\cle
    ((c\times a\to b)\to c\to a\to b)\in S$.
    Hence the equivalence
    $(c\to a\to b)\in S$ iff $(c\times a\to b)\in S$, that is:
    $[c]\le_S[a\to b]$ iff $[c\times a]\le_S[b]$.
    Therefore: $[a\to b]=
    \max\{\gamma\in H:\gamma\wedge_H[a]\le_S[b]\}=[a]\to_H[b]$.
  \end{itemize}
  So that the poset $(\A/S,{\le_S})$ is a Heyting algebra.
  If, moreover, the separator $S\subseteq\A$ is classical, then
  we have $\cc^{\A}\cle(\lnot\lnot a\to a)\in S$ for all $a\in\A$,
  so that $\lnot_H\lnot_H[a]=[\lnot\lnot a]\le_S[a]$, which means that
  $(\A/S,{\le_S})$ is a Boolean algebra.
\end{proof}

\begin{remarks}\label{r:InducedHA}
  (1)~~In the particular case where $(\A,{\cle},{\to})$ is a complete
  Heyting algebra (Section~\ref{sss:ComplHA}), the separator
  $S\subseteq\A$ is a filter, and the above construction amounts to
  the usual construction of the quotient $\A/S$ in Heyting
  algebras.\par
  (2)~~Coming back to the general framework of implicative structures,
  it is clear that the induced Heyting algebra $H=(\A/S,{\le_S})$ is
  non-degenerate (i.e.\ $[\top]\neq[\bot]$) if and only if the
  separator $S\subseteq\A$ is consistent (i.e.\ $\bot\notin S$).\par
  (3)~~When the separator $S\subseteq\A$ is classical (i.e.\ when
  $\cc^{\A}\in S$), the induced Heyting algebra is a Boolean algebra.
  The converse implication does not hold in general, and we shall see
  a counter-example in Section~\ref{ss:SeparMax} below
  (Remark~\ref{r:SeparMaxNoCC}).\par
  (4)~~In general, the induced Heyting algebra $(\A/S,{\le_S})$ is
  not complete---so that it is not an implicative structure either.
  A simple counter-example is given by the complete Boolean algebra
  $\Pow(\omega)$ (which is also an implicative structure) equipped
  with the Fr{\'e}chet filter
  $F=\{a\in\Pow(\omega):a~\text{cofinite}\}$ (which is also a
  classical separator of~$\Pow(\omega)$), since the quotient
  Boolean algebra $\Pow(\omega)/F$ is not complete~\cite[Chap. 2,
    \S~5.5]{Kop89}.
\end{remarks}

\subsection{Ultraseparators}\label{ss:SeparMax}

Let $\A=(\A,{\cle},{\to})$ be an implicative structure.
Although the separators of~$\A$ are in general not filters, they can
be manipulated similarly to filters.
By analogy with the notion of ultrafilter, we define the notion of
ultraseparator:
\begin{definition}[Ultraseparator]
  We call an \emph{ultraseparator} of~$\A$ any separator
  $S\subseteq\A$ that is both consistent and maximal among consistent
  separators (w.r.t.\ $\subseteq$).
\end{definition}

From Zorn's lemma, it is clear that:
\begin{lemma}
  For each consistent separator $S_0\subseteq\A$, there exists an
  ultraseparator $S\subseteq\A$ such that $S_0\subseteq S$.
\end{lemma}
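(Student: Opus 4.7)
The plan is a standard Zorn's lemma argument, entirely analogous to the proof that every filter extends to an ultrafilter. Let $\mathcal{P}$ denote the collection of all consistent separators of $\A$ that contain $S_0$, partially ordered by inclusion. Then $\mathcal{P}$ is nonempty since $S_0 \in \mathcal{P}$ by hypothesis, and the goal is to exhibit a maximal element of $\mathcal{P}$, which will by definition be an ultraseparator.

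To apply Zorn's lemma, I would verify that every nonempty chain $\mathcal{C} \subseteq \mathcal{P}$ admits an upper bound in $\mathcal{P}$, and the natural candidate is $S^{*} := \bigcup \mathcal{C}$. Checking that $S^{*}$ is a separator is routine: upwards closure (condition~(1) of Def.~\ref{d:Separator}) is preserved under arbitrary unions; the elements $\mathbf{K}^{\A}$ and $\mathbf{S}^{\A}$ lie in any single $S \in \mathcal{C}$, hence in $S^{*}$ (condition~(2)); and for closure under modus ponens (condition~(3)), if $(a \to b) \in S^{*}$ and $a \in S^{*}$, then using that $\mathcal{C}$ is a chain we can find a single $S \in \mathcal{C}$ containing both, and $b \in S \subseteq S^{*}$ follows. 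Consistency of $S^{*}$ is immediate: since $\bot \notin S$ for every $S \in \mathcal{C}$, we have $\bot \notin S^{*}$. Clearly $S_0 \subseteq S^{*}$, so $S^{*} \in \mathcal{P}$ is an upper bound for $\mathcal{C}$.

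Zorn's lemma then furnishes a maximal element $S \in \mathcal{P}$. By construction $S$ is a consistent separator containing $S_0$, and its maximality in $\mathcal{P}$ is exactly the condition that $S$ be maximal among consistent separators, i.e.\ an ultraseparator in the sense of the definition just given. There is no real obstacle here; the only point worth flagging is that the chain argument uses the directedness of $\mathcal{C}$ precisely at the modus ponens step, which is why a single $S \in \mathcal{C}$ containing both $(a\to b)$ and $a$ can always be located.
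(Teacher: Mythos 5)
Your proof is correct and follows exactly the route the paper intends: the paper simply asserts the lemma ``from Zorn's lemma,'' and your argument—ordering the consistent separators containing $S_0$ by inclusion, taking unions of chains (with directedness used for modus ponens), and applying Zorn—is the standard verification being left implicit there.
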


\begin{proposition}
  For each separator $S\subseteq\A$, the following are equivalent:
  \begin{enumerate}[(99)]
  \item[(1)] $S$ is an ultraseparator of~$\A$.
  \item[(2)] The induced Heyting algebra $(\A/S,{\le_S})$ is the
    2-element Boolean algebra.
  \end{enumerate}
\end{proposition}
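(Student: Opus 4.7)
The plan is to prove the two implications separately, making essential use of the deduction lemma (stated between the definition of $\Sep$ and that of the intuitionistic/classical cores) together with the explicit descriptions from Prop.~\ref{p:InducedHA}: $[a] = [\top]$ iff $a \in S$, and $[a] = [\bot]$ iff $(\lnot a) \in S$.

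For the direction $(2) \Rightarrow (1)$, assume $\A/S$ is the two-element Boolean algebra. Then $[\top] \neq [\bot]$, so by Remark~\ref{r:InducedHA}(2) the separator $S$ is consistent. For maximality, suppose some consistent separator $S'$ strictly contains $S$, and pick $a \in S' \setminus S$. Since $a \notin S$, we have $[a] \neq [\top]$ in $\A/S$, so by the two-element hypothesis $[a] = [\bot]$, giving $\lnot a \in S \subseteq S'$. Then $a, \lnot a \in S'$, and closure under modus ponens yields $\bot \in S'$, contradicting the consistency of $S'$. Hence no such $S'$ exists, and $S$ is an ultraseparator.

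For the direction $(1) \Rightarrow (2)$, assume $S$ is an ultraseparator and fix an arbitrary $a \in \A$; I claim that $[a] \in \{[\top], [\bot]\}$. If $a \in S$, then $[a] = [\top]$ and we are done. Otherwise, $\Sep(S \cup \{a\})$ strictly contains $S$, so by the maximality of $S$ among consistent separators, $\Sep(S \cup \{a\})$ must be inconsistent, i.e., $\bot \in \Sep(S \cup \{a\})$. By the deduction lemma, this is equivalent to $(a \to \bot) \in S$, i.e., $\lnot a \in S$, so $[a] = [\bot]$. Combined with the consistency of $S$ (which ensures $[\top] \neq [\bot]$ by Remark~\ref{r:InducedHA}(2)), this shows $|\A/S| = 2$; and the two-element Heyting algebra is automatically Boolean, since $\lnot\lnot\top = \top$ and $\lnot\lnot\bot = \bot$ there.

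The argument is essentially a direct transposition of the classical fact that a filter in a Heyting algebra is an ultrafilter iff the quotient is the two-element Boolean algebra. There is no real obstacle here: the deduction lemma does all the heavy lifting in $(1) \Rightarrow (2)$, and the main thing worth flagging is that no classicality assumption on $S$ is required, because the two-element Heyting algebra happens to be Boolean on its own.
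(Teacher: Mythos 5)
Your proof is correct, and while the direction $(2)\limp(1)$ coincides with the paper's argument (the paper phrases it directly as $S'=S$ rather than by contradiction, but the content --- $a\in S'$, $\lnot a\notin S$, hence $a\in[\top]=S$ --- is the same), your direction $(1)\limp(2)$ takes a genuinely different route. The paper fixes $a_0$ with $[a_0]\neq[\bot]$ and builds the auxiliary separator $S'=\{a\in\A:(a_0\to a)\in S\}$, i.e.\ the preimage of the principal filter ${\uparrow}[a_0]\subseteq\A/S$ under the canonical surjection; consistency of $S'$ follows from $[a_0]\neq[\bot]$, and maximality forces $S'=S$, hence $a_0\in S$ and $[a_0]=[\top]$. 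You instead invoke the deduction lemma of Section~\ref{ss:SeparGen}: for $a\notin S$, the generated separator $\Sep(S\cup\{a\})$ strictly contains $S$, so maximality forces $\bot\in\Sep(S\cup\{a\})$, and the deduction lemma converts this into $\lnot a\in S$, i.e.\ $[a]=[\bot]$. Your route yields the cleaner dichotomy ``for every $a$, either $a\in S$ or $\lnot a\in S$'' (the exact analogue of the classical ultrafilter characterization) and outsources all the work to an already-proved lemma, whereas the paper's route is self-contained at the level of the quotient construction, at the cost of checking (which it leaves as ``clearly'') that the preimage $S'$ is indeed a consistent separator. Your closing remarks --- that consistency gives $[\top]\neq[\bot]$ via Remark~\ref{r:InducedHA}~(2), and that no classicality of $S$ is needed because the two-element Heyting algebra is Boolean --- are accurate and consistent with the paper's own Remark~\ref{r:SeparMaxNoCC}.
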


\begin{proof}
  $(1)\limp(2)$\quad Assume that $S\subseteq\A$ is an ultraseparator.
  Since~$S$ is consistent, we have $\bot\notin S$ and thus
  $[\bot]\neq[\top]~({=}~S)$.
  Now, take $a_0\in\A$ such that $[a_0]\neq[\bot]$, and let
  $S'=\{a\in\A:[a_0]\le_S[a]\}=\{a\in\A:(a_0\to a)\in S\}$ be the
  preimage of the principal filter ${\uparrow}[a_0]\subseteq\A/S$ via
  the canonical surjection $[\,\cdot\,]:\A\to\A/S$.
  Clearly, the subset $S'\subseteq\A$ is a consistent separator such
  that $S\subseteq S'$ and $a_0\in S'$.
  By maximality, we have $S'=S$, so that $a_0\in S$ and thus
  $[a_0]=[\top]$.
  Therefore, $\A/S=\{[\bot],[\top]\}$ is the 2-element Heyting
  algebra, that is also a Boolean algebra.
  \smallbreak\noindent
  $(2)\limp(1)$\quad Let us assume that $\A/S$ is the 2-element
  Boolean algebra (so that $\A/S=\{[\bot],[\top]\}$), and consider a
  consistent separator $S'\subseteq\A$ such that $S\subseteq S'$.
  For all $a\in S'$, we have $\lnot a\notin S$
  (otherwise, we would have $a,\lnot a\in S'$, and thus
  $\bot\in S'$), hence $a\notin[\bot]$ and thus
  $a\in[\top]=S$.
  Therefore, $S'=S$.
\end{proof}

\begin{remark}\label{r:SeparMaxNoCC}
  It is important to notice that a maximal separator is not
  necessarily classical, although the induced Heyting algebra is
  always the trivial \emph{Boolean} algebra.
  Indeed, we have seen in Section~\ref{sss:ImpAlgRealizJ} that any
  total combinatory algebra $(P,{\,\cdot\,},\mathtt{k},\mathtt{s})$
  induces an implicative algebra
  $(\A,{\cle},{\to},S)=
  (\Pow(P),{\subseteq},{\to},\Pow(P)\setminus\{\varnothing\})$
  whose separator
  $S:=\Pow(P)\setminus\{\varnothing\}=\A\setminus\{\bot\}$ 
  is obviously an ultraseparator.
  But when the set~$P$ has more than one element, it is easy to check
  that
  $$\cc^{\A}~\cle~\bigmeet_{\!\!a\in\A\!\!}(\lnot\lnot a\to a)
  ~=~\bot~({=}~\varnothing)$$
  so that $\cc^{\A}=\bot\notin S$.
  On the other hand, the induced Heyting algebra $\A/S$ is the
  trivial Boolean algebra, which corresponds to the well-known fact
  that, in intuitionistic realizability, one of both formulas~$\phi$
  and $\lnot\phi$ is realized for each \emph{closed} 
  formula~$\phi$.
  So that all the closed instances of the law of excluded middle are
  actually realized.
  Of course, this does not imply that the law of excluded middle
  itself---that holds for all \emph{open} formulas---is (uniformly)
  realized.
  By the way, this example also shows that a non-classical
  separator~$S\subseteq\A$ may induce a Boolean algebra (see
  Remark~\ref{r:InducedHA}~(3)).
\end{remark}

\subsection{Separators, filters and non-deterministic
  choice}\label{ss:SeparFilter}

Like filters, separators are upwards closed and nonempty, but they
are not closed under binary meets in general.
In this section, we shall now study the particular case of separators
that happen to be filters.

\subsubsection{Non-deterministic choice}
Given an implicative structure $\A=(\A,{\cle},{\to})$, we let:
$$\Fork^{\A}
~:=~(\Lam{xy}{x})^{\A}\meet(\Lam{xy}{y})^{\A}
~=~\bigmeet_{\!\!\!\!a,b\in\A\!\!\!\!}(a\to b\to a\meet b)\,.$$
By construction, we have:
$$\Fork^{\A}a\,b~\cle~a\qquad\text{and}\qquad
\Fork^{\A}a\,b~\cle~b\eqno(\text{for all}~a,b\in\A)$$
so that we can think of $\Fork^{\A}$ as the \emph{non-deterministic
  choice operator} (in~$\A$), that takes two arguments $a,b\in\A$ and
returns~$a$ or~$b$ in an non-deterministic way%
\footnote{In classical realizability, it can be shown~\cite{GM15} that
  the universal realizers of the second-order formula
  $\forall\alpha\,\forall\beta\,(\alpha\to\beta\to\alpha\cap\beta)$
  (where $\alpha\cap\beta$ denotes the intersection of~$\alpha$
  and~$\beta$) are precisely the closed terms~$t$ with the
  non-deterministic computational rules\ \
  $t\star u\cdot v\cdot\pi\succ u\star\pi$\ \ and\ \
  $t\star u\cdot v\cdot\pi\succ v\star\pi$\ \
  for all closed terms~$u$, $v$ and for all stacks~$\pi$.
  Recall that Krivine's abstract machine~\cite{Kri09} can be extended
  with extra instructions at will (for instance: an instruction
  $\Fork$ with the aforementioned non-deterministic behavior), so
  that such realizers may potentially exist.}.

From the point of view of logic, recall that the meet $a\meet b$ of
two elements $a,b\in\A$ can be seen as a strong form of conjunction.
Indeed, it is clear that
$$(\Lam{xz}{z\,x\,x})^{\A}~\cle~(a\meet b\to a\times b)~\in~S$$
for all separators $S\subseteq\A$ and for all $a,b\in\A$, so that we
have $a\meet b\ent_Sa\times b$.
Seen as a type, the non-deterministic choice operator
$\Fork^{\A}=\bigmeet_{a,b}(a\to b\to a\meet b)$ precisely
expresses the converse implication, and we easily check that:
\begin{proposition}[Characterizing filters]\label{p:CharacFilter}
  For all separators $S\subseteq\A$, the following assertions are
  equivalent:
  \begin{enumerate}[(99)]
  \item[(1)] $\Fork^{\A}\in S$;
  \item[(2)] $[a\meet b]_{/S}=[a\times b]_{/S}$ for all $a,b\in\A$;
  \item[(3)] $S$ is a filter (w.r.t.\ the ordering $\cle$).
  \end{enumerate}
\end{proposition}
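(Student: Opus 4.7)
\medbreak\noindent\textbf{Proof plan.}
The plan is to prove the cycle $(3)\Rightarrow(1)\Rightarrow(2)\Rightarrow(3)$, using at each step only the combinatorial properties of separators and the semantic typing rules established earlier.

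The implication $(3)\Rightarrow(1)$ is the easiest: since $\mathbf{K}^{\A}=(\Lam{xy}{x})^{\A}$ lies in $S$ by axiom~(2) of Def.~\ref{d:Separator}, and $(\Lam{xy}{y})^{\A}\in S$ by Prop.~\ref{p:SeparLam} applied to the closed $\lambda$-term $\Lam{xy}{y}$, if $S$ is closed under binary meets then $\Fork^{\A}=(\Lam{xy}{x})^{\A}\meet(\Lam{xy}{y})^{\A}\in S$.

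For $(1)\Rightarrow(2)$, the inequality $[a\meet b]_{/S}\le_S[a\times b]_{/S}$ always holds, since the remark just before the proposition exhibits $(\Lam{xz}{z\,x\,x})^{\A}\cle(a\meet b\to a\times b)$, and this element lies in any separator by Prop.~\ref{p:SeparLam}. For the converse direction I would argue as follows. Assuming $\Fork^{\A}\in S$, by upwards closure and the definition of $\Fork^{\A}$ as a meet, $(a\to b\to a\meet b)\in S$ for all $a,b\in\A$. Now, using the second-order definition of $a\times b$ specialized at $c:=a\meet b$, we have $a\times b\cle((a\to b\to a\meet b)\to a\meet b)$. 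Consider then the $\lambda$-term $\Lam{x}{x\,p}$ with parameter $p:=\Fork^{\A}$: by the semantic typing rules of Prop.~\ref{p:SemTypingRules} (parameter, subsumption, ${\to}$-elim, ${\to}$-intro), one obtains $(\Lam{x}{x\,p})^{\A}\cle(a\times b\to a\meet b)$, and by Prop.~\ref{p:SeparLam} this element lies in $S$. By upwards closure, $(a\times b\to a\meet b)\in S$, which combined with the reverse entailment yields $[a\meet b]_{/S}=[a\times b]_{/S}$.

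For $(2)\Rightarrow(3)$, I need to show $S$ is closed under binary meets (it is already upwards closed and nonempty, hence contains $\top$). Given $a,b\in S$, consider the $\lambda$-term $\Lam{z}{z\,a\,b}$ with parameters $a,b$: by typing (using the axiom, parameter, ${\to}$-elim, ${\to}$-intro and generalization rules) it receives type $\bigmeet_{c\in\A}((a\to b\to c)\to c)=a\times b$, and by Prop.~\ref{p:SeparLam} its interpretation lies in $S$, so $a\times b\in S$ by upwards closure. Applying hypothesis~(2), $(a\times b\to a\meet b)\in S$, and modus ponens gives $a\meet b\in S$.

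The only delicate step is the construction in $(1)\Rightarrow(2)$, which requires exactly the right instantiation of the second-order encoding of $\times$ at the meet $a\meet b$; the rest reduces to bookkeeping with closed $\lambda$-terms and the semantic typing rules already established.
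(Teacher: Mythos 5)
Your proof is correct and takes essentially the same route as the paper: the same cyclic decomposition into $(1)\Rightarrow(2)\Rightarrow(3)\Rightarrow(1)$, with the same key witness $(\Lam{z}{z\,\Fork^{\A}})^{\A}\cle(a\times b\to a\meet b)$ for $(1)\Rightarrow(2)$ and the meet $(\Lam{xy}{x})^{\A}\meet(\Lam{xy}{y})^{\A}$ for $(3)\Rightarrow(1)$. The only cosmetic difference is in $(2)\Rightarrow(3)$, where the paper computes $[a\meet b]_{/S}=[a\times b]_{/S}=[\top\times\top]_{/S}=[\top]_{/S}$ in the quotient Heyting algebra while you establish $a\times b\in S$ via the pairing term $\Lam{z}{z\,a\,b}$ and conclude by modus ponens; the two arguments are the same bookkeeping.
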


\begin{proof}
  $(1)\limp(2)$\quad
  For all $a,b\in\A$, it is clear that
  $[a\meet b]_{/S}\le_S[a\times b]_{/S}$.
  And from~(1), we get
  $\bigl(\Lam{z}{z\,\Fork^{\A}}\bigr)^{\A}
  \cle(a\times b\to a\meet b)\in S$,
  hence $[a\times b]_{/S}\le_S[a\meet b]_{/S}$.
  \smallbreak\noindent
  $(2)\limp(3)$\quad Let us assume that $a,b\in S$.
  We have $[a]_{/S}=[b]_{/S}=[\top]_{/S}$, so that by~(2) we get
  $[a\meet b]_{/S}=[a\times b]_{/S}
  =[\top\times\top]_{/S}=[\top]_{/S}$.
  Therefore $(a\meet b)\in S$.
  \smallbreak\noindent
  $(3)\limp(1)$\quad
  It is clear that $(\Lam{xy}{x})^{\A}\in S$ and
  $(\Lam{xy}{y})^{\A}\in S$, so that from (3) we get
  $\Fork^{\A}=(\Lam{xy}{x})^{\A}\meet(\Lam{xy}{y})^{\A}\in S$.
\end{proof}

\subsubsection{Non-deterministic choice and induction}
In second-order logic~\cite{Gir89,Kri93}, the predicate
$\Nat(x)$ expressing that a given individual~$x$ is a natural number%
\footnote{Here, we recognize Dedekind's construction of natural
  numbers, as the elements of a fixed Dedekind-infinite set that
  are reached by the induction principle (seen as a local property).}
is given by:
$$\Nat(x)~:=~
\forall Z\,(Z(0)\limp\forall y\,(Z(y)\limp Z(y+1))\limp Z(x))\,.$$
In intuitionistic realizability~\cite{Oos08,Kri93} as in classical
realizability~\cite{Kri09}, it is well-known that the (unrelativized)
induction principle\ \ $\textsc{Ind}:=\forall x~\Nat(x)$\ \
is not realized in general, even when individuals are interpreted by
natural numbers in the model.
(Technically, this is the reason why uniform quantifications over the
set of natural numbers need to be replaced by quantifications
relativized to the predicate $\Nat(x)$.)

In any implicative structure $\A=(\A,{\cle},{\to})$, the syntactic
predicate $\Nat(x)$ is naturally interpreted by the semantic predicate
$\Nat^{\A}:\omega\to\A$ defined by
$$\Nat^{\A}(n)~:=~\bigmeet_{\!a\in\A^{\omega}\!}
\Biggl(a_0\to
\bigmeet_{\!\!i\in\omega\!\!}\Bigl(a_i\to a_{i+1}\Bigr)\to
a_n\Biggr)\eqno(\text{for all}~n\in\omega)$$
while the (unrelativized) induction scheme is interpreted by the truth
value
$$\textsc{Ind}^{\A}~:=~
\bigmeet_{\!\!n\in\omega\!\!}\Nat^{\A}(n)\,.$$
The following proposition states that the unrelativized induction
scheme $\textsc{Ind}^{\A}$ and the non-deterministic choice
operator~$\Fork^{\A}$ are intuitionistically equivalent in~$\A$:
\begin{proposition}
  $\textsc{Ind}^{\A}\tnent_{\SJ(\A)}\Fork^{\A}$
  (where $\SJ(\A)$ is the intuitionistic core of~$\A$).
\end{proposition}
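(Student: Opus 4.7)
The plan is to prove the two entailments $\textsc{Ind}^{\A}\ent_{\SJ(\A)}\Fork^{\A}$ and $\Fork^{\A}\ent_{\SJ(\A)}\textsc{Ind}^{\A}$ separately, each by producing a closed pure $\lambda$-term $T$ with $T^{\A}$ below the required implication. By Prop.~\ref{p:SeparLam}, the interpretation of any such closed $\lambda$-term lies in $\Sep(\varnothing)=\SJ(\A)$, and upwards closure then places the required implication in the separator.

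For $\textsc{Ind}^{\A}\ent_{\SJ(\A)}\Fork^{\A}$, I would use $T:=\Lam{txy}{t\,x\,(\Lam{z}{y})}$ and verify it via the semantic typing rules of Prop.~\ref{p:SemTypingRules}. Commuting the outer meet over $n$ past the two implications using axiom~(2) of Def.~\ref{d:ImpStruct}, one has
$$\textsc{Ind}^{\A}~=~\bigmeet_{a\in\A^{\omega}}
\Bigl(a_0\to\bigmeet_{i\in\omega}(a_i\to a_{i+1})\to\bigmeet_{n\in\omega}a_n\Bigr).$$
Under the assumptions $t:\textsc{Ind}^{\A}$, $x:a$, $y:b$, I would instantiate this with the sequence $a_0:=a$ and $a_i:=b$ for $i\ge 1$. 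The constant function $\Lam{z}{y}$ types at $a_i\to a_{i+1}$ for every~$i$ (its body always produces $y\cle b$), so by generalization it types at $\bigmeet_i(a_i\to a_{i+1})$. Then $t\,x\,(\Lam{z}{y})$ types at $\bigmeet_n a_n=a\meet b$. Three uses of $\to$-introduction and one generalization over $a,b$ yield $\vdash T:\textsc{Ind}^{\A}\to\Fork^{\A}$.

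For $\Fork^{\A}\ent_{\SJ(\A)}\textsc{Ind}^{\A}$, I would realize $\textsc{Ind}^{\A}$ as a \emph{non-deterministic Church numeral} using a closed Turing-style fixed-point combinator $\Theta$ (so that $\Theta F\redd_{\beta}F(\Theta F)$). Set
$$T~:=~\Lam{\phi}{\Theta\bigl(\Lam{n x f}{\phi\,x\,(f\,(n\,x\,f))}\bigr)}.$$
Given a parameter $\phi\cle\Fork^{\A}$ and writing $z:=(T\,\phi\,x\,f)^{\A}$ for generic $x,f$, $\beta$-reduction (Prop.~\ref{p:BetaEta}) yields $z\cle\phi\,x\,(f\,z)$. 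The two defining inclusions of $\Fork^{\A}$ then produce $z\cle x$ and $z\cle f\,z$ separately. For a sequence $(a_n)$ with $x\cle a_0$ and $f\cle\bigmeet_i(a_i\to a_{i+1})$, an external induction on~$n$, combined with monotonicity of application (Prop.~\ref{p:PropApp}~(1)), gives $z\cle a_n$ at every rank; hence $z\cle\bigmeet_n a_n$, so $T^{\A}\cle\Fork^{\A}\to\textsc{Ind}^{\A}$.

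The main obstacle is the fixpoint step in the reverse direction: one must ensure that the syntactic unfolding of $\Theta$ translates into a semantic inequality strong enough to drive the external induction on~$n$. This is precisely the regime where the one-sided inequalities of Prop.~\ref{p:BetaEta} (rather than equalities) are adequate, since the argument only requires upper bounds on~$z$. Everything else---adjunction, monotonicity, and commutation of meets with implication---is routine once this pivot is in place.
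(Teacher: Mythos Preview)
Your proposal is correct and follows essentially the same approach as the paper. In both directions you use the same instantiations and the same fixpoint-based construction; the only differences are cosmetic: for the forward direction the paper writes $\mathbf{K}\,y$ where you write $\Lam{z}{y}$, and for the reverse direction the paper packages the fixpoint at the numeral level via $\mathbf{zero}$ and $\mathbf{succ}$ (proving $\Theta\cle\Nat^{\A}(n)$ for each~$n$) while you inline the application to $x,f$ and induct on $z\cle a_n$ directly---same argument, different bookkeeping.
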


\begin{proof}
  $(\textsc{Ind}^{\A}\ent_{\SJ(\A)}\Fork^{\A})$\quad
  Given $a,b\in\A$, we let $c_0=a$ and $c_n=b$ for all $n\ge 1$.
  From an obvious argument of subtyping, we get
  $$\textsc{Ind}^{\A}~\cle~\bigmeet_{\!\!\!n\in\omega\!\!\!}
  \biggl(c_0\to
  \bigmeet_{\!\!\!i\in\omega\!\!\!}\Bigl(c_i\to c_{i+1}\Bigr)
  \to c_n\biggr)~=~
  a\to((a\to b)\meet(b\to b))\to a\meet b$$
  so that $(\Lam{nxy}{n\,x\,(\mathbf{K}\,y)})^{\A}\cle
  (\textsc{Ind}^{\A}\to a\to b\to a\meet b)$.
  Now taking the meet for all $a,b\in\A$, we thus get
  $(\Lam{nxy}{n\,x\,(\mathbf{K}\,y)})^{\A}\cle
  (\textsc{Ind}^{\A}\to\Fork^{\A})\in\SJ(\A)$.
  \smallbreak\noindent
  $(\Fork^{\A}\ent_{\SJ(\A)}\textsc{Ind}^{\A})$\quad
  Consider the following pure $\lambda$-terms:
  $$\begin{array}{rcl}
    \mathbf{zero}&:=&\Lam{xy}{x}\\
    \mathbf{succ}&:=&\Lam{nxy}{y\,(n\,x\,y)}\\
    \mathbf{Y}&:=&(\Lam{yf}{f\,(y\,y\,f)})\,
    (\Lam{yf}{f\,(y\,y\,f)})\\
    t[x]&:=&\mathbf{Y}\,(\Lam{r}{
      x\,\mathbf{zero}\,(\mathbf{succ}\,r)})\\
  \end{array}$$
  (here, $\mathbf{Y}$ is Turing's fixpoint combinator).
  From the typing rules of Prop.~\ref{p:SemTypingRules}, we easily
  check that $\mathbf{zero}^{\A}\cle\Nat(0)$ and
  $\mathbf{succ}^{\A}\cle\Nat(n)\to\Nat(n+1)$ for all $n\in\omega$.
  Now, consider the element
  $\Theta:=\bigl(t[\Fork^{\A}]\bigr)^{\A}\in\A$.
  From the reduction rule of $\mathbf{Y}$, we get
  $$\Theta~\cle~
  \Fork^{\A}\mathbf{zero}^{\A}(\mathbf{succ}^{\A}\Theta)
  ~\cle~\mathbf{zero}^{\A}\meet\mathbf{succ}^{\A}\Theta\,.$$
  By a straightforward induction on~$n$, we deduce that
  $\Theta\cle\Nat(n)$ for all $n\in\omega$, hence
  $\Theta\cle\textsc{Ind}^{\A}$.
  Therefore: $(\Lam{x}{t[x]})^{\A}\cle
  (\Fork^{\A}\to\Theta)\cle
  (\Fork^{\A}\to\textsc{Ind}^{\A})\in\SJ(\A)$.
\end{proof}

\subsubsection{Non-deterministic choice and the parallel-or}
A variant of the non-deterministic choice operator is the
\emph{parallel `or'}, that is defined by:
$$\Por^{\A}~:=~
(\bot\to\top\to\bot)\meet(\top\to\bot\to\bot)\,.$$
Intuitively, the parallel `or' is a function that takes two
arguments---one totally defined and the other one totally
undefined---and returns the most defined of both, independently from
the order in which both arguments were passed to the function.
(Recall that according to the definitional ordering
$a\sqsubseteq b:\liff a\cge b$, the element $\bot$ represents
the totally defined object whereas $\top$ represents the totally
undefined object.)

We observe that
$$\Fork^{\A}~=~\bigmeet_{\!\!\!\!a,b\in\A\!\!\!\!}(a\to b\to a\meet b)
~\cle~(\bot\to\top\to\bot)\meet(\top\to\bot\to\bot)\,,$$
which means that the parallel `or' $\Por^{\A}$ is a super-type of the
non-deterministic choice operator $\Fork^{\A}$.
However, both operators are classically equivalent:
\begin{proposition}\label{p:PorForkEquivK}
  $\Por^{\A}\tnent_{\SK(\A)}\Fork^{\A}$
  (where $\SK(\A)$ is the classical core of~$\A$).
\end{proposition}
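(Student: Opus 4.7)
The plan is to prove the two entailments separately. The easy direction $\Fork^{\A}\ent_{\SK(\A)}\Por^{\A}$ follows immediately from the inclusion $\Fork^{\A}\cle\Por^{\A}$ noted just above the statement: monotonicity of implication in its second argument gives $\mathbf{I}^{\A}\cle(\Fork^{\A}\to\Fork^{\A})\cle(\Fork^{\A}\to\Por^{\A})$, and since $\mathbf{I}^{\A}\in\SJ(\A)\subseteq\SK(\A)$, upwards closure concludes.

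For the non-trivial direction $\Por^{\A}\ent_{\SK(\A)}\Fork^{\A}$, I would exhibit a single closed classical combinatory term that uses $\cc$ to convert the parallel-or into non-deterministic choice, namely
$$t~:=~\Lam{pxy}{\cc\,(\Lam{k}{p\,(k\,x)\,(k\,y)})}\,.$$
By the commutation of implication with meets on the right (axiom~(2) of Def.~\ref{d:ImpStruct}), together with the generalization rule, it suffices to prove that for every $a,b\in\A$, the body $\cc\,(\Lam{k}{p(kx)(ky)})$ can be typed simultaneously as $a$ and as $b$ in the context $p:\Por^{\A},\ x:a,\ y:b$; taking the meet over $a,b$ then yields $t^{\A}\cle\Por^{\A}\to\Fork^{\A}$, and Prop.~\ref{p:SeparLam} places $t^{\A}$ in $\SK(\A)$, so that upwards closure concludes.

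The essential classical move is to instantiate $\cc$ at two different Peirce-law instances on the \emph{same} body. To type it as~$a$, use $\cc^{\A}\cle((a\to\bot)\to a)\to a$: assuming $k:a\to\bot$ one gets $kx:\bot$, and combining this with the branch $p\cle\bot\to\top\to\bot$ of $\Por^{\A}$ (noting $ky:\top$ is free by $\top$-intro) gives $p(kx)(ky):\bot$, hence $:a$ by subsumption, since $\bot\cle a$. To type it as~$b$, symmetrically use $\cc^{\A}\cle((b\to\bot)\to b)\to b$ together with the other branch $p\cle\top\to\bot\to\bot$, yielding $ky:\bot$ and then $p(kx)(ky):\bot\cle b$. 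The meet-introduction rule (Generalization in Prop.~\ref{p:SemTypingRules}) combines these into $\cc(\Lam{k}{p(kx)(ky)}):a\meet b$.

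The main conceptual obstacle is finding the right term; once the classical trick of throwing through a captured continuation (either $kx$ or $ky$) is in place, the two Peirce instantiations at types $a$ and $b$ each select exactly one of the two branches packed into $\Por^{\A}$, which is precisely the asymmetry needed to realize $\Fork^{\A}=\bigmeet_{a,b}(a\to b\to a\meet b)$. No appeal to the operational machinery of abstract Krivine structures is necessary: everything is carried out inside the implicative structure using only the semantic typing rules, the variance and meet-commutation axioms of implication, and the two branches of $\Por^{\A}$.
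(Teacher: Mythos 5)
Your proposal is correct and follows essentially the same route as the paper: the easy direction by subtyping, and the hard direction via the very same classical term $\Lam{zxy}{\cc\,(\Lam{k}{z\,(k\,x)\,(k\,y)})}$, typed at both $a$ and $b$ through the two Peirce instances $((a\to\bot)\to a)\to a$ and $((b\to\bot)\to b)\to b$ and the two branches of $\Por^{\A}$, then combined by meets and placed in $\SK(\A)$ via Prop.~\ref{p:SeparLam}. The only difference is presentational (you take the meet at the level of the body, the paper at the level of the two full types), and your write-up usefully spells out the typing details the paper leaves to the reader.
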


\begin{proof}
  $(\Fork^{\A}\ent_{\SK(\A)}\Por^{\A})$\quad Obvious, by subtyping.
  \smallbreak\noindent
  $(\Por^{\A}\ent_{\SK(\A)}\Fork^{\A})$\quad
  Let $t:=\Lam{zxy}{\cc\,(\Lam{k}{z\,(k\,x)\,(k\,y)})}$.
  From the semantic typing rules of Prop.~\ref{p:SemTypingRules}
  (and from the type of~$\cc$) we easily check that
  $$t^{\A}\cle(\Por^{\A}\to a\to b\to a)\quad\text{and}\quad
  t^{\A}\cle(\Por^{\A}\to a\to b\to b)$$
  for all $a,b\in\A$, hence
  $t^{\A}\cle(\Por^{\A}\to\Fork^{\A})\in\SK(\A)$.
\end{proof}

\subsubsection{The case of finitely generated
  separators}\label{sss:FinGenSep}
In Prop.~\ref{p:CharacFilter} above, we have seen that a separator
$S\subseteq\A$ is a filter if and only if it contains the
non-deterministic choice operator $\Fork^{\A}$.
In the particular case where the separator $S\subseteq\A$ is finitely
generated (see Section~\ref{ss:SeparGen}), the situation is even more
dramatic:
\begin{proposition}\label{p:CharacPrincFilter}
  Given a separator $S\subseteq\A$, the following are equivalent:
  \begin{enumerate}[(99)]
  \item[(1)] $S$ is finitely generated and $\Fork^{\A}\in S$.
  \item[(2)] $S$ is a principal filter of~$\A$:\quad
    $S={\uparrow}\{\Theta\}$\quad for some $\Theta\in S$.
  \item[(3)] The induced Heyting algebra $(\A/S,{\le_S})$ is
    complete and the canonical surjection $[\,\cdot\,]_{/S}:\A\to\A/S$
    commutes with arbitrary meets:
    $$\biggl[\bigmeet_{\!\!i\in I\!\!}a_i\biggr]_{/S}~=~
    \bigwedge_{i\in I}[a_i]_{/S}
    \eqno(\text{for all}~(a_i)_{i\in I}\in\A^I)$$
  \end{enumerate}
\end{proposition}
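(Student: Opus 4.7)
The plan is to establish the cyclic chain of implications $(1)\Rightarrow(2)\Rightarrow(3)\Rightarrow(1)$.

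For $(1)\Rightarrow(2)$, suppose $S=\Sep(X)$ for some finite $X\subseteq\A$ with $\Fork^{\A}\in S$. By Prop.~\ref{p:CharacFilter}, $S$ is already a filter, so enlarging $X$ into the still-finite set $Y:=X\cup\{\mathbf{K}^{\A},\mathbf{S}^{\A},\Fork^{\A}\}$ (which still generates $S$) we may form the finite meet $\Theta_0:=\bigmeet Y$, which lies in $S$. The key move is to fabricate a \emph{self-replicating} witness from $\Theta_0$: letting $\mathbf{Y}$ denote Turing's fixpoint combinator, set
$$\Theta~:=~\bigl(\mathbf{Y}\,(\Lam{x}{\Fork^{\A}\,\Theta_0\,(x\,x)})\bigr)^{\A}\,.$$
By Prop.~\ref{p:SeparLam}, $\Theta\in S$, and from the reduction $\mathbf{Y}\,f\redd_\beta f(\mathbf{Y}\,f)$ combined with Prop.~\ref{p:BetaEta} and the defining inequality $\Fork^{\A}\,a\,b\cle a\meet b$, one reads off the two key inequalities $\Theta\cle\Theta_0$ and $\Theta\cle\Theta\Theta$. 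A structural induction on $b_0\in@(Y)$ then shows $\Theta\cle b_0$: for a generator $y\in Y$, use $\Theta\cle\Theta_0\cle y$; for an application $b_0=c_1c_2$, the induction hypothesis together with monotonicity of application (Prop.~\ref{p:PropApp}~(1)) gives $\Theta\Theta\cle c_1c_2$, and self-replication concludes. Since $S={\uparrow}@(Y)$ by Prop.~\ref{p:SeparGen}, this yields $S={\uparrow}\{\Theta\}$.

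For $(2)\Rightarrow(3)$, assuming $S={\uparrow}\{\Theta\}$, the equivalence $(a\to b)\in S\liff\Theta\cle(a\to b)$ combined with axiom~(2) of Def.~\ref{d:ImpStruct} gives the chain of equivalences
$$[c]_{/S}\le_S[a_i]_{/S}~(\forall i\in I)
~\liff~\Theta\cle\bigmeet_{i\in I}(c\to a_i)
~\liff~\Theta\cle\Bigl(c\to\bigmeet_{i\in I}a_i\Bigr)
~\liff~[c]_{/S}\le_S\biggl[\bigmeet_{i\in I}a_i\biggr]_{/S}\,,$$
which characterises $\bigl[\bigmeet_{i\in I}a_i\bigr]_{/S}$ as the greatest lower bound of $\{[a_i]_{/S}:i\in I\}$ in $\A/S$; hence the induced Heyting algebra is complete and $[\,\cdot\,]_{/S}$ commutes with arbitrary meets.

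For $(3)\Rightarrow(1)$, I would first record the equivalence $a\in S\liff[a]_{/S}=[\top]_{/S}$: the forward direction follows from $\mathbf{K}^{\A}\cle(a\to\top\to a)\in S$ together with modus ponens (producing $(\top\to a)\in S$), and the converse from $\top\in S$ plus modus ponens. Hypothesis (3) then yields
$$\biggl[\bigmeet S\biggr]_{/S}~=~\bigwedge_{a\in S}[a]_{/S}
~=~\bigwedge_{a\in S}[\top]_{/S}~=~[\top]_{/S}\,,$$
so $\Theta:=\bigmeet S$ belongs to $S$; since $\Theta\cle a$ for every $a\in S$, we obtain $S={\uparrow}\{\Theta\}=\Sep(\{\Theta\})$, which is finitely generated. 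As a principal filter, $S$ is closed under binary meets, so $\Fork^{\A}=(\Lam{xy}{x})^{\A}\meet(\Lam{xy}{y})^{\A}\in S$. The main obstacle is the construction in $(1)\Rightarrow(2)$: the filter property alone delivers only finite meets, while the applicative closure of the generators is typically infinite, so the self-replication $\Theta\cle\Theta\Theta$---obtained via the fixpoint combinator applied to $\Lam{x}{\Fork^{\A}\,\Theta_0\,(x\,x)}$---is the crucial ingredient that lets one bootstrap from the finite data of $Y$ to the whole applicative closure by the structural induction above.
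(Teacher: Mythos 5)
Your proof is correct and follows essentially the same route as the paper: the same cycle $(1)\Rightarrow(2)\Rightarrow(3)\Rightarrow(1)$, with the fixpoint-combinator/self-replication trick for $(1)\Rightarrow(2)$, the principal-filter characterisation of meets for $(2)\Rightarrow(3)$, and $\Theta:=\bigmeet S$ together with the commutation hypothesis for $(3)\Rightarrow(1)$. The only (harmless) variation is that you apply the binary $\Fork^{\A}$ to the single meet $\Theta_0=\bigmeet Y$ of the generators, where the paper instead forms the $(n{+}1)$-ary operator $\Fork_{n+1}^{\A}$ and applies it to the generators directly.
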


\begin{proof}
  $(1)\limp(2)$\quad Let us assume that
  $S={\uparrow}@(\{g_1,\ldots,g_n\})$ for some $g_1,\ldots,g_n\in S$
  (see Section~\ref{ss:SeparGen}, Prop.~\ref{p:SeparGen}), and
  $\Fork^{\A}\in S$.
  From the latter assumption, we know (by Prop.~\ref{p:CharacFilter})
  that~$S$ is closed under all finite meets, so that for all $k\ge 1$,
  we have:
  $$\Fork_k^{\A}~:=~\bigmeet_{i=1}^k(\Lam{x_1\cdots x_k}{x_i})^{\A}
  ~=~\bigmeet_{\hskip-10pta_1,\ldots,a_k\in\A\hskip-10pt}
  (a_1\to\cdots\to a_k\to a_1\meet\cdots\meet a_k)~\in~S\,.$$
  Let $\Theta:=\bigl(\mathbf{Y}\,(\Lam{r}{
    \Fork_{n+1}^{\A}g_1\cdots g_n\,(r\,r)})\bigr)^{\A}$, where
  $\mathbf{Y}:=(\Lam{yf}{f\,(y\,y\,f)})\,(\Lam{yf}{f\,(y\,y\,f)})$
  is Turing's fixpoint combinator.
  Since $g_1,\ldots,g_n,\Fork_{n+1}^{\A}\in S$, it is clear that
  $\Theta\in S$.
  From the evaluation rule of $\mathbf{Y}$, we have\ \
  $\Theta~\cle~\Fork_{n+1}^{\A}g_1\cdots g_n\,(\Theta\Theta)
  ~\cle~g_1\meet\cdots\meet g_n\meet\Theta\Theta$,\ \
  hence $\Theta\cle g_i$ for all $i\in\{1,\ldots,n\}$ and
  $\Theta\cle\Theta\Theta$.
  By a straightforward induction, we deduce that
  $\Theta\cle a$ for all $a\in@(\{g_1,\ldots,g_n\})$
  (recall that the latter set is generated from $g_1,\ldots,g_n$ by
  application), and thus $\Theta\cle a$ for all 
  $a\in{\uparrow}@(\{g_1,\ldots,g_n\})=S$ (by upwards closure).
  Therefore: $\Theta=\min(S)$ and $S={\uparrow}\{\Theta\}$
  (since $S$ is upwards closed).
  \smallbreak\noindent
  $(2)\limp(3)$\quad
  Let us assume that $S={\uparrow}\{\Theta\}$ for some $\Theta\in S$.
  Let $(\alpha_i)_{i\in I}\in(\A/S)^I$ be a family of equivalence
  classes indexed by an arbitrary set~$I$, and
  $(a_i)_{i\in I}\in\prod_{i\in I}\alpha_i$ a system of
  representatives.
\COUIC{
  \footnote{\label{note:AC}
    In what follows, we shall silently use the axiom of choice
    (AC) whenever necessary.
    Note that here, we only need (AC) to extract a family of
    representatives $(a_i)_{i\in I}\in\prod_{i\in I}\alpha_i$ from a
    family of equivalence classes  $(\alpha_i)_{i\in I}\in(\A/S)^I$,
    which could be avoided by working with Heyting pre-algebras rather
    than with Heyting algebras.
    On the other hand, Heyting pre-algebras have their own technical
    complications, so that we shall stick to Heyting algebras, for the
    sake of readability.}.
}
  Since $\bigl(\bigmeet_{i\in I}a_i\bigr)\cle a_i$ for all $i\in I$,
  we have $\bigl[\bigmeet_{i\in I}a_i\bigr]_{/S}\le_S\alpha_i$ for
  all $i\in I$, hence $\bigl[\bigmeet_{i\in I}a_i\bigr]_{/S}$ is a
  lower bound of $(\alpha_i)_{i\in I}$ in $\A/S$.
  Now, let us assume that $\beta=[b]_{/S}$ is a lower bound of
  $(\alpha_i)_{i\in I}$ in~$\A/S$, which means that $(b\to a_i)\in S$
  for all $i\in I$.
  But since $S={\uparrow}\{\Theta\}$, we have
  $\Theta\cle(b\to a_i)$ for all $i\in I$, hence
  $\Theta\cle\bigl(b\to\bigmeet_{i\in I}a_i\bigr)$,
  so that $\beta=[b]_{/S}\le_S\bigl[\bigmeet_{i\in I}a_i\bigr]_{/S}$.
  Therefore, $\bigl[\bigmeet_{i\in I}a_i\bigr]_{/S}$ is the g.l.b.\ of
  the family $(\alpha_i)_{i\in I}=\bigl([a_i]_{/S}\bigr)_{i\in I}$ in
  $\A/S$.
  This proves that the induced Heyting algebra $(\A/S,{\le_S})$ is
  complete, as well as the desired commutation property.
  \smallbreak\noindent
  $(3)\limp(2)\limp(1)$\quad Let us assume that the Heyting algebra
  $(\A/S,{\le_S})$ is complete, and that the canonical surjection
  $[\,\cdot\,]_{/S}:\A\to\A/S$ commutes with arbitrary meets.
  Letting $\Theta:=\bigmeet S$, we observe that
  $$\bigl[\Theta\bigr]_{/S}~=~
  \biggl[\bigmeet_{a\in S}a\biggr]_{/S}~=~
  \bigwedge_{a\in S}[a]_{/S}~=~[\top]_{/S}\,,$$
  hence $\Theta\in S$.
  Therefore $\Theta=\min(S)$ and $S={\uparrow}\{\Theta\}$ (since
  $S$ is upwards closed), which shows that~$S$ is the principal filter
  generated by~$\Theta$.
  But this implies that $S$ is finitely generated (we obviously have
  $S=\Sep(\{\Theta\})$) and that $\Fork^{\A}\in S$ (by
  Prop.~\ref{p:CharacFilter}).
\end{proof}

\begin{remark}\label{r:RealizCollapse}
  From a categorical perspective, the situation described by
  Prop.~\ref{p:CharacPrincFilter} is particularly important, since it
  characterizes the collapse of realizability to forcing.
  Indeed, we shall see in Section~\ref{ss:CharacForcingTriposes}
  (Theorem~\ref{th:CharacForcingTriposes}) that the tripos induced by
  an implicative algebra $(\A,{\cle},{\to},S)$
  (Section~\ref{ss:ConstrImpTripos}) is isomorphic to a forcing tripos
  (induced by some complete Heyting algebra) if and only if the
  separator~$S\subseteq\A$ is a principal filter of~$\A$, that is: if
  and only if the separator~$S$ is finitely generated and contains the
  non-deterministic choice operator $\Fork^{\A}$.
\end{remark}

\subsection{On the interpretation of existential
  quantification as a join}\label{ss:ExJoin}

In Section~\ref{ss:InterpFOL}, we have seen that existential
quantifications cannot be interpreted by (infinitary) joins in the
general framework of implicative structures.
(We shall actually present a counter-example at the end of this
section.)
Using the material presented in Section~\ref{ss:SeparFilter} above,
we shall now study the particular class of implicative structures
where existential quantifications are naturally interpreted by joins.

Formally, we say that an implicative structure $\A=(\A,{\cle},{\to})$
is \emph{compatible with joins} when it fulfills the additional axiom
$$\bigmeet_{\!\!a\in A\!\!}(a\to b)~=~
\biggl(\bigjoin_{\!\!a\in A\!\!}a\biggr)\to b$$
for all subsets $A\subseteq\A$ and for all $b\in\A$.
(Note that the converse relation $\cge$ holds in any implicative
structure, so that only the direct relation $\cle$ matters.)

This axiom obviously holds in any complete Heyting (or Boolean)
algebra, as well as in any implicative structure induced by a total
combinatory algebra $(P,{\,\cdot\,},\mathtt{k},\mathtt{s})$ 
(Section~\ref{sss:CaseRealizJ}).
On the other hand, the implicative structures induced by classical
realizability (Section~\ref{sss:CaseRealizK}) are in general
\emph{not} compatible with joins, as we shall see below.

When an implicative structure $\A=(\A,{\cle},{\to})$ is compatible
with joins, the existential quantifier can be interpreted as a join
$$\bigexists_{i\in I}a_i~:=~\bigjoin_{i\in I}a_i$$
since the corresponding elimination rule is directly given by the
subtyping relation
$$\bigmeet_{\!\!i\in I\!\!}(a_i\to b)~\cle~
\biggl(\bigjoin_{\!\!i\in I\!\!}a_i\biggr)\to b\,.$$
In this situation, we can also observe many simplifications at the level
of the defined connectives $\times$ and $+$:
\begin{proposition}
  If an implicative structure $\A=(\A,{\cle},{\to})$ is
  compatible with joins, then for all $a\in\A$, we have:
  $$\begin{array}{r@{~~}c@{~~}l@{\qquad\qquad}r@{~~}c@{~~}l}
    \bot\to a&=&\top & \Por^{\A}&=&\top \\
    a\times\bot&=&\top\to\bot & a+\bot&=&(\Lam{xy}{x\,a})^{\A} \\
    \bot\times a&=&\top\to\bot & \bot+a&=&(\Lam{xy}{y\,a})^{\A} \\
  \end{array}$$
\end{proposition}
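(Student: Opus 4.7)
The plan is to establish each of the six equalities by combining the newly assumed axiom of compatibility with joins with the basic properties of implicative structures, instantiated at suitably degenerate sets. I first observe that $\bot = \bigjoin\varnothing$ and $\top = \bigmeet\varnothing$, so applying the compatibility axiom with $A = \varnothing$ gives $\bot\to a = \bigmeet_{a'\in\varnothing}(a'\to a) = \top$. This single identity immediately entails $\Por^{\A} = \top$: the first conjunct $\bot\to\top\to\bot$ reduces to $\top$ directly, while the second $\top\to\bot\to\bot$ reduces to $\top\to\top$, which equals $\top$ by axiom~(2) of Def.~\ref{d:ImpStruct} applied with $B = \varnothing$.

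For $a\times\bot$ and $\bot\times a$, I would unfold the second-order encoding, so that $a\times\bot = \bigmeet_c((a\to\bot\to c)\to c)$ and symmetrically for $\bot\times a$; in both cases the inner subterm $\bot\to(\cdot)$ collapses to $\top$ by the first identity, and then $a\to\top = \top = \top\to\top$ by axiom~(2). Both expressions therefore reduce to $\bigmeet_c(\top\to c)$, which, again by axiom~(2), equals $\top\to\bigmeet_c c = \top\to\bot$, as required.

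The only genuinely non-trivial step will be the pair $a + \bot = (\Lam{xy}{x\,a})^{\A}$ and $\bot + a = (\Lam{xy}{y\,a})^{\A}$; I handle the first, the second being entirely symmetric. Using the first identity, $a + \bot$ simplifies to $\bigmeet_c((a\to c)\to\top\to c)$, while unfolding the abstraction yields $(\Lam{xy}{x\,a})^{\A} = \bigmeet_x\bigl(x\to\bigmeet_y(y\to xa)\bigr)$, and compatibility with joins (applied with $A = \A$, whose join is~$\top$) simplifies the inner meet to $\top\to xa$, giving $(\Lam{xy}{x\,a})^{\A} = \bigmeet_x(x\to\top\to xa)$. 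Matching these two meets is then a change of variable: for $a + \bot \cle (\Lam{xy}{x\,a})^{\A}$, I fix $x$, instantiate $c := xa$, and use the $\eta$-expansion $x \cle (a\to xa)$ of Prop.~\ref{p:PropApp}~(3) together with anti-monotonicity of implication in its first argument to obtain $(a\to xa)\to\top\to xa \cle x\to\top\to xa$; for the converse inequality, I fix $c$, instantiate $x := a\to c$, and use the $\beta$-reduction $(a\to c)a \cle c$ of Prop.~\ref{p:PropApp}~(2) together with monotonicity of implication in the second argument. The sole obstacle is tracking the variance of implication correctly through this last comparison, but no deeper ingredient is required.
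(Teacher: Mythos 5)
Your proof is correct and takes essentially the same route as the paper's: the identities $\bot\to a=\top$, $\Por^{\A}=\top$, $a\times\bot=\bot\times a=\top\to\bot$ are computed identically, and your treatment of $a+\bot$ (and symmetrically $\bot+a$) amounts to the same instantiation-plus-variance argument, using Prop.~\ref{p:PropApp}~(2)--(3). The only cosmetic difference is that the paper obtains $(\Lam{xy}{x\,a})^{\A}\cle a+\bot$ via a semantic-typing step and keeps the two-variable form $\bigmeet_{d,e}(d\to e\to da)$, whereas you prove that inequality by explicit $\beta$-reduction and first rewrite the abstraction as $\bigmeet_{x}(x\to\top\to xa)$ using compatibility with joins.
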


\begin{proof*}
  Indeed, we have:
  \begin{itemize}
  \item[$\bullet$] $\bot\to a=(\bigjoin\varnothing)\to a
    =\bigmeet\varnothing=\top$, from the compatibility with joins.
  \item[$\bullet$] $\Por^{\A}=(\bot\to\top\to\bot)\meet
    (\top\to\bot\to\bot)=\top\meet(\top\to\top)=\top$.
  \item[$\bullet$] $a\times\bot=\bigmeet_c((a\to\bot\to c)\to c)
    =\bigmeet_c(\top\to c)=\top\to\bot$.
  \item[$\bullet$] $\bot\times a=\bigmeet_c((\bot\to a\to c)\to c)
    =\bigmeet_c(\top\to c)=\top\to\bot$.
  \item[$\bullet$] By semantic typing, we have:
    \begin{center}
      $(\Lam{xy}{x\,a})^{\A}~\cle~
      \bigmeet_{c}((a\to c)\to(\bot\to c)\to c)~=~a+\bot\,.$
    \end{center}
    And conversely:
    \begin{center}
      $\begin{array}{@{}r@{~~}c@{~~}l}
        a+\bot&=&\ds\bigmeet_{c}((a\to c)\to(\bot\to c)\to c)
        ~=~\bigmeet_{c}((a\to c)\to\top\to c)\\
        &\cle&\ds\bigmeet_{d,e}((a\to da)\to e\to da)
        ~\cle~\bigmeet_{d,e}(d\to e\to da)~=~(\Lam{xy}{x\,a})^{\A}\\
      \end{array}$
    \end{center}
  \item[$\bullet$] The equality $\bot+a=(\Lam{xy}{y\,a})^{\A}$ is
    proved similarly.\hfill\usebox{\proofbox}
  \end{itemize}
\end{proof*}

In particular, we observe a trivialization of the parallel `or':
$\Por^{\A}=\top$, so that by Prop.~\ref{p:PorForkEquivK}, we get
$\Fork^{\A}\in\SK(\A)$.
Therefore, by Prop.~\ref{p:CharacFilter}, it is clear that:
\begin{proposition}\label{p:BadRealizK}
  If an implicative structure $\A=(\A,{\to},{\cle})$ is compatible
  with joins, then all its classical separators are filters.
\end{proposition}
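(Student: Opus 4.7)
The plan is to chain together the three results just proved. First, from the preceding proposition (under compatibility with joins) we have $\Por^{\A}=\top$, and $\top$ belongs to every separator by upwards closure. Hence $\Por^{\A}\in\SK(\A)$, the classical core of~$\A$.

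Next, by Prop.~\ref{p:PorForkEquivK}, $\Por^{\A}\tnent_{\SK(\A)}\Fork^{\A}$; in particular $(\Por^{\A}\to\Fork^{\A})\in\SK(\A)$. Combining this with $\Por^{\A}\in\SK(\A)$ via modus ponens (condition~(3) in the definition of separator) yields $\Fork^{\A}\in\SK(\A)$.

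Now let $S\subseteq\A$ be an arbitrary classical separator. By definition $\cc^{\A}\in S$, and since $\SK(\A)=\Sep(\{\cc^{\A}\})$ is the smallest classical separator of~$\A$, we have $\SK(\A)\subseteq S$. In particular $\Fork^{\A}\in S$.

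Finally, by Prop.~\ref{p:CharacFilter} (the characterization of separators that are filters via membership of $\Fork^{\A}$), the separator~$S$ is a filter of~$\A$. No step is really the main obstacle here; the whole proof is just a short chain of invocations, the only subtlety being to recall that every classical separator automatically contains the classical core $\SK(\A)$, which is what transfers the equivalence $\Por^{\A}\tnent_{\SK(\A)}\Fork^{\A}$ from the core to an arbitrary classical~$S$.
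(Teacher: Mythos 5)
Your proof is correct and follows essentially the same route as the paper: compatibility with joins gives $\Por^{\A}=\top\in\SK(\A)$, Prop.~\ref{p:PorForkEquivK} plus modus ponens gives $\Fork^{\A}\in\SK(\A)$, hence $\Fork^{\A}\in S$ for every classical separator~$S$, and Prop.~\ref{p:CharacFilter} concludes that $S$ is a filter. The only cosmetic remark is that $\top\in S$ comes from upwards closure applied to, e.g., $\mathbf{K}^{\A}\in S$, not from upwards closure alone.
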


Of course, this situation is highly undesirable in classical
realizability (see Remark~\ref{r:RealizCollapse} above), and this
explains why classical realizability is not and cannot be compatible
with joins in general (except in the degenerate case of forcing).

\begin{remark}[The model of threads]\label{r:ModelThreads}
  In~\cite{Kri12}, Krivine constructs a model of
  $\text{ZF}+\text{DC}$ from a particular abstract Krivine structure
  (see Section~\ref{sss:CaseRealizK}), called the \emph{model of
    threads}.
  This particular AKS is defined in such a way that it is consistent,
  while providing a proof-like term $\theta\in\mathrm{PL}$ that
  realizes the \emph{negation} of the parallel `or':
  \begin{center}
    $\theta~\Vdash~
    \lnot((\bot\to\top\to\bot)\meet(\top\to\bot\to\bot))$.
  \end{center}
  In the induced classical implicative algebra $(\A,{\cle},{\to},S)$
  (Section~\ref{sss:ImpAlgRealizK}), we thus have $\bot\notin S$
  and $\lnot\Por^{\A}\in S$.
  Hence $\Por^{\A}\notin S$ and thus $\Fork^{\A}\notin S$
  (by Prop.~\ref{p:PorForkEquivK}), so that~$S$ is not a filter
  (Prop.~\ref{p:CharacFilter}).
  From Prop.~\ref{p:BadRealizK} (by contraposition), it is then clear
  that the underlying implicative structure $(\A,{\cle},{\to})$ is not
  compatible with joins.
\end{remark}

\section{The implicative tripos}
\label{s:Tripos}

In Section~\ref{ss:InducedHA}, we have seen that any implicative
algebra $(\A,{\cle},{\to},S)$ induces a Heyting algebra
$(\A/S,{\le_S})$ that intuitively captures the corresponding
logic, at least at the propositional level.
In this section, we shall see that this construction more generally
gives rise to a ($\Set$-based) tripos, called an \emph{implicative
  tripos}.
For that, we first need to present some constructions on implicative
structures and on separators.

\subsection{Product of implicative structures}
\label{ss:ProdImpStruct}

Let $(\A_i)_{i\in I}=(\A_i,{\cle_i},{\to_i})_{i\in I}$ be a family of
implicative structures indexed by an arbitrary set~$I$.
The Cartesian product $\A:=\prod_{i\in I}\A_i$ is naturally equipped
with the ordering $({\cle})\subseteq\A^2$ and the implication
$({\to}):\A^2\to\A$ that are defined componentwise:
$$\begin{array}{r@{~~}c@{~~}l}
  (a_i)_{i\in I}\cle(b_i)_{i\in I}
  &:\liff& \forall i\in I,~a_i\cle_ib_i\\[3pt]
  (a_i)_{i\in I}\to(b_i)_{i\in I}
  &:=& (a_i\to_ib_i)_{i\in I}\\
\end{array}\eqno\begin{tabular}{r@{}}
(product ordering)\\[3pt]
(product implication)\\
\end{tabular}$$
It is straightforward to check that:
\begin{proposition} The triple $(\A,{\cle},{\to})$ is an
  implicative structure.
\end{proposition}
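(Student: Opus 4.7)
The plan is to reduce each requirement of Definition~\ref{d:ImpStruct} to the corresponding property holding in each factor $\A_i$, exploiting the fact that the product structure is defined strictly componentwise.

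First I would verify that $(\A,{\cle})$ is a complete meet-semilattice. The key observation is that for any subset $B \subseteq \A = \prod_{i \in I} \A_i$, its meet is computed coordinatewise: if we let $\pi_i : \A \to \A_i$ denote the $i$-th projection, then
$$\bigmeet B ~=~ \Bigl(\bigmeet\nolimits_i \pi_i(B)\Bigr)_{i\in I}\,,$$
which exists because each $(\A_i,{\cle_i})$ is a complete meet-semilattice. A short verification against the product ordering confirms that this tuple is indeed the greatest lower bound of $B$ in $\A$.

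Next I would check axiom~(1) (variance of implication). Given $(a'_i)_{i\in I} \cle (a_i)_{i\in I}$ and $(b_i)_{i\in I} \cle (b'_i)_{i\in I}$ in $\A$, the definition of the product ordering yields $a'_i \cle_i a_i$ and $b_i \cle_i b'_i$ for every $i \in I$. Applying axiom~(1) of $\A_i$ at each coordinate gives $(a_i \to_i b_i) \cle_i (a'_i \to_i b'_i)$ for all $i$, which is exactly the required componentwise inequality $(a_i)_{i\in I} \to (b_i)_{i\in I} \cle (a'_i)_{i\in I} \to (b'_i)_{i\in I}$.

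For axiom~(2) (commutation with meets on the second operand), fix $a = (a_i)_{i\in I} \in \A$ and $B \subseteq \A$. Unfolding the componentwise definitions of implication and meet, then applying axiom~(2) in each $\A_i$, I compute
$$a \to \bigmeet B ~=~ \Bigl(a_i \to_i \bigmeet\nolimits_i \pi_i(B)\Bigr)_{i\in I} ~=~ \Bigl(\bigmeet\nolimits_{b \in B}(a_i \to_i b_i)\Bigr)_{i\in I} ~=~ \bigmeet_{b \in B}(a \to b)\,,$$
which is the desired equality. There is no real obstacle here, since every property required of $(\A,{\cle},{\to})$ is an equational or quasi-equational statement that is preserved under pointwise products of algebraic structures of the same signature; the only care needed is to observe that meets in $\A$ really are componentwise, which is where the completeness of each factor is used.
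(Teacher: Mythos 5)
Your proof is correct: the paper states this proposition as a straightforward check and gives no explicit proof, and your componentwise verification (meets in the product are computed coordinatewise, so completeness and both axioms of Def.~\ref{d:ImpStruct} reduce to the corresponding properties of each factor $\A_i$) is exactly the intended routine argument.
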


In the product implicative structure
$(\A,{\cle},{\to})=\prod_{i\in I}\A_i$, the defined constructions
$\lnot a$ (negation), $a\times b$ (conjunction), $a+b$ (disjunction),
$ab$ (application), $\cc^{\A}$ (Peirce's law) and $\Fork^{\A}$
(non-deterministic choice) are naturally characterized componentwise:
\begin{proposition}\label{p:ProdOps}
  For all $a,b\in\A=\prod_{i\in I}\A_i$, we have:
  $$\begin{array}{r@{~~}c@{~~}l@{\qquad}r@{~~}c@{~~}l@{\qquad}r@{~~}c@{~~}l}
    \lnot a&=&(\lnot a_i)_{i\in I} &
    a\times b&=&(a_i\times b_i)_{i\in I} &
    a+b&=&(a_i+b_i)_{i\in I} \\
    ab&=&(a_ib_i)_{i\in I} &
    \cc^{\A}&=&\bigl(\cc^{\A_i}\bigr)_{i\in I} &
    \Fork^{\A}&=&\bigl(\Fork^{\A_i}\bigr)_{i\in I} \\
  \end{array}$$
\end{proposition}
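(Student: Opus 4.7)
The plan is to reduce everything to a single underlying fact: arbitrary meets in the product implicative structure $\A=\prod_{i\in I}\A_i$ are computed componentwise. Once this is established, each of the six identities is just a matter of unfolding the relevant definition, since each of $\lnot a$, $a\times b$, $a+b$, $ab$, $\cc^{\A}$ and $\Fork^{\A}$ is built purely out of meets and the (componentwise) implication.

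First I would verify that for any family $(a^{(j)})_{j\in J}$ of elements of $\A$ with $a^{(j)}=(a^{(j)}_i)_{i\in I}$, one has
$$\bigmeet_{j\in J} a^{(j)} ~=~ \Bigl(\,\bigmeet_{j\in J} a^{(j)}_i\Bigr)_{i\in I}\,,$$
which is immediate from the definition of the product ordering, together with the fact that the bottom element of $\A$ is $\bot=(\bot_i)_{i\in I}$ and the top element is $\top=(\top_i)_{i\in I}$. Combined with the componentwise definition of implication, this yields the componentwise behaviour of every expression built from meets and implications.

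Next I would dispatch the six identities in order. For negation, $\lnot a = a\to\bot = (a_i\to_i\bot_i)_{i\in I} = (\lnot a_i)_{i\in I}$. For conjunction,
$$a\times b ~=~ \bigmeet_{c\in\A}\bigl((a\to b\to c)\to c\bigr)
~=~\Bigl(\,\bigmeet_{c_i\in\A_i}\bigl((a_i\to_i b_i\to_i c_i)\to_i c_i\bigr)\Bigr)_{i\in I}
~=~(a_i\times b_i)_{i\in I},$$
where the crucial step is that ranging $c$ over $\A=\prod_i\A_i$ is equivalent to ranging each component $c_i$ independently over $\A_i$, so the meet over $\A$ decomposes into a product of componentwise meets. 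The same argument handles disjunction, $\cc^{\A}=\bigmeet_{a,b}(((a\to b)\to a)\to a)$ and $\Fork^{\A}=\bigmeet_{a,b}(a\to b\to a\meet b)$ (here using also that the binary meet $a\meet b$ is itself componentwise, which follows from the same principle applied to the two-element indexing set). For application, $ab=\bigmeet\{c\in\A:a\cle(b\to c)\}$; the condition $a\cle(b\to c)$ unfolds componentwise to $a_i\cle_i(b_i\to_ic_i)$ for every $i$, so choosing each $c_i$ independently shows that the meet equals $(a_ib_i)_{i\in I}$.

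The only slightly delicate step is the interchange of meet and indexing in the conjunction/disjunction/$\cc$/$\Fork$ cases: one must justify that
$$\bigmeet_{(c_i)_{i\in I}\in\prod_i\A_i} F\bigl((c_i)_i\bigr) ~=~ \Bigl(\,\bigmeet_{c_i\in\A_i} F_i(c_i)\Bigr)_{i\in I}$$
when $F$ is built componentwise from $F_i$. This is not a problem, however, since a tuple $(x_i)_{i\in I}$ is a lower bound of the left-hand meet iff $x_i$ is a lower bound of the $i$th component meet for every $i$, which is precisely the characterisation of the right-hand side as a greatest lower bound. With this remark in place, all six identities follow by direct computation, and no further argument is required.
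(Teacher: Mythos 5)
Your proposal is correct and follows essentially the same route as the paper: both reduce everything to the componentwise computation of meets and implication in $\prod_{i\in I}\A_i$, with the key step being that a meet over tuples $c\in\A$ of an expression whose $i$th component depends only on $c_i$ decomposes into the tuple of componentwise meets (the paper carries this out explicitly for $a\times b$ and $ab$ and declares the other cases similar). Your explicit statement and justification of the interchange-of-meet lemma is simply a slightly more systematic write-up of the same argument.
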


\begin{proof}
  Given $a,b\in\A$, we have:
  $$\begin{array}{r@{~~}c@{~~}l}
    a\times b&=&\ds\bigmeet_{\!\!c\in\A\!\!}((a\to b\to c)\to c)
    ~=~\bigmeet_{\!\!c\in\A\!\!}
    \Bigl((a_i\to b_i\to c_i)\to c_i\Bigr)_{i\in I}\\
    &=&\ds\biggl(\bigmeet_{\!c\in\A_i\!}
    ((a_i\to b_i\to c)\to c)\biggr)_{i\in I}
    ~=~\bigl(a_i\times b_i\bigr)_{i\in I}\\
  \end{array}$$
  $$\begin{array}{r@{~~}c@{~~}l}
    ab&=&\ds\bigmeet\bigl\{c\in\A:a\cle(b\to c)\bigr\}
    ~=~\bigmeet\prod_{i\in I}
    \bigl\{c\in\A_i:a_i\cle(b_i\to c)\bigr\}\\
    &=&\ds\Bigl(\bigmeet
    \bigl\{c\in\A_i:a_i\cle(b_i\to c)\bigr\}\Bigr)_{i\in I}
    ~=~\bigl(a_ib_i\bigr)_{i\in I}\\
  \end{array}$$
  The other equalities are proved similarly.
\end{proof}

\begin{proposition}\label{p:ProdLam}
  For all pure $\lambda$-terms $t(x_1,\ldots,x_k)$ with
  free variables $x_1,\ldots,x_k$ and for all parameters
  $a_1,\ldots,a_k\in\A=\prod_{i\in I}\A_i$, we have:
  $$t(a_1,\ldots,a_k)^{\A}~=~\Bigl(
  t\bigl(a_{1,i},\ldots,a_{k,i}\bigr)^{\A_i}
  \Bigr)_{i\in I}$$
\end{proposition}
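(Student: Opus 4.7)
The plan is to proceed by structural induction on $t$; the variable case is immediate, the application case reduces at once to Prop.~\ref{p:ProdOps}, and all the real content of the statement sits in the abstraction case.

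For $t = x_j$ a variable, both sides of the claimed equality collapse to $a_j$, and the assertion reduces to the tautology $a_j = (a_{j,i})_{i\in I}$ valid for any $a_j \in \A = \prod_{i\in I}\A_i$. For $t = t_1 t_2$, unfolding the definition in $\A$ gives $t(\vec{a})^{\A} = t_1(\vec{a})^{\A}\, t_2(\vec{a})^{\A}$, where application in $\A$ is computed componentwise by Prop.~\ref{p:ProdOps}. Combining this with the two induction hypotheses yields
$$t(\vec{a})^{\A} ~=~ \bigl(t_1(\vec{a}_i)^{\A_i}\, t_2(\vec{a}_i)^{\A_i}\bigr)_{i\in I} ~=~ \bigl(t(\vec{a}_i)^{\A_i}\bigr)_{i\in I},$$
writing $\vec{a}_i := (a_{1,i},\ldots,a_{k,i})$ and applying the componentwise application in the final step.

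The interesting case is $t = \Lam{x_0}{t_0}$. Unfolding the definition of $\slambda$ and invoking the induction hypothesis on $t_0$ gives
$$t(\vec{a})^{\A} ~=~ \bigmeet_{a_0 \in \A}\bigl(a_0 \to t_0(a_0,\vec{a})^{\A}\bigr) ~=~ \bigmeet_{a_0 \in \A}\bigl(a_{0,i} \to_i t_0(a_{0,i},\vec{a}_i)^{\A_i}\bigr)_{i \in I},$$
where the second equality uses the componentwise description of implication in the product. Since meets in $\A$ are also computed componentwise, this expression can be rewritten coordinate-by-coordinate as $\bigl(\bigmeet_{a_0 \in \A} f_i(a_{0,i})\bigr)_{i\in I}$, with $f_i(b) := b \to_i t_0(b,\vec{a}_i)^{\A_i}$. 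In each coordinate the integrand depends only on the $i$-th component $a_{0,i}$ of $a_0$, and since every $\A_j$ is nonempty (it contains the top element $\top_j = \bigmeet\varnothing$), the projection $\A \onto \A_i$ is surjective. Hence $\bigmeet_{a_0 \in \A} f_i(a_{0,i}) = \bigmeet_{b \in \A_i} f_i(b) = t(\vec{a}_i)^{\A_i}$, which closes the induction.

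The only subtle step is precisely this reindexing of the outer meet in the abstraction case: one must observe that restricting from a meet indexed by the whole product $\A$ to a meet indexed by a single factor $\A_i$ leaves the value unchanged when the integrand factors through the $i$-th projection, which in turn requires that every $\A_j$ be nonempty so that the projection is surjective. Everything else is routine unfolding against the componentwise descriptions supplied by Prop.~\ref{p:ProdOps}.
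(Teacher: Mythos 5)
Your proof is correct and follows essentially the same route as the paper: structural induction on $t$, with the variable and application cases handled by the componentwise identities of Prop.~\ref{p:ProdOps} and the abstraction case by pushing the meet through the coordinates. The only difference is that you explicitly justify the reindexing of the meet from $a_0\in\A$ to $b\in\A_i$ via surjectivity of the projections (each $\A_j$ being nonempty), a step the paper performs silently; this is a welcome clarification rather than a deviation.
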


\begin{proof*}
  By structural induction on the term $t(x_1,\ldots,x_k)$.
  The case of a variable is obvious, the case of an application
  follows from the equality $ab=(a_ib_i)_{i\in I}$, so that we only
  treat the case where
  $t(x_1,\ldots,x_k)=\Lam{x_0}{t_0(x_0,x_1,\ldots,x_k)}$.
  In this case, we have:
  $$\begin{array}[b]{r@{~~}c@{~~}l}
    t(a_1,\ldots,a_k)^{\A}
    &=&\ds\bigl(\Lam{x_0}{t_0(x_0,a_1,\ldots,a_k)}\bigr)^{\A}\\
    &=&\ds\bigmeet_{\!\!a_0\in\A\!\!}
    (a_0\to t_0(a_0,a_1,\ldots,a_k)^{\A})\\
    &=&\ds\bigmeet_{\!\!a_0\in\A\!\!}\Bigl(a_{0,i}\to_i
    t_0\bigl(a_{0,i},a_{1,i},\ldots,a_{k,i}\bigr)^{\A_i}\Bigr)_{i\in{I}}
    \rlap{\hskip 26mm(by IH)}\\
    &=&\ds\biggl(\bigmeet_{\!\!a_0\in\A_i\!\!}\Bigl(a_0\to_i
    t_0\bigl(a_0,a_{1,i},\ldots,a_{k,i}\bigr)^{\A_i}
    \Bigr)\biggr)_{i\in I}\\
    &=&\ds\Bigl(\bigl(\Lam{x_0}{
      t_0(x_0,a_{1,i},\ldots,a_{k,i})}\bigr)^{\A_i}\Bigr)_{i\in I}
    ~=~\Bigl(t\bigl(a_{1,i},\ldots,a_{k,i}\bigr)^{\A_i}\Bigr)_{i\in{I}}\\
  \end{array}\eqno\usebox{\proofbox}$$
\end{proof*}

\subsubsection{Product of separators}\label{sss:ProdSep}
Given a family of separators $(S_i\subseteq\A_i)_{i\in I}$, it is
clear that the Cartesian product $S=\prod_{i\in I}S_i$ is also a
separator of $\A=\prod_{i\in I}\A_i$.
In the product separator $S=\prod_{i\in I}S_i$, the relation of
entailment $a\ent_Sb$ and the corresponding equivalence $a\tnent_Sb$
are characterized by:
$$\begin{array}{rcl}
  a\ent_Sb&\liff&\forall i\in I,~a_i\ent_{S_i}b_i\\
  a\tnent_Sb&\liff&\forall i\in I,~a_i\tnent_{S_i}b_i\\
\end{array}\eqno\hskip-30mm(\text{for all}~a,b\in\A)$$
For each index $i\in I$, the corresponding projection
$\pi_i:\A\to\A_i$ factors into a map
$$\begin{array}{r@{~~}c@{~~}l}
  \tilde{\pi}_i~~:~~\A/S&\to&\A_i/S_i\\
  {[a]}_{/S}&\mapsto&[a_i]_{/S_i}\\
\end{array}$$
that is obviously a morphism of Heyting algebras (from
Prop.~\ref{p:InducedHA} and~\ref{p:ProdOps}).
In this situation, we immediately get the factorization\ \
$\A/S~\cong~\prod_{i\in I}(\A_i/S_i)$,\ \ since:
\begin{proposition}\label{p:Factorization}
  The map
  $$\<\tilde{\pi}_i\>_{i\in I}~:~
  \A/S~\to~\prod_{i\in I}(\A_i/S_i)$$
  is an isomorphism of Heyting algebras.
\end{proposition}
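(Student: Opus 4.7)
The plan is to establish three things about the map $\Phi := \langle\tilde\pi_i\rangle_{i\in I}$: that it preserves the Heyting operations, that it is injective, and that it is surjective. All three will follow essentially automatically from the componentwise character of the relevant data, which is already in place: implication, meet, join, top, bottom in $\A$ are computed componentwise (Prop.~\ref{p:ProdOps}), the separator $S$ is a product, and consequently the entailment preorder $\ent_S$ and the equivalence $\tnent_S$ are also characterized componentwise, as noted just above the statement.

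First I would handle injectivity. If $\Phi([a]_{/S}) = \Phi([b]_{/S})$, then by definition $[a_i]_{/S_i} = [b_i]_{/S_i}$ for every $i\in I$, i.e.\ $a_i \tnent_{S_i} b_i$ for every~$i$. By the componentwise characterization of $\tnent_S$ in the product separator, this gives $a\tnent_Sb$, hence $[a]_{/S} = [b]_{/S}$. Next, surjectivity: given a family $(\alpha_i)_{i\in I}\in\prod_{i\in I}(\A_i/S_i)$, pick (by choice) a representative $c_i\in\A_i$ of each class~$\alpha_i$, and form $c := (c_i)_{i\in I}\in\A$. Then $\tilde\pi_i([c]_{/S}) = [c_i]_{/S_i} = \alpha_i$ for all $i$, so $\Phi([c]_{/S}) = (\alpha_i)_{i\in I}$.

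It remains to verify that $\Phi$ is a morphism of Heyting algebras. The Heyting structure on $\prod_{i\in I}(\A_i/S_i)$ is defined componentwise, so it suffices to show that each projection $\tilde\pi_i\colon\A/S\to\A_i/S_i$ is a Heyting morphism. For this, combine Prop.~\ref{p:InducedHA} (which describes $\to_H$, $\wedge_H$, $\vee_H$, $\top_H$, $\bot_H$ in $\A/S$ as $[a\to b]$, $[a\times b]$, $[a+b]$, $[\top]$, $[\bot]$, and similarly in each $\A_i/S_i$) with Prop.~\ref{p:ProdOps} (which ensures $(a\to b)_i = a_i\to_i b_i$, $(a\times b)_i = a_i\times b_i$, $(a+b)_i = a_i+b_i$, and that $\top$ and $\bot$ of $\A$ have $i$-th components equal to $\top$ and $\bot$ of $\A_i$). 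Reading these off, $\tilde\pi_i([a\to b]_{/S}) = [a_i\to_i b_i]_{/S_i} = \tilde\pi_i([a]_{/S})\to_H \tilde\pi_i([b]_{/S})$, and analogously for the other operations.

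The proof is therefore almost purely bookkeeping; the only mildly delicate step is the surjectivity argument, which requires choosing a representative in each factor and thus invokes (a mild form of) the axiom of choice, as already flagged by the author in the footnote commented out in the preceding proof.
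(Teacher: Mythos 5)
Your proposal is correct and follows essentially the same route as the paper: both proofs rest on the componentwise characterization of entailment in the product separator (which gives injectivity, or equivalently the order-embedding property) and obtain surjectivity by choosing representatives via the axiom of choice. The only cosmetic difference lies in the concluding step: the paper shows the map is a surjective poset embedding and concludes it is an isomorphism of posets, hence of Heyting algebras, whereas you explicitly verify that the map preserves the Heyting operations (a fact the paper had already noted just before the statement, from Prop.~\ref{p:InducedHA} and Prop.~\ref{p:ProdOps}) and then invoke that a bijective morphism of Heyting algebras is an isomorphism---both closing facts being among the properties of $\HA$ recalled in Section~\ref{sss:CatHA}.
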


\begin{proof}
  For all $a,b\in\A$, we have
  $$[a]\le_S[b]~~\liff~~(a\to b)\in S~~\liff~
  (\forall i\,{\in}\,I)~(a_i\to b_i)\in S_i~~\liff~~
  (\forall i\,{\in}\,I)~[a_i]\le_{S_i}[b_i]$$
  which proves that the map
  $\<\tilde{\pi}_i\>_{i\in I}:\A/S\to\prod_{i\in I}(\A_i/S_i)$ is an
  embedding of the poset $(\A/S,{\le_S})$ into the product poset
  $\prod_{i\in I}(\A_i/S_i,{\le_{S_i}})$.
  Moreover, the map $\<\tilde{\pi}_i\>_{i\in I}$ is clearly surjective
  (from the axiom of choice);
\COUIC{
  \footnote{See the discussion of footnote~\ref{note:AC}
    p.~\pageref{note:AC}.})
}
  therefore, it is an isomorphism of posets, and thus an isomorphism
  of Heyting algebras.
\end{proof}

\subsection{The uniform power separator}
\label{ss:UnifPowSep}

Let $\A=(\A,{\cle},{\to})$ be a fixed implicative structure.
For each set $I$, we write
$$\A^I~=~(\A^I,{\cle}^I,{\to}^I)~:=~\prod_{i\in I}(\A,{\cle},{\to})$$
the corresponding power implicative structure, which is a particular
case of the product presented in Section~\ref{ss:ProdImpStruct}
above.
Each separator $S\subseteq\A$ induces two separators in~$\A^I$:
\begin{itemize}
\item The \emph{power separator}
  $S^I:=\prod_{i\in I}S\subseteq\A^I$.
\item The \emph{uniform power separator} $S[I]\subseteq\A$, that is
  defined by:
  $$S[I]~:=~\{a\in\A:\exists s\in S,~\forall i\in I,~s\cle a_i\}
  ~=~{\uparrow}\mathrm{img}(\delta_I)\,,$$
  where $\delta_I:\A\to\A^I$ is defined by $\delta(a)=(i\mapsto a)$
  for all $a\in\A$.
\end{itemize}
From the definition, it is clear that $S[I]\subseteq S^I\subseteq\A$.
The converse inclusion $S^I\subseteq S[I]$ does not hold in general,
and we easily check that:
\begin{proposition}\label{p:CharacSepEqual}
  For all separators $S\subseteq\A$, the following are equivalent:
  \begin{enumerate}[(99)]
  \item[(1)] $S[I]=S^I$.
  \item[(2)] $S$ is closed under all $I$-indexed meets.
  \end{enumerate}
\end{proposition}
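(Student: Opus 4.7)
The plan is a direct unwinding of the two definitions, using upwards closure of $S$ and basic properties of meets. First I would record the easy inclusion $S[I]\subseteq S^I$: if $a\in S[I]$, there is $s\in S$ with $s\cle a_i$ for every $i\in I$, and by upwards closure of $S$ each coordinate $a_i$ lies in $S$, so $a\in S^I$. Hence the equivalence really concerns when the reverse inclusion holds.

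For $(2)\limp(1)$, I would take an arbitrary $a=(a_i)_{i\in I}\in S^I$, so that $a_i\in S$ for every $i\in I$. By assumption $(2)$, $s:=\bigmeet_{i\in I}a_i\in S$, and by the definition of the meet $s\cle a_i$ for every $i\in I$. Thus $a\in S[I]$, which together with the trivial inclusion above yields $S[I]=S^I$.

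For $(1)\limp(2)$, I would start from a family $(a_i)_{i\in I}\in S^I$ and apply the hypothesis $S[I]=S^I$ to get some $s\in S$ with $s\cle a_i$ for every $i\in I$. The universal property of the meet gives $s\cle\bigmeet_{i\in I}a_i$, and upwards closure of $S$ (axiom~(1) of Def.~\ref{d:Separator}) then forces $\bigmeet_{i\in I}a_i\in S$. This is exactly closure of $S$ under $I$-indexed meets.

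There is no real obstacle: both directions are one-liners once the definitions are unfolded, and the only nontrivial property of separators that is used is upwards closure. The meet/implication structure on $\A^I$ developed in Section~\ref{ss:ProdImpStruct} is not needed here; everything happens coordinatewise in $\A$ itself.
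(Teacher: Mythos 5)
Your proof is correct and follows essentially the same route as the paper: $(2)\limp(1)$ by taking the meet of the coordinates as the uniform bound, and $(1)\limp(2)$ by extracting a uniform $s\in S$ below all $a_i$, passing under the meet, and invoking upwards closure. The only cosmetic difference is that you spell out the trivial inclusion $S[I]\subseteq S^I$, which the paper records just before the proposition.
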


\begin{proof}
  $(1)\limp(2)$\quad
  Let $(a_i)_{i\in I}$ be an $I$-indexed family of elements of~$S$,
  that is: an element of $S^I$.
  By $(1)$ we have $(a_i)_{i\in I}\in S[I]$, so that there is
  $s\in S$ such that $s\cle a_i$ for all $i\in I$.
  Therefore $s\cle\bigl(\bigmeet_{i\in I}a_i\bigr)\in S$
  (by upwards closure).
  \smallbreak\noindent
  $(2)\limp(1)$\quad
  Let $(a_i)_{i\in I}\in S^I$.
  By $(2)$ we have $s:=\bigl(\bigmeet_{i\in I}a_i\bigr)\in S$,
  and since $s\cle a_i$ for all $i\in I$, we get that
  $(a_i)_{i\in I}\in S[I]$ (by definition).
  Hence $S^I=S[I]$.
\end{proof}

Thanks to the notion of uniform separator, we can also characterize
the intuitionistic and classical cores (Section~\ref{ss:SeparGen}) of
the power implicative structure $\A^I$:
\begin{proposition}
  $\SJ(\A^I)=\SJ(\A)[I]$ and
  $\SK(\A^I)=\SK(\A)[I]$.
\end{proposition}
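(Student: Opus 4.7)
The plan is to exploit the fact that the diagonal embedding $\delta_I:\A\to\A^I$ commutes with every operation involved in generating the intuitionistic and classical cores. First, combining Prop.~\ref{p:ProdOps} with Prop.~\ref{p:ProdLam}, one gets immediately $\mathbf{K}^{\A^I}=\delta_I(\mathbf{K}^{\A})$, $\mathbf{S}^{\A^I}=\delta_I(\mathbf{S}^{\A})$ and $\cc^{\A^I}=\delta_I(\cc^{\A})$, and for all $a,b\in\A$ the identity $\delta_I(a)\,\delta_I(b)=\delta_I(ab)$ (since application in $\A^I$ is computed componentwise).

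Next, by Prop.~\ref{p:SeparGen} applied in the implicative structure $\A^I$, one has the explicit descriptions
$$\SJ(\A^I)~=~{\uparrow}@\bigl(\{\mathbf{K}^{\A^I},\mathbf{S}^{\A^I}\}\bigr)
\qquad\text{and}\qquad
\SK(\A^I)~=~{\uparrow}@\bigl(\{\mathbf{K}^{\A^I},\mathbf{S}^{\A^I},\cc^{\A^I}\}\bigr),$$
where the applicative closure $@$ and the upwards closure are taken in $\A^I$. A straightforward induction on the construction of the applicative closure---using the two observations of the previous paragraph---then shows that
$$@\bigl(\{\mathbf{K}^{\A^I},\mathbf{S}^{\A^I}\}\bigr)~=~
\delta_I\bigl(@(\{\mathbf{K}^{\A},\mathbf{S}^{\A}\})\bigr),$$
and similarly with $\cc$ adjoined. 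Indeed, every element on the right-hand side is obtained from the generators $\mathbf{K}^{\A^I}$, $\mathbf{S}^{\A^I}$ by finitely many componentwise applications, which is exactly what $\delta_I$ of an element of $@(\{\mathbf{K}^{\A},\mathbf{S}^{\A}\})$ looks like.

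Finally I would establish the general fact that for any subset $X\subseteq\A$,
$${\uparrow}\delta_I(X)~=~({\uparrow}X)[I]$$
as subsets of $\A^I$: a family $(a_i)_{i\in I}$ belongs to the left-hand side iff there exists $x\in X$ with $x\cle a_i$ for all $i\in I$, while it belongs to the right-hand side iff there exists $s\in{\uparrow}X$ with $s\cle a_i$ for all $i\in I$; the two conditions coincide by transitivity of $\cle$. Applying this to $X=@(\{\mathbf{K}^{\A},\mathbf{S}^{\A}\})$ (resp.\ $X=@(\{\mathbf{K}^{\A},\mathbf{S}^{\A},\cc^{\A}\})$) and using ${\uparrow}X=\SJ(\A)$ (resp.\ ${\uparrow}X=\SK(\A)$) immediately yields $\SJ(\A^I)=\SJ(\A)[I]$ and $\SK(\A^I)=\SK(\A)[I]$.

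There is no genuine obstacle; the proposition is a formal consequence of the componentwise nature of the product implicative structure. The only point demanding a bit of care is the routine induction identifying the applicative closure of the diagonal generators in $\A^I$ with the image under $\delta_I$ of the corresponding applicative closure in~$\A$.
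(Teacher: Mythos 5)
Your proof is correct, but it takes a somewhat different route from the paper's. The paper splits the two inclusions asymmetrically: the inclusion $\SJ(\A^I)\subseteq\SJ(\A)[I]$ is obtained abstractly, from the fact that $\SJ(\A)[I]$ is a separator of $\A^I$ and $\SJ(\A^I)$ is the smallest one, while the reverse inclusion is proved via the characterization $\SJ(\A)={\uparrow}\{t^{\A}:t~\text{closed $\lambda$-term}\}$ together with the componentwise interpretation of closed $\lambda$-terms (Prop.~\ref{p:ProdLam}), which gives $t^{\A^I}=\delta_I(t^{\A})$. You instead work entirely with the combinatory presentation of Prop.~\ref{p:SeparGen}: you identify the generating sets exactly, showing $@(\{\mathbf{K}^{\A^I},\mathbf{S}^{\A^I}\})=\delta_I\bigl(@(\{\mathbf{K}^{\A},\mathbf{S}^{\A}\})\bigr)$ (and likewise with $\cc$ adjoined) from $\mathbf{K}^{\A^I}=\delta_I(\mathbf{K}^{\A})$, $\mathbf{S}^{\A^I}=\delta_I(\mathbf{S}^{\A})$, $\cc^{\A^I}=\delta_I(\cc^{\A})$ and $\delta_I(a)\delta_I(b)=\delta_I(ab)$, and then conclude with the general identity ${\uparrow}\delta_I(X)=({\uparrow}X)[I]$, which is exactly the transitivity argument you give. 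Both proofs ultimately rest on the same componentwise facts (Prop.~\ref{p:ProdOps} and~\ref{p:ProdLam}); the paper's minimality step makes one direction immediate but leans on the earlier (unproved-in-detail) assertion that $S[I]$ is a separator of $\A^I$, whereas your argument is more symmetric and self-contained, gives the slightly stronger structural statement that the applicative closures themselves correspond under $\delta_I$, and pays for it only with a routine induction on the applicative closure.
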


\begin{proof}
  Recall that:\quad
  $\begin{array}[t]{r@{~~}c@{~~}l}
    \SJ(\A)&=&{\uparrow}\bigl\{(t)^{\A}:
    t~\text{closed $\lambda$-term}\bigr\}\\[3pt]
    \SJ(\A^I)&=&{\uparrow}\bigl\{(t)^{\A^I}:
    t~\text{closed $\lambda$-term}\bigr\}\\[3pt]
    \SJ(\A)[I]&=&\bigl\{a\in\A^I:
    \exists s\in\SJ(\A),~\forall i\in I,~s\cle a_i\bigr\}\,.\\
  \end{array}$\smallbreak\noindent
  Since $\SJ(\A^I)$ is the smallest separator of~$\A^I$, we have
  $\SJ(\A^I)\subseteq\SJ(\A)[I]$.
  Conversely, take $a\in\SJ(\A)[I]$.
  By definition, there is $s\in\SJ(\A)$ such that $s\cle a_i$ for all
  $i\in I$.
  And since $s\in\SJ(\A)$, there is a closed $\lambda$-term $t$ such
  that $(t)^{\A}\cle s$, hence $(t)^{\A}\cle a_i$ for all $i\in I$.
  From Prop.~\ref{p:ProdLam}, we deduce that
  $(t)^{\A^I}=\bigl((t)^{\A}\bigr)_{i\in I}\cle(a_i)_{i\in I}$
  (in~$\A^I$), hence $(a_i)_{i\in I}\in\SJ(\A^I)$.
  The equality $\SK(\A^I)=\SK(\A)[I]$ is proved similarly, using
  closed $\lambda$-terms with~$\cc$ instead of pure
  $\lambda$-terms.
\end{proof}

In the rest of this section, we shall see that, given a separator
$S\subseteq\A$, the correspondence\ \ $I\mapsto\A^I/S[I]$\ \ (from
unstructured sets to Heyting algebras) is functorial, and
actually constitutes a tripos.

\subsection{Triposes}
\label{ss:Tripos}

\subsubsection{The category of Heyting algebras}
\label{sss:CatHA}

Given two Heyting algebras~$H$ and~$H'$, a function $F:H\to H'$ is
called a \emph{morphism of Heyting algebras} when
$$\begin{array}{r@{~~}c@{~~}l@{\qquad}r@{~~}c@{~~}l}
  F(a\land_Hb)&=&F(a)\land_{H'}F(b) & F(\top_H)&=&\top_{H'} \\
  F(a\lor_Hb)&=&F(a)\lor_{H'}F(b) & F(\bot_H)&=&\bot_{H'} \\
  F(a\to_Hb)&=&F(a)\to_{H'}F(b) \\
\end{array}\eqno(\text{for all}~a,b\in H)$$
(In other words, a morphism of Heyting algebras is a morphism of
bounded lattices that also preserves Heyting's implication.
Note that such a function is always monotonic.)

The category of Heyting algebras (notation: \HA) is the category whose
objects are the Heyting algebras and whose arrows are the morphisms of
Heyting algebras; it is a (non-full) subcategory of the category of
posets (notation: \Pos).
This category also enjoys some specific properties that will be useful
in the following:
\begin{enumerate}[(99)]
\item[(1)] An arrow is an isomorphism in~\HA{} if and only if it is an
  isomorphism in~\Pos.
\item[(2)] Any injective morphism of Heyting algebras $F:H\to H'$ is
  also an embedding of posets, in the sense that:\quad 
  $a\le b$~~iff~~ $F(a)\le F(b)$\quad (for all $a,b\in H$).
\item[(3)] Any bijective morphism of Heyting algebras is also an
  isomorphism.
\end{enumerate}

\subsubsection{$\Set$-based triposes}

In this section, we recall the definition of $\Set$-based triposes,
such as initially formulated by Hyland, Johnstone and Pitts
in~\cite{HJP80}. 
For the general definition of triposes---where the base category
$\Set$ can be replaced by an arbitrary Cartesian category---, see for
instance~\cite{Pit81,Pit01}.

\begin{definition}[$\Set$-based tripos]\label{d:Tripos}
  A \emph{$\Set$-based tripos} is a functor $\P:\Set^{\op}\to\HA$ that
  fulfills the following three conditions:
  \begin{enumerate}[(99)]
  \item[(1)] For each function $f:I\to J$, the corresponding map
    $\P{f}:\P{J}\to\P{I}$ has left and right adjoints in~$\Pos$,
    that are monotonic maps $\exists{f},\forall{f}:\P{I}\to\P{J}$ such
    that
    $$\begin{array}{rcl}
      \exists{f}(p)\le q&\liff&p\le\P{f}(q) \\[3pt]
      q\le\forall{f}(p)&\liff&\P{f}(q)\le p \\
    \end{array}\eqno(\text{for all}~p\in\P{I},~q\in\P{J})$$
  \item[(2)] Beck-Chevalley condition. Each pullback square in $\Set$
    (on the left-hand side) induces the following two commutative
    diagrams in~$\Pos$ (on the right-hand side):
    $$\begin{array}{@{}c@{}}
      \xymatrix{
        I\pullback{6pt}\ar[r]^{f_1}\ar[d]_{f_2}& I_1\ar[d]^{g_1} \\
        I_2\ar[r]_{g_2} & J \\
      }\\
    \end{array}\qquad{\limp}\qquad
    \begin{array}{c@{\qquad}c}
      \xymatrix{
        \P{I}\ar[r]^{\exists{f_1}}& \P{I_1} \\
        \P{I_2}\ar[u]^{\P{f_2}}\ar[r]_{\exists{g_2}} &
        \P{J}\ar[u]_{\P{g_1}} \\
      } &
      \xymatrix{
        \P{I}\ar[r]^{\forall{f_1}}& \P{I_1} \\
        \P{I_2}\ar[u]^{\P{f_2}}\ar[r]_{\forall{g_2}} &
        \P{J}\ar[u]_{\P{g_1}} \\
      }\\
    \end{array}$$
    That is:\quad
    $\exists{f_1}\circ\P{f_2}~=~\P{g_1}\circ\exists{g_2}$\quad
    and\quad $\forall{f_1}\circ\P{f_2}~=~\P{g_1}\circ\forall{g_2}$.
  \item[(3)] The functor $\P:\Set^{\op}\to\HA$ has a \emph{generic
    predicate}, that is: a predicate $\Tr\in\P\Prop$ (for
    some set~$\Prop$) such that for all sets~$I$, the following map
    is surjective:
    $$\begin{array}{r@{~{}~}c@{~{}~}l}
      \Prop^I&\to&\P{I} \\
      f&\mapsto&\P{f}(\Tr) \\
    \end{array}$$
  \end{enumerate}
\end{definition}

\begin{remarks}[Intuitive meaning of the
    definition]\label{r:TriposIntuitions}
  Intuitively, each $\Set$-based tripos $\P:\Set^{\op}\to\HA$
  describes a particular model of intuitionistic higher-order logic,
  in which higher-order types are modeled by sets.
  In this framework:
  \smallbreak\noindent
  (1)~~The functor $\P:\Set^{\op}\to\C$ associates to each `type'
  $I\in\Set$ the poset $\P{I}$ of all \emph{predicates} over~$I$.
  The ordering on $\P{I}$ represents \emph{inclusion} of predicates
  (in the sense of the considered model), whereas equality represents
  \emph{extensional equality} (or \emph{logical equivalence}).
  In what follows, it is convenient to think that predicates
  $p,q,\ldots\in\P{I}$ represent abstract formulas $p(x),q(x),\ldots$
  depending on a variable~$x:I$, so that
  $$\begin{array}[b]{r@{\qquad}c@{\qquad}l}
    p\le q&\text{means}&(\forall x\in I)(p(x)\limp q(x))\\[3pt]
    p=q&\text{means}&(\forall x\in I)(p(x)\liff q(x))\,.\\
  \end{array}\leqno\text{whereas}$$
  The fact that $\P{I}$ is a Heyting algebra simply expresses that the
  predicates over~$I$ can be composed using all the connectives of
  intuitionistic logic, and that these operations fulfill the laws of
  intuitionistic propositional logic.
  \smallbreak\noindent
  (2)~~The functoriality of~$\P$ expresses that each function
  $f:I\to J$ induces a \emph{substitution map} $\P{f}:\P{J}\to\P{I}$,
  that intuitively associates to each predicate $q\in\P{J}$ its
  ``preimage'' $\P{f}(q)=\text{``}q\circ f\text{''}\in\P{I}$.
  Again, if we think that the predicate $q\in\P{J}$ represents an
  abstract formula $q(y)$ depending on a variable $y:J$, then the
  predicate $\P{f}(q)$ represents the substituted formula
  $q(y)\{y:=f(x)\}\equiv q(f(x))$ (that now depends on $x:I$).
  The fact that the substitution map $\P{f}:\P{J}\to\P{I}$ is a
  morphism of Heyting algebras expresses that substitution commutes
  with all the logical connectives.
  \smallbreak\noindent
  (3)~~Given a function $f:I\to J$, the left and right adjoints
  $\exists{f},\forall{f}:\P{I}\to\P{J}$ represent existential and
  universal quantifications along the function $f:I\to J$.
  By this, we mean that if a predicate $p\in\P{I}$ represents a
  formula $p(x)$ (depending on $x:I$), then
  $$\begin{array}[b]{r@{\qquad}c@{\qquad}l}
    \exists{f}(p)&\text{represents the formula}&
    (\exists x:I)(f(x)=y\land p(x))\\[3pt]
    \forall{f}(p)&\text{represents the formula}&
    (\forall x:I)(f(x)=y\limp p(x))\\
  \end{array}\leqno\text{whereas}$$
  (where both right-hand side formulas depend on $y:J$).
  Both `quantified' predicates $\exists{f}(p),\forall{f}(p)\in\P{J}$
  are characterized by the adjunctions
  $$\begin{array}[b]{r@{\qquad}c@{\qquad}l}
    \exists{f}(p)\le q&\text{iff}&p\le\P{f}(q) \\[3pt]
    q\le\forall{f}(p)&\text{iff}&\P{f}(q)\le p \\
  \end{array}\leqno\text{and}$$
  (for all $q\in\P{J}$), which express the logical equivalences
  $$\begin{array}[b]{r@{\quad}c@{\quad}l}
    (\forall y:J)[(\exists x:I)(f(x)=y\land p(x))~\limp~q(y)]
    &\liff&(\forall x:I)[p(x)~\limp~q(f(x))] \\[3pt]
    (\forall y:J)[q(y)~\limp~(\forall x:I)(f(x)=y\limp p(x))]
    &\liff&(\forall x:I)[q(f(x))~\limp~p(x)]\,. \\
  \end{array}\leqno\text{and}$$
  Note that unlike the substitution map $\P{f}:\P{J}\to\P{I}$, the two
  adjoints $\exists{f},\forall{f}:\P{I}\to\P{J}$ are only monotonic
  maps (i.e.\ arrows in $\Pos$); they are not morphisms of Heyting
  algebras in general.
  (Quantifiers do not commute will all connectives!)
  Nevertheless, left adjoints commute with joins (and $\bot$),
  whereas right adjoints commute with meets (and $\top$):
  $$\begin{array}[m]{r@{~~}c@{~~}l@{\qquad\quad}r@{~~}c@{~~}l}
    \exists{f}(p_1\lor p_2)&=&
    \exists{f}(p_1)\lor\exists{f}(p_2)&
    \exists{f}(\bot)&=&\bot\\
    \forall{f}(p_1\land p_2)&=&
    \forall{f}(p_1)\land\forall{f}(p_2)&
    \forall{f}(\top)&=&\top\\
  \end{array}\eqno(\text{for all}~p_1,p_2\in\P{I})$$
  Using left adjoints, we can also define the equality predicate
  $$({=}_I)~:=~\exists{\delta_I}(\top_I)~\in~\P(I\times I)
  \eqno(\text{for each}~I\in\Set)$$
  writing $\delta_I:I\to I\times I$ the duplication function
  and~$\top_I$ the top element of $\P{I}$.
  From what precedes, it should be clear to the reader that this
  predicate represents the formula
  $(\exists x\in I)(\delta(x)=(x_1,x_2)\land\top)$ (depending on
  $x_1,x_2:I$), that is equivalent to $x_1=x_2$.
  \smallbreak\noindent
  (4)~~The Beck-Chevalley condition expresses a property of
  commutation between substitution and quantifications.
  It is typically used with pullback squares of the form
  $$\xymatrix@C=36pt{
    I\times K\pullback{6pt}
    \ar[r]^{\pi_{I,K}}\ar[d]_{f\times\id_K}&I\ar[d]^{f}\\
    J\times K\ar[r]_{\pi_{J,K}}& J\\
  }$$
  where the adjoints
  $\exists\pi_{I,K},\forall\pi_{I,K}:\P(I\times K)\to\P{I}$ and
  $\exists\pi_{J,K},\forall\pi_{J,K}:\P(J\times K)\to\P{J}$
  represent `pure' quantifications over an abstract variable $z:K$
  (in the contexts $x:I$ and $y:J$, respectively).
  In this case, the induced equalities
  $$\begin{array}{c@{\quad}c@{\quad}c}
    \exists\pi_{I,K}\circ\P(f\times\id_K)=\P{f}\circ\exists\pi_{J,K}&
    \text{and}&
    \forall\pi_{I,K}\circ\P(f\times\id_K)=\P{f}\circ\forall\pi_{J,K}\\
    \noalign{\medskip}
    \xymatrix@C=48pt{
      \P(I\times K)\ar[r]^{\exists\pi_{I,K}}&\P{I}\\
      \P(J\times K)\ar[r]_{\exists\pi_{J,K}}\ar[u]^{\P(f\times\id_K)}&
      \P{J}\ar[u]_{\P{f}}\\
    } &&
    \xymatrix@C=48pt{
      \P(I\times K)\ar[r]^{\forall\pi_{I,K}}&\P{I}\\
      \P(J\times K)\ar[r]_{\forall\pi_{J,K}}\ar[u]^{\P(f\times\id_K)}&
      \P{J}\ar[u]_{\P{f}}\\
    }\\
  \end{array}$$
  express for each predicate $q\in\P(J\times K)$ the logical
  equivalences
  $$\begin{array}[b]{l}
    (\forall x:I)[
      (\exists z:K)(q(y,z)\{y:=f(x),z:=z\})~\liff~
      ((\exists z:K)q(y,z))\{y:=f(x)\}] \\[3pt]
    (\forall x:I)[
      (\forall z:K)(q(y,z)\{y:=f(x),z:=z\})~\liff~
      ((\forall z:K)q(y,z))\{y:=f(x)\}] \\
  \end{array}\leqno\text{and}$$
  describing the behavior of substitution w.r.t. quantifiers.
  \smallbreak\noindent
  (5)~~The Beck-Chevalley condition requires that the diagrams
  $$\begin{array}{c}
    \xymatrix{
      \P{I}\ar[r]^{\exists{f_1}}& \P{I_1} \\
      \P{I_2}\ar[u]^{\P{f_2}}\ar[r]_{\exists{g_2}} &
      \P{J}\ar[u]_{\P{g_1}} \\
    }
  \end{array}\qquad\text{and}\qquad\begin{array}{c}
    \xymatrix{
      \P{I}\ar[r]^{\forall{f_1}}& \P{I_1} \\
      \P{I_2}\ar[u]^{\P{f_2}}\ar[r]_{\forall{g_2}} &
      \P{J}\ar[u]_{\P{g_1}} \\
    }
  \end{array}$$
  commute for all pullback squares\quad
  $\begin{array}{c}
    \xymatrix@C=18pt@R=18pt{
      I\pullback{6pt}\ar[r]^{f_1}\ar[d]_{f_2}&I_1\ar[d]^{g_1}\\
      I_2\ar[r]_{g_2}&J\\
    }
  \end{array}$\quad in~$\Set$.\\
  However, both commutation properties are equivalent up to the
  symmetry w.r.t.\ the diagonal (by exchanging the indices~$1$ and~$2$
  in the above pullback square).
  By this, we mean that the $\exists$-diagram associated to the
  initial pullback square commutes if and only if the
  $\forall$-diagram associated to the symmetric pullback square
  (obtained by exchanging the indices~$1$ and~$2$) commutes, that is:
  $$\exists{f_1}\circ\P{f_2}=\P{g_1}\circ\exists{g_2}
  \qquad\text{iff}\qquad
  \forall{f_2}\circ\P{f_1}=\P{g_2}\circ\forall{g_1}\,.$$
  (This equivalence is easily derived from the adjunctions defining
  the monotonic maps $\exists{f}$ and $\forall{f}$.)
  So that in order to prove the Beck-Chevalley condition, we only need
  to check that all $\exists$-diagrams commute, or that all
  $\forall$-diagrams commute.
  \smallbreak\noindent
  (6)~~Finally, the set $\Prop$ represents the \emph{type of
    propositions}, whereas the generic predicate $\Tr\in\P\Prop$
  represents the formula asserting that a given proposition is true.
  Thanks to this predicate, we can turn any \emph{functional
    proposition} into a predicate via the map
  $$\begin{array}{r@{~~}c@{~~}l}
    \Prop^I&\to&\P{I} \\
    f&\mapsto&\P{f}(\Tr) \\
  \end{array}\eqno(I\in\Set)$$
  We require that this map is surjective for all sets~$I$, thus
  ensuring that each predicate $p\in\P{I}$ is represented by (at
  least) a functional proposition $f\in\Prop^I$.
\end{remarks}

\begin{remark}[Non-uniqueness of the generic predicate]
  It is important to observe that in a $\Set$-based tripos~$\P$, the
  generic predicate is never unique.
  \smallbreak\noindent
  (1)~~Indeed, given a generic predicate $\Tr\in\P\Prop$ and a
  surjection $h:\Prop'\to\Prop$, we can always construct another
  generic predicate $\Tr'\in\P\Prop'$, letting
  $\Tr'=\P{h}(\Tr)$%
  \footnote{To prove that $\Tr'\in\P\Prop'$ is another generic
    predicate of the tripos~$\P$, we actually need to pick a right
    inverse of $h:\Prop'\to\Prop$, which exists by~(AC).
    Without (AC), the same argument works by replacing `surjective'
    with `having a right inverse'.}.
  \smallbreak\noindent
  (2)~~More generally, if $\Tr\in\P\Prop$ and $\Tr'\in\P\Prop'$ are
  two generic predicates of the same tripos~$\P$, then there always
  exist two conversion maps $h:\Prop'\to\Prop$ and $h':\Prop\to\Prop'$
  such that $\Tr'=\P{h}(\Tr)$ and $\Tr=\P{h'}(\Tr)$.
  Intuitively, the sets $\Prop$ and $\Prop'$ represent distinct
  implementations of the type of propositions (they do not need to
  have the same cardinality), whereas the conversion functions
  $h:\Prop'\to\Prop$ and~$h':\Prop\to\Prop'$ implement the
  corresponding changes in representation.
\end{remark}

\begin{example}[Forcing tripos]\label{ex:ForcingTripos}
  Given a complete Heyting algebra $(H,{\le})$, the functor
  $\P:\Set^{\op}\to\HA$ defined for all $I,J\in\Set$ and $f:I\to J$
  by
  $$\P{I}~:=~H^I\qquad\text{and}\qquad
  \P{f}~:=~(h\mapsto h\circ f)~:~H^J\to H^I$$
  is a $\Set$-based tripos, in which left and right adjoints
  $\exists{f},\forall{f}:\P{I}\to\P{J}$ are given by
  $$\exists{f}(p)~:=~
  \biggl(\bigvee_{i\in f^{-1}(j)}\!\!\!\!\!p_i\biggr)_{j\in J}
  \qquad\text{and}\qquad
  \forall{f}(p)~:=~
  \biggl(\bigwedge_{i\in f^{-1}(j)}\!\!\!\!\!p_i\biggr)_{j\in J}$$
  (for all $f:I\to J$ and $p\in\P{I}=H^I$), and whose generic
  predicate $(\Prop,\Tr)$ is given by
  $$\Prop~:=~H\qquad\text{and}\qquad
  \Tr~:=~\id_{H}~\in~\P{\Prop}\,.$$
  Such a tripos is called a \emph{Heyting tripos}, or a
  \emph{forcing tripos}.
\end{example}

\subsection{Construction of the implicative tripos}
\label{ss:ConstrImpTripos}

\begin{theorem}[Implicative tripos]\label{th:ImpTripos}
  Let $\A=(\A,{\cle},{\to},S)$ be an implicative algebra.
  For each set $I$, we write $\P{I}=\A^I/S[I]$.
  Then:
  \begin{enumerate}[(99)]
  \item[(1)] The correspondence $I\mapsto\P{I}$ induces a
    (contravariant) functor $\P:\Set^{\op}\to\HA$
  \item[(2)] The functor $\P:\Set^{\op}\to\HA$ is a $\Set$-based
    tripos.
  \end{enumerate}
\end{theorem}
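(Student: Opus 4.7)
The plan is to handle the two parts in sequence, reducing each tripos axiom to a uniform-realizer computation in~$\A$.

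For part~(1), I first observe that for each set~$I$, the power implicative structure $\A^I$ equipped with the uniform power separator $S[I]\subseteq\A^I$ (Section~\ref{ss:UnifPowSep}) is an implicative algebra, so by Proposition~\ref{p:InducedHA} its quotient $\P{I}=\A^I/S[I]$ is a Heyting algebra. For a function $f:I\to J$, I define $\P{f}:\P{J}\to\P{I}$ by precomposition, $\P{f}([\varphi]_{S[J]}):=[\varphi\circ f]_{S[I]}$, and check: (a)~well-definedness, because if $s\in S$ witnesses $\varphi\tnent_{S[J]}\varphi'$ uniformly in $j$, then the \emph{same}~$s$ witnesses $\varphi\circ f\tnent_{S[I]}\varphi'\circ f$ uniformly in~$i$; (b)~that $\P{f}$ is a morphism of Heyting algebras, since all operations on $\A^I/S[I]$ are obtained (via Proposition~\ref{p:InducedHA}) from operations in $\A^I$ that are computed componentwise (Proposition~\ref{p:ProdOps}), and precomposition trivially commutes with componentwise operations; (c)~functoriality, which is immediate from associativity of composition.

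For part~(2), I treat the three tripos axioms in turn. For the adjoints of $\P{f}$, I define them \emph{fiberwise}, setting, for $p=[\varphi]_{S[I]}$,
$$
\forall{f}(p)(j):=\bigmeet_{i\in f^{-1}(j)}\varphi(i),\qquad
\exists{f}(p)(j):=\bigmeet_{c\in\A}\Bigl(\bigmeet_{i\in f^{-1}(j)}(\varphi(i)\to c)\to c\Bigr).
$$
The universal adjunction is direct: axiom~(2) of Definition~\ref{d:ImpStruct} gives $\psi(j)\to\bigmeet_i\varphi(i)=\bigmeet_i(\psi(j)\to\varphi(i))$, so a uniform $s\in S$ with $s\cle\psi(j)\to\forall{f}(\varphi)(j)$ is \emph{the same datum} as a uniform $s\in S$ with $s\cle\psi(f(i))\to\varphi(i)$. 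The existential adjunction is more delicate: one direction rests on the subtyping $\varphi(i)\cle\exists{f}(\varphi)(f(i))$, which is essentially the second-order $\exists$-introduction (cf.\ Proposition~\ref{p:ValidExRules}); conversely, given a uniform $s\in S$ with $s\cle\varphi(i)\to\psi(f(i))$, one has $s\cle\bigmeet_{i\in f^{-1}(j)}(\varphi(i)\to\psi(j))$ for each~$j$, and then $(\Lam{x}{x\,s})^{\A}$, whose value lies in~$S$ by Proposition~\ref{p:SeparLam}, uniformly bounds $\exists{f}(\varphi)(j)\to\psi(j)$ by a short $\beta$-computation using Proposition~\ref{p:PropApp}. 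Well-definedness of both adjoints on equivalence classes is handled by an analogous uniform-$\lambda$-term transport.

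For the Beck-Chevalley condition, I use the fact that a pullback square in $\Set$ is a fibered product: for each $i_1\in I_1$, the restriction of $f_2$ to $f_1^{-1}(i_1)$ is a bijection onto $g_2^{-1}(g_1(i_1))$. Reindexing through this bijection makes $\forall{f_1}(\P{f_2}(p))(i_1)=\bigmeet_{i\in f_1^{-1}(i_1)}\varphi(f_2(i))$ and $\P{g_1}(\forall{g_2}(p))(i_1)=\bigmeet_{i_2\in g_2^{-1}(g_1(i_1))}\varphi(i_2)$ \emph{literally equal} as elements of~$\A$, hence equal in $\P{I_1}$; the $\exists$-case is handled identically, the bijection acting on both the outer meet and the inner meets of the second-order encoding. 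Finally, for the generic predicate, I take $\Prop:=\A$ and $\Tr:=[\id_\A]_{S[\A]}\in\P{\A}$; any $p=[\varphi]_{S[I]}\in\P{I}$ has a representative $\varphi:I\to\A$ which is itself an element of~$\Prop^I$, and $\P{\varphi}(\Tr)=[\id_\A\circ\varphi]_{S[I]}=p$, so surjectivity is trivial. The main obstacle throughout is bookkeeping the \emph{uniformity} of realizers: the separator $S[I]$ requires a single $s\in S$ valid for every index simultaneously, so every construction must produce or transport such a uniform witness, which is exactly why the second-order encoding of~$\exists$ (rather than a naive join) is used and why Proposition~\ref{p:SeparLam} on $\lambda$-closure must be invoked at nearly every step.
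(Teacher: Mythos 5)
Your construction is, point for point, the paper's own proof: the same quotient Heyting algebras $\P{I}=\A^I/S[I]$ via Prop.~\ref{p:InducedHA}, reindexing by precomposition with the same uniform-witness argument and Prop.~\ref{p:ProdOps} for the Heyting-morphism property, the same fiberwise formulas for $\forall{f}$ and $\exists{f}$ (with the second-order encoding of the existential), the same reduction of Beck--Chevalley to the canonical pullback where the two composites become literally equal families of meets, and the same generic predicate $(\Prop,\Tr)=(\A,[\id_{\A}]_{/S[\A]})$. The adjunction for $\forall{f}$, the $(\Lam{x}{x\,s})^{\A}$ witness for one direction of the adjunction for $\exists{f}$, and the surjectivity argument for the generic predicate all match the paper's computations.

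There is, however, one step that is wrong as written: you say the direction $\exists{f}(p)\le q\Rightarrow p\le\P{f}(q)$ ``rests on the subtyping $\varphi(i)\cle\exists{f}(\varphi)(f(i))$''. That inequality fails in general implicative structures. For instance, in the dummy structure where $a\to b:=b$ (Example~\ref{ex:DummyImpStruct}), one computes $\bigexists_{i}a_i=\bigmeet_{c\in\A}\bigl(\bigmeet_i(a_i\to c)\to c\bigr)=\bigmeet_{c\in\A}c=\bot$ for any nonempty family, so the claimed subtyping would force $\varphi(i)=\bot$. What $\exists$-introduction actually provides (Prop.~\ref{p:ValidExRules}, which you cite) is a \emph{realizer}, not a subtyping: $(\Lam{xz}{z\,x})^{\A}\cle\bigmeet_{i}\bigl(a_i\to\bigexists_{f(i')=f(i)}a_{i'}\bigr)$. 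The direction is still true, but it must be completed by transporting through this term: from $s''\in S$ with $s''\cle\bigmeet_{j}\bigl(\exists^0_f(a)_j\to b_j\bigr)$ one gets $(\Lam{x}{s''\,(\Lam{z}{z\,x})})^{\A}\cle\bigmeet_{i}(a_i\to b_{f(i)})$ by the semantic typing rules (Prop.~\ref{p:SemTypingRules}, Prop.~\ref{p:PropApp}), and this element lies in $S$ by Prop.~\ref{p:SeparLam}; this is exactly how the paper's chain of equivalences for the $\exists$-adjunction is justified. With that correction (and the analogous $\lambda$-term transport you already invoke for well-definedness of $\exists{f}$ on equivalence classes), the rest of your argument goes through as in the paper.
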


\begin{proof*}
  It is clear that for each set~$I$, the poset
  $(\A^I/S[I],{\le_{S[I]}})$ is a Heyting algebra, namely: the Heyting
  algebra induced by the implicative algebra
  $(\A^I,{\cle^I},{\to^I},S[I])$.
  \begin{description}
  \item[Functoriality]\quad
    Let $I,J\in\Set$.
    Each function $f:I\to J$ induces a reindexing map
    $\A^f:\A^J\to\A^I$ defined by $\A^f(a)=a\circ f$ for all
    $a\in\A^J$.
    Now, let us consider two families $a,b\in\A^J$ such that
    $a\tnent_{S[J]}b$, that is: such that
    $\bigmeet_{j\in J}(a_j\leftrightarrow b_j)\in S$. 
    Since
    $\bigmeet_{j\in J}(a_j\leftrightarrow b_j)
    \cle\bigmeet_{i\in I}(a_{f(i)}\leftrightarrow b_{f(j)})$, we deduce
    that $\bigmeet_{i\in I}(a_{f(i)}\leftrightarrow b_{f(i)})\in S$,
    so that $\A^f(a)\tnent_{S[I]}\A^f(b)$.
    Therefore, through the quotients $\P{J}=\A^J/S[J]$ and
    $\P{I}=\A^I/S[I]$, the reindexing map $\A^f:\A^J\to\A^I$
    factors into a map $\P{f}:\P{J}\to\P{I}$.
    We now need to check that the map $\P{f}:\P{J}\to\P{I}$ is a
    morphism of Heyting algebras.
    For that, given predicates $p=[a]_{/S[J]}\in\P{J}$ and
    $q=[b]_{/S[J]}\in\P{J}$, we observe that:
    $$\begin{array}{r@{~~}c@{~~}l}
      \P{f}(p\land q)&=&
      \P{f}\bigl(\bigl[a\times^J b\bigr]_{/S[J]}\bigr)
      ~=~\P{f}\bigl(\bigl[(a_j\times b_j)_{j\in J}\bigr]_{/S[J]}\bigr)\\
      &=&\bigl[(a_{f(i)}\times b_{f(i)})_{i\in I}\bigr]_{/S[I]}
      ~=~\bigl[(a_{f(i)})_{i\in I}\times^I
        (b_{f(i)})_{i\in I}\bigr]_{/S[I]} \\
      &=&\bigl[(a_{f(i)})_{i\in I}\bigr]_{/S[I]}\land
      \bigl[(b_{f(i)})_{i\in I}\bigr]_{/S[I]}
      ~=~\P{f}(p)\land\P{f}(q)\\
    \end{array}$$
    (The case of the other connectives $\lor$, $\to$, $\bot$ and
    $\top$ is similar.)
    The contravariant functoriality of the correspondence 
    $f\mapsto\P{f}$ is obvious from the definition.
  \item[Existence of right adjoints]\quad
    Let $f:I\to J$.
    For each family $a\in\A^I$, we let
    $$\forall_f^0(a)~=~
    \biggl(\bigmeet_{f(i)=j}\!\!\!\!a_i\biggr)_{j\in J}
    \eqno({\in}~\A^J)$$
    We observe that for all $a,b\in\A^I$ and $s\in S$,
    $$s\cle\bigmeet_{i\in I}(a_i\to b_i)
    \qquad\text{implies}\qquad
    s\cle\bigmeet_{j\in J}(\forall_f^0(a)_j\to\forall_f^0(b)_j)\,.$$
    $$\begin{array}{r@{\qquad}c@{\qquad}l}
      a\ent_{S[I]}b&\text{implies}&
      \forall_f^0(a)\ent_{S[J]}\forall_f^0(b)\,,\\
      a\tnent_{S[I]}b&\text{implies}&
      \forall_f^0(a)\tnent_{S[J]}\forall_f^0(b)\,.\\
    \end{array}\leqno\begin{tabular}{@{}l}
    Therefore\\and thus\\
    \end{tabular}$$
    For each predicate $p=[a]_{/S[I]}\in\P{I}$, we can now let
    $\forall{f}(p)=\bigl[\forall_f^0(a)\bigr]_{/S[J]}\in\P{J}$.
    Given $p=[a]_{/S[I]}\in\P{I}$ and $q=[b]_{/S[J]}\in\P{J}$,
    it remains to check that:
    $$\begin{array}{rcl}
      \P{f}(q)\le p
      &\text{iff}&\ds\bigmeet_{i\in I}(b_{f(i)}\to a_i)\in S
      \quad\text{iff}\quad\bigmeet_{j\in J}
      \bigmeet_{f(i)=j}\!\!(b_j\to a_i)\in S\\
      \noalign{\smallskip}
      &\text{iff}&\ds\bigmeet_{j\in J}\Bigl(
      b_j\to\!\!\!\bigmeet_{f(i)=j}\!\!\!\!a_i\Bigr)\in S
      \quad\text{iff}\quad\bigmeet_{j\in J}
      \bigl(b_j\to\forall^0_f(a)_j\bigr)\in S\\
      \noalign{\smallskip}
      &\text{iff}&q\le\forall{f}(p)\\
    \end{array}$$
  \item[Existence of left adjoints]\quad
    Let $f:I\to J$.
    For each family $a\in\A^I$, we let
    $$\exists_f^0(a)~=~
    \biggl(\bigexists_{f(i)=j}\!\!\!a_i\biggr)_{j\in J}
    ~=~\Biggl(\bigmeet_{c\in\A}\biggl(
    \bigmeet_{f(i)=j}\!\!\!\!(a_i\to c)~\to~c\biggr)\Biggr)_{j\in J}
    \eqno({\in}~\A^J)$$
    We observe that for all $a,b\in\A^I$ and $s\in S$,
    $$s\cle\bigmeet_{i\in I}(a_i\to b_i)
    \qquad\text{implies}\qquad
    s'\cle\bigmeet_{j\in J}(\exists_f^0(a)_j\to\exists_f^0(b)_j)\,,$$
    where $s':=(\Lam{xy}{x\,(\Lam{z}{y\,(s\,z)})})^{\A}\in S$.
    $$\begin{array}{r@{\qquad}c@{\qquad}l}
      a\ent_{S[I]}b&\text{implies}&
      \exists_f^0(a)\ent_{S[J]}\exists_f^0(b)\,,\\
      a\tnent_{S[I]}b&\text{implies}&
      \exists_f^0(a)\tnent_{S[J]}\exists_f^0(b)\,.\\
    \end{array}\leqno\begin{tabular}{@{}l}
    Therefore\\and thus\\
    \end{tabular}$$
    For each predicate $p=[a]_{/S[I]}\in\P{I}$, we can now let
    $\exists{f}(p)=\bigl[\exists_f^0(a)\bigr]_{/S[J]}\in\P{J}$.
    Given $p=[a]_{/S[I]}\in\P{I}$ and $q=[b]_{/S[J]}\in\P{J}$,
    it remains to check that:
    $$\begin{array}{r@{\quad}c@{\quad}l}
      p\le\P{f}(q)
      &\text{iff}&\ds\bigmeet_{i\in I}(a_i\to b_{f(i)})\in S
      \quad\text{iff}\quad\bigmeet_{j\in J}
      \bigmeet_{f(i)=j}\!\!(a_i\to b_j)\in S\\
      \noalign{\smallskip}
      &\text{iff}&\ds\bigmeet_{j\in J}\biggl(
      \biggl(\bigexists_{f(i)=j}\!\!\!a_i\biggr)\to b_j\biggr)\in S
      \quad\text{iff}\quad\bigmeet_{j\in J}
      \bigl(\exists^0_f(a)_j\to b_j\bigr)\in S\\
      \noalign{\smallskip}
      &\text{iff}&\exists{f}(p)\le q\\
    \end{array}$$
    (Here, the third `iff' follows from the soundness of the
    elimination rule of~$\exists$.)
  \item[Beck-Chevalley condition]\quad
    Let us now check that the Beck-Chevalley condition holds for
    the functor $\P:\Set^{\op}\to\HA$.
    For that, we consider an arbitrary pullback square in the
    category~$\Set$
    $$\xymatrix{
      I\pullback{6pt}\ar[r]^{f_1}\ar[d]_{f_2}& I_1\ar[d]^{g_1} \\
      I_2\ar[r]_{g_2} & J \\
    }$$
    and we want to show that the following two diagrams commute
    (in $\Pos$):
    $$\xymatrix{
      \P{I}\ar[r]^{\exists{f_1}}& \P{I_1} \\
      \P{I_2}\ar[u]^{\P{f_2}}\ar[r]_{\exists{g_2}} &
      \P{J}\ar[u]_{\P{g_1}} \\
    }\qquad\xymatrix{
      \P{I}\ar[r]^{\forall{f_1}}& \P{I_1} \\
      \P{I_2}\ar[u]^{\P{f_2}}\ar[r]_{\forall{g_2}} &
      \P{J}\ar[u]_{\P{g_1}} \\
    }$$
    Since both commutation properties are equivalent up to the
    symmetry w.r.t.\ the diagonal
    (Remarks~\ref{r:TriposIntuitions}~(5)), we only need to prove the
    second commutation property.
    And since the correspondence $f\mapsto\forall{f}$ is functorial,
    we can assume without loss of generality that
    \begin{itemize}
    \item[$\bullet$]
      $I=\{(i_1,i_2)\in I_1\times I_2:g_1(i_1)=g_2(i_2)\}$
    \item[$\bullet$] $f_1(i_1,i_2)=i_1$ for all $(i_1,i_2)\in I$
    \item[$\bullet$] $f_2(i_1,i_2)=i_2$ for all $(i_1,i_2)\in I$
    \end{itemize}
    using the fact that each pullback diagram in~$\Set$ is of this
    form, up to a bijection.
    For all $p=[a]=\bigl[(a_i)_{i\in I_2}\bigr]\in\P{I_2}$, we check
    that:
    $$\begin{array}[b]{@{}r@{~~}c@{~~}l@{}}
      (\forall{f_1}\circ\P{f_2})(p)
      &=&\ds\forall{f_1}\bigl(\bigl[
        (a_{f_2(i_1,i_2)})_{(i_1,i_2)\in I}
        \bigr]\bigr)
      ~=~\forall{f_1}\bigl(\bigl[
        (a_{i_2})_{(i_1,i_2)\in I}
        \bigr]\bigr)\\
      \noalign{\medskip}
      &=&\ds\biggl[\biggl(\!
        \bigmeet_{\substack{(i_1,i_2)\in I\\f_1(i_1,i_2)=i'_1}}
        \!\!\!\!\!\!\!\!a_{i_2}\biggr)_{i'_1\in I_1}\biggr]
      ~=~\biggl[\biggl(\!\!\!
        \bigmeet_{\substack{i_2\in I_2\\g_2(i_2)=g_1(i_1)}}
        \!\!\!\!\!\!\!\!\!\!a_{i_2}\biggr)_{i_1\in I_1}\biggr]\\
      \noalign{\medskip}
      &=&\ds\Bigl[\Bigl(
        \bigl(\forall^0_{g_2}(a)\bigr)_{g_1(i_1)}
        \Bigr)_{i_1\in I_1}\Bigr]
      ~=~\P{g_1}\bigl(\bigl[\forall^0_{g_2}(a)\bigr]\bigr)\\
      \noalign{\medskip}
      &=&(\P{g_1}\circ\forall{g_2})(p)\\
    \end{array}$$
  \item[The generic predicate]\quad
    Let us now take $\Prop:=\A$ and
    $\Tr:=[\id_{\A}]_{/S[\A]}\in\P{\Prop}$.
    Given a set $I\in\Set$ and a predicate
    $p=\bigl[(a_i)_{i\in I}\bigr]_{/S[I]}\in\P{I}$,
    we take $f:=(a_i)_{i\in I}:I\to\A$ and check that:
    $$\P{f}(\Tr)~=~
    \P{f}\bigl(\bigl[(a)_{a\in\A}\bigr]_{/S[\A]}\bigr)
    ~=~\bigl[(a_i)_{i\in I}\bigr]_{/S[I]}~=~p\,.
    \eqno\usebox{\proofbox}$$
  \end{description}
\end{proof*}

\begin{example}[Particular case of a complete Heyting algebra]%
  \label{ex:HeytingImpTripos}
  In the particular case where the implicative algebra
  $(\A,{\cle},{\to},S)$ is a complete Heyting algebra (which means
  that $\to$ is Heyting's implication whereas the separator is
  trivial: $S=\{\top\}$), we can observe that for each set~$I$, the
  equivalence relation $\tnent_{S[I]}$ over $\A^I$ is  discrete (each
  equivalence class has one element), so that we can drop the
  quotient:
  $$\P{I}~=~\A^I/S[I]~\sim~\A^I\,.$$
  Up to this technical detail, the implicative tripos associated to
  the implicative algebra~$(\A,{\cle},{\to},S)$ is thus the very same
  as the forcing tripos associated to the underlying complete Heyting
  algebra $(\A,{\cle})$ (cf Example~\ref{ex:ForcingTripos}).
\end{example}

\subsection{Characterizing forcing
  triposes}\label{ss:CharacForcingTriposes}
Example~\ref{ex:HeytingImpTripos} shows that forcing triposes are
particular cases of implicative triposes.
However, it turns out that many implicative algebras that are not
complete Heyting algebras nevertheless induce a tripos that is
isomorphic to a forcing tripos.
The aim of this section is to characterize them, by proving the
following:
\begin{theorem}[Characterizing forcing
    triposes]\label{th:CharacForcingTriposes}
  Let $\P:\Set^{\op}\to\HA$ be the tripos induced by an implicative
  algebra $(\A,{\cle},{\to},S)$.
  Then the following are equivalent:
  \begin{enumerate}[(99)]
  \item[(1)] $\P$ is isomorphic to a forcing tripos.
  \item[(2)] The separator $S\subseteq\A$ is a principal filter
    of~$\A$.
  \item[(3)] The separator $S\subseteq\A$ is finitely generated
    and $\Fork^{\A}\in S$.
  \end{enumerate}
\end{theorem}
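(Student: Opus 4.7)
The equivalence $(2)\liff(3)$ has just been established in Proposition~\ref{p:CharacPrincFilter}, so the plan reduces to proving $(1)\liff(2)$. The hinge of the argument is the interplay between the uniform power separator~$S[I]$, the product separator~$S^I$, and the factorization of Proposition~\ref{p:Factorization}.

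For $(2)\limp(1)$, assume $S={\uparrow}\{\Theta\}$. Then~$S$ is trivially closed under arbitrary meets, hence by Proposition~\ref{p:CharacSepEqual} we have $S[I]=S^I$ for every set~$I$, and Proposition~\ref{p:Factorization} provides a family of Heyting-algebra isomorphisms
$$\phi_I~:~\P{I}~=~\A^I/S[I]~\simto~(\A/S)^I,
\qquad[(a_i)_i]_{/S[I]}~\mapsto~([a_i]_{/S})_{i\in I}\,.$$
Letting $H:=\A/S$, which is a complete Heyting algebra by Proposition~\ref{p:CharacPrincFilter}, I would then check that $(\phi_I)_I$ constitutes an isomorphism between~$\P$ and the forcing tripos associated to~$H$ (Example~\ref{ex:ForcingTripos}). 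Naturality in~$I$ is immediate from the pointwise definition of reindexing on each side. Preservation of~$\forall_f$ follows from the explicit formula $\forall^0_f(a)=(\bigmeet_{f(i)=j}a_i)_{j\in J}$ used in the proof of Theorem~\ref{th:ImpTripos}, together with Proposition~\ref{p:CharacPrincFilter}(3), which says that $[\,\cdot\,]_{/S}:\A\to H$ commutes with arbitrary meets; preservation of the generic predicate is routine, using that $[\,\cdot\,]_{/S}$ is surjective.

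The only delicate step of this direction is preservation of~$\exists_f$, because the implicative tripos uses the second-order encoding~$\bigexists$ whereas the forcing tripos uses a literal join. I would first verify, in any complete Heyting algebra, the identity $\bigexists_{i\in I}h_i=\bigjoin_{i\in I}h_i$: the inequality $h_{i_0}\cle((\bigmeet_{i}(h_i\to c))\to c)$ for each~$i_0$ and each~$c$ gives, after taking sup over~$i_0$ and meet over~$c$, the direction $\bigjoin_ih_i\cle\bigexists_ih_i$; the reverse direction follows by instantiating $c:=\bigjoin_ih_i$, which makes $\bigmeet_i(h_i\to c)=\top$. Using that $[\,\cdot\,]_{/S}$ is a morphism of Heyting algebras that commutes with arbitrary meets and surjects onto~$H$, the encoded existential $[\bigexists_{f(i)=j}a_i]_{/S}$ then equals $\bigexists_{f(i)=j}[a_i]_{/S}=\bigjoin_{f(i)=j}[a_i]_{/S}$, matching the forcing-style quantifier.

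For $(1)\limp(2)$, assume $\phi:\P\simto\P'$ for some forcing tripos~$\P'$ associated to a complete Heyting algebra~$H$ via Example~\ref{ex:ForcingTripos}. Restricting to $I=1$ yields a Heyting-algebra isomorphism $\phi_1:\A/S\simto H$. Naturality of~$\phi$ along the point maps $\hat{\imath}:1\to I$ sending $*\mapsto i$ gives
$$\phi_I([a]_{/S[I]})(i)~=~\phi_1([a_i]_{/S})
\qquad(\text{for all}~a\in\A^I,~i\in I)\,,$$
so $\phi_I$ coincides, up to applying~$\phi_1$ componentwise, with the canonical map $\Phi_I:[a]_{/S[I]}\mapsto([a_i]_{/S})_{i\in I}$. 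Since~$\phi_I$ is bijective, so is~$\Phi_I$. A direct computation shows that injectivity of~$\Phi_I$ amounts to the inclusion $\tnent_{S^I}\subseteq\tnent_{S[I]}$, which combined with the always-true converse yields $\tnent_{S[I]}=\tnent_{S^I}$; and two separators that induce the same equivalence must coincide (because $a\in T\liff a\tnent_T\top$ for any separator~$T$), so $S[I]=S^I$. Holding for every set~$I$, this implies via Proposition~\ref{p:CharacSepEqual} that~$S$ is closed under arbitrary meets, hence $\bigmeet S\in S$ and $S={\uparrow}\{\bigmeet S\}$ is a principal filter, giving~(2). The main obstacle throughout is tracking the three quotients $\A^I/S[I]$, $\A^I/S^I$ and $(\A/S)^I$ carefully enough to extract this collapse from the mere existence of a tripos isomorphism.
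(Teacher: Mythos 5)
Your proposal is correct and follows essentially the same route as the paper: $(2)\liff(3)$ via Prop.~\ref{p:CharacPrincFilter}, $(2)\limp(1)$ via closure of~$S$ under meets giving $S[I]=S^I$ and the canonical isomorphism $\P{I}\simto(\A/S)^I$ of Prop.~\ref{p:Factorization} (packaged in the paper as Prop.~\ref{p:FundDiag}), and $(1)\limp(2)$ via naturality along the point maps $1\to I$ forcing that canonical map to be bijective, hence $S[I]=S^I$ for all~$I$ and~$S$ a principal filter. The only deviations are cosmetic: you re-derive by hand the ``injective iff $S[I]=S^I$'' step that Prop.~\ref{p:FundDiag} already provides, and your checks that $\forall f$, $\exists f$ and the generic predicate are preserved are superfluous, since the paper's notion of tripos isomorphism only requires a natural isomorphism in~$\HA$ (adjoints and generic predicates then transfer automatically).
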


Before proving the theorem, let us recall that:
\begin{definition}
  Two $\Set$-based triposes $\P,\P':\Set^{\op}\to\HA$ are
  \emph{isomorphic} when there exists a natural isomorphism
  $\phi:\P\limp\P'$, that is: a family of isomorphisms
  $\phi_I:\P{I}\simto\P'{I}$ (indexed by $I\in\Set$)
  such that the following diagram commutes:
  $$\begin{array}{c}
    \xymatrix{
      I\ar[d]_f & \P{I}\ar[r]^{\phi_I}_{\sim} & \P'{I} \\
      J & \P{J}\ar[u]^{\P{f}}\ar[r]_{\phi_J}^{\sim} &
      \P'{J}\ar[u]_{\P'{f}} \\
    }
  \end{array}\eqno(\text{for all}~f:I\to J)$$
  (Note that here, the notion of isomorphism can be taken
  indifferently in the sense of $\HA$ or $\Pos$, since
  $\phi_I:\P{I}\to\P'{I}$ is an iso in~$\HA$ if and only if it is an
  iso in~$\Pos$.)
\end{definition}

\begin{remarks}
  The above definition does not take care of generic predicates,
  since any natural isomorphism $\phi:\P\limp\P'$ automatically maps
  each generic predicate $\Tr\in\P\Prop$ (for the tripos~$\P$) into a
  generic predicate $\Tr':=\phi_{\Prop}(\Tr)\in\P'\Prop$ (for the
  tripos~$\P'$).
\end{remarks}

\subsubsection{The fundamental diagram}
Given an implicative algebra $\A=(\A,{\cle},{\to},S)$ and a set~$I$,
we have seen (Section~\ref{ss:UnifPowSep}) that the separator
$S\subseteq\A$ induces two separators
$$S[I]~\subseteq~S^I~\subseteq~\A^I$$
in the power implicative structure~$\A^I$, where
$$S[I]~:=~\bigl\{(a_i)_{i\in I}\in\A^I~:~
\exists s\in S,~\forall i\in I,~s\cle a_i\bigr\}
\eqno(\text{uniform power separator})$$
We thus get the following (commutative) diagram
$$\xymatrix@R=48pt@C=48pt{
  \A^I\ar@{->>}[r]^{[\cdot]_{/S[I]}}\ar@{->>}[d]_{[\cdot]_{/S^I}}&
  \A^I/S[I]\ar@{->>}[dl]^{\widetilde{\mathrm{id}}}\ar@{->>}[d]^{\rho_I}
  \rlap{$~{=}~\P{I}$} &
  \text{\footnotesize$[(a_i)_{i\in I}]_{/S[I]}$}\ar@{|->}[d]\\
  \A^I/S^I\ar@{->>}[r]_{\alpha_I}^{\sim}&
  (\A/S)^I\rlap{$~{=}~(\P1)^I$}&
  \text{\footnotesize$([a_i]_{/S})_{i\in I}$}\\
}$$
where:
\begin{itemize}
\item $[\cdot]_{/S[I]}:\A^I\to A^I/S[I]~({=}~\P{I})$ is the quotient
  map associated to $A^I/S[I]$;
\item $[\cdot]_{/S^I}:\A^I\to A^I/S^I$ is the quotient map associated
  to $A^I/S^I$;
\item $\tilde{\id}:\A^I/S[I]\to\A^I/S^I$ is the (surjective) map that
  factors the identity of $\A^I$ through the quotients
  $\A^I/S[I]$ and $\A^I/S^I$ (remember that $S[I]\subseteq S^I$);
\item $\alpha_I=\<\tilde{\pi}_i\>_{i\in I}:
  \A^I/S^I\to(\A/S)^I$ is the canonical isomorphism
  (Prop.~\ref{p:Factorization}) between the Heyting algebras
  $\A^I/S^I$ and $(\A/S)^I~({=}~(\P1)^I)$;
\item $\rho_I:\A^I/S[I]\to(\A/S)^I$ is the (surjective) map that is
  defined by $\rho_I:=\alpha_I\circ\tilde{\id}$, so that for all
  $(a_i)_{i\in I}\in\A^I$, we have
  $$\rho_I\bigl(\bigl[(a_i)_{i\in I}\bigr]_{/S[I]}\bigr)
  ~=~\bigl([a_i]_{/S}\bigr)_{i\in I}\,.$$
\end{itemize}

\begin{proposition}\label{p:FundDiag}
  The following are equivalent:
  \begin{enumerate}[(99)]
  \item[(1)] The map $\rho_I:\P{I}\to(\P1)^I$ is injective.
  \item[(2)] The map $\rho_I:\P{I}\to(\P1)^I$ is an isomorphism
    of Heyting algebras.
  \item[(3)] Both separators $S[I]$ and $S^I$ coincide: $S[I]=S^I$.
  \item[(4)] The separator $S\subseteq\A$
    is closed under all $I$-indexed meets.
  \end{enumerate}
\end{proposition}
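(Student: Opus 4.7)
The plan is to peel off the easy equivalence $(3)\liff(4)$, which is exactly Prop.~\ref{p:CharacSepEqual}, and then close the cycle $(3)\limp(2)\limp(1)\limp(3)$ by exploiting the factorization $\rho_I=\alpha_I\circ\widetilde{\id}$ supplied by the fundamental diagram. Since $\alpha_I$ is always an isomorphism of Heyting algebras (Prop.~\ref{p:Factorization}) and $\widetilde{\id}:\A^I/S[I]\onto\A^I/S^I$ is always surjective (it factors the identity of $\A^I$ through the two quotients, and $S[I]\subseteq S^I$), every property of $\rho_I$ reduces to the corresponding property of $\widetilde{\id}$.

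For $(3)\limp(2)$, when $S[I]=S^I$ the two quotients $\A^I/S[I]$ and $\A^I/S^I$ literally coincide, so $\widetilde{\id}$ is the identity map and $\rho_I=\alpha_I$ is an isomorphism. The step $(2)\limp(1)$ is immediate.

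The real content lies in $(1)\limp(3)$. Assume $\rho_I$ is injective and fix an arbitrary $(a_i)_{i\in I}\in S^I$; I need to show $(a_i)_{i\in I}\in S[I]$. Since each $a_i\in S$, Prop.~\ref{p:InducedHA} (which identifies $[\top]_{/S}=S$) gives $[a_i]_{/S}=[\top]_{/S}$ for every $i$, whence
\[
\rho_I\bigl([(a_i)_{i\in I}]_{/S[I]}\bigr)
~=~([a_i]_{/S})_{i\in I}
~=~([\top]_{/S})_{i\in I}
~=~\rho_I\bigl([(\top)_{i\in I}]_{/S[I]}\bigr).
\]
Injectivity of $\rho_I$ then forces $(a_i)_{i\in I}\tnent_{S[I]}(\top)_{i\in I}$; in particular $(\top\to a_i)_{i\in I}\in S[I]$, so there exists $s\in S$ with $s\cle(\top\to a_i)$ for every $i\in I$. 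Applying Prop.~\ref{p:PropApp}~(2), I get $s\top\cle(\top\to a_i)\top\cle a_i$ uniformly in~$i$, while $s\top\in S$ by closure of separators under application (here I use that $\top\in S$, which follows from $\mathbf{K}^{\A}\in S$, $\mathbf{K}^{\A}\cle\top$ and upwards closure). Hence $(a_i)_{i\in I}\in S[I]$, as required.

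The only subtle point -- and therefore the main obstacle -- is that the definition of $S[I]$ demands a \emph{uniform} witness $s\in S$ lying below every $a_i$, whereas injectivity of $\rho_I$ directly yields only the weaker information $(\top\to a_i)_{i\in I}\in S[I]$. The trick of evaluating the witness at~$\top$ (using $\beta$-reduction and closure under application) is exactly what bridges this gap and makes $(1)\limp(3)$ go through.
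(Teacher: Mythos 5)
Your proof is correct, but it closes the nontrivial implication by a different route than the paper. The paper argues structurally: $\rho_I$ is by construction a \emph{surjective} morphism of Heyting algebras, so injectivity already gives an isomorphism (bijective HA-morphisms are isos), which gives $(1)\liff(2)$; then $(2)\liff(3)$ is read off from the factorization $\rho_I=\alpha_I\circ\widetilde{\id}$ ($\rho_I$ iso iff $\widetilde{\id}$ iso iff $S[I]=S^I$), and $(3)\liff(4)$ is Prop.~\ref{p:CharacSepEqual}, as in your write-up. You instead prove $(1)\limp(3)$ directly at the level of elements: injectivity applied to the pair $(a_i)_{i\in I}$, $(\top)_{i\in I}$ yields $(\top\to a_i)_{i\in I}\in S[I]$, and you extract the required uniform witness as $s\top$ (note the first inequality $s\top\cle(\top\to a_i)\top$ uses monotonicity, Prop.~\ref{p:PropApp}~(1), in addition to the $\beta$-rule~(2) you cite). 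This is a perfectly sound, more hands-on argument; its only cost is that it redoes work the abstract machinery already provides --- since $(\top)_{i\in I}\in S[I]$ and $S[I]$ is a separator of $\A^I$, closure under modus ponens immediately upgrades $(\top\to a_i)_{i\in I}\in S[I]$ to $(a_i)_{i\in I}\in S[I]$, so the ``main obstacle'' you identify dissolves without the evaluation-at-$\top$ trick. Conversely, your route has the mild advantage of not invoking the surjectivity of $\rho_I$ (hence not the choice-dependent surjectivity of $\alpha_I$) for this implication.
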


\begin{proof}
  $(1)\liff(2)$\quad
  Recall that a morphism of Heyting algebras is an isomorphism
  (in~$\HA$) if and only if the underlying map (in~$\Set$) is
  bijective.
  But since $\rho_I$ is a surjective morphism of Heyting algebras, it
  is clear that $\rho_I$ is an isomorphism (in~$\HA$) if and only the
  underlying map (in $\Set$) is injective.
  \smallbreak\noindent
  $(2)\liff(3)$\quad It is clear that $\rho_I$ is an iso iff
  $\tilde{\id}$ is an iso, that is: iff $S[I]=S^I$.
  \smallbreak\noindent
  $(3)\liff(4)$\quad See Prop.~\ref{p:CharacSepEqual}
  p.~\pageref{p:CharacSepEqual}.
\end{proof}

We can now present the
\begin{proof}[Proof of Theorem~\ref{th:CharacForcingTriposes}]\quad
  We have already proved that $(2)\liff(3)$
  (Prop.~\ref{p:CharacPrincFilter}, Section~\ref{sss:FinGenSep}),
  so that it only remains to prove that $(1)\liff(2)$.
  \smallbreak\noindent
  $(2)\limp(1)$\quad When $S\subseteq\A$ is a principal filter,
  the Heyting algebra $H:=\P1=\A/S$ is complete
  (Prop.~\ref{p:CharacPrincFilter}).
  Moreover, since $S$ is closed under arbitrary meets, the arrow
  $\rho_I:\P{I}\to(\P1)^I$ is an isomorphism (Prop.~\ref{p:FundDiag})
  for all sets~$I$.
  It is also clearly natural in~$I$, so that the family
  $(\rho_I)_{I\in\Set}$ is an isomorphism between the implicative
  tripos~$\P$ and the forcing tripos $I\mapsto H^I$ (where
  $H=\P1=\A/S$).
  \smallbreak\noindent
  $(1)\limp(2)$\quad Let us now assume that there is a complete
  Heyting algebra~$H$ together with a natural isomorphism
  $\phi_I:\P{I}\simto H^I$ (in~$I$).
  In particular, we have $\phi_1:\P1\inonto H^1=H$, so that
  $\A/S=\P1\sim H$ is a complete Heyting algebra.
  Now, fix a set~$I$, and write
  $c_i:=\{0\mapsto i\}:1\to I$ for each element $i\in I$.
  Via the two (contravariant) functors
  $\P,H^{(\text{--})}:\Set^{\op}\to\HA$, we easily check that
  the arrow $c_i:1\to I$ is mapped to:
  $$\begin{array}[b]{rclcl}
    \P{c_i}&=&\rho_i&:&\P{I}\to\P1\\[3pt]
    H^{c_i}&=&\pi'_i&:&H^I\to H\\
  \end{array}\leqno\text{and}$$
  where $\rho_i$ is the $i$th component of the surjection
  $\rho_I:\P{I}\onto(\P1)^I$ and where $\pi'_i$ is the $i$th
  projection from $H^I$ to $H$.
  We then observe that the two diagrams
  $$\xymatrix{
    \A/S\ar[r]^{\phi_1}_{\sim} & H \\
    \A^I/S[I]\ar[u]^{\P{c_i}=\rho_i}\ar[r]_{\phi_I}^{\sim} &
    H^I\ar[u]_{\pi'_i=H^{c_i}} \\
  }\qquad\xymatrix{
    (\A/S)^I\ar[r]^{\phi_1^I}_{\sim} & H^I \\
    \A^I/S[I]\ar[u]^{\rho_I=\<\rho_i\>_{i\in I}}
    \ar[r]_{\phi_I}^{\sim} &
    H^I\ar[u]_{\id_{H^I}=\<\pi'_i\>_{i\in I}}^{\sim} \\
  }$$
  are commutative.
  Indeed, the first commutation property comes from the naturality
  of~$\phi$, and the second commutation property follows from the
  first commutation property, by gluing the arrows $\rho_i$ and
  $\pi'_i$ for all indices $i\in I$.
  From the second commutation property, it is then clear that the
  arrow $\rho_I:\P{I}\to(\P1)^I$ is an isomorphism for all sets $I$,
  so that by Prop.~\ref{p:FundDiag}, the separator $S\subseteq\A$ is
  closed under arbitrary meets, which precisely means that it is a
  principal filter of~$\A$.
\end{proof}

\begin{remarks}
  Intuitively, Theorem~\ref{th:CharacForcingTriposes} expresses that
  forcing is the same as non-deterministic realizability (both in
  intuitionistic and classical logic).
\end{remarks}

\subsection{The case of classical
  realizability}\label{ss:TriposRealizK}
In Sections~\ref{sss:ImpStructRealizK} and~\ref{sss:ImpAlgRealizK}, we
have seen that each Abstract Krivine Structure (AKS)
$\mathcal{K}=(\Lambda,\Pi,\ldots)$ can be turned into a classical
implicative algebra $\A_{\mathcal{K}}=(\Pow(\Pi),\ldots)$.
By Theorem~\ref{th:ImpTripos}, the classical implicative algebra
$\A_{\mathcal{K}}$ induces in turn a (classical) tripos, which we
shall call the \emph{classical realizability tripos} induced by the
AKS $\mathcal{K}$.

\begin{remark}
  In~\cite{Str13}, Streicher shows how to construct a classical tripos
  (which he calls a \emph{Krivine tripos}) from an AKS, using a very
  similar construction.
  Streicher's construction is further refined in~\cite{FFGMM17}, which
  already introduces some of the main ideas underlying implicative
  algebras.
  Technically, the main difference between Streicher's construction
  and ours is that Streicher works with a smaller algebra
  $\A'_{\mathcal{K}}$ of truth values, that only contains the sets of
  stacks that are closed under bi-orthogonal:
  $$\A'_{\mathcal{K}}~=~\Pow_{\Bot}(\Pi)~=~\{S\in\Pow(\Pi)~:~
  S=S^{{\Bot}{\Bot}}\}\,.$$
  Although Streicher's algebra $\A'_{\mathcal{K}}$ is not an
  implicative algebra (it is a \emph{classical ordered combinatory
    algebra}, following the terminology of~\cite{FFGMM17}), it
  nevertheless gives rise to a classical tripos, using a construction
  that is very similar to ours.
  In~\cite{FM17}, it is shown that Streicher's tripos is actually
  isomorphic to the implicative tripos that is constructed from the
  implicative algebra $\A_{\mathcal{K}}$.
\end{remark}

The following theorem states that AKSs generate the very same class of
triposes as classical implicative algebras, so that both structures
(abstract Krivine structures and classical implicative algebras) have
actually the very same logical expressiveness:
\begin{theorem}[Universality of AKS]\label{th:UnivAKS}
  For each classical implicative algebra~$\A$, there exists an AKS
  $\mathcal{K}$ that induces the same tripos, in the sense that the
  classical realizability tripos induced by~$\mathcal{K}$ is
  isomorphic to the implicative tripos induced by~$\A$.
\end{theorem}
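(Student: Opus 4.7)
The plan is to build, from a classical implicative algebra $(\A,{\cle},{\to},S)$, an abstract Krivine structure $\mathcal{K}$ whose sets of terms and of stacks are both copies of~$\A$, and then show that ``take the pointwise meet of a family of sets of stacks'' induces a natural iso between the two triposes. Concretely I would take $\Lambda := \A$, $\Pi := \A$, application $@$ given by the implicative application of Def.~\ref{d:App}, push $t\cdot\pi := t \to \pi$, continuation $\mathtt{k}_\pi := \pi \to \bot^{\A}$ (with $\bot^{\A}=\bigmeet\A$), combinators $\mathtt{K} := \mathbf{K}^{\A}$, $\mathtt{S} := \mathbf{S}^{\A}$, $\cc := \cc^{\A}$, proof-like terms $\mathrm{PL} := S$, and pole $\Bot := \{(t,\pi)\in\A\times\A : t \cle \pi\}$. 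That $\mathrm{PL}$ contains the three distinguished combinators and is closed under application is immediate from the definition of a classical separator, and the five pole-closure rules reduce to short inequalities already in the bank: rule~1 is Prop.~\ref{p:PropApp}~(5); rules~2 and~3 use Prop.~\ref{p:CombTypes} for $\mathbf{K}^{\A},\mathbf{S}^{\A}$ together with the $\eta$-expansion $u\cle v\to uv$; rule~4 is Peirce's inequality $\cc^{\A}\cle((\pi\to\bot^{\A})\to\pi)\to\pi$; and rule~5 combines antimonotonicity of~$\to$ in its first operand with $\bot^{\A}\cle\pi'$.

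A direct unfolding gives $X^{\Bot} = {\downarrow}\bigmeet X$ for $X\in\Pow(\A)$, so that $S_{\mathcal{K}} = \{X : \bigmeet X \in S\}$ (by upward closure of~$S$). The whole comparison then pivots on the single identity
\[
\bigmeet(X \to_{\mathcal{K}} Y) \;=\; \bigmeet X \to \bigmeet Y
\qquad (X,Y \in \Pow(\A)),
\]
which I obtain by noting that in $\bigmeet\{t \to y : t \cle \bigmeet X,\; y \in Y\}$ the meet over~$t$ is attained at $t=\bigmeet X$ by antimonotonicity in the first argument, while the meet over~$y$ comes out of~$\to$ via axiom~(2) of Def.~\ref{d:ImpStruct}.

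For each set~$I$ I would then define $\psi_I : \A_{\mathcal{K}}^I \to \A^I$ by $\psi_I(X)(i) := \bigmeet X_i$; the section $\phi_I(a)(i) := \{a_i\}$ satisfies $\psi_I\circ\phi_I = \id$, giving surjectivity at the level of quotients. To show $\psi_I$ reflects the separator equivalence, I unfold $X \ent_{S_{\mathcal{K}}[I]} Y$ as the existence of $Z \in S_{\mathcal{K}}$ with $Z \supseteq X_i \to_{\mathcal{K}} Y_i$ for all~$i$; taking $Z := \bigcup_i(X_i \to_{\mathcal{K}} Y_i)$ (the smallest such candidate) and applying the key identity collapses the condition to $\bigmeet_i(\bigmeet X_i \to \bigmeet Y_i) \in S$, which is exactly $\psi_I(X) \ent_{S[I]} \psi_I(Y)$. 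Hence $\psi_I$ descends to a poset isomorphism $\overline{\psi_I} : \A_{\mathcal{K}}^I/S_{\mathcal{K}}[I] \simto \A^I/S[I]$, and because both sides are Heyting algebras this order iso is automatically a Heyting iso. Naturality in~$I$ is immediate: reindexing along $f:I\to J$ commutes with the pointwise application of~$\bigmeet$, i.e.\ $\psi_I(X\circ f) = \psi_J(X)\circ f$.

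The main obstacle is really the identity $\bigmeet(X \to_{\mathcal{K}} Y) = \bigmeet X \to \bigmeet Y$ and the ensuing collapse of the two-level condition ``$\exists Z \in S_{\mathcal{K}}$ dominating $X_i \to_{\mathcal{K}} Y_i$ for every $i$'' to the one-level condition ``$\bigmeet_i(\bigmeet X_i \to \bigmeet Y_i) \in S$''; without this coincidence there would be no reason for the powerset-based tripos built from~$\mathcal{K}$ to match the original implicative tripos of~$\A$. The pole-closure rules look numerous but are mechanical once read as typing-theoretic inequalities already catalogued in Prop.~\ref{p:PropApp} and Prop.~\ref{p:CombTypes}, so the real content of the theorem lies in that one collapse computation.
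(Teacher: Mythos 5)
Your proposal is correct and follows essentially the same route as the paper: the same AKS with $\Lambda=\Pi=\A$, push given by $\to$, pole $({\cle})$ and $\mathrm{PL}=S$, and the same comparison map $\beta\mapsto\bigmeet\beta$, whose compatibility with $\to$ and with the separators (your ``key identity'' and the collapse $X^{\Bot}={\downarrow}\{\bigmeet X\}$, $S_{\mathcal{K}}=\{X:\bigmeet X\in S\}$) is exactly what the paper isolates as conditions (1)--(3) of its reduction lemma (Lemma~\ref{l:ReducImpAlg}). The only differences are presentational: the paper factors the tripos isomorphism through that lemma while you inline the same computation at the level of $S_{\mathcal{K}}[I]$, and you additionally spell out the pole-closure axioms that the paper dismisses as a routine exercise.
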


The proof of Theorem~\ref{th:UnivAKS} is a consequence of the
following lemma:
\begin{lemma}[Reduction of implicative algebras]\label{l:ReducImpAlg}
  Let $\A=(\A,{\cle_{\A}},{\to_{\A}},S_{\A})$ and
  $\B=(\B,{\cle_{\B}},{\to_{\B}},S_{\B})$ be
  two implicative algebras.
  If there exists a surjective map $\psi:\B\to\A$ (a `reduction
  from~$\B$ onto~$\A$') such that
  \begin{enumerate}[(99)]
  \item[(1)] $\psi\bigl(\bigmeet_{i\in I}b_i\bigr)
    =\bigmeet_{i\in I}\psi(b_i)$\hfill
    (for all $I\in\Set$ and $b\in\B^I$)
  \item[(2)] $\psi(b\to_{\B}b')=\psi(b)\to_{\A}\psi(b')$\hfill
    (for all $b,b'\in\B$)
  \item[(3)] $b\in S_{\!\B}$ iff $\psi(b)\in S_{\!\A}$\hfill
    (for all $b\in\B$)
  \end{enumerate}
  then the corresponding triposes
  $\P_{\!\A},\P_{\!\B}:\Set^{\op}\to\HA$ are isomorphic.
\end{lemma}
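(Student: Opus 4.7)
The plan is to lift $\psi$ componentwise to maps $\psi^I:\B^I\to\A^I$ for each set~$I$, show that each factors through the separator quotients to a bijection $\bar\psi_I:\P_{\!\B}(I)\simto\P_{\!\A}(I)$, and then check that the family $(\bar\psi_I)_{I\in\Set}$ is natural in~$I$ and preserves the Heyting structure, so that it constitutes an isomorphism of triposes in the sense of Section~\ref{ss:CharacForcingTriposes}.

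The preliminary observation is that conditions~(1) and~(2), together with surjectivity of~$\psi$, force $\psi$ to commute with every derived operation of the implicative structure. In particular $\psi(\top_\B)=\top_\A$, $\psi(\bot_\B)=\bot_\A$, the map $\psi$ is monotonic, and one has $\psi(b\times_\B b')=\psi(b)\times_\A\psi(b')$, $\psi(b+_\B b')=\psi(b)+_\A\psi(b')$, and more generally $\psi(t^\B)=t^\A$ for every closed $\lambda$-term~$t$. The only subtlety here is that definitions such as $b\times_\B b'=\bigmeet_{c\in\B}((b\to b'\to c)\to c)$ involve meets indexed by the whole carrier; surjectivity of~$\psi$ is precisely what ensures that the image meet $\bigmeet_{c\in\B}\psi(\cdots)$ ranges over all of~$\A$ and so coincides with the analogous meet taken in~$\A$.

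Now define $\psi^I(b):=(\psi(b_i))_{i\in I}$. This map is surjective (lift a system of preimages pointwise, using choice) and, by (1)--(2), commutes with the product implication $\to^I$ and with arbitrary product meets. To factor it through the quotients, recall from Section~\ref{ss:UnifPowSep} that $a\in S[I]$ iff $\bigmeet_{i\in I}a_i\in S$, so $b\tnent_{S_\B[I]}b'$ amounts to the conjunction $\bigmeet_i(b_i\to_\B b'_i)\in S_\B$ and $\bigmeet_i(b'_i\to_\B b_i)\in S_\B$. Applying $\psi$ and using~(1), (2), (3) in turn, these are equivalent to $\bigmeet_i(\psi(b_i)\to_\A\psi(b'_i))\in S_\A$ and $\bigmeet_i(\psi(b'_i)\to_\A\psi(b_i))\in S_\A$, that is, to $\psi^I(b)\tnent_{S_\A[I]}\psi^I(b')$. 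Hence $\psi^I$ both preserves and reflects the quotient equivalence, and descends to a bijection $\bar\psi_I:\P_{\!\B}(I)\to\P_{\!\A}(I)$.

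It remains to check that each $\bar\psi_I$ is a Heyting-algebra morphism (hence an isomorphism, since bijective $\HA$-morphisms are invertible in~$\HA$), and that the family is natural in~$I$. The Heyting operations on both quotients are computed, via Prop.~\ref{p:InducedHA}, from the representative-level operations $\to,\times,+,\bot,\top$ of $\B^I$ and~$\A^I$; since $\psi^I$ preserves each of these componentwise by the first paragraph, $\bar\psi_I$ preserves $\to_H,\land_H,\lor_H,\bot_H,\top_H$. For naturality, given $f:I\to J$, the reindexing maps in either tripos are induced by precomposition with~$f$, which visibly commutes with componentwise application of~$\psi$, giving $\P_{\!\A}(f)\circ\bar\psi_J=\bar\psi_I\circ\P_{\!\B}(f)$. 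The left/right adjoints $\exists f,\forall f$ and the generic predicate are then transported along this natural iso automatically, adjoints being unique up to unique natural isomorphism. The main care point---rather than a genuine obstacle---is the quotient step: one must recognize that membership in $S[I]$ reduces to the single condition $\bigmeet_{i\in I}a_i\in S$, so that the three axioms on~$\psi$ translate cleanly and simultaneously into the corresponding facts about~$\psi^I$.
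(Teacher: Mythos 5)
Your proof is correct and follows essentially the same route as the paper's: lift $\psi$ componentwise to $\psi^I:\B^I\to\A^I$, use (1)--(3) together with the characterization of $S[I]$-membership by $\bigmeet_{i\in I}a_i\in S$ to show that $\psi^I$ preserves and reflects the entailment preorder, descend to the quotients, get bijectivity from surjectivity of $\psi$, and observe naturality since reindexing is precomposition; the only (harmless) divergence is that the paper concludes the $\HA$-isomorphism directly from the fact that a surjective order-embedding of posets is an iso in $\Pos$ and hence in $\HA$, whereas you verify preservation of the Heyting operations at the level of representatives, which is fine because $\times$, $+$, $\bot$, $\top$ are all meets of implications indexed by the \emph{whole} carrier, where surjectivity of $\psi$ applies. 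One caution on your preliminary remark: the claim that $\psi$ commutes with \emph{every} derived operation, in particular $\psi(t^{\B})=t^{\A}$ for all closed $\lambda$-terms, is not justified by (1)--(3)---application $ab=\bigmeet\{c:a\cle(b\to c)\}$ is a meet over a subset that $\psi$ need not map onto the corresponding subset downstairs (only $\psi(b)\psi(b')\cle\psi(bb')$ holds in general, since $\psi$ need not reflect the order), and indeed the reduction $\psi(\beta)=\bigmeet\beta$ used in the proof of Theorem~\ref{th:UnivAKS} gives a counterexample---but this over-claim is never used in your argument, which only needs commutation with $\to$, with arbitrary meets, and with the full-carrier-indexed encodings of the propositional connectives.
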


\begin{proof}
  For each set $I$, we consider the map $\psi^I:\B^I\to\A^I$ defined
  by $\psi^I(b)=\psi\circ b$ for all $b\in\B^I$.
  Given two points $b,b'\in\B^I$, we observe that:
  $$\begin{array}{rcl}
    b\ent_{S_{\B}[I]}b'
    &\text{iff}&\bigmeet_{i\in I}(b_i\to_{\B}b'_i)\in S_{\B} \\
    &\text{iff}&\psi\bigl(\bigmeet_{i\in I}(b_i\to_{\B}b'_i)\bigr)
    \in S_{\A}\\
    &\text{iff}&\bigmeet_{i\in I}(\psi(b_i)\to_{\A}\psi(b'_i))
    \in S_{\A}\\
    &\text{iff}&\psi^I(b)\ent_{S_{\A}[I]}\psi^I(b') \\
  \end{array}$$
  From this, we deduce that:
  \begin{enumerate}[(99)]
  \item[(1)] The map $\psi^I:\B^I\to\A^I$ is compatible with the
    preorders $\ent_{S_{\B}[I]}$ (on~$\B^I$) and
    $\ent_{S_{\A}[I]}$ (on $\A^I$), and thus factors into a
    monotonic map $\tilde{\psi}_I:\P_{\!\B}I\to\P_{\!\A}I$
    through the quotients $\P_{\!\B}I=\B^I/S_{\B}[I]$ and
    $\P_{\!\A}I=\A^I/S_{\A}[I]$.
  \item[(2)] The monotonic map
    $\tilde{\psi}_I:\P_{\!\B}I\to\P_{\!\A}I$ is an embedding of
    partial orderings, in the sense that $p\le p'$ iff 
    $\tilde{\psi}_I(p)\le\tilde{\psi}_I(p')$ for all
    $p,p'\in\P_{\!\B}I$.
  \end{enumerate}
  Moreover, since $\psi:\B\to\A$ is surjective, the maps
  $\psi^I:\B^I\to\A^I$ and
  $\tilde{\psi}_I:\P_{\!\B}I\to\P_{\!\A}I$ are surjective too, so
  that the latter is actually an isomorphism in~$\Pos$, and thus an
  isomorphism in~$\HA$.
  The naturality of $\tilde{\psi}_I:\P_{\!\B}I\to\P_{\!\A}I$ (in~$I$)
  follows from the naturality of $\psi^I:\A^I\to\B^I$ (in~$I$), which
  is obvious by construction.
\end{proof}

\begin{proof}[Proof of Theorem~\ref{th:UnivAKS}]\quad
  Let $\A=(\A,{\cle},{\to},S)$ be a classical implicative algebra.
  Following~\cite{FFGMM17}, we define
  $\mathcal{K}=(\Lambda,\Pi,@,{\,\cdot\,},\mathtt{k}_{\_},
  \mathtt{K},\mathtt{S},\cc,\mathrm{PL},\Bot)$ by
  \begin{itemize}
  \item[$\bullet$] $\Lambda=\Pi:=\A$
  \item[$\bullet$] $a@b:=ab$,\quad $a\cdot b:=a\to b$\quad and\quad
    $\mathtt{k}_{a}:=a\to\bot$\hfill (for all $a,b\in\A$)
  \item[$\bullet$] $\mathtt{K}:=\mathbf{K}^{\A}$,\quad
    $\mathtt{S}:=\mathbf{S}^{\A}$\quad and\quad
    $\cc:=\mathbf{cc}^{\A}$
  \item[$\bullet$] $\mathrm{PL}:=S$\quad and\quad
    $\Bot:=({\cle_{\A}})=\{(a,b)\in\A^2:a\cle b\}$
  \end{itemize}
  It is a routine exercise to check that the above structure is an
  AKS.
  Note that in this AKS, the orthogonal
  $\beta^{\Bot}\subseteq\Lambda$ of a set of stacks
  $\beta\subseteq\Pi$ is characterized by
  $$\beta^{\Bot}~=~\{a\in\A~:~\forall b\in\beta,~a\cle b\}
  ~=~{\bigm\downarrow}\bigl\{\bigmeet\!\beta\bigr\}$$
  From the results of Sections~\ref{sss:ImpStructRealizK}
  and~\ref{sss:ImpAlgRealizK}, the AKS $\mathcal{K}$ induces in turn a
  classical implicative algebra
  $\B=(\B,{\cle_{\B}},{\to_{\B}},S_{\B})$ that is defined by:
  \begin{itemize}
  \item[$\bullet$] $\B~:=~\Pow(\Pi)~=~\Pow(\A)$
  \item[$\bullet$] $\beta\cle_{\B}\beta'~:\liff~
    \beta\supseteq\beta'$\hfill (for all $\beta,\beta'\in\B$)
  \item[$\bullet$] $\beta\to_{\B}\beta'~:=~\beta^{\Bot}\cdot\beta'
    ~=~\bigl\{a\to a'~:~a\cle\bigmeet\!\beta,~a'\in\beta'\bigr\}$\hfill
    (for all $\beta,\beta'\in\B$)
  \item[$\bullet$] $S_{\B}~:=~\{\beta\in\B:
    \beta^{\Bot}\cap\mathrm{PL}\neq\varnothing\}
    ~=~\bigl\{\beta\in\Pow(\A):\bigmeet\!\beta\in S_{\A}\bigr\}$
  \end{itemize}
  Let us now define $\psi:\B\to\A$ by $\psi(\beta)=\bigmeet\!\beta$
  for all $\beta\in\B~({=}~\Pow(\A))$.
  We easily check that $\psi:\B\to\A$ is a reduction from the
  implicative algebra $\B$ onto the implicative algebra~$\A$ (in the
  sense of Lemma~\ref{l:ReducImpAlg}), so that by
  Lemma~\ref{l:ReducImpAlg}, the triposes induced by~$\A$ and $\B$
  are isomorphic.
\end{proof}

\subsection{The case of intuitionistic
  realizability}\label{ss:TriposRealizJ}

In Sections~\ref{sss:ImpStructRealizJ} and~\ref{sss:CaseRealizJ}, we
have seen that each PCA $P=(P,{\,\cdot\,},\mathtt{k},\mathtt{s})$
induces a quasi-implicative structure
$\A_P=(\Pow(P),{\subseteq},{\to})$ based on Kleene's implication.
In intuitionistic realizability~\cite{HJP80,Pit01,Oos08}, this
quasi-implicative structure $\A_P$ is the logical seed from which
the corresponding realizability tripos is constructed.
Indeed, recall that the intuitionistic realizability tripos
$\P:\Set^{\op}\to\HA$ induced by a PCA
$P=(P,{\,\cdot\,},\mathtt{k},\mathtt{s})$ is defined by
$$\P{I}~:=~\Pow(P)^I/{\tnent_I}
\eqno(\text{for all}~I\in\Set)$$
where $\tnent_I$ is the equivalence relation associated to the
preorder of entailment $\ent_I$ on the set $\Pow(P)^I~({=}~\A_P^I)$,
which is defined by
$$a\ent_Ia'\qquad\text{iff}\qquad
\bigcap_{i\in I}(a_i\to a'_i)\neq\varnothing
\eqno(\text{for all}~I\in\Set~\text{and}~a,a'\in\Pow(P)^I)$$

In the particular case where the PCA
$P=(P,{\,\cdot\,},\mathtt{k},\mathtt{s})$ is a (total) combinatory 
algebra, the induced quasi-implicative structure $\A_P$ turns out to
be a full implicative structure (Fact~\ref{f:PAS}~(2)
p.~\pageref{f:PAS}), and it should be clear to the reader that the
above construction coincides with the construction of the implicative
tripos induced by the implicative algebra
$(\A_P,\Pow(P)\setminus\{\varnothing\})$, which is obtained by
endowing the implicative structure $\A_P$ with the separator formed by
all nonempty truth values.
In other words:
\begin{proposition}
  For each (total) combinatory algebra
  $P=(P,{\,\cdot\,},\mathtt{k},\mathtt{s})$, the corresponding
  intuitionistic realizability tripos is an implicative tripos,
  namely: the implicative tripos induced by the implicative algebra
  $(\A_P,\Pow(P)\setminus\{\varnothing\})$ induced by~$P$.
\end{proposition}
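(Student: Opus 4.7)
The plan is to show that the two triposes coincide essentially on the nose: the natural isomorphism will be the identity on representatives, so only a chain of definitional unfoldings is required. First, I would compare the two preorders on $\Pow(P)^I$. The realizability preorder is $a\ent_I a'$ iff $\bigcap_{i\in I}(a_i\to a'_i)\neq\varnothing$. The implicative preorder in the tripos induced by $(\A_P,S)$ with $S=\Pow(P)\setminus\{\varnothing\}$ unfolds as $a\ent_{S[I]}a'$ iff $\bigmeet_{i\in I}(a_i\to a'_i)\in S$; but in $\A_P$ the meet is intersection, so this is exactly $\bigcap_{i\in I}(a_i\to a'_i)\neq\varnothing$. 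The two preorders are therefore literally the same relation, the induced equivalences $\tnent_I$ and $\tnent_{S[I]}$ coincide, and the quotient posets are equal: $\P_{\text{real}}(I)=\P_{\text{imp}}(I)$ as ordered sets.

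Next I would observe that the reindexing maps $\P{f}$ for $f:I\to J$ agree in both constructions, since each is induced by precomposition with~$f$ and then quotienting by the (common) equivalence. So the two functors $\Set^{\op}\to\Pos$ underlying the two triposes are equal on the nose.

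The remaining tripos data---the Heyting algebra structure on each $\P(I)$, and the left/right adjoints $\exists{f},\forall{f}$ to $\P{f}$---are uniquely determined by the underlying presheaf of posets whenever they exist: finite meets and joins are g.l.b.\ and l.u.b., top and bottom are the extrema, Heyting implication is the unique right adjoint to meet, and the quantifiers are the unique adjoints to $\P{f}$. Since both constructions exhibit such HA structures and adjoints on the same underlying presheaf of posets, they necessarily coincide. In particular, the implicative tripos's representatives $a\times b$, $a+b$, $\bigmeet_i a_i$ and $\bigexists_i a_i$ become, after passing to equivalence classes, the standard realizability connectives; one can even verify this on representatives in favorable cases, e.g.\ $\A_P$ is compatible with joins (Section~\ref{ss:ExJoin}), so the implicative existential quantifier $\bigexists$ coincides with the union, matching the usual realizability definition.

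There is no substantial obstacle: the entire content of the proposition is the observation that the defining preorders are identical. The only mild verification is to confirm that the literature's intuitionistic realizability tripos is, up to canonical isomorphism, determined by the preorder $\ent_I$ and the reindexing maps---at which point the uniqueness of Heyting algebra structure and of adjoints forces the two triposes to coincide.
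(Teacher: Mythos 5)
Your proof is correct and takes essentially the same route the paper intends: since the paper defines the realizability tripos directly from the preorder $a\ent_I a'$ iff $\bigcap_{i\in I}(a_i\to a'_i)\neq\varnothing$, and this is literally the preorder $\ent_{S[I]}$ for the separator $S=\Pow(P)\setminus\{\varnothing\}$ (meets in $\A_P$ being intersections and $S$ being upwards closed), the two constructions coincide on the nose. Your additional observations---that the reindexing maps agree and that the Heyting structure and the adjoints $\exists f,\forall f$ are uniquely determined by the underlying ordered presheaf---simply make explicit what the paper leaves to the reader.
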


However, there are many interesting intuitionistic realizability
triposes (for instance Hyland's effective tripos) that are induced by
PCAs whose application is not total.
To see how such realizability triposes fit in our picture, it is now
time to make a detour towards the notion of \emph{quasi-}implicative
algebra and the corresponding tripos construction.

\subsubsection{Quasi-implicative algebras}
\label{sss:QuasiImpAlg}

Quasi-implicative structures (Remark~\ref{r:ImpStruct}~(2)) differ
from (full) implicative structures in that the commutation property
$$a\to\bigmeet_{b\in B}\!\!b~=~\bigmeet_{b\in B}\!(a\to b)$$
only holds for the nonempty subsets $B\subseteq\A$, so that in general
we have $(\top\to\top)\neq\top$.

In practice, quasi-implicative structures are manipulated
essentially the same way as (full) implicative structures, the main
difference being that, in the absence of the equation
$(\top\to\top)=\top$, the operation of application $(a,b)\mapsto ab$
(Def.~\ref{d:App}) and the interpretation $t\mapsto t^{\A}$ of pure
$\lambda$-terms (Section~\ref{ss:AppLam}) are now \emph{partial}
functions.
Formally:
\begin{definition}[Interpretation of $\lambda$-terms in a
    quasi-implicative structure]
  Let $(\A,{\cle},{\to})$ be a quasi-implicative structure.
  \begin{enumerate}[(99)]
  \item[(1)] Given $a,b\in\A$, we let
    $U_{a,b}=\{c\in\A:a\cle(b\to c)\}$.
    When $U_{a,b}\neq\varnothing$, application is defined as
    $ab:=\bigmeet U_{a,b}$; otherwise, the notation $ab$ is
    undefined.
  \item[(2)] Given a \emph{partial} function $f:\A\pto\A$, the
    abstraction $\boldsymbol{\lambda}f$ is (always) defined by
    $$\boldsymbol{\lambda}f:=\mskip-22mu
    \bigmeet_{a\in\dom(f)}\mskip-20mu(a\to f(a))\,.$$
  \item[(3)] The \emph{partial} function $t\mapsto t^{\A}$ (from the
    set of closed $\lambda$-terms with parameters in~$\A$ to~$\A$)
    is defined by the equations 
    $$a^{\A}:=\,a,\qquad
    (tu)^{\A}:=\,t^{\A}u^{\A},\qquad
    (\Lam{x}{t})^{\A}:=\,\boldsymbol{\lambda}
    (a\mapsto (t\{x:=a\})^{\A})$$
    whenever the right-hand side is well defined.
  \item[(4)] The above interpretation naturally extends to
    $\lambda$-terms with $\cc$, letting\\
    $\cc^{\A}:=\bigmeet_{a,b\in\A}(((a\to b)\to a)\to a)$
    (as before).
  \end{enumerate}
\end{definition}
(The reader is invited to check that when the equation
$(\top\to\top)=\top$ holds, the above functions are total and coincide
with the ones defined in Section~\ref{ss:AppLam}.)

In the broader context of quasi-implicative structures:
\begin{itemize}
\item All the semantic typing rules of Prop.~\ref{p:SemTypingRules}
  p.~\pageref{p:SemTypingRules} remain valid (except the
  $\top$-introduc\-tion rule), provided we adapt the definition of
  semantic typing to partiality, by requiring that well-typed terms
  have a well-defined interpretation:
  $$\Gamma\vdash t:a\quad:\equiv\quad
  \FV(t)\subseteq\dom(\Gamma),~~
  (t[\Gamma])^{\A}~\text{well-defined}~~\text{and}~~
  (t[\Gamma])^{\A}\cle a\,.$$
\item The identities of Prop.~\ref{p:CombTypes}
  p.~\pageref{p:CombTypes} (about combinators
  $\mathbf{K}^{\A}$, $\mathbf{S}^{\A}$, etc.) remain valid.
\item Separators are defined the same way as for implicative
  structures (Def.~\ref{d:Separator} p.~\pageref{d:Separator}).
\item A \emph{quasi-implicative algebra} is a quasi-implicative
  structure $(\A,{\cle},{\to})$ equipped with a separator
  $S\subseteq\A$.
  As before, each quasi-implicative algebra $(\A,{\cle},{\to},S)$
  induces a Heyting algebra written $\A/S$, that is defined as the
  poset reflection of the preordered set $(\A,\ent_S)$, where $\ent_S$
  is defined by\ \ $a\ent_Sb~:\equiv~(a\to b)\in S$\ \ for all
  $a,b\in\A$.
\end{itemize}

Given a quasi-implicative algebra $(\A,{\cle},{\to},S)$, we more
generally associate to each set~$I$ the poset $\P{I}:=\A^I/S[I]$,
where $S[I]$ is the corresponding uniform power separator (same
definition as before).
It is then a routine exercise to check that:
\begin{proposition}[Quasi-implicative tripos]
  \begin{enumerate}[(99)]
  \item[(1)] The correspondence $I\mapsto\P{I}$ induces a
    (contravariant) functor $\P:\Set^{\op}\to\HA$.
  \item[(2)] The functor $\P:\Set^{\op}\to\HA$ is a $\Set$-based
    tripos.
  \end{enumerate}
\end{proposition}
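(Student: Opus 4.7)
The plan is to follow the proof of Theorem~\ref{th:ImpTripos} essentially verbatim, since the quasi-implicative setting only affects whether certain $\lambda$-term interpretations are defined, not the structural arguments. Most of the machinery --- Heyting algebra structure on the quotient, reindexing, adjoints, the Beck-Chevalley square, and the generic predicate --- uses only meets and the implication $\to$, both of which are total operations in $\A$. The quotient $\A^I/S[I]$ is a Heyting algebra by the quasi-implicative analogue of Prop.~\ref{p:InducedHA}, whose proof goes through unchanged once we check that each of the specific $\lambda$-terms appearing there (all closed, in $\beta$-normal form, typable in simply typed $\lambda$-calculus) has a well-defined interpretation in $\A$.

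For functoriality, I would show that any $f:I\to J$ makes the reindexing map $\A^f:\A^J\to\A^I$, $a\mapsto a\circ f$, compatible with $\tnent_{S[J]}$ and $\tnent_{S[I]}$: if $\bigmeet_{j\in J}(a_j\leftrightarrow b_j)\in S$, then the componentwise comparison $\bigmeet_{i\in I}(a_{f(i)}\leftrightarrow b_{f(i)})$ is a super-element, hence also in $S$ by upward closure. The factorized map $\P{f}:\P{J}\to\P{I}$ is a morphism of Heyting algebras because all the connectives on $\P{K}$ are defined pointwise from the operations of $\A$, which are preserved by composition with $f$. Next, for each $f:I\to J$ and $a\in\A^I$, define as before
\[
\forall_f^0(a)_j:=\bigmeet_{f(i)=j}\!\!a_i
\qquad\text{and}\qquad
\exists_f^0(a)_j:=\bigmeet_{c\in\A}\Bigl(\bigmeet_{f(i)=j}\!\!(a_i\to c)\to c\Bigr).
\]
These involve only meets and implications, so they are total maps $\A^I\to\A^J$; that they descend to well-defined monotone maps $\forall f,\exists f:\P{I}\to\P{J}$ and form the required adjoint pair follows from exactly the same chains of logical equivalences as in Theorem~\ref{th:ImpTripos}. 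The Beck-Chevalley condition is then proved by the same elementary index calculation, and the generic predicate $\Tr:=[\mathrm{id}_{\A}]_{/S[\A]}\in\P{\A}$ with $\Prop:=\A$ works exactly as before, since the verification reduces to the identity $\P{f}(\Tr)=[f]_{/S[I]}$.

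The main thing to watch carefully --- and this is the only genuinely new point compared with Theorem~\ref{th:ImpTripos} --- is the surjectivity argument for $\exists_f^0$ descending through the quotient, which in the implicative case used the closed $\lambda$-term $\Lam{xy}{x\,(\Lam{z}{y\,(s\,z)})}$ as a witness. I would verify that this term (and the analogous combinators used elsewhere in the proof, such as those in Prop.~\ref{p:InducedHA}) is a closed simply typed $\lambda$-term whose interpretation is well-defined in any quasi-implicative structure: by Prop.~\ref{p:CombTypes} (which the excerpt explicitly says remains valid) every interpretation reduces through $\mathbf{K}^{\A},\mathbf{S}^{\A},\mathbf{B}^{\A}$, all of which are defined, so no partiality issue arises. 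Once this check is packaged, the rest of the proof is a transcription.

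The expected main obstacle is bookkeeping rather than mathematical: the proof requires confirming, in each of the half-dozen places where a specific $\lambda$-term witness appears, that its interpretation is defined despite the possibility that $(\top\to\top)\neq\top$. Since all such witnesses are combinators or short compositions thereof --- never requiring the full totality of the interpretation function --- no obstruction arises, and the construction of an implicative tripos extends cleanly to the quasi-implicative setting.
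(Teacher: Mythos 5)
Your plan breaks down at the right adjoints, and the problem is not the bookkeeping of partial interpretations that you flag, but the formula $\forall_f^0(a)_j:=\bigmeet_{f(i)=j}a_i$ itself. The chain of equivalences in Theorem~\ref{th:ImpTripos} passes through
$\bigmeet_{f(i)=j}(b_j\to a_i)=\bigl(b_j\to\bigmeet_{f(i)=j}a_i\bigr)$,
and when $f$ is not surjective the fiber $f^{-1}(j)$ is empty, so this is exactly the instance $(b_j\to\top)=\top$ of axiom~(2) of Def.~\ref{d:ImpStruct} that quasi-implicative structures drop (Remark~\ref{r:ImpStruct}~(2)). This is not a repairable presentation issue: in the quasi-implicative algebra $(\Pow(\N),{\subseteq},{\to},\Pow(\N)\setminus\{\varnothing\})$ of Kleene's first PCA, take $I:=K=\{n:\phi_n(n){\downarrow}\}$, $J:=K\cup\{\star\}$, $f$ the inclusion, $a_n:=\{\phi_n(n)\}$, $b_n:=\{n\}$ for $n\in K$ and $b_\star:=\N$. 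Then $\P{f}([b])\le[a]$ holds (the partial diagonal code realizes $\bigcap_{n\in K}(\{n\}\to\{\phi_n(n)\})$), but $[b]\le[\forall_f^0(a)]$ fails, since a realizer would be a code lying in $\N\to\N$ (hence total) and computing $n\mapsto\phi_n(n)$ on $K$, which is impossible by diagonalization. So with your formula $\forall{f}$ is simply not right adjoint to $\P{f}$, and "the same chain of logical equivalences" cannot go through.

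The fix requires a genuinely new ingredient, as in the classical treatment of realizability triposes over partial PCAs: one must guard the universal quantification (e.g.\ $\forall_f^0(a)_j:=\bigmeet_{f(i)=j}(\top\to a_i)$, with a thunked witness of the shape $\Lam{x}{\Lam{d}{s\,x}}$ replacing the direct use of $s$), and one must then also re-justify that such witnesses lie in the separator, since the quasi-implicative analogue of Prop.~\ref{p:SeparLam} is no longer automatic (closed $\lambda$-terms need not belong to a separator when $(\top\to\top)\neq\top$, so each witness has to be rebuilt from $\mathbf{K}^{\A}$, $\mathbf{S}^{\A}$ and the given parameters by applications that are provably defined). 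Alternatively---and this is the cleanest route, which the paper itself makes available---one can first prove the completion result of Section~\ref{sss:Completion}, whose proof uses nothing about $\P_{\!\A}$ beyond its definition, and then transfer the tripos structure of $\P_{\!\B}$ (an honest implicative tripos by Theorem~\ref{th:ImpTripos}) along the natural isomorphism. The paper states the proposition without proof, so there is no official argument to match, but your specific route of transcribing Theorem~\ref{th:ImpTripos} verbatim with the unguarded adjoints is incorrect; the remaining points you raise (definedness of the $\exists$-witnesses, the Heyting structure on the fibers, Beck--Chevalley, the generic predicate) are indeed only bookkeeping, apart from the small change that $[\top]$ being the top of $\A/S$ now needs an argument such as $\mathbf{K}^{\A}\top\cle(c\to\top)$ rather than the equality $(c\to\top)=\top$.
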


From what precedes, it is now clear that:
\begin{proposition}
  Given a PCA $P=(P,{\,\cdot\,},\mathtt{k},\mathtt{s})$:
  \begin{enumerate}[(99)]
  \item[(1)] The quadruple
    $(\Pow(P),{\subseteq},{\to},\Pow(P)\setminus\{\varnothing\})$
    is a quasi-implicative algebra.
  \item[(2)] The tripos induced by the quasi-implicative algebra
    $(\Pow(P),{\subseteq},{\to},\Pow(P)\setminus\{\varnothing\})$
    is the intuitionistic realizability tripos induced by
    the PCA $P=(P,{\,\cdot\,},\mathtt{k},\mathtt{s})$.
  \end{enumerate}
\end{proposition}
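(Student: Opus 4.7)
For (1), the underlying quasi-implicative structure of $(\Pow(P),{\subseteq},{\to})$ is already established by Fact~\ref{f:PAS}(1), so the only work is to verify that $S=\Pow(P)\setminus\{\varnothing\}$ satisfies the three axioms of Def.~\ref{d:Separator}. Upward closure is immediate: a superset of a nonempty set is nonempty. Closure under modus ponens follows from the very definition of Kleene's implication: if $z\in a\to b$ and $x\in a$, then $z\cdot x$ is defined and belongs to $b$, so $b\neq\varnothing$. The only nontrivial point is membership of $\mathbf{K}^{\A}$ and $\mathbf{S}^{\A}$ in~$S$. Using Prop.~\ref{p:CombTypes} (which holds in the quasi-implicative setting), we have $\mathbf{K}^{\A}=\bigcap_{a,b\in\Pow(P)}(a\to b\to a)$ and $\mathbf{S}^{\A}=\bigcap_{a,b,c}((a\to b\to c)\to(a\to b)\to a\to c)$. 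One checks directly from the PCA axioms that $\mathtt{k}$ belongs to every $(a\to b\to a)$ and $\mathtt{s}$ to every $((a\to b\to c)\to(a\to b)\to a\to c)$, so both intersections are nonempty.

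For (2), the plan is to show that the two triposes coincide on the nose, by identifying their underlying preorders. The key observation is that for the quasi-implicative algebra $(\Pow(P),{\subseteq},{\to},S)$, the uniform power separator reduces to
$$S[I]~=~\bigl\{c\in\Pow(P)^I:\exists s\neq\varnothing,~\forall i\in I,~s\subseteq c_i\bigr\}
~=~\Bigl\{c\in\Pow(P)^I:\bigcap_{i\in I}c_i\neq\varnothing\Bigr\},$$
since taking $s=\bigcap_ic_i$ witnesses one direction and the other is immediate. Consequently, for $a,a'\in\Pow(P)^I$, entailment in the quasi-implicative tripos is
$$a\ent_{S[I]}a'\quad\liff\quad(a_i\to a'_i)_{i\in I}\in S[I]\quad\liff\quad
\bigcap_{i\in I}(a_i\to a'_i)\neq\varnothing,$$
which is \emph{verbatim} the definition of the entailment preorder $\ent_I$ of the intuitionistic realizability tripos. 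Hence the quotients $\A^I/S[I]$ and $\Pow(P)^I/{\tnent_I}$ coincide as sets and as posets.

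It then remains to check that the two triposes have the same functorial structure: the reindexing maps $\P{f}$ are in both cases induced by pointwise precomposition $a\mapsto a\circ f$; the Heyting operations are induced pointwise from the connectives $\times,+,\to,\bot,\top$ (which by Prop.~\ref{p:ProdOps} act componentwise); and the left/right adjoints $\exists{f},\forall{f}$ are given by the same formulas involving $\bigexists_{f(i)=j}$ and $\bigmeet_{f(i)=j}$ as those used to define the corresponding operations on the standard realizability tripos. Finally, the generic predicate $[\id_{\Pow(P)}]\in\P\Pow(P)$ of Theorem~\ref{th:ImpTripos} coincides with the usual generic predicate of the realizability tripos. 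The main obstacle is essentially bookkeeping: one has to match the implicative-tripos formulas for the Heyting connectives and the adjoints (which are stated using the encoded $\times$, $+$ and the second-order encoding of $\bigexists$) with the customary formulas used in the standard presentation of realizability triposes, and check that they yield the same equivalence classes. This is routine because entailment is already known to coincide, so any two expressions provably equivalent from the PCA axioms land in the same class on either side.
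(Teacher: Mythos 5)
Your proposal is correct and follows essentially the same route the paper takes implicitly (the paper states the proposition as ``clear from what precedes''): part (1) is the separator axioms for $\Pow(P)\setminus\{\varnothing\}$ via Fact~\ref{f:PAS} and the quasi-implicative validity of Prop.~\ref{p:CombTypes}, and part (2) is exactly the observation that $S[I]=\{c\in\Pow(P)^I:\bigcap_{i\in I}c_i\neq\varnothing\}$, so that $\ent_{S[I]}$ coincides verbatim with the realizability entailment $\ent_I$ and the two functors agree. Your final ``bookkeeping'' paragraph could even be shortened: once the quotient posets and the reindexing maps coincide, the Heyting operations and the adjoints $\exists f,\forall f$ are uniquely determined by the order, so no further matching of formulas is needed.
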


At this point, the reader might wonder why we focused our study on the
notion of implicative algebra rather than on the more general notion
of quasi-implicative algebra.
The reason is that from the point of view of logic, quasi-implicative
algebras bring no expressiveness with respect to implicative algebras,
due to the existence of a simple completion mechanism that turns any
quasi-implicative algebra into a full implicative algebra, without
changing the underlying tripos.

\subsubsection{Completion of a quasi-implicative algebra}
\label{sss:Completion}
Given a quasi-implicative structure $\A=(\A,{\cle_{\A}},{\to_{\A}})$,
we consider the triple $\B=(\B,{\cle_{\B}},{\to_{\B}})$ that is
defined by:
\begin{itemize}
\item[$\bullet$] $\B~:=~\A\cup\{\top_{\!\B}\}$,
  where $\top_{\!\B}$ is a new element;
\item[$\bullet$] $b\cle_{\B}b'$\ \ iff\ \
  $b,b'\in\A$ and $b\cle_{\A}b'$, or $b'=\top_{\!\B}$\hfill
  (for all $b,b'\in\B$)
\item[$\bullet$] $b\to_{\B}b'~:=~\begin{cases}
  b\to_{\A}b'&\text{if}~b,b'\in\A\\
  \top_{\!\A}\to b'&\text{if}~b=\top_{\!\B}~\text{and}~b'\in\A\\
  \top_{\!\B}&\text{if}~b'=\top_{\!\B}\\
  \end{cases}$\hfill(for all $b,b'\in\B$)
\end{itemize}

\begin{fact}
  The triple $\B=(\B,{\cle_{\B}},{\to_{\B}})$ is a full implicative
  structure.
\end{fact}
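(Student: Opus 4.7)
The plan is to verify the three ingredients making $\B$ a full implicative structure: that $(\B,\cle_{\B})$ is a complete meet-semilattice, that $\to_{\B}$ satisfies axioms (1) and (2) of Definition~\ref{d:ImpStruct}, and finally that $(\top_{\!\B}\to_{\B}\top_{\!\B})=\top_{\!\B}$—which by Remark~\ref{r:ImpStruct}(2) upgrades the quasi-implicative structure to a full one. This last equality is immediate from the third clause in the definition of $\to_{\B}$, which is exactly why the construction was designed this way: we chose to make $\top_{\!\B}$ absorbent on the right of implication and to leave the quasi-implicative behaviour of $\A$ undisturbed elsewhere.

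For the semilattice structure, I would observe that for any $B\subseteq\B$, either $B\subseteq\{\top_{\!\B}\}$, in which case $\bigmeet B=\top_{\!\B}$, or $B':=B\cap\A$ is nonempty, in which case $\bigmeet_{\B}B=\bigmeet_{\A}B'$, exploiting that $\top_{\!\B}$ is the top of $\B$ and hence may be dropped without changing the meet. For axiom~(1), the proof is a small case split on whether each of $a,a',b,b'$ lies in $\A$ or equals $\top_{\!\B}$: if $b'=\top_{\!\B}$ the conclusion is trivial since $a'\to_{\B}b'=\top_{\!\B}$; otherwise $b'\in\A$ and $b\cle_{\B}b'$ forces $b\in\A$; a symmetric remark handles the first argument. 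The only mildly delicate subcase is $a=\top_{\!\B}$, $a'\in\A$, where one chains $\top_{\!\A}\to_{\A}b\cle_{\A}\top_{\!\A}\to_{\A}b'\cle_{\A}a'\to_{\A}b'$ using the variance of $\to_{\A}$ together with $a'\cle_{\A}\top_{\!\A}$.

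The heart of the proof is axiom~(2). I split on whether $B\subseteq\{\top_{\!\B}\}$ or not: in the small cases (including $B=\varnothing$) both sides evaluate to $\top_{\!\B}$, which crucially uses the new equation $\top_{\!\B}\to_{\B}\top_{\!\B}=\top_{\!\B}$. Otherwise, letting $B':=B\cap\A\neq\varnothing$, one has $\bigmeet_{\B}B=\bigmeet_{\A}B'$, and any occurrence of $\top_{\!\B}$ in $B$ contributes a factor $a\to_{\B}\top_{\!\B}=\top_{\!\B}$ on the right-hand side, which is absorbed by the meet. The identity then reduces to axiom~(2) for the quasi-implicative structure $\A$ applied to the nonempty set $B'$: this works both when $a\in\A$ (using $a\to_{\B}b=a\to_{\A}b$ for $b\in B'$) and when $a=\top_{\!\B}$ (using $\top_{\!\B}\to_{\B}b=\top_{\!\A}\to_{\A}b$ for $b\in B'$).

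The main obstacle is not mathematical but notational: keeping the case analyses clean so that the crucial passage from the \emph{nonempty} instance of axiom~(2) in $\A$ to the \emph{full} axiom~(2) in $\B$ is transparent. The whole point of adjoining $\top_{\!\B}$ with the stipulated implication clauses is that the empty-meet case, which had to be excluded from the axioms of a quasi-implicative structure, is now recovered for free—essentially by forcing $\top_{\!\B}$ to behave as a strict top, both with respect to the order and to implication on the right.
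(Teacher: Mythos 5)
Your proof is correct, and it is exactly the routine verification that the paper leaves implicit (the Fact is stated without proof there): check that meets of subsets of $\B$ either collapse to $\top_{\!\B}$ or coincide with the corresponding nonempty meets in $\A$, verify variance of $\to_{\B}$ by a case split on which arguments equal $\top_{\!\B}$, and reduce axiom~(2) for $\B$ to the nonempty-meet instance of axiom~(2) in $\A$, the empty case being handled by the stipulation $b\to_{\B}\top_{\!\B}=\top_{\!\B}$. You also correctly isolate the only two points needing care, namely that $\B$-meets of nonempty subsets of $\A$ agree with $\A$-meets and that the case $a=\top_{\!\B}$ reduces to the instance of axiom~(2) in $\A$ with first argument $\top_{\!\A}$, so nothing is missing.
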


In what follows, we shall say that the implicative structure~$\B$ is
the \emph{completion} of the quasi-implicative structure~$\A$%
\footnote{Here, the terminology of completion is a bit abusive, since
  $\B$ always extends~$\A$ with one point, even when~$\A$ is already
  a full implicative algebra.}.
Intuitively, this completion mechanism simply consists to add to the
source quasi-implicative structure $\A$ a new top element
$\top_{\!\B}$ (or, from the point of view of definitional ordering: a
new bottom element) that fulfills the equation
$(\top_{\!\B}\to\top_{\!\B})=\top_{\!\B}$ by construction.

Writing $\phi:\A\to\B$ the inclusion of~$\A$ into~$\B$, we easily
check that:
\begin{lemma}\label{l:PropPhi}
  \begin{enumerate}[(99)]
  \item[(1)] $a\cle_{\A}a'$\ \ iff\ \ $\phi(a)\cle_{\B}\phi(a')$\hfill
    (for all $a,a'\in\A$)
    \smallbreak
  \item[(2)] $\ds\phi\Bigl(\bigmeet_{i\in I}a_i\Bigr)=
    \bigmeet_{i\in I}\phi(a_i)$\hfill
    (for all $I\neq\varnothing$ and $a\in\A^I$)
    \smallbreak
  \item[(3)] $\phi(a\to_{\A}a')=\phi(a)\to_{\B}\phi(a')$\hfill
    (for all $a,a'\in\A$)
    \smallbreak
  \item[(4)] $\phi(\mathbf{K}^{\A})=\mathbf{K}^{\B}$\ \ and\ \
    $\phi(\mathbf{S}^{\A})=\mathbf{S}^{\B}$
  \end{enumerate}
\end{lemma}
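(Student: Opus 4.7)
The plan is to verify the four claims in increasing order of difficulty, each reducing to an inspection of the construction of $\B=\A\cup\{\top_{\!\B}\}$. Items~(1) and~(3) are immediate: the first clause in the definition of $\cle_{\B}$ tells us that, for $a,a'\in\A$, $\phi(a)\cle_{\B}\phi(a')$ is precisely $a\cle_{\A}a'$, and the first clause in the definition of $\to_{\B}$ gives $\phi(a)\to_{\B}\phi(a')=a\to_{\A}a'=\phi(a\to_{\A}a')$.

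For~(2), the key point is that $\top_{\!\B}$ is the \emph{strict} top of $\B$: by construction $\top_{\!\B}\cge_{\B}b$ for every $b\in\B$, but $\top_{\!\B}\cle_{\B}b$ only when $b=\top_{\!\B}$. Consequently, given a nonempty family $(a_i)_{i\in I}\in\A^I$, every lower bound $b\in\B$ of $\{\phi(a_i):i\in I\}$ must itself lie in $\A$: picking any $i_0\in I$, the inequality $b\cle_{\B}\phi(a_{i_0})$ forces either $\phi(a_{i_0})=\top_{\!\B}$ (impossible) or $b\in\A$ with $b\cle_{\A}a_{i_0}$. Therefore the greatest lower bound of the family in $\B$ coincides with its greatest lower bound in $\A$, namely $\phi\bigl(\bigmeet_{i\in I}a_i\bigr)$. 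This is precisely the point where the hypothesis $I\neq\varnothing$ is essential: for the empty meet one has $\bigmeet_{\A}\varnothing=\top_{\!\A}\neq\top_{\!\B}=\bigmeet_{\B}\varnothing$.

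For~(4), my approach is to first invoke Proposition~\ref{p:CombTypes}, whose proof remains valid in quasi-implicative structures (the only commutations required are instances of axiom~(2) over the nonempty index $\A$). This supplies the explicit formulas $\mathbf{K}^{\A}=\bigmeet_{a,b\in\A}(a\to_{\A}b\to_{\A}a)$ and $\mathbf{S}^{\A}=\bigmeet_{a,b,c\in\A}((a\to_{\A}b\to_{\A}c)\to_{\A}(a\to_{\A}b)\to_{\A}a\to_{\A}c)$, together with the analogous formulas in the full implicative structure $\B$ with the meets taken over $\B$ and $\B^{3}$. A case analysis on whether each index belongs to $\A$ or equals $\top_{\!\B}$ then splits the $\B$-meet into a sub-meet over $\A^{n}$ together with the sub-meets containing at least one $\top_{\!\B}$-argument. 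The sub-meet over $\A^{n}$ collapses, by iterated application of~(3) followed by~(2), to $\phi(\mathbf{K}^{\A})$ (respectively $\phi(\mathbf{S}^{\A})$). In each remaining sub-meet, every covariant occurrence of $\top_{\!\B}$ propagates by the clause $b\to_{\B}\top_{\!\B}=\top_{\!\B}$ to make the whole expression equal $\top_{\!\B}$ (which does not lower the meet), whereas contravariant occurrences of $\top_{\!\B}$ are absorbed, via the clause $\top_{\!\B}\to_{\B}b'=\top_{\!\A}\to_{\A}b'$, into an element of $\A$ that is already an instance of the defining $\A$-family when one substitutes $\top_{\!\A}$ for the corresponding $\A$-variable.

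The main obstacle is the bookkeeping for $\mathbf{S}$, where eight sub-cases have to be checked. The three sub-cases with $b_3=\top_{\!\B}$ all collapse uniformly to $\top_{\!\B}$; the four sub-cases with $b_3\in\A$ each produce a specific $\A$-valued expression, and one must identify it as an instance $(a\to_{\A}b\to_{\A}c)\to_{\A}(a\to_{\A}b)\to_{\A}a\to_{\A}c$ of $\mathbf{S}^{\A}$'s defining family, now with $\top_{\!\A}$-substitutions playing the role of the original $\top_{\!\B}$-substitutions. Since $\top_{\!\A}\to_{\A}x$ need not equal $x$ in a quasi-implicative structure, the identification is not purely mechanical: one has to invoke the same semantic typing of $\lambda xyz.\,xz(yz)$ that underlies Proposition~\ref{p:CombTypes}, instantiated at the triple $(a,b,c)$ in which each variable bound in the original meet by a $\top_{\!\B}$-argument is frozen to $\top_{\!\A}$.
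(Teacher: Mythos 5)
Your items (1)--(3) are correct and are exactly the paper's argument (the paper simply declares them obvious from the definitions of $\cle_{\B}$ and $\to_{\B}$), and your treatment of $\mathbf{K}$ coincides with the paper's explicit computation: the $\B$-meet splits into the sub-meet over $\A^{2}$, which collapses via (2) and (3) to $\phi(\mathbf{K}^{\A})$, the terms where $\top_{\!\B}$ sits in the contravariant slot, which become the honest instances $a\to_{\A}\top_{\!\A}\to_{\A}a$ and are absorbed, and the terms equal to $\top_{\!\B}$.

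The gap is in the $\mathbf{S}$ case, precisely at the sub-cases you yourself flag as ``not purely mechanical'' (incidentally, there are four, not three, sub-cases with the third index equal to $\top_{\!\B}$). Take the index $(a,\top_{\!\B},c)$ with $a,c\in\A$. Unfolding $\to_{\B}$, the corresponding member of the $\B$-meet is $(a\to_{\A}\top_{\!\A}\to_{\A}c)\to_{\A}\top_{\!\A}\to_{\A}a\to_{\A}c$: its middle antecedent is $\top_{\!\A}$, because $a\to_{\B}\top_{\!\B}=\top_{\!\B}$ collapses first and is only then converted by the clause $\top_{\!\B}\to_{\B}x=\top_{\!\A}\to_{\A}x$. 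The instance of the defining family at your frozen triple $(a,\top_{\!\A},c)$ has middle antecedent $a\to_{\A}\top_{\!\A}$ instead; since $a\to_{\A}\top_{\!\A}\cle\top_{\!\A}$ (and not conversely in a quasi-implicative structure), the comparison goes the wrong way: semantic typing of $\Lam{xyz}{xz(yz)}$ at the frozen triple only re-proves the instance, not the bound $\mathbf{S}^{\A}\cle(a\to\top_{\!\A}\to c)\to\top_{\!\A}\to a\to c$ that your decomposition needs. Worse, that bound is false in general: in the quasi-implicative structure $(\Pow(P),{\subseteq},{\to})$ of Kleene's first algebra, the code $\mathtt{s}$ belongs to $\mathbf{S}^{\A}$, but for $a=c=\{0\}$ it does not belong to $(a\to\top\to c)\to\top\to a\to c$, since $\mathtt{s}xyw\simeq xw(yw)$ diverges whenever $yw$ diverges and here $y$ ranges over all of $P=\top$. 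So the case analysis cannot be closed as you propose; the same computation shows $\mathbf{S}^{\B}\cle\phi\bigl((a\to\top\to c)\to\top\to a\to c\bigr)$ while $\phi(\mathbf{S}^{\A})$ is not below it, so in this example the two sides of the $\mathbf{S}$-half of (4) actually come apart. Any correct treatment of $\mathbf{S}$ therefore requires something genuinely different from transcribing the $\mathbf{K}$ computation --- note that this is exactly the step the paper's own proof compresses into ``the equality $\phi(\mathbf{S}^{\A})=\mathbf{S}^{\B}$ is proved similarly''.
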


\begin{proof}
  Items~(1), (2) and (3) are obvious from the definition of
  $\cle_{\B}$ and $\to_{\B}$.
  (Note that (2) only holds when $I\neq\varnothing$.)
  To prove (4), we observe that
  $$\begin{array}{rcl}
    \mathbf{K}^{\B}
    &=&\ds\bigmeet_{\!\!\!a,b\in\B\!\!\!}(a\to_{\B}b\to_{\B}a) \\
    &=&\ds\bigmeet_{\!\!\!a,b\in\A\!\!\!}(a\to_{\B}b\to_{\B}a)
    \quad{\meet}\quad
    \bigmeet_{\!\!\!a\in\A\!\!\!}(a\to_{\B}\top_{\!\B}\to_{\B}a)
    \quad{\meet}\\
    &&\ds\bigmeet_{\!\!\!b\in\A\!\!\!}
    (\top_{\!\B}\to_{\B}b\to_{\B}\top_{\!\B})
    \quad{\meet}\quad
    (\top_{\!\B}\to_{\B}\top_{\!\B}\to_{\B}\top_{\!\B})\\
    &=&\ds\bigmeet_{\!\!\!a,b\in\A\!\!\!}(a\to_{\A}b\to_{\A}a)
    \quad{\meet}\quad
    \bigmeet_{\!\!\!a\in\A\!\!\!}(a\to_{\A}\top_{\!\A}\to_{\A}a)
    \quad{\meet}\quad\top_{\B}\quad{\meet}\quad\top_{\B}\\
    &=&\ds\bigmeet_{\!\!\!a,b\in\A\!\!\!}(a\to_{\A}b\to_{\A}a)
    ~=~\phi(\mathbf{K}^{\A}) \\
  \end{array}$$
  The equality $\phi(\mathbf{S}^{\A})=\mathbf{S}^{\B}$ is proved
  similarly.
\end{proof}

From the above lemma, we immediately deduce that:
\begin{proposition}\label{p:ExtSepar}
  If $S_{\A}$ is a separator of the quasi-implicative structure~$\A$,
  then the set $S_{\B}:=S_{\A}\cup\{\top_{\!\B}\}$ is a separator of
  the implicative structure $\B$.
\end{proposition}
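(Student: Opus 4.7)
The plan is to verify the three defining conditions of a separator (Def.~\ref{d:Separator}) for $S_{\B}=S_{\A}\cup\{\top_{\!\B}\}$ inside the completed implicative structure $\B$: upward closure in $(\B,\cle_{\B})$, containment of the combinators $\mathbf{K}^{\B}$ and $\mathbf{S}^{\B}$, and closure under modus ponens.

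For upward closure, assume $b\in S_{\B}$ and $b\cle_{\B}b'$. If $b=\top_{\!\B}$, the definition of $\cle_{\B}$ forces $b'=\top_{\!\B}\in S_{\B}$. Otherwise $b\in S_{\A}\subseteq\A$, and either $b'=\top_{\!\B}\in S_{\B}$, or $b'\in\A$ with $b\cle_{\A}b'$, in which case the upward closure of $S_{\A}$ in $\A$ yields $b'\in S_{\A}\subseteq S_{\B}$. For the combinators, Lemma~\ref{l:PropPhi}~(4) gives $\mathbf{K}^{\B}=\phi(\mathbf{K}^{\A})=\mathbf{K}^{\A}$ and $\mathbf{S}^{\B}=\phi(\mathbf{S}^{\A})=\mathbf{S}^{\A}$, and both already live in $S_{\A}\subseteq S_{\B}$ by hypothesis on $S_{\A}$.

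The only step with any case analysis is modus ponens. Assume $(b\to_{\B}b')\in S_{\B}$ and $b\in S_{\B}$; I split on the three clauses in the definition of $\to_{\B}$. If $b'=\top_{\!\B}$, then $b'\in S_{\B}$ trivially. If $b,b'\in\A$, then $b\to_{\B}b'=b\to_{\A}b'\in\A$, so the assumption $(b\to_{\B}b')\in S_{\B}$ forces $(b\to_{\A}b')\in S_{\A}$; combined with $b\in S_{\A}$, modus ponens in $S_{\A}$ delivers $b'\in S_{\A}\subseteq S_{\B}$. The remaining case is $b=\top_{\!\B}$ with $b'\in\A$, where $b\to_{\B}b'=\top_{\!\A}\to_{\A}b'\in\A$, hence $(\top_{\!\A}\to_{\A}b')\in S_{\A}$; and since $\mathbf{K}^{\A}\in S_{\A}$ and $\mathbf{K}^{\A}\cle_{\A}\top_{\!\A}$, upward closure of $S_{\A}$ gives $\top_{\!\A}\in S_{\A}$, so a second modus ponens in $S_{\A}$ yields $b'\in S_{\A}\subseteq S_{\B}$.

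No step looks like a genuine obstacle: the construction of $\B$ in Section~\ref{sss:Completion} was engineered precisely so that the new top $\top_{\!\B}$ behaves neutrally with respect to implication, and the only non-routine input is Lemma~\ref{l:PropPhi}~(4), which lets the combinators of $\B$ be identified with those of $\A$. If anything deserves care, it is just remembering to invoke upward closure of $S_{\A}$ to see that $\top_{\!\A}\in S_{\A}$ in the mixed case of modus ponens.
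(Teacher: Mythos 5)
Your proof is correct and follows the same route the paper intends: the paper leaves Prop.~\ref{p:ExtSepar} as an immediate consequence of Lemma~\ref{l:PropPhi}, and your argument is exactly the direct verification of the three separator axioms that this deduction amounts to, using Lemma~\ref{l:PropPhi}~(4) to identify $\mathbf{K}^{\B}$ and $\mathbf{S}^{\B}$ with $\mathbf{K}^{\A}$ and $\mathbf{S}^{\A}$. The only detail the paper glosses over is the mixed modus ponens case ($b=\top_{\!\B}$, $b'\in\A$), and you handle it correctly by noting $\top_{\!\A}\in S_{\A}$ via upward closure.
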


Now, given a quasi-implicative algebra
$\A=(\A,{\cle_{\A}},{\to_{\A}},S_{\A})$, we can define its
completion as the full implicative algebra
$\B=(\B,{\cle_{\B}},{\to_{\B}},S_{\B})$ where
\begin{itemize}
\item $(\B,{\cle_{\B}},{\to_{\B}})$ is the completion of the
  quasi-implicative structure $(\A,{\cle_{\A}},{\to_{\A}})$;
\item $S_{\B}:=S_{\A}\cup\{\top_{\!\B}\}$ is the extension of the
  separator $S_{\A}\subseteq\A$ into $\B$ (Prop.~\ref{p:ExtSepar}).
\end{itemize}
Writing $\P_{\!\A}:\Set^{\op}\to\HA$ and $\P_{\!\B}:\Set^{\op}\to\HA$
the triposes induced by~$\A$ and~$\B$, respectively, it now remains to
check that:
\begin{theorem}
  The triposes $\P_{\A},\P_{\B}:\Set^{\op}\to\HA$ are isomorphic.
\end{theorem}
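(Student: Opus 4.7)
The plan is to exhibit a natural isomorphism $\tilde{\phi}:\P_{\!\A}\Rightarrow\P_{\!\B}$ lifting the inclusion $\phi:\A\into\B$. For each set $I$, the reindexing $\phi^I:\A^I\to\B^I$, $a\mapsto\phi\circ a$, is compatible with the equivalences: by Lemma~\ref{l:PropPhi}, $\bigmeet_{i\in I}(\phi(a_i)\to_{\!\B}\phi(a'_i))=\phi\bigl(\bigmeet_{i\in I}(a_i\to_{\!\A}a'_i)\bigr)$ whenever $I\neq\varnothing$, and the inclusion $S_{\!\A}\subseteq S_{\!\B}$ then ensures that $\tnent_{S_{\!\A}[I]}$-equivalent families in $\A^I$ become $\tnent_{S_{\!\B}[I]}$-equivalent in $\B^I$. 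Thus $\phi^I$ factors through a monotonic map $\tilde{\phi}_I:\P_{\!\A}I\to\P_{\!\B}I$, and naturality in $I$ is immediate from $\phi\circ(a\circ f)=(\phi\circ a)\circ f$.

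Next I would verify that each $\tilde{\phi}_I$ is a morphism of Heyting algebras. For $a,a'\in\A$, the derived operations in $\B$ agree with those in~$\A$: in the meet $\bigmeet_{c'\in\B}((a\to_{\!\B}a'\to_{\!\B}c')\to_{\!\B}c')$ defining $a\times_{\!\B}a'$, the extra summand for $c'=\top_{\!\B}$ collapses to $\top_{\!\B}$ and is absorbed, so $a\times_{\!\B}a'=a\times_{\!\A}a'$; the computations for $+$ and $\bot$ are analogous. Preservation of the unit is the delicate point: one has $\phi(\top_{\!\A})=\top_{\!\A}\neq\top_{\!\B}$, but these two elements represent the same class in $\P_{\!\B}I$, since $\top_{\!\A}\to_{\!\B}\top_{\!\B}=\top_{\!\B}\in S_{\!\B}$ and $\top_{\!\B}\to_{\!\B}\top_{\!\A}=\top_{\!\A}\to_{\!\A}\top_{\!\A}\cge\mathbf{I}^{\A}\in S_{\!\A}\subseteq S_{\!\B}$.

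The main step is bijectivity of each $\tilde{\phi}_I$. For surjectivity, given $b\in\B^I$, let $a\in\A^I$ be obtained from $b$ by substituting $\top_{\!\A}$ for every occurrence of $\top_{\!\B}$; direct calculation of both $\bigmeet_{i\in I}(\phi(a_i)\to_{\!\B}b_i)$ and $\bigmeet_{i\in I}(b_i\to_{\!\B}\phi(a_i))$ shows that each meet either equals $\top_{\!\B}$ (in the trivial cases where $I=\varnothing$ or every $b_i=\top_{\!\B}$) or is a nonempty meet of elements of $\A$ bounded below by $\mathbf{I}^{\A}$, so both belong to $S_{\!\B}$ and $[\phi^I(a)]=[b]$. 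For injectivity, suppose $a,a'\in\A^I$ satisfy $\phi^I(a)\tnent_{S_{\!\B}[I]}\phi^I(a')$ with $I\neq\varnothing$ (the case $I=\varnothing$ being trivial); then $\bigmeet_{i\in I}(a_i\to_{\!\A}a'_i)$ is a nonempty meet of elements of $\A$, hence itself an element of $\A$ that lies in $S_{\!\B}$, and by the very construction $S_{\!\B}\cap\A=(S_{\!\A}\cup\{\top_{\!\B}\})\cap\A=S_{\!\A}$, so it lies in $S_{\!\A}$. The main obstacle throughout is this interplay between the artificial top $\top_{\!\B}$ and the quotients: one must check at each step that the new point, sitting above all of $\A$ and automatically in $S_{\!\B}$, introduces no spurious identifications in the quotient Heyting algebras beyond identifying $\top_{\!\A}$ with $\top_{\!\B}$.
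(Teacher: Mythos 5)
Your proof is correct, and its skeleton matches the paper's: you factor the inclusion $\phi:\A\to\B$ through the quotients to obtain maps $\tilde{\phi}_I:\P_{\!\A}I\to\P_{\!\B}I$, you get order-reflection (hence injectivity) from $S_{\!\B}\cap\A=S_{\!\A}$ together with Lemma~\ref{l:PropPhi}, and you reduce everything to surjectivity of each $\tilde{\phi}_I$. Where you genuinely diverge is in that surjectivity step and in the bookkeeping around the Heyting structure. The paper uses one uniform device, the map $\psi(b):=\bigmeet_{c\in\B}((b\to c)\to c)$, showing $\psi^I(b)\tnent_{S_{\!\B}[I]}b$ (via the terms $\Lam{xz}{z\,x}$ and $\Lam{y}{y\,\mathbf{I}}$) and $\psi(b)\cle\top_{\!\A}$, so every class in $\P_{\!\B}I$ has a representative in $\A^I$ with no case analysis; you instead truncate pointwise, replacing each $\top_{\!\B}$ by $\top_{\!\A}$, and check directly that both meets $\bigmeet_{i\in I}(\phi(a_i)\to_{\B}b_i)$ and $\bigmeet_{i\in I}(b_i\to_{\B}\phi(a_i))$ lie above $\mathbf{I}^{\A}\in S_{\!\B}$. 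This is more elementary and sound, modulo one harmless slip: when every $b_i=\top_{\!\B}$ the second meet is $\top_{\!\A}\to_{\!\A}\top_{\!\A}$, not $\top_{\!\B}$, but it is still above $\mathbf{I}^{\A}$, so the conclusion stands. The paper's $\psi$ buys uniformity (the same closure trick reappears in the reduction arguments of Section~\ref{ss:TriposRealizK}), while your truncation makes explicit that the completion only adds the one identification $[\top_{\!\A}]=[\top_{\!\B}]$. Finally, you verify by hand that $\tilde{\phi}_I$ preserves $\times$, $+$, $\to$, $\bot$, $\top$; the paper skips this entirely, since a surjective order-embedding is an isomorphism in $\Pos$ and hence in $\HA$ by the facts recalled in Section~\ref{sss:CatHA}, so your extra computation is correct but unnecessary given that shortcut.
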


\begin{proof}
  For each set~$I$, we observe that the inclusion map $\phi:\A\to\B$
  induces an inclusion map $\phi^I:\A^I\to\B^I$ (given by
  $\phi^I(a)=\phi\circ a$ for all $a\in\A^I$).
  Given $a,a'\in\A^I$, let us now check that:
  $$a\ent_{S_{\A}[I]}a'\quad\text{iff}\quad
  \phi^I(a)\ent_{S_{\B}[I]}\phi^I(a')\,.$$
  Indeed, the equivalence is clear when $I=\varnothing$, since
  $a=a'$ (for $\A^I$ is a singleton).
  And in the case where $I\neq\varnothing$, we have
  $$\begin{array}[t]{rcl}
    a\ent_{S_{\A}[I]}a'
    &\text{iff}&\ds\bigmeet_{i\in I}(a_i\to_{\A}a'_i)\in S_{\A}\\
    &\text{iff}&\ds\phi\Bigl(
    \bigmeet_{i\in I}(a_i\to_{\A}a'_i)\Bigr)\in S_{\B}\\
    &\text{iff}&\ds\bigmeet_{i\in I}
    (\phi(a_i)\to_{\B}\phi(a'_i))\in S_{\B}\\
    &\text{iff}&\phi^I(a)\ent_{S_{\B}[I]}\phi^I(a')\\
  \end{array}\eqno\begin{array}[t]{r@{}}
  \\[12pt](\text{since}~S_{\A}=S_{\B}\cap\A=\phi^{-1}(S_{\B}))\\[11pt]
  (\text{by Lemma~\ref{l:PropPhi}~(2), (3)})\\[11pt]
  \end{array}$$
  From the above equivalence, it is clear that:
  \begin{enumerate}[(99)]
  \item[(1)] The map $\phi^I:\A^I\to\B^I$ is compatible with the
    preorders $\ent_{S_{\A}[I]}$ (on~$\A^I$) and $\ent_{S_{\B}[I]}$
    (on~$\B^I$), and thus factors into a monotonic map 
    $\tilde{\phi}_I:\P_{\!\A}I\to\P_{\!\B}I$ through the quotients
    $\P_{\!\A}I:=\A^I/S_{\A}[I]$ and 
    $\P_{\!\B}I:=\B^I/S_{\B}[I]$.
  \item[(2)] The monotonic map
    $\tilde{\phi}_I:\P_{\!\A}I\to\P_{\!\B}I$ is an embedding of
    partial orderings, in the sense that $p\le p'$ iff 
    $\tilde{\phi}_I(p)\le\tilde{\phi}_I(p')$ for all
    $p,p'\in\P_{\!\A}I$.
  \item[(3)] The embedding
    $\tilde{\phi}_I:\P_{\!\A}I\to\P_{\!\B}I$
    is natural in~$I\in\Set$.
  \end{enumerate}
  To conclude that the embedding
  $\tilde{\phi}_I:\P_{\!\A}I\to\P_{\!\B}I$ is an isomorphism in
  $\Pos$---and thus an isomorphism in $\HA$---, it only remains to
  prove that it is surjective.
  For that, we consider the map $\psi:\B\to\B$ that is defined by
  $$\psi(b)~:=~\bigmeet_{c\in\B}\!((b\to c)\to c)
  \eqno(\text{for all}~b\in\B)$$
  as well as the family of maps $\psi^I:\B^I\to\B^I$ ($I\in\Set$)
  defined by $\psi^I(b):=\psi\circ b$ for all $I\in\Set$ and
  $b\in\B^I$.
  Now, given $I\in\Set$ and $b\in\B^I$, we observe that
  \begin{enumerate}[(99)]
  \item[(1)] $\psi^I(b)\tnent_{S_{\B}[I]}b$, since
    $$(\Lam{xz}{zx})^{\B}~\cle~
    \bigmeet_{i\in I}(b_i\to\psi(b_i))\quad\text{and}\quad
    (\Lam{y}{y\,\mathbf{I}})^{\B}~\cle~
    \bigmeet_{i\in I}(\psi(b_i)\to b_i)\,.$$
  \item[(2)] $\psi^I(b)\in\A^I$, since for all $i\in I$, we have
    $$\psi^I(b)_i~=~\psi(b_i)~\cle~(b\to_{\B}\bot)\to_{\B}\bot
    ~\cle~\bot\to_{\B}\bot~=~\bot\to_{\A}\bot~\cle~\top_{\A}\,.$$
  \end{enumerate}
  Therefore:\ \
  $[b]_{/S_{\B}[I]}=[\psi^I(b)]_{/S_{\B}[I]}=
  \tilde{\phi}_I\bigl([\psi^I(b)]_{/S_{\A}[I]}\bigr)$.\ \
  Hence $\tilde{\phi}_I$ is surjective.
\end{proof}

\bigbreak
From the above discussion, we can now conclude that:
\begin{theorem}
  For each PCA $P=(P,{\,\cdot\,},\mathtt{k},\mathtt{s})$,
  the intuitionistic realizability tripos induced by $P$ is
  isomorphic to an implicative tripos, namely: to the implicative
  tripos induced by the completion of the quasi-implicative algebra
  $(\Pow(P),{\subseteq},{\to},\Pow(P)\setminus\{\varnothing\})$.
\end{theorem}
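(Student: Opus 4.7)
The plan is to chain together two results that have already been established in the excerpt, so essentially no new work is needed beyond combining them.

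First, I would observe that the earlier proposition on quasi-implicative algebras tells us exactly that the quadruple $(\Pow(P),{\subseteq},{\to},\Pow(P)\setminus\{\varnothing\})$ is a quasi-implicative algebra (with Kleene's implication), and that the quasi-implicative tripos it induces \emph{coincides} with the intuitionistic realizability tripos induced by the PCA $P$. This handles the step from the realizability definition (with $\P{I}=\Pow(P)^I/{\tnent_I}$ based on non-emptiness of $\bigcap_{i\in I}(a_i\to a'_i)$) to the quasi-implicative formulation.

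Next, I would invoke the completion construction of Section~\ref{sss:Completion}: writing $\A$ for the quasi-implicative algebra $(\Pow(P),{\subseteq},{\to},\Pow(P)\setminus\{\varnothing\})$ and $\B$ for its completion, we have the full implicative algebra $\B=(\Pow(P)\cup\{\top_{\!\B}\},{\cle_{\B}},{\to_{\B}},(\Pow(P)\setminus\{\varnothing\})\cup\{\top_{\!\B}\})$. The theorem proved immediately above (that $\P_{\!\A}\cong\P_{\!\B}$ as triposes $\Set^{\op}\to\HA$) gives the desired natural isomorphism between the quasi-implicative tripos on $\A$ and the implicative tripos on $\B$.

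Composing these two isomorphisms (an equality of functors, really, for the first step, and the natural isomorphism $\tilde\phi_I$ of the completion theorem for the second) yields the natural isomorphism between the intuitionistic realizability tripos induced by $P$ and the implicative tripos induced by the completion of $(\Pow(P),{\subseteq},{\to},\Pow(P)\setminus\{\varnothing\})$. There is no technical obstacle here since both pieces have been proved; the only thing to verify is that one has correctly identified ``the intuitionistic realizability tripos induced by $P$'' with ``the quasi-implicative tripos of the quasi-implicative algebra $(\Pow(P),{\subseteq},{\to},\Pow(P)\setminus\{\varnothing\})$'', which is a straightforward unfolding of definitions (the preorder $\ent_I$ of the realizability tripos is exactly $\ent_{S[I]}$ for $S=\Pow(P)\setminus\{\varnothing\}$, since $\bigcap_{i\in I}(a_i\to a'_i)\neq\varnothing$ is precisely the condition that this meet lies in the separator of nonempty subsets).
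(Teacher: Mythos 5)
Your proposal is correct and is exactly how the paper obtains this result: the theorem is stated as an immediate consequence (``From the above discussion, we can now conclude\ldots'') of the proposition identifying the intuitionistic realizability tripos with the quasi-implicative tripos of $(\Pow(P),{\subseteq},{\to},\Pow(P)\setminus\{\varnothing\})$ and of the completion theorem $\P_{\!\A}\cong\P_{\!\B}$. Your unfolding of the entailment relations (nonemptiness of $\bigcap_{i\in I}(a_i\to a'_i)$ versus membership of the meet in the separator $\Pow(P)\setminus\{\varnothing\}$, via upwards closure) is precisely the definitional check the paper leaves implicit.
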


\bibliographystyle{apalike}
\bibliography{paper}

\begin{thebibliography}{}

\bibitem[Barendregt, 1984]{Bar84}
Barendregt, H. (1984).
\newblock {\em The Lambda Calculus: Its Syntax and Semantics}, volume 103 of
  {\em Studies in Logic and The Foundations of Mathematics}.
\newblock North-Holland.

\bibitem[Barendregt et~al., 1983]{BCD83}
Barendregt, H., Coppo, M., and Dezani{-}Ciancaglini, M. (1983).
\newblock A filter lambda model and the completeness of type assignment.
\newblock {\em J. Symb. Log.}, 48(4):931--940.

\bibitem[Cohen, 1963]{Coh63}
Cohen, P.~J. (1963).
\newblock The independence of the continuum hypothesis.
\newblock {\em Proceedings of the National Academy of Sciences of the United
  States of America}, 50(6):1143--1148.

\bibitem[Cohen, 1964]{Coh64}
Cohen, P.~J. (1964).
\newblock The independence of the continuum hypothesis {II}.
\newblock {\em Proceedings of the National Academy of Sciences of the United
  States of America}, 51(1):105--110.

\bibitem[Coppo et~al., 1980]{CDV80}
Coppo, M., {Dezani-Ciancaglini}, M., and Venneri, B. (1980).
\newblock Principal type schemes and lambda-calculus semantics.
\newblock In Hindley, R. and Seldin, G., editors, {\em To H. B. Curry. Essays
  on Combinatory Logic, Lambda-Calculus and Formalism}, pages 480--490.
  Academic Press, London.

\bibitem[{Ferrer} and {Malherbe}, 2017]{FM17}
{Ferrer}, W. and {Malherbe}, O. (2017).
\newblock {The category of implicative algebras and realizability}.
\newblock {\em ArXiv e-prints}.

\bibitem[{Ferrer Santos} et~al., 2017]{FFGMM17}
{Ferrer Santos}, W., Frey, J., Guillermo, M., Malherbe, O., and Miquel, A.
  (2017).
\newblock Ordered combinatory algebras and realizability.
\newblock {\em Mathematical Structures in Computer Science}.

\bibitem[Friedman, 1973]{Fri73}
Friedman, H. (1973).
\newblock Some applications of {K}leene's methods for intuitionistic systems.
\newblock In {\em Cambridge Summer School in Mathematical Logic}, volume 337 of
  {\em Springer Lecture Notes in Mathematics}, pages 113--170. Springer-Verlag.

\bibitem[Girard, 1987]{Gir87}
Girard, J. (1987).
\newblock Linear logic.
\newblock {\em Theor. Comput. Sci.}, 50:1--102.

\bibitem[Girard, 1972]{Gir72}
Girard, J.-Y. (1972).
\newblock {\em Interpr{\'e}tation fonctionnelle et {\'e}limination des coupures
  de l'arith\-m{\'e}\-tique d'ordre sup{\'e}rieur}.
\newblock Doctorat d'{{\'E}}tat, Universit{\'e} Paris VII.

\bibitem[Girard et~al., 1989]{Gir89}
Girard, J.-Y., Lafont, Y., and Taylor, P. (1989).
\newblock {\em Proofs and Types}.
\newblock Cambridge University Press.

\bibitem[Griffin, 1990]{Gri90}
Griffin, T. (1990).
\newblock A formulae-as-types notion of control.
\newblock In {\em Principles Of Programming Languages (POPL'90)}, pages 47--58.

\bibitem[Guillermo and Miquel, 2015]{GM15}
Guillermo, M. and Miquel, A. (2015).
\newblock Specifying {P}eirce's law.
\newblock {\em Mathematical Structures in Computer Science}.

\bibitem[Hyland et~al., 1980]{HJP80}
Hyland, J. M.~E., Johnstone, P.~T., and Pitts, A.~M. (1980).
\newblock Tripos theory.
\newblock In {\em Math. Proc. Cambridge Philos. Soc.}, volume~88, pages
  205--232.

\bibitem[Jech, 2002]{Jec02}
Jech, T. (2002).
\newblock {\em Set theory, third millennium edition (revised and expanded)}.
\newblock Springer.

\bibitem[Kleene, 1945]{Kle45}
Kleene, S.~C. (1945).
\newblock On the interpretation of intuitionistic number theory.
\newblock {\em Journal of Symbolic Logic}, 10:109--124.

\bibitem[Koppelberg, 1989]{Kop89}
Koppelberg, S. (1989).
\newblock {\em Handbook of Boolean algebras, Vol. 1}.
\newblock North-Holland.

\bibitem[Krivine, 1993]{Kri93}
Krivine, J.~L. (1993).
\newblock {\em Lambda-calculus, types and models}.
\newblock Ellis Horwood.
\newblock Out of print, now available at the author's web page at:\\
  \texttt{https://www.irif.univ-%
  paris-diderot.fr/\string~krivine/articles/Lambda.pdf}.

\bibitem[Krivine, 2001]{Kri01}
Krivine, J.-L. (2001).
\newblock Typed lambda-calculus in classical {Z}ermelo-{F}raenkel set theory.
\newblock {\em Arch. Math. Log.}, 40(3):189--205.

\bibitem[Krivine, 2003]{Kri03}
Krivine, J.-L. (2003).
\newblock Dependent choice, `quote' and the clock.
\newblock {\em Theor. Comput. Sci.}, 308(1-3):259--276.

\bibitem[Krivine, 2009]{Kri09}
Krivine, J.-L. (2009).
\newblock Realizability in classical logic.
\newblock In {\em Interactive models of computation and program behaviour},
  volume~27 of {\em Panoramas et synth{\`e}ses}, pages 197--229.
  Soci{\'e}t{\'e} Math{\'e}matique de France.

\bibitem[Krivine, 2011]{Kri11}
Krivine, J.-L. (2011).
\newblock Realizability algebras : a program to well order {R}.
\newblock {\em Logical Methods in Computer Science}, 7:1--47.

\bibitem[Krivine, 2012]{Kri12}
Krivine, J.-L. (2012).
\newblock Realizability algebras {II} : new models of {ZF} + {DC}.
\newblock {\em Logical Methods for Computer Science}, 8(1:10):1--28.

\bibitem[Leivant, 1983]{Lei83}
Leivant, D. (1983).
\newblock Polymorphic type inference.
\newblock In {\em Proceedings of the 10th ACM Symposium on Principles of
  Programming Languages}, pages 88--98.

\bibitem[McCarty, 1984]{McCPhd}
McCarty, D. (1984).
\newblock {\em Realizability and Recursive Mathematics}.
\newblock PhD thesis, Oxford University.

\bibitem[Miquel, 2000]{Miq00}
Miquel, A. (2000).
\newblock A model for impredicative type systems, universes, intersection types
  and subtyping.
\newblock In {\em LICS}, pages 18--29.

\bibitem[Miquel, 2010]{Miq10}
Miquel, A. (2010).
\newblock Existential witness extraction in classical realizability and via a
  negative translation.
\newblock {\em Logical Methods for Computer Science}.

\bibitem[Miquel, 2011]{Miq11}
Miquel, A. (2011).
\newblock Forcing as a program transformation.
\newblock In {\em LICS}, pages 197--206. IEEE Computer Society.

\bibitem[Myhill, 1973]{Myh73}
Myhill, J. (1973).
\newblock Some properties of intuitionistic {Z}ermelo-{F}raenkel set theory.
\newblock {\em Lecture Notes in Mathematics}, 337:206--231.

\bibitem[Parigot, 1997]{Par97}
Parigot, M. (1997).
\newblock Proofs of strong normalisation for second order classical natural
  deduction.
\newblock {\em Journal of Symbolic Logic}, 62(4):1461--1479.

\bibitem[Pitts, 1981]{Pit81}
Pitts, A.~M. (1981).
\newblock {\em The theory of triposes}.
\newblock PhD thesis, University of Cambridge.

\bibitem[Pitts, 2001]{Pit01}
Pitts, A.~M. (2001).
\newblock Tripos theory in retrospect.
\newblock {\em Mathematical Structures in Computer Science}.

\bibitem[{Ronchi della Rocca} and Venneri, 1984]{RRV84}
{Ronchi della Rocca}, S. and Venneri, B. (1984).
\newblock Principal type schemes for an extended type theory.
\newblock {\em Theoretical Computer Science}, 28:151--169.

\bibitem[Ruyer, 2006]{Ruy06}
Ruyer, F. (2006).
\newblock {\em Preuves, types et sous-types}.
\newblock Th{\`e}se de doctorat, Universit{\'e} Savoie Mont Blanc.

\bibitem[Streicher, 2013]{Str13}
Streicher, T. (2013).
\newblock Krivine's classical realisability from a categorical perspective.
\newblock {\em Mathematical Structures in Computer Science}.

\bibitem[Tait, 1967]{Tai67}
Tait, W. (1967).
\newblock Intensional interpretation of functionals of finite type {I}.
\newblock {\em Journal of Symbolic Logic}, 32(2).

\bibitem[van Bakel et~al., 1994]{BLRU94}
van Bakel, S., Liquori, L., {Ronchi della Rocca}, S., and Urzyczyn, P. (1994).
\newblock Comparing cubes.
\newblock In Nerode, A. and Matiyasevich, Y.~V., editors, {\em Proceedings of
  LFCS'94. Third International Symposium on Logical Foundations of Computer
  Science}, volume 813 of {\em Lecture Notes in Computer Science}, pages
  353--365. Springer-Verlag.

\bibitem[van Oosten, 2002]{Oos02}
van Oosten, J. (2002).
\newblock Realizability: a historical essay.
\newblock {\em Mathematical Structures in Computer Science}, 12(3):239--263.

\bibitem[van Oosten, 2008]{Oos08}
van Oosten, J. (2008).
\newblock {\em Realizability, an Introduction to its Categorical Side}.
\newblock Elsevier.

\bibitem[Werner, 1994]{WerPhD}
Werner, B. (1994).
\newblock {\em Une th{\'e}orie des Constructions Inductives}.
\newblock PhD thesis, Universit{\'e} Paris VII.

\end{thebibliography}

\end{document}